\newcommand{\R}{\mathbb{R}}
\newcommand{\N}{\mathbb{N}}
\newcommand{\M}{\mathcal{M}}
\renewcommand{\d}{\mathrm{d}}
\theoremstyle{plain}
\newtheorem{lem}{Lemma}[section]
\newtheorem{thm}{Theorem}[section]
\newtheorem{prop}{Proposition}[section]
\newtheorem{coro}{Corollary}[section]
\newtheorem{mydef}{Definition}[section]
\numberwithin{equation}{section}
\newcommand{\ffi}{\varphi}
\newcommand{\fir}{\mathring{\varphi}}
\newcommand{\omegar}{\mathring{\omega}}
\newcommand{\e}{\varepsilon}
\newcommand{\dr}{\partial}
\newcommand{\dive}{\mathrm{div}}
\newcommand{\tb}{\Bar{\otimes}}
\newcommand{\Ll}{\mathbf{T}}
\newcommand{\tr}{\mathrm{tr}}
\renewcommand{\l}{\left\|}
\renewcommand{\r}{\right\|}
\newcommand{\enstq}[2]{\left\{#1~\middle|~#2\right\}}
\title[Einstein vacuum equations with $\mathbb{U}(1)$ symmetry in an elliptic gauge]{Einstein vacuum equations with $\mathbb{U}(1)$ symmetry in an elliptic gauge: local well-posedness and blow-up criterium}
\author{Arthur Touati}
\date{}
\address{Centre de Mathématiques Laurent Schwartz, Ecole Polytechnique, France}
\email{arthur.touati@polytechnique.edu}
\begin{document}

\begin{abstract}
In this article, we are interested in the Einstein vacuum equations on a Lorentzian manifold displaying $\mathbb{U}(1)$ symmetry. We identify some freely prescribable initial data, solve the constraint equations and prove the existence of a unique and local in time solution at the $H^3$ level. In addition, we prove a blow-up criterium at the $H^2$ level. By doing so, we improve a result of Huneau and Luk in \cite{hunluk18} on a similar system, and our main motivation is to provide a framework adapted to the study of high-frequency solutions to the Einstein vacuum equations done in a forthcoming paper by Huneau and Luk. As a consequence we work in an elliptic gauge, particularly adapted to the handling of high-frequency solutions, which have large high-order norms. 
\end{abstract}
\maketitle
\tableofcontents

\section{Introduction}

\subsection{Presentation of the results}

In this article, we are interesting in solving the Einstein vacuum equations 
\begin{equation*}
R_{\mu\nu}-\frac{1}{2}Rg_{\mu\nu}=0
\end{equation*}
on a four-dimensional lorentzian manifold $(\M,  ^{(4)}g)$, where $R_{\mu\nu}$ and $R$ are the Ricci tensor and the scalar curvature associated to $^{(4)}g$. We assume that the manifold $\M$ amits a translation Killing field, this symmetry being called the $\mathbb{U}(1)$ symmetry. Thanks to this symmetry, the $3+1$ Einstein vacuum equations reduce to the $2+1$ Einstein equations coupled with two scalar fields satisfying a wave map system: 
\begin{equation}\label{notre système}
    \left\{
\begin{array}{l}
\Box_g \ffi  =-\frac{1}{2}e^{-4\ffi}\dr^\rho\omega\dr_\rho\omega\\
\Box_g \omega  =4\dr^\rho\omega\dr_\rho\ffi\\
R_{\mu\nu}(g)  =2\partial_{\mu}\ffi\partial_{\nu}\ffi+\frac{1}{2}e^{-4\ffi}\dr_\mu\omega\dr_\nu\omega
\end{array}
\right. 
\end{equation}
where $\ffi$ and $\omega$ are the two scalar fields and $g$ is a $2+1$ lorentzian metric appearing in the decomposition of $^{(4)}g$ (see Section \ref{section U(1) symmetry} for more details).  

The goal of this paper is to solve the previous system in an elliptic gauge. This particular choice of gauge for the $2+1$ spacetime will be precisely defined in Section \ref{section geometrie},  but let us just say for now that it allows us to recast the Einstein equations as a system of \textit{semilinear elliptic equations} for the metric coefficients. This gauge is therefore especially useful for low-regularity problems, since it offers additional regularity for the metric.

More precisely, we obtain two results on this system: local well-posedness with some precise smallness assumptions and a blow-up criterium. Both these results can be roughly stated as follows (see Theorems \ref{theoreme principal} and \ref{theo 2} for some precise statements):

\begin{thm}[Rough version of Theorem \ref{theoreme principal}]\label{rough theo 1}
Given admissible initial data for $(\ffi,\omega)$ large in $H^3$ and small enough in $W^{1,4}$ (the smallness threshold being independent of the potentially large $H^3$-norms), there exists a unique solution to \eqref{notre système} in the elliptic gauge on $[0,T]\times \R^2$ for some $T>0$ depending on the initial $H^3$-norms.
\end{thm}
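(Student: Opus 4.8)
The plan is to set up a contraction/iteration scheme that alternates between solving the elliptic equations for the $2+1$ metric coefficients and solving the hyperbolic wave-map system for $(\ffi,\omega)$ on the resulting (moving) background. First I would fix a time $T>0$ and work in the elliptic gauge: at each time slice $\{t\}\times\R^2$ the metric is determined by elliptic PDEs whose source terms are quadratic in the first derivatives of $(\ffi,\omega)$ and of the metric coefficients themselves. The key structural point, which I would exploit throughout, is that the elliptic gauge buys one extra degree of differentiability for the metric compared with the matter fields, so that if $(\ffi,\omega)\in H^3$ then the metric components lie (morally) in $H^4$ in space; this is what makes the $H^3$-level theory for the matter fields close even though the background is only being controlled at relatively low regularity. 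I would define the iteration by: given $(\ffi^{(n)},\omega^{(n)})$, solve the linear elliptic system for $g^{(n)}$ with these fields as sources (using the linear elliptic estimates together with the relevant solvability conditions on $\R^2$ — here the asymptotic/boundary behaviour and the usual $L^p$ weighted estimates for $\Delta$ on $\R^2$ must be invoked, which is exactly where the $W^{1,4}$-smallness enters); then solve the linear wave-map system $\Box_{g^{(n)}}(\ffi^{(n+1)},\omega^{(n+1)})=$ (quadratic terms in $\partial\ffi^{(n)},\partial\omega^{(n)}$) with the prescribed initial data.

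The energy estimates then split into two coupled pieces. For the hyperbolic step I would run the standard $H^3$ energy estimate for $\Box_{g^{(n)}}$, where the metric coefficients of $g^{(n)}$ and their first derivatives appear in the error terms; since those are controlled (via the elliptic step) by the $H^3$-norm of the matter fields, a Grönwall argument gives a bound on $[0,T]$ for $T$ small depending on the initial $H^3$-size, and a smallness-propagation for the $W^{1,4}$-type norm. For the elliptic step I would use that the elliptic operators are, after the gauge choice, essentially $\Delta$ plus lower-order terms with small coefficients, so that elliptic regularity on $\R^2$ gives $\|g^{(n)}\|$ at one derivative higher than the sources, with constants that are uniform as long as the bootstrap smallness holds. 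One has to be a little careful with the time derivatives of the metric (which do not come directly from an elliptic equation but from differentiating the elliptic system in $t$, picking up $\partial_t\ffi,\partial_t\omega$ as sources), and with the compatibility of the elliptically-constructed data at $t=0$ with the freely-prescribed admissible data — this is handled by the constraint-solving done earlier in the paper.

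Having both estimates, I would close the scheme: first prove uniform boundedness of the iterates in the high norm ($H^3$ for matter, one derivative more for the metric) on a time interval $[0,T]$ with $T$ depending only on the initial $H^3$-size, together with the persistence of the $W^{1,4}$-smallness (this last point requires that the smallness threshold chosen at the start be strictly smaller than what the estimates consume, independently of the large $H^3$-norms — this is the one place where the precise hierarchy of norms matters). Then prove contraction in a \emph{lower} norm (say $H^2\times H^1$, or $H^1$ for the fields) to get convergence of the iterates; the limit solves \eqref{notre système} in the elliptic gauge, and uniqueness follows from the same low-norm estimate applied to the difference of two solutions. The main obstacle I anticipate is not any single estimate but the bookkeeping of the coupling: one must arrange the elliptic and hyperbolic estimates so that neither loses derivatives relative to the other (the elliptic gain of one derivative is exactly what prevents a loss), and one must verify that the solvability/decay conditions for the elliptic equations on the non-compact slice $\R^2$ are preserved along the iteration — in particular that the $W^{1,4}$-smallness is genuinely propagated and not merely assumed, so that the elliptic estimates retain uniform constants.
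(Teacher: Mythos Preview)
Your overall architecture---iterate between elliptic equations for the metric and hyperbolic energy estimates for $(\ffi,\omega)$, prove uniform bounds in a high norm and contraction in a lower norm---matches the paper in spirit, but there is a genuine gap in the elliptic step that the paper takes considerable care to circumvent.

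You propose to solve, at each iteration and at every time slice, the full elliptic system for the metric. On $\R^2$ this means inverting $\Delta$ in weighted spaces, and for that the right-hand sides must satisfy integral (orthogonality) conditions. Concretely, the elliptic equations for $\gamma$ and for the traceless second fundamental form $H$ are solvable in the relevant weighted Sobolev spaces only if the conservation laws
\[
\int_{\R^2}\Bigl(4e^{2\gamma}\Ll\ffi\,\partial_j\ffi+e^{-4\ffi+2\gamma}\Ll\omega\,\partial_j\omega\Bigr)\,dx=0
\]
hold for every $t$. These hold at $t=0$ by admissibility, but they are \emph{not} preserved along a naive iteration: they are consequences of the full Einstein equations, not of any individual elliptic or wave equation in your scheme. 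You flag this as ``the main obstacle'' but give no mechanism to resolve it, and indeed it cannot be resolved within the purely elliptic-for-metric framework you describe.

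The paper's fix is to abandon the elliptic equations for $H$ and $\gamma$ during the iteration and replace them by evolution equations: a transport equation $e_0 H_{ij}=\ldots$ and a genuine wave equation for $\gamma$. The resulting \emph{reduced system} is elliptic only for $N$ and $\beta$, and is otherwise a coupled wave/transport system. One solves this by iteration (uniform bounds via a hierarchy of constants $A_0\ll A_1\ll\cdots\ll A_4$, then Cauchy in a lower norm), and afterwards uses the contracted Bianchi identity to derive a closed transport system for the gauge defects $\tau$ and $G_{0i}-T_{0i}$; these vanish initially by the constraint step and hence identically. Only \emph{then} do $H$ and $\gamma$ satisfy their elliptic equations, and the improved $H^4$ regularity for the metric is recovered a posteriori.

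Two smaller omissions: (i) inverting $\Delta$ on $\R^2$ forces $N$ and $\gamma$ to carry $\chi\ln$ tails, so the whole argument lives in weighted Sobolev spaces $H^m_\delta$, which you should make explicit; (ii) making the $W^{1,4}$-smallness threshold genuinely independent of the large $H^3$ norm is not a generic Gr\"onwall matter---it requires the layered bookkeeping of constants mentioned above so that every appearance of $C_{high}$ in the elliptic estimates comes multiplied by a power of $\e$.
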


\begin{thm}[Rough version of Theorem \ref{theo 2}]\label{rough theo 2}
If the time of existence $T$ of the solution obtained in Theorem \ref{rough theo 1} is finite,  then the $H^2$ norm of $(\ffi,\omega)$ diverges at $T$ or the smallness in $W^{1,4}$ no longer holds.
\end{thm}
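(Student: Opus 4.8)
The plan is to prove the blow-up criterium by contradiction: assuming the solution exists on $[0,T)$ with $T$ finite, that the $H^2$ norm of $(\ffi,\omega)$ stays bounded on $[0,T)$, and that the $W^{1,4}$ smallness persists up to $T$, I would show that one can extend the solution past $T$, contradicting maximality. The heart of the matter is a continuation/bootstrap argument, so the first step is to set up the right bootstrap assumptions at the $H^2$ (or $W^{1,4}$) level and show they are improvable. Concretely, I would assume $\|(\ffi,\omega)\|_{H^2} \leq C_0$ and $\|(\ffi,\omega)\|_{W^{1,4}} \leq \e_0$ on $[0,T)$ and aim to propagate these with strictly better constants, then upgrade to the $H^3$ regularity needed to re-invoke the local well-posedness result (Theorem \ref{rough theo 1}) from a time close to $T$.

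The key steps, in order, are as follows. First, exploit the elliptic gauge: feed the bootstrap bounds on $(\ffi,\omega)$ into the semilinear elliptic equations for the metric coefficients and use elliptic regularity (on $\R^2$, with the appropriate function spaces and decay/normalization at infinity) to obtain control of the metric, lapse and shift in terms of the matter fields — this is where the gauge pays off, since the metric gains derivatives and the $H^2$ control of the fields translates into strong control of the geometry. Second, with the geometry controlled, treat the wave-map system (the first two equations of \eqref{notre système}) as a system of quasilinear wave equations on a now-controlled background: derive energy estimates for $(\ffi,\omega)$ at the $H^2$ level, where the crucial point is that the nonlinearities are quadratic in first derivatives and can be closed using the $W^{1,4}$ smallness together with Sobolev embedding $H^2(\R^2) \hookrightarrow W^{1,4}(\R^2)$, so that the nonlinear terms come with a small factor $\e_0$ and one gets a Gr\"onwall-type inequality on $[0,T)$ with no blow-up. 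Third, bootstrap up one derivative: once the $H^2$ norm and the geometry are controlled uniformly on $[0,T)$, run the analogous energy estimate at the $H^3$ level for $(\ffi,\omega)$ — here the estimates are linear in the top-order norm (the quadratic structure puts at most one top-order derivative on each factor, the other factor being controlled in $L^\infty$ or $W^{1,4}$ via the already-established lower-order bounds), so Gr\"onwall gives a finite $H^3$ bound on $[0,T)$, hence $(\ffi,\omega)(t) \to$ a limit in $H^3$ (or at least stays in a fixed ball) as $t \to T$.

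Finally, I would take a time $t_0 < T$ close enough to $T$, use the solution at $t_0$ as admissible initial data — checking that the admissibility and the $W^{1,4}$ smallness assumption of Theorem \ref{rough theo 1} still hold at $t_0$ — and apply local well-posedness to extend the solution on $[t_0, t_0 + \tau]$ for a $\tau > 0$ depending only on the (now uniformly bounded) $H^3$ norm at $t_0$; choosing $t_0$ so that $t_0 + \tau > T$ contradicts the maximality of $T$. This forces that either the $H^2$ norm diverges at $T$ or the $W^{1,4}$ smallness fails, which is exactly the claimed dichotomy.

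I expect the main obstacle to be the second step: closing the $H^2$ energy estimate for the wave-map fields using only the $W^{1,4}$ smallness. The danger is that the nonlinear terms $e^{-4\ffi}\dr^\rho\omega\dr_\rho\omega$ and $\dr^\rho\omega\dr_\rho\ffi$, when commuted with two derivatives, produce terms like $\dr^2\omega \cdot \dr\omega$ which are genuinely top-order and cannot be absorbed by energy alone unless the second factor $\dr\omega$ is small in the right norm — this is precisely why the $W^{1,4}$ (equivalently $H^2$, borderline) smallness is the natural assumption rather than $L^2$ or $L^\infty$ boundedness, and making this interplay work in $2$ spatial dimensions (where $H^1 \not\hookrightarrow L^\infty$ but $H^2 \hookrightarrow L^\infty$ barely holds and $H^1 \hookrightarrow L^p$ for all $p<\infty$) requires care with the Sobolev and product estimates. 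A secondary difficulty is controlling the elliptic gauge uniformly up to $T$: one must make sure the elliptic system remains solvable (the relevant linear operators stay invertible, coercivity is not lost) under only the bootstrap bounds, which typically also uses the $W^{1,4}$ smallness to stay in the perturbative regime where the fixed-point/Schauder argument for the metric coefficients converges.
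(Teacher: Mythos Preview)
Your overall contradiction strategy (propagate to $H^3$, then re-apply local existence from a time close to $T$) is the right shape, but there is a genuine gap in your third step: the naive $H^3$ energy estimate does \emph{not} close. When you commute two spatial derivatives through $\Box_g U = g^{-1}(\dr U)^2$ and multiply by $\dr_t\nabla^2 U$, the right-hand side contains the cubic top-order term
\[
\int_{\R^2} (\nabla^2\dr U)^2 \,\dr U \,\d x .
\]
To bound this by $\|\dr U\|_{H^2}^2$ (so that Gr\"onwall applies) you would need $\dr U\in L^\infty$, but under your hypothesis $\|\dr U\|_{H^1}\leq C_0$ you only get $\dr U\in L^p$ for every $p<\infty$ in dimension~$2$. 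The $L^4$ smallness does not help either: H\"older would then require the top-order factor $\nabla^2\dr U$ in $L^{8/3}$, which is unavailable. The best available interpolation $\|\dr U\|_{L^\infty}\lesssim C_0^{1/2}\|\dr U\|_{H^2}^{1/2}$ yields a right-hand side of order $\|\dr U\|_{H^2}^{5/2}$, i.e.\ genuinely superlinear, and the bound does not propagate on all of $[0,T)$. So your claim that ``the estimates are linear in the top-order norm'' is false precisely at this borderline, and the paper identifies this obstruction explicitly (see the discussion of the naive estimate in Section~\ref{section theo 2}).

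The missing idea is the \emph{wave map} structure of the $(\ffi,\omega)$ system: the target is $(\R^2,\,2(\d x)^2+\tfrac12 e^{-4x}(\d y)^2)$, and following Choquet-Bruhat one replaces the naive third-order energy by a geometrically corrected one,
\[
\mathscr{E}_3 \;=\; \int \Bigl|\dr\nabla^2\ffi + \tfrac12 e^{-4\ffi}\nabla^2\omega\,\dr\omega\Bigr|^2 + \tfrac12 e^{-4\ffi}\Bigl|\dr\nabla^2\omega - 2\nabla^2\omega\,\dr\ffi - 2\nabla^2\ffi\,\dr\omega\Bigr|^2 ,
\]
whose time derivative exhibits an exact cancellation of the bad cubic terms between the $\ffi$-part and the $\omega$-part (Proposition~\ref{dernier coro}, Appendix~\ref{appendix C}). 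What survives is bounded by $C(C_0)\|\dr U\|_{H^2}^2$, and only then does a continuity argument propagate $\|\dr U\|_{H^2}$ on $[0,T)$. Your step~2 and the elliptic control of the metric are essentially as in the paper; it is this wave-map cancellation at third order that your proposal is lacking.
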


\subsection{Strategy of proof and main challenges}

Let us briefly discuss the strategy employed to prove the two previous theorems and point out the main challenges we face.  We adopt the same global strategy as in the work of Huneau and Luk \cite{hunluk18}, and we will discuss the differences and similarities with this article in Section \ref{section comparaison}.

\subsubsection{Theorem \ref{rough theo 1}}

In order to prove Theorem \ref{rough theo 1}, we need to solve the Einstein equations in the elliptic gauge.  As the name of the gauge suggests, the system \eqref{notre système} then reads
\begin{equation}\label{système avec jauge}
    \left\{
\begin{array}{l}
\Box_g U  =(\dr U)^2 \\
\Delta g  = (\dr U)^2 + (\dr g)^2
\end{array}
\right. 
\end{equation}
where $U$ denotes either $\ffi$ or $\omega$ and in the second equation $g$ denotes any metric coefficient. One of the main challenges of solving such a system is therefore the inversion of the Laplacian operator on a unbounded set, here $\R^2$. Indeed this will imply that some of the metric coefficients, the lapse $N$ and the conformal factor $\gamma$ (see Section \ref{section geometrie} for their definitions), presents some logarithmic growth at spacelike infinity. To counteract these growth, we work in the whole paper with weighted Sobolev spaces (see Definition \ref{wss}).

One major aspect of Theorem \ref{rough theo 1} is that the smallness assumed for the initial data is only at the $W^{1,4}$ level, while their higher order norms can be arbitrarily large.  It is quite unusual to require some smallness on the initial data to only prove \textit{local} existence, usually one would only ask for smallness on the time of existence. Here however, the smallness of the time of existence can only be of help when performing energy estimates for the hyperbolic part of \eqref{système avec jauge}. When dealing with the non-linearities in the elliptic part of \eqref{système avec jauge}, we rely on the smallness of the solution to close the hierarchy of estimates we introduce.

However, one of the strength of our result is that the smallness of the initial data is only assumed for their first derivatives in $L^4$ topology.  The higher order norm, i.e the $L^2$ norm of their second and third derivatives can be large, and this largeness is \textit{not} compensated by the smallness ot the initial data, which concretely means that the smallness threshold in Theorem \ref{rough theo 1} doesn't depend on the $H^3$ norm of the initial data.  This initial data regime (largeness in $H^3$ not compensated by smallness in $W^{1,4}$) is motivated by the main application of this article, namely to the construction of high-frequency spacetimes in the context of the Burnett conjecture in general relativity. See Huneau and Luk's article \cite{hunluk} for the application of the present article and \cite{bur89} for the original paper of Burnett.

\par\leavevmode\par

Despite the particularities of the elliptic gauge we just discussed, the global strategy to solve the Einstein vacuum equations is standard:
\begin{itemize}
\item we first solve the \textit{constraint equations} and by doing so construct initial data for the metric on the slice $\{ t=0\}$ which in particular satisfies the gauge conditions,
\item then, we solve a \textit{reduced system}, which in our case is a coupled system of elliptic, wave and transport equations,
\item finally, we prove using the Bianchi identity that solving the reduced system actually implies the full Einstein vacuum equations and the propagation of the gauge conditions.
\end{itemize}

As a final comment, note that wave map structure of the hyperbolic part of \eqref{système avec jauge} plays no role in the proof of Theorem \ref{rough theo 1}.

\subsubsection{Theorem \ref{rough theo 2}}

Inversely, the wave map structure of the coupling between the wave equations for $\ffi$ and $\omega$ is at the heart of Theorem's \ref{rough theo 1} proof. This result basically means that the $H^2$ norm of the initial data controls the time of existence of the solution (as long as the smallness in $W^{1,4}$ holds), whereas we need $H^3$ regularity to prove local existence.  This is not a consequence of the standard energy method for the wave equation, since in dimension 2 it only allows for $H^{2+\e}$ regularity. Reaching $H^2$ requires therefore to use another structure, in our case the wave map structure of the hyperbolic part of \eqref{système avec jauge}.  Since the work of Choquet-Bruhat in \cite{CBwavemaps} it is well-known that we can associate to any wave map systems a \textit{third order energy estimate}, which we crucially use to reach $H^2$.

As explain above, we rely on the smallness of the initial data to prove local-existence. This requirement has the following consequence: we are unable to prove local well-posedness at the $H^2$ level. Indeed, in order to obtain such a result, we would need (in addition to the third order energy estimate) to propagate the smallness in $W^{1,4}$ through the wave map system using only $H^2$ norms. This is not possible in dimension 2 using only energy estimates. Therefore, we need to assume that the  $W^{1,4}$ smallness is propagated,  which explains why we "only" prove a blow-up criterium and not local well-posedness at the $H^2$ level.

\subsection{Relation to previous works}

In this section we discuss the link of our work with the litterature. To say it briefly, the proof of Theorem \ref{rough theo 1} draws from \cite{hunluk18} and the proof of Theorem \ref{rough theo 2} uses tehniques from Choquet-Bruhat.

\subsubsection{An improvement of \cite{hunluk18}}\label{section comparaison}

This work has a lot of common points with the work of Huneau and Luk in \cite{hunluk18}, where they also study the system \eqref{notre système}. In this section, let us detail the similarities and differences between these two works.

\par\leavevmode\par

The system actually solved in \cite{hunluk18} is the Einstein \textit{null dust} system in \textit{polarized} $\mathbb{U}(1)$ symmetry. The polarized assumption implies $\omega=0$, and thus simplifies the hyperbolic part of the Einstein equations: a classical linear wave equation replaces our wave map system and its non-linear coupling associated to the non-polarized case we study here.

The Einstein null dust system is a particular case of Einstein Vlasov system and is translated as follows: the system studied in \cite{hunluk18} is coupled with some transport equations for massless particles along null geodesics. This involves the solving of the eikonal equation and thus requires the use of the null structure in $2+1$ dimension to avoid a loss of derivatives. Since we solve the Einstein \textit{vacuum} equations, this difficulty disappears in our work.

\par\leavevmode\par

As explained earlier in this introduction, the actual structure of the hyperbolic part of \eqref{notre système} doesn't influence the proof of the local existence of solutions. The proof given here nevertheless differs from the one of \cite{hunluk18} because of the differences in terms of regularity of the initial data. In \cite{hunluk18}, the initial data enjoy $H^4$ regularity and are small in $W^{1,\infty}$. This should be compared to our assumptions: $H^3$ regularity with smallness in $W^{1,4}$ only.  Because of this fact, the hierarchy of estimates we introduce during the bootstrap argument differs from the one introduced in \cite{hunluk18}. 

\subsubsection{Symmetry and wave maps}

As explained in the seventh chapter of the appendix of \cite{cho09}, the presence of a symmetry group acting on the spacetime generically implies the reduction of the Einstein vacuum equations into a coupled system between some Einstein-type equations and a wave map system. This is in particular the case for the $\mathbb{U}(1)$ symmetry. In \cite{Malone}, Moncrief performed this reduction and in \cite{CBM} Choquet-Bruhat and Moncrief prove local-existence at the $H^2$ level for a manifold of the form $\R_t \times \Sigma \times \mathbb{U}(1)$ where $\Sigma$ is a compact two-dimensional manifold. The compactness of $\Sigma$ allows them to use Schauder fixed point theorem, thus avoiding the need for some initial smallness. This has to be compared to the present work, where we need some initial smallness to solve the PDE system.

As explained earlier in this introduction, the wave map structure is particularly important for the proof of Theorem \ref{rough theo 2}. Indeed, as noted by Choquet-Bruhat in \cite{CBwavemaps} in the most general case, it is always possible to associate to any wave map system a third order energy estimate (see also \cite{MR2387237}).

\section{Geometrical setting}

In this section, we first introduce our notations, and then we present the $\mathbb{U}(1)$ symmetry and the elliptic gauge.

\subsection{Notations}

In this section we introduce the notations of this article. We will be working on $\mathcal{M}\vcentcolon=I\times\R^2$, where $I\subset\R$ is an interval. This space will be given a coordinate system $(t,x^1,x^2)$. We will use $x^i$ with lower case Latin index $i=1,2$ to denote the spatial coordinates.

\paragraph{Convention with indices :} 
\begin{itemize}
    \item Lower case Latin indices run through the spatial indices 1, 2, while lower case Greek indices run through all the spacetime indices. Moreover, repeat indices are always summed over their natural range.
    \item Lower case Latin indices are always raised and lowered with respect to the standard Euclidean metric $\delta_{ij}$, while lower case Greek indices are raised and lowered with respect to the spacetime metric $g$. 
\end{itemize}

\paragraph{Differential operators :} 
\begin{itemize}
    \item For a function $f$ defined on $\R^{2+1}$, we set $\dr f=(\dr_t f,\nabla f)$, where $\nabla f$ is the usual spatial gradient on $\R^2$. Samewise, $\Delta$ denotes the standard Laplacian on $\R^2$. If $A=(a_1,a_2)$ and $B=(b_1,b_2)$ are two vectors of $\R^2$, we use the dot notation for their scalar product 
    \begin{equation*}
    A\cdot B=a_1b_1+a_2b_2= \delta^{ij}a_ib_j.
    \end{equation*}
    The notation $|\cdot|$ is reserved for the norm associated to this scalar product, meaning $|A|^2=A\cdot A$.
    \item $\mathcal{L}$ denotes the Lie derivatives, $D$ denotes the Levi-Civita connection associated to the spacetime metric $g$, and $\Box_g$ denotes the d'Alembertian operator on functions :
    \begin{equation*}
        \Box_gf=\frac{1}{\sqrt{|\det(g)|}}\dr_{\mu}\left( \left(g^{-1}\right)^{\mu\nu}\sqrt{|\det(g)|}\dr_{\nu}f\right).
    \end{equation*}
    \item $L$ denotes the euclidean conformal Killing operator acting on vectors on $\R^2$ to give a symmetric traceless (with respect to $\delta$) covariant 2-tensor :
    \begin{equation*}
        (L\xi)_{ij}\vcentcolon=\delta_{j\ell}\dr_i\xi^{\ell}+\delta_{i\ell}\dr_j\xi^{\ell}-\delta_{ij}\dr_k\xi^k.
    \end{equation*}
\end{itemize}

\paragraph{Functions spaces :} We will work with standard functions spaces $L^p$, $H^k$, $C^m$, $C^{\infty}_c$, etc., and assume their standard definitions. We use the following convention :
\begin{itemize}
    \item All function spaces will be taken on $\R^2$ and the measures will be the 2D-Lebesgue measure.
    \item When applied to quantities defined on a spacetime $I\times\R^2$, the norms $L^p$, $H^k$, $C^m$ denote fixed-time norms. In particular, if in an estimate the time $t\in I$ in question is not explicitly stated, then it means that the estimate holds for all $t\in I$ for the time interval $I$ that is appropriate for the context.
\end{itemize}
We will also work in weighted Sobolev spaces, which are well-suited to elliptic equations. We recall here their definition, together with the definition of weighted Hölder space. The properties of these spaces that
we need are listed in Appendix \ref{appendix B}. We use the standard notation $\langle x \rangle=\left( 1+|x|^2\right)^{\frac{1}{2}}$ for $x\in\R^2$.

\begin{mydef}\label{wss}
Let $m\in\N$, $1<p<\infty$, $\delta\in\R$. The weighted Sobolev space $W^{m,p}_{\delta}$ is the completion of $C^{\infty}_0$ under the norm
\begin{equation*}
    \| u\|_{W^{m,p}_{\delta}}=\sum_{|\beta|\leq m} \left\| \langle x \rangle^{\delta+\beta}\nabla^{\beta}u \right\|_{L^p}.
\end{equation*}
We will use the notation $H^m_{\delta}=W^{m,2}_{\delta}$, $L^p_{\delta}=W^{0,p}_{\delta}$ and $W^{m,p}$ denotes the standard Sobolev spaces on $\R^2$.

The weighted Hölder space $C^m_{\delta}$ is the completion of $C^m_c$ under the norm
\begin{equation*}
    \| u\|_{C^m_{\delta}}=\sum_{|\beta|\leq m} \left\| \langle x \rangle^{\delta+\beta}\nabla^{\beta}u \right\|_{L^{\infty}}.
\end{equation*}
For a covariant 2-tensor $A_{ij}$ tangential to $\R^2$, we use the convention :
\begin{equation*}
    \| A\|_{X}=\sum_{i,j=1,2}  \| A_{ij}\|_{X},
\end{equation*}
where $X$ stands for any function spaces defined above.
\end{mydef}
We denote by $B_r$ the ball in $\R^2$ of radius $r$ centered at 0.

\subsection{Einstein vacuum equations with a translation Killing field}\label{section U(1) symmetry}

In this section, we present the $\mathbb{U}(1)$ symmetry. From now on, we consider a Lorentzian manifold $(I\times\R^3,^{(4)}g)$, where $I\subset\R$ is an interval, and $^{(4)}g$ is a Lorentzian metric, for which $\dr_3$ is a Killing field. Following the Appendix VII of \cite{cho09}, this is equivalent to say that $^{(4)}g$ has the following form :
\begin{equation}
    ^{(4)}g=e^{-2\varphi}g+e^{2\varphi}(\d x^3+A_\alpha\d x^\alpha)^2,
\end{equation}
where $\ffi:I\times\R^2\longrightarrow\R$ is a scalar function, $g$ is a Lorentzian metric on $I\times\R^2$ and $A$ is a 1-form on $I\times\R^2$. The \textit{polarized} $\mathbb{U}(1)$ symmetry is the case where $A=0$. We extend $\ffi$ to a function on $I\times\R^3$ in such a way that $\ffi$ does not depend on $x^3$. Given this ansatz of the metric, the vector field $\dr_3$ is Killing and hypersurface orthogonal. Assuming that the metric $^{(4)}g$ satisfies the Einstein vacuum equations, i.e $R_{\mu\nu}(^{(4)}g)=0$, one can prove that there exists a function $\omega$ such that 
\begin{equation*}
F=-e^{-3\ffi}*\d\omega
\end{equation*}
where $F_{\alpha\beta}=\dr_\alpha A_\beta-\dr_\beta A_\alpha$.
\par\leavevmode\par
The Einstein vacuum equations for $(I\times\R^3,^{(4)}g)$ are thus equivalent to the following system of equations :
\begin{equation}\label{EVE}
    \left\{
\begin{array}{l}
\Box_g \ffi  =-\frac{1}{2}e^{-4\ffi}\dr^\rho\omega\dr_\rho\omega\\
\Box_g \omega  =4\dr^\rho\omega\dr_\rho\ffi\\
R_{\mu\nu}(g)  =2\partial_{\mu}\ffi\partial_{\nu}\ffi+\frac{1}{2}e^{-4\ffi}\dr_\mu\omega\dr_\nu\omega
\end{array}.
\right. 
\end{equation}
Solving the system \eqref{EVE} is the goal of this article. Note that the last equation of \eqref{EVE} is actually the Einstein equation $G_{\mu\nu}(g)=T_{\mu\nu}$ with the following stress-energy-momentum tensor :
\begin{equation}
    T_{\mu\nu}=2\dr_{\mu}\ffi\dr_{\nu}\ffi-g_{\mu\nu}g^{\alpha\beta}\dr_{\alpha}\ffi\dr_{\beta}\ffi+\frac{1}{2}e^{-4\ffi}\left( 2\dr_{\mu}\omega\dr_{\nu}\omega-g_{\mu\nu}g^{\alpha\beta}\dr_{\alpha}\omega\dr_{\beta}\omega\right).\label{tenseur energie impulsion }
\end{equation}

\subsection{The elliptic gauge}\label{section geometrie}

In this section, we present the elliptic gauge. We first write the $(2+1)$-dimensional metric $g$ on $\mathcal{M}\vcentcolon=I\times\R^2$ in the usual form :
\begin{equation}
    g=-N^2\d t^2+\Bar{g}_{ij}\left(\d x^i+\beta^i\d t \right)\left(\d x^j+\beta^j\d t \right).
\end{equation}
Let $\Sigma_t\vcentcolon=\enstq{(s,x)\in\mathcal{M}}{s=t}$ and $e_0\vcentcolon=\dr_t-\beta^i\dr_i$, which is a future directed normal to $\Sigma_t$.  The function $N$ is called the lapse and the vector field $\beta$ is the shift. We introduce $\Ll\vcentcolon=\frac{e_0}{N}$, the unit future directed normal to $\Sigma_t$.
We introduce the second fundamental form of the embedding $\Sigma_t\xhookrightarrow{}\mathcal{M}$
\begin{equation}
    K_{ij}\vcentcolon=-\frac{1}{2N}\mathcal{L}_{e_0}\Bar{g}_{ij}.\label{Kij}
\end{equation}
We decompose $K$ into its trace and traceless part :
\begin{equation}
    K_{ij}=H_{ij}+\frac{1}{2}\Bar{g}_{ij}\tau,\label{def H}
\end{equation}
where $\tau=\tr_{\Bar{g}}K$. We introduce the following gauge conditions, which define the elliptic gauge :
\begin{itemize}
    \item $\Bar{g}$ is conformally flat, i.e there exists a function $\gamma$ such that
    \begin{equation}\label{g bar}
        \Bar{g}_{ij}=e^{2\gamma}\delta_{ij}.
    \end{equation}
    \item the hypersurfaces $\Sigma_t$ are maximal, which means that $K$ is traceless, i.e 
    \begin{equation}
        \tau=0.
    \end{equation}
\end{itemize}
Thus, the metric takes the following form :
\begin{equation}
    g=-N^2\d t^2+e^{2\gamma}\delta_{ij}\left(\d x^i+\beta^i\d t \right)\left(\d x^j+\beta^j\d t \right).\label{metrique elliptique}
\end{equation}
The main computations in the elliptic gauge are performed in Appendix \ref{appendix A}. They show that \eqref{EVE} is schematically of the form 
\begin{equation}\label{EVE2}
    \left\{
\begin{array}{l}
\Box_g U  =(\dr U)^2 \\
\Delta g  = (\dr U)^2+ (\dr g)^2
\end{array}
\right. 
\end{equation}
where $U$ denotes either $\ffi$ or $\omega$ and in the second equation $g$ denotes any metric coefficient.

\section{Main results}

\subsection{Initial data}\label{subsection initial data}

We now describe our choice of initial data for the system \eqref{EVE}. We distinguish the \textit{admissible} initial data, and the \textit{admissible free} initial data. 

For the rest of this paper, we choose a fixed smooth cutoff function $\chi:\R\to\R$ such that $\chi_{|[-1,1]}=0$ and $\chi_{|[-2,2]}=0$. The notation $\chi\ln$ stands for the function $x\in\R^2\longmapsto \chi(|x|)\ln(|x|)$. 

\begin{mydef}[Admissible initial data]
For $-1<\delta<0$ and $R>0$, an admissible initial data set with respect to the elliptic gauge for \eqref{EVE} consists of
\begin{enumerate}
    \item A conformally flat intrinsic metric $(e^{2\gamma}\delta_{ij})_{|\Sigma_0}$ which admits a decomposition 
    \begin{equation*}
        \gamma=-\alpha\chi\ln+\Tilde{\gamma},
    \end{equation*}
    where $\alpha\geq 0$ is a constant and $\Tilde{\gamma}\in H^{4}_{\delta}$.
    \item A second fundamental form $(H_{ij})_{|\Sigma_0}\in H^{3}_{\delta+1}$ which is traceless.
    \item $\left( \Ll\ffi,\nabla\ffi \right)_{|\Sigma_0}\in H^2$, compactly supported in $B_R$.
    \item $\left( \Ll\omega,\nabla\omega \right)_{|\Sigma_0}\in H^2$, compactly supported in $B_R$.
    \item $\gamma$ and $H$ are required to satisfy the following constraint equations :
    \begin{align*}
    \dr^iH_{ij}&=-2e^{2\gamma}\Ll\ffi\dr_j\ffi-\frac{1}{2}e^{-4\ffi+2\gamma}\Ll\omega\dr_j\omega, \\
    \Delta\gamma&=-\frac{e^{-2\gamma}}{2}\vert H\vert^2-e^{2\gamma}\left(\Ll\ffi\right)^2-\vert\nabla\ffi\vert^2-\frac{e^{-4\ffi}}{4}\left( e^{-2\gamma}\left(\Ll\omega\right)^2+|\nabla\omega|^2 \right).
\end{align*}
\end{enumerate}
\end{mydef}

We recall that the constraint equations for the Einstein Vacuum Equations are $G_{00}=T_{00}$ and $G_{0i}=T_{0i}$, where $G_{\mu\nu}=R_{\mu\nu}-\frac{1}{2}Rg_{\mu\nu}$ is the Einstein tensor, and $T_{\mu\nu}$ is the stress-energy-momentum tensor associated to the matter fields $\ffi$ and $\omega$, according to the RHS of system \eqref{EVE}. The fact that these equations reduces in the elliptic gauge to the previous equations on $H$ and $\gamma$ can be proved using the computations done in Appendix \ref{appendix A}.
\par\leavevmode\par
We define the notion of admissible free initial data as follows :

\begin{mydef}[Admissible free initial  data]
We set $\fir=e^{2\gamma}\Ll\ffi$ and $\omegar=e^{2\gamma}\Ll\omega$, where $\gamma$ is as in \eqref{metrique elliptique}. For $-1<\delta<0$ and $R>0$, an admissible free initial  data set with respect to the elliptic gauge for \eqref{EVE} is given by $(\fir,\nabla\ffi)_{|\Sigma_0}\in H^2$ and $(\omegar,\nabla\omega)_{|\Sigma_0}\in H^2$, all compactly supported in $B_R$, satisfying
\begin{equation}\label{orthogonality condition}
    \int_{\R^2}\left(2\fir\dr_j\ffi+\frac{1}{2}e^{-4\ffi}\omegar\dr_j\omega\right)\d x=0, \quad j=1,2.
\end{equation}
\end{mydef}

The interest of the admissible free initial data is that we can construct from them a set of admissible initial data, which in particular satisfies the constraint equations. Note that instead of prescribing $\Ll \ffi$ and $\Ll\omega$, we prescribe a suitable rescaled version of them, which allows the decoupling of the two constraint equations : we will first solve for $H$ and then for $\gamma$.

\subsection{Statement of the theorems}

The following is our main result on local well-posedness for \eqref{EVE}.

\begin{thm}\label{theoreme principal}
Let $-1<\delta<0$ and $R>0$. Given an admissible free initial data set such that
\begin{equation*}
    \left\| \fir \right\|_{L^4}+\left\| \nabla\ffi \right\|_{L^4}+\left\| \omegar \right\|_{L^4}+\left\| \nabla\omega \right\|_{L^4}\leq \e,
\end{equation*}
and 
\begin{equation*}
    C_{high}\vcentcolon=\left\| \fir \right\|_{H^2}+\left\| \nabla\ffi \right\|_{H^2}+\left\| \omegar \right\|_{H^2}+\left\| \nabla\omega \right\|_{H^2}<\infty,
\end{equation*}
for any $C_{high}$, there exists a constant $\e_{0}=\e_{0}(\delta,R)>0$ independent of $C_{high}$ and a time $T=T(C_{high},\delta,R)>0$ such that, if $0<\e\leq\e_{0}$, there exists a unique solution to \eqref{EVE} in elliptic gauge on $[0,T]\times\R^2$. 

Moreover, defining $\delta'=\delta-\e$, there exists a constant $C_h=C_h(C_{high},\delta,R)>0$ such that
\begin{itemize}
    \item The fields $\ffi$ and $\omega$ satisfy for all $t\in [0,T]$ 
    \begin{align*}
        \left\| \dr_t^2\ffi \right\|_{H^1}+\left\| \dr_t\ffi \right\|_{H^2}+\left\| \nabla\ffi \right\|_{H^2}&\leq C_h,\\
        \left\| \dr_t^2\omega \right\|_{H^1}+\left\| \dr_t\omega \right\|_{H^2}+\left\| \nabla\omega \right\|_{H^2}&\leq C_h,
    \end{align*}
    and their supports are both included in $ J^+\left(\Sigma_0\cap B_R\right)$, where $J^+$ denotes the causal future.
    \item The metric components $\gamma$ and $N$ can be decomposed as
    \begin{equation*}
        \gamma=-\alpha\chi\ln+\Tilde{\gamma},\quad N=1+N_a\chi\ln+\Tilde{N},
    \end{equation*}
    with $\alpha\geq 0$ and $N_a(t)\geq 0$ a function of $t$ alone.
    \item $\gamma$, $N$ and $\beta$ satisfy the following estimates for all $t\in[0,T]$ :
    \begin{align*}
        |\alpha|+\left\|\Tilde{\gamma}\right\|_{H^4_{\delta}}+\left\|\dr_t\Tilde{\gamma}\right\|_{H^3_{\delta}}+\left\|\dr_t^2\Tilde{\gamma}\right\|_{H^2_{\delta}} & \leq C_h,\\
        \left|N_a\right|+\left|\dr_tN_a\right|+\left|\dr_t^2N_a\right| & \leq C_h,\\
        \left\|\Tilde{N}\right\|_{H^4_{\delta}}+\left\|\dr_t\Tilde{N}\right\|_{H^3_{\delta}}+\left\|\dr_t^2\Tilde{N}\right\|_{H^2_{\delta}} & \leq C_h,\\
        \left\| \beta \right\|_{H^4_{\delta'}} +\left\| \dr_t\beta \right\|_{H^3_{\delta'}}+\left\| \dr_t^2\beta \right\|_{H^2_{\delta'}}&\leq C_h.
    \end{align*}
    \item The following conservation laws hold :
    \begin{align}
        \int_{\R^2}\left(4e^{2\gamma}\Ll\ffi\dr_j\ffi+e^{-4\ffi+2\gamma}\Ll\omega\dr_j\omega\right)\d x&=0,\label{CL1}\\
        \int_{\R^2}\left(2e^{-2\gamma}|H|^2+4e^{2\gamma}(\Ll\ffi)^2+e^{-4\ffi+2\gamma}(\Ll\omega)^2+4|\nabla\ffi|^2+e^{-4\ffi}|\nabla\omega|^2 \right)\d x &=4\alpha.\label{CL2}
    \end{align}
\end{itemize}
\end{thm}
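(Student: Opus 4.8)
The plan is to prove Theorem~\ref{theoreme principal} by the standard three-step scheme for the Cauchy problem in general relativity, adapted to the elliptic gauge, with a bootstrap argument controlling both the hyperbolic and the elliptic parts simultaneously. First I would construct the admissible initial data from the admissible free data: solve the first constraint $\dr^iH_{ij}=-2e^{2\gamma}\fir\dr_j\ffi-\tfrac12 e^{-4\ffi}\omegar\dr_j\omega$ (rewritten with $\fir,\omegar$) for the traceless symmetric tensor $H$ on $\R^2$ by writing $H=LY$ for a vector field $Y$ and inverting the vector Laplacian $\dive L$ on weighted Sobolev spaces (the orthogonality condition~\eqref{orthogonality condition} is exactly the solvability/decay condition needed on $\R^2$, killing the would-be constant obstruction); then feed $H,\fir,\omegar,\ffi,\omega$ into the second constraint $\Delta\gamma=\cdots$, whose right-hand side is in $L^1$, and solve it on $\R^2$ — the integral of the source produces precisely the logarithmic coefficient $\alpha$ via $\Delta(\chi\ln)=\text{compactly supported}+$ lower order, which forces the $-\alpha\chi\ln$ decomposition and yields the conservation law~\eqref{CL2} on $\Sigma_0$ as $4\alpha=\int(\text{source})$. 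The weighted elliptic theory from Appendix~\ref{appendix B} gives $\Tilde\gamma\in H^4_\delta$, $H\in H^3_{\delta+1}$.

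**Second**, I would set up the reduced system: keep the two wave equations $\Box_g\ffi,\Box_g\omega$ of~\eqref{EVE} as evolution equations, and — following Appendix~\ref{appendix A} — replace the Ricci equations by the elliptic equations for $\gamma$ and $N$ (Hamiltonian-type and lapse equations, both of the form $\Delta(\cdot)=(\dr U)^2+(\dr g)^2$ with logarithmically growing solutions, hence the ansätze $\gamma=-\alpha\chi\ln+\Tilde\gamma$, $N=1+N_a\chi\ln+\Tilde N$ with $\alpha,N_a\ge0$), together with the elliptic equation for the shift $\beta$ (a $\dive L$ equation whose source has the worse decay, explaining the shift to weight $\delta'=\delta-\e$), plus the transport equation $\dr_t\Bar g_{ij}=-2NK_{ij}+\mathcal L_\beta\Bar g_{ij}$ used to propagate conformal flatness and recover $K=H$. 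I would solve this coupled elliptic–wave–transport system by iteration: given $(\ffi,\omega)$ on a slab $[0,T]\times\R^2$, solve the elliptic system for $(\gamma,N,\beta)$ at each time (with their $t$-derivatives obtained by differentiating the elliptic equations in $t$), then solve the wave equations for the next iterate. Local existence/uniqueness for this iteration comes from a bootstrap: assume the estimates claimed in the theorem with $C_h$ replaced by $2C_h$ (and the $W^{1,4}$-smallness by $2\e$), and close them. For the hyperbolic part, standard $H^3$ energy estimates for $\Box_g$ on the slab lose only a factor $e^{CT(\ldots)}$, absorbed by taking $T$ small depending on $C_{high}$. For the elliptic part, here is where the $W^{1,4}$-smallness is essential: the nonlinear sources contain terms like $e^{2\gamma}(\fir)^2$ and $(\dr\gamma)^2$ whose top-order contributions, estimated via the weighted elliptic inequalities and the product/Sobolev embeddings of Appendix~\ref{appendix B} (e.g. $W^{1,4}\hookrightarrow L^\infty$ fails in 2D but $W^{1,4}\cdot H^1\hookrightarrow H^1$-type estimates hold), must be closed \emph{without} paying a power of $C_{high}$; the small $L^4$ norms multiply the large $H^2$/$H^3$ norms so that $\e_0$ can be chosen independently of $C_{high}$. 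The finite speed of propagation for $\Box_g$ gives the support statement $\mathrm{supp}\subset J^+(\Sigma_0\cap B_R)$, which in turn keeps the sources of the elliptic equations compactly supported modulo the explicit log tails, legitimizing the decompositions.

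**Third**, I would verify that a solution of the reduced system is a solution of~\eqref{EVE} and that the gauge is propagated. This is the usual Bianchi-identity argument: from $\dr^\mu(R_{\mu\nu}-\tfrac12Rg_{\mu\nu})=0$ and the wave/elliptic equations solved, the "error" tensor $E_{\mu\nu}:=R_{\mu\nu}(g)-2\dr_\mu\ffi\dr_\nu\ffi-\tfrac12 e^{-4\ffi}\dr_\mu\omega\dr_\nu\omega$ satisfies a homogeneous symmetric-hyperbolic system in $t$ with zero initial data (the initial vanishing of $E_{0\mu}$ being exactly the constraint equations, which we imposed, and of the spatial part following from the elliptic equations at $t=0$); uniqueness for that system forces $E\equiv0$. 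Simultaneously the maximality $\tau=0$ and conformal flatness are propagated by the transport equation for $\Bar g$ and the lapse equation, which were built to be the evolution of the gauge conditions. Finally the conservation laws~\eqref{CL1}–\eqref{CL2} for all $t$ follow by differentiating the spatial integrals in $t$ and using the equations: the integrand of~\eqref{CL1} is the momentum constraint density whose $\dr_t$-derivative is a spatial divergence (integrating to zero by compact support), and~\eqref{CL2} is $4\alpha$ because $\alpha$ is $t$-independent (no log growth is generated dynamically in $\gamma$, by the structure of its elliptic equation) while the Hamiltonian density integrates to $4\alpha$ as on $\Sigma_0$.

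**The main obstacle** is the elliptic step of the bootstrap: closing the top-order estimates for $(\Tilde\gamma,\Tilde N,\beta)$ and their time derivatives in the weighted spaces $H^4_\delta,H^3_{\delta'}$ etc. using \emph{only} the $W^{1,4}$-smallness and not a smallness of $C_{high}$. Every quadratic source term of the form $(\dr g)^2$ or $(\dr U)(\dr g)$ at top order threatens to reproduce a large constant times the large norm being estimated; one must carefully split each such product so that a small $L^4$ factor always hits the dangerous high-order factor, and track how the borderline 2D Sobolev embeddings interact with the weights and with the logarithmic tails (in particular controlling $\alpha,N_a$ and their $t$-derivatives, which couple the log coefficients of $\gamma,N$ to the hyperbolic energy). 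The time-differentiated elliptic equations are the most delicate, since $\dr_t$ of a quadratic source is again quadratic but now pairs a top-order $t$-derivative with a spatial one, and the bookkeeping of which factor is "small" must be redone. Getting this hierarchy of estimates to close with $\e_0$ independent of $C_{high}$ — which is the whole novelty over~\cite{hunluk18} — is where the real work lies.
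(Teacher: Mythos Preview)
Your three-step outline (constraints, reduced system, Bianchi propagation) is right, and your description of Step~1 matches the paper. But your reduced system is not the paper's, and the difference is exactly where the real obstacle lives. You propose to solve $\gamma$ \emph{elliptically} at each time slice; the paper instead gives $\gamma$ a \emph{wave} equation \eqref{EQ gamma} and $H$ a \emph{transport} equation \eqref{EQ H} in the reduced system, keeping only $N$ and $\beta$ elliptic. The reason is the circularity you slide past when you assert that ``$\alpha$ is $t$-independent (no log growth is generated dynamically)'': if you solve $\Delta\gamma=\text{source}$ at each $t$, the log coefficient is $\alpha(t)=-\tfrac{1}{2\pi}\int\text{source}$, and showing this is constant \emph{is} the conservation law~\eqref{CL2} --- which you cannot invoke before the full Einstein equations are known to hold. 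The paper's cure (explained in the paragraph following \eqref{EQ omega}) is to fix $\alpha$ at its initial value, evolve $\Tilde\gamma:=\gamma+\alpha\chi\ln$ hyperbolically with that fixed $\alpha$, and only \emph{after} the Bianchi step --- once $\tau=0$ has been propagated --- recover the elliptic equation \eqref{elliptic gamma} and read off \eqref{CL2} a posteriori. The same issue hits your treatment of $H$: you never say what equation determines $H$ during the iteration (your ``transport equation for $\bar g$'' is just the definition of $K$, so if $\gamma$ is imposed elliptically it gives $H$ back in terms of $\beta$ and the equation $L\beta=2e^{-2\gamma}NH$ becomes tautological), and solving the momentum constraint for $H$ at each time would again require \eqref{CL1} to hold at each time. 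The paper's transport equation for $H$ avoids this, at the price of the more elaborate bootstrap hierarchy $A_0\ll\cdots\ll A_4$ you don't mention.

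A smaller correction on Step~3: in this gauge the Bianchi identity does not produce a symmetric-hyperbolic system for the full error tensor. Proposition~\ref{Gij et Goo} shows that $G_{00}-T_{00}$ and $G_{ij}-T_{ij}$ are already proportional to $e_0\tau$ once the reduced system is solved; Proposition~\ref{usage de bianchi} then gives a first-order coupled \emph{transport} system for $G_{0i}-T_{0i}$ and $e_0\tau$, closed by an $L^2$ energy estimate in which the top-order cross terms cancel. The conclusion you want (vanishing from zero data) is right, but the mechanism is gauge-specific and not the harmonic-gauge hyperbolic one you describe.
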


This theorem has the following corollary, which basically states that if we want to have $T=1$, it suffices to take $C_{high}$ small enough. We will omit the details of its proof because it is actually simplier than the proof of Theorem \ref{theoreme principal}.

\begin{coro}\label{CORO}
Suppose the assumptions of Theorem \ref{theoreme principal} hold. There exists $\e_{small}=\e_{small}(\delta,R)>0$ such that if $C_{high}$ and $\e$ in Theorem \ref{theoreme principal} satisfy
\begin{equation*}
    C_{high},\e\leq\e_{small},
\end{equation*}
then the unique solution exists in $[0,1]\times\R^2$. Moreover, there exists $C_0=C_0(\delta,R)$ such that all the estimates in Theorem \ref{theoreme principal} hold with $C_h$ replaced by $C_0\e$.
\end{coro}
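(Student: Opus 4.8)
The plan is to rerun the proof of Theorem~\ref{theoreme principal}, the point being that in the present regime the whole hierarchy of bootstrap assumptions — which there carries the possibly large constant $C_{high}$ at top order and the small parameter $\e$ at low order — collapses to a single uniformly small family. On a slab $[0,T^\star]\subseteq[0,1]$ one assumes that \emph{every} quantity appearing in the conclusion of Theorem~\ref{theoreme principal} (the weighted norms of $\gamma,N,\beta$ and their time derivatives, the Sobolev norms of $\ffi,\omega$ and their derivatives, and $|\alpha|$, $|N_a|$ together with its time derivatives) is bounded by $C_0\e$, where $C_0=C_0(\delta,R)$ is fixed at the end. Since the two smallness parameters $\e$ and $C_{high}$ are now both $\le\e_{small}$ — and for the conclusion to read as stated one takes them comparable, i.e.\ $C_{high}\le\e$ — there is no large quantity left to propagate, so the delicate separation between top-order and low-order estimates needed for Theorem~\ref{theoreme principal} is no longer necessary; this is exactly what makes the argument simpler.

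First I would solve the constraint equations as in Theorem~\ref{theoreme principal}: inverting the momentum constraint with the weighted elliptic estimates of Appendix~\ref{appendix B}, and using that the free data are quadratically coupled and supported in $B_R$, gives $\|H\|_{H^3_{\delta+1}}\lesssim\e_{small}\,\e$; the conservation law \eqref{CL2} gives $\alpha\lesssim\e^2$; and the Hamiltonian constraint then gives $\|\widetilde\gamma\|_{H^4_\delta}\lesssim\e_{small}\,\e$, so the induced data already satisfy the bootstrap assumptions at $t=0$ with room to spare once $\e_{small}$ is small. On the slab I would then close the bootstrap in the usual two steps. \textbf{Metric.} Feeding the bootstrap assumptions into the elliptic system $\Delta g=(\dr U)^2+(\dr g)^2$ of \eqref{EVE2}, and into its differentiated versions in the precise form of Appendix~\ref{appendix A}, and applying the elliptic regularity of Appendix~\ref{appendix B}, recovers the decompositions $\gamma=-\alpha\chi\ln+\widetilde\gamma$ and $N=1+N_a\chi\ln+\widetilde N$ with $N_a\ge 0$, and bounds of size $C\bigl((C_0\e)^2+\e_{small}\,\e\bigr)\le\tfrac12 C_0\e$, provided $C_0$ is large compared to the ($C_0$-independent, geometric) elliptic constants and $\e_{small}$ is small; the conservation laws \eqref{CL1}--\eqref{CL2} and the sign of $N_a$ propagate exactly as in Theorem~\ref{theoreme principal}. \textbf{Matter.} Running the first-, second- and third-order energy estimates for $\Box_g U=(\dr U)^2$ on $[0,T^\star]$, every right-hand side term is at least quadratic in small quantities, so Gr\"onwall on an interval of length $\le 1$ gives $\|\dr\ffi\|_{H^2}+\|\dr\omega\|_{H^2}+(\text{lower }\dr_t\text{ norms})\le 2\,(\text{initial size})+C(C_0\e)\cdot C_0\e\le\tfrac12 C_0\e$ once $\e_{small}$ is small, and finite speed of propagation keeps the supports inside $J^+(\Sigma_0\cap B_R)$. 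The improved bounds, together with the local existence of Theorem~\ref{theoreme principal}, then let the standard continuity argument push $T^\star$ up to $1$; the Bianchi-identity argument of Theorem~\ref{theoreme principal} upgrades the solution of the reduced system to a genuine solution of \eqref{EVE} in the elliptic gauge, and uniqueness is inherited from Theorem~\ref{theoreme principal}.

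The one step that genuinely needs care — and the reason this is not a one-line quotation of Theorem~\ref{theoreme principal} — is the bookkeeping of constants above: one must check that the constants produced by the weighted elliptic and energy estimates depend only on $\delta$ and $R$ (through the fixed support radius, the weights, and smooth functions of $\gamma,\ffi$, which are $O(C_0\e)\le 1$ on the slab) and, crucially, not on $C_0$, so that the schematic inequality $C\,(C_0\e)^2\le\tfrac12 C_0\e$ can really be closed by first fixing $C_0$ and then shrinking $\e_{small}$. This is automatic here because every nonlinearity in \eqref{EVE2} is quadratic with coefficients that are smooth functions of the bounded quantities $\gamma,\ffi$; granting it, the remainder is a direct and lighter rerun of the proof of Theorem~\ref{theoreme principal}.
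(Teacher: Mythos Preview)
Your proposal is correct and matches the paper's (unwritten) approach: the paper simply states that the proof is a simpler rerun of Theorem~\ref{theoreme principal}, and your sketch---collapse the two-scale $(\e,C_{high})$ hierarchy into a single small bootstrap scale $C_0\e$, close the elliptic and evolution estimates on $[0,1]$ using that every nonlinearity is at least quadratic in $O(\e)$ quantities so the Gr\"onwall constants stay $O(1)$, and push $T^\star$ to $1$ by continuity---is exactly the natural execution of that hint. One small over-reach: the third-order wave-map energy is invoked only for Theorem~\ref{theo 2}, not for Theorem~\ref{theoreme principal}, so the ordinary energy estimates of Lemmas~\ref{inegalite d'energie lemme 1}--\ref{inegalite d'energie lemme} already suffice here.
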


\par\leavevmode\par
We also prove the following theorem, which is a blow-up criterium (See the introduction of Section \ref{section theo 2} for a discussion of this theorem) :

\begin{thm}\label{theo 2}
Let $T>0$ be the maximal time of existence of the solution of \eqref{EVE} obtained in Theorem \ref{theoreme principal}. If $T<+\infty$ and $\e_0$ is small enough (still independent of $C_{high}$) then one of the following holds :
\begin{enumerate}[label=\roman*)]
\item $\sup_{[0,T)} \left( \l \dr \ffi \r_{H^1} + \l \dr\omega \r_{H^1} \right)=+\infty$,
\item $\sup_{[0,T)} \left( \l \dr \ffi \r_{L^4} + \l \dr\omega \r_{L^4} \right)>\e_0$,
\end{enumerate}
\end{thm}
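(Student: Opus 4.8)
The plan is to argue by contrapositive: assume $T<+\infty$ and that \emph{both} alternatives in Theorem \ref{theo 2} fail, i.e. $\sup_{[0,T)}(\|\partial\ffi\|_{H^1}+\|\partial\omega\|_{H^1})<+\infty$ and $\sup_{[0,T)}(\|\partial\ffi\|_{L^4}+\|\partial\omega\|_{L^4})\leq\e_0$, and show that the solution extends past $T$, contradicting maximality. The heart of the matter is that the $H^1$-bound on $\partial\ffi,\partial\omega$ together with the propagated $W^{1,4}$-smallness must be upgraded to the full $H^3$-regularity required by Theorem \ref{theoreme principal} to restart the solution; the $W^{1,4}$-smallness replaces the role of the smallness parameter $\e$ in the local theory, and is what lets us close the elliptic estimates for $\gamma$, $N$, $\beta$ exactly as in the proof of Theorem \ref{theoreme principal}. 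Then it suffices to produce uniform-in-$t$ bounds on the $H^2$-norm of $\partial\ffi,\partial\omega$ (i.e. the third-order energy of the wave maps) on $[0,T)$, apply these at a time $t_0<T$ close to $T$, and use Theorem \ref{theoreme principal} (with data on $\Sigma_{t_0}$, and time of existence bounded below in terms of that $H^2$-norm and of $\e_0$) to extend.

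The key steps, in order, are as follows. First, I would show that the metric coefficients are controlled on $[0,T)$ in the weighted spaces (the $H^4_\delta$/$H^3_\delta$-type norms of $\gamma,N,\beta$ and their time derivatives, plus $\alpha$ and $N_a$) purely in terms of $\|\partial\ffi\|_{H^1}+\|\partial\omega\|_{H^1}$ and the $W^{1,4}$-smallness, by inverting the Laplacian in the elliptic equations of \eqref{EVE2} and bounding the quadratic right-hand sides using the weighted elliptic estimates of Appendix \ref{appendix B}; this is where the $\e_0$-smallness is needed to absorb the $(\partial g)^2$ terms. Second, and this is the crucial analytic input, I would run the third-order (Choquet-Bruhat) energy estimate for the wave map system formed by the $\Box_g\ffi$, $\Box_g\omega$ equations: differentiating twice, the top-order term that would naively lose a derivative is reorganized using the wave-map structure (the specific algebraic form of the right-hand sides $-\tfrac12 e^{-4\ffi}\partial^\rho\omega\partial_\rho\omega$ and $4\partial^\rho\omega\partial_\rho\ffi$), and the commutator terms involving the metric are handled via the already-established metric bounds and the $L^4$–$L^4$–$L^2$ Hölder structure; a Grönwall argument on $[0,T)$ then gives a finite bound on $\|\partial\ffi\|_{H^2}+\|\partial\omega\|_{H^2}$ depending only on $T$, the initial data, and $\e_0$. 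Third, feeding back the $H^2$-control of $\partial\ffi,\partial\omega$ into the elliptic estimates upgrades the metric regularity by one derivative, yielding the full admissible-data regularity on a slice $\Sigma_{t_0}$. Fourth, I would check that the conservation laws \eqref{CL1}–\eqref{CL2} and in particular the orthogonality condition \eqref{orthogonality condition} are propagated, so that the data on $\Sigma_{t_0}$ are genuinely admissible free initial data; then Theorem \ref{theoreme principal} applies with a time of existence bounded below uniformly as $t_0\uparrow T$, extending the solution beyond $T$.

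The main obstacle is the second step: making the third-order energy estimate close at the $H^2$-level in spatial dimension $2$. The naive energy method gives only $H^{2+\e}$, so one must genuinely exploit the wave map structure — the antisymmetry/target-geometry cancellation that removes the top-order quasilinear-looking term — and simultaneously control all the lower-order and metric-commutator contributions using only the $L^2$- and $L^4$-based norms already at hand; in particular the terms where two derivatives hit $\omega$ and the remaining factors are measured in $L^4$ must be estimated without ever needing an $L^\infty$-bound on $\partial\ffi$ or $\partial\omega$, which in 2D is not available from $H^1$. A secondary technical point is bookkeeping the logarithmically growing parts of $N$ and $\gamma$ through the commutators $[\Box_g,\nabla^2]$ in the weighted spaces, but this is of the same nature as in the proof of Theorem \ref{theoreme principal} and should not present new difficulties. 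I also need to make sure the lower bound on the extended time of existence produced by Theorem \ref{theoreme principal} does not degenerate as $t_0\to T$: it depends on the $H^2$-data norm, which is uniformly bounded by the Grönwall estimate, and on $\e_0$, which is fixed, so this is fine.
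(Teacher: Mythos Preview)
Your proposal is correct and follows essentially the same strategy as the paper: argue by contradiction, use the assumed $H^1$- and $L^4$-bounds to control the metric via the elliptic equations, then exploit the Choquet-Bruhat third-order energy for the wave map system to close an $H^2$-estimate on $\partial\ffi,\partial\omega$ via a continuity/Gr\"onwall argument, and finally restart with Theorem~\ref{theoreme principal}. One small refinement: the paper does in fact use $L^\infty$-bounds on $\partial\ffi,\partial\omega$, but obtains them by Gagliardo--Nirenberg interpolation $\|\partial U\|_{L^\infty}\lesssim\|\partial U\|_{L^2}^{1/2}\|\nabla^2\partial U\|_{L^2}^{1/2}$, so that each occurrence costs only a half-power of the bootstrapped $H^2$-norm and the estimate still closes quadratically.
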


As said in the introduction, one major feature of these two theorems is that the smallness constant $\e_0$ does not depend on $C_{high}$. Note in contrast that the time of existence $T$ does depend on $C_{high}$.

\subsection{The reduced system}\label{section reduced system}

In order to solve the Einstein Equations, we will first solve the following system, which we call the reduced system. It is identical to the one introduced in \cite{hunluk18}.
\begin{align}
     \Delta N&=e^{-2\gamma}N\vert H\vert^2+\frac{\tau^2}{2}e^{2\gamma}N+\frac{2e^{2\gamma}}{N}(e_0\ffi)^2+\frac{e^{2\gamma-4\ffi}}{2N}(e_0\omega)^2,\label{EQ N} \\
     L\beta&=2e^{-2\gamma}NH,\label{EQ beta} \\
     N\tau&=-2e_0\gamma+\dive\beta,\label{EQ tau} \\
    e_0H_{ij}&=-2e^{-2\gamma}NH_i^{\;\,\ell}H_{j\ell}+\dr_{(j}\beta^kH_{i)k}-\frac{1}{2}(\dr_i\tb\dr_j)N+(\delta_i^k\tb\dr_j\gamma)\dr_kN\nonumber\\&\qquad\qquad\qquad\qquad\qquad-(\dr_i\ffi\tb\dr_j\ffi)N-\frac{1}{4}e^{-4\ffi}(\dr_i\omega\tb\dr_j\omega)N,\label{EQ H} \\
    \Ll^2\gamma-e^{-2\gamma}\Delta\gamma&=-\frac{\tau^2}{2}+\frac{1}{2}\Ll\left(\frac{\dive(\beta)}{N}\right)+e^{-2\gamma}\left(\frac{\Delta N}{2N}+\vert\nabla\ffi\vert^2+\frac{1}{4}e^{-4\ffi}\left|\nabla\omega \right|^2\right),\label{EQ gamma} \\
    \Ll^2\ffi-e^{-2\gamma}\Delta \ffi&=\frac{e^{-2\gamma}}{N}\nabla \ffi\cdot\nabla N+\tau\Ll\ffi+\frac{1}{2}e^{-4\ffi}\left( (e_0\omega)^2+|\nabla\omega|^2 \right),\label{EQ ffi}\\
    \Ll^2\omega-e^{-2\gamma}\Delta \omega&=\frac{e^{-2\gamma}}{N}\nabla \omega\cdot\nabla N+\tau\Ll\omega-4e_0\omega e_0\ffi-4\nabla\omega\cdot\nabla\ffi,\label{EQ omega}
\end{align}
where we use the notation $u_i\tb v_j=u_iv_j +u_jv_i-\delta_{ij}u^kv_k$.

Let us explain where equations \eqref{EQ N}-\eqref{EQ omega} come from :
\begin{itemize}
    \item Considering \eqref{appendix R00} and \eqref{JSP}, the equation $R_{00}=T_{00}-g_{00}\tr_gT$ without the term in $e_0\tau$ gives \eqref{EQ N}.
    \item For $\beta$ and $\tau$, the equations \eqref{EQ beta} and \eqref{EQ tau} simply come from \eqref{appendix beta} and \eqref{appendix tau}.
    \item To obtain the equation for $H$, we basically take the traceless part of $R_{ij}$. More precisely, using \eqref{appendix Rij}, \eqref{appendix trace ricci}, \eqref{JSP} and \eqref{JSP 3} the equation 
    \begin{equation*}
       R_{ij}-\frac{1}{2}\delta_{ij}\delta^{k\ell}R_{k\ell}=T_{ij}-g_{ij}\mathrm{tr}_gT-\frac{1}{2}\delta_{ij}\delta^{k\ell}\left(T_{k\ell}-g_{k\ell}\mathrm{tr}_gT \right) 
    \end{equation*}
    gives \eqref{EQ H}.
    \item Considering \eqref{appendix trace ricci} and \eqref{JSP 2}, the equation $\delta^{ij}R_{ij}=\delta^{ij}(T_{ij}-g_{ij}\mathrm{tr}_gT)$ reads :
    \begin{equation*}
        \Delta\gamma=\frac{\tau^2}{2}e^{2\gamma}-\frac{e^{2\gamma}}{2}\Ll\tau-\frac{\Delta N}{2N}-\left|\nabla\ffi\right|^2-\frac{1}{4}e^{-4\ffi}\left|\nabla\omega \right|^2.
    \end{equation*}
    Using \eqref{appendix tau}, we can compute $\Ll\tau$ and inject it in the previous equation to obtain \eqref{EQ gamma}.
    \item For the equation on the matter fields $\ffi$ and on $\omega$, we simply use Proposition \ref{appendix box} to rewrites $\Box_g\ffi$ and $\Box_g\omega$. 
\end{itemize}
After obtaining a solution to the reduced system, our next task will be to prove that this solution is in fact a solution of \eqref{EVE}.

Note that $H$ and $\gamma$ no longer satisfy elliptic equations, whereas in the "full" Einstein equations in the elliptic gauge they do. We follow this strategy to avoid to propagate the two conservation laws \eqref{CL1} and \eqref{CL2}, which would have been essential for solving elliptic equations and obtain a suitable behavior at spacelike infinity for $H$ and $\gamma$. Since we assume these conservation laws to hold initially, we do obtain this behavior while solving the constraints equations.

Therefore, only $N$ and $\beta$ satisfy elliptic equations, and the reduced system is a coupled hyperbolic-elliptic-transport system. 

\subsection{Outline of the proof}

We briefly discuss the structure of this article, which aims at proving Theorems \ref{theoreme principal} and \ref{theo 2}.
\par\leavevmode\par
First of all, in Section \ref{initial data}, we solve the constraints equations. More precisely we prove that an admissible free initial data set gives rise to an actual admissible initial data set, thus satisfying the constraints equations. Then, we split the proof of Theorem \ref{theoreme principal} into two parts :
\begin{itemize}
\item in Section \ref{section solving the reduced system}, we solve the reduced system \eqref{EQ N}-\eqref{EQ omega} using an iteration scheme, with initial data given by Section \ref{initial data}. During this iteration scheme, we first prove that our sequence of approximate solution is uniformaly bounded (see Section \ref{uniforme boundedness}) and then that it is a Cauchy sequence (see Section \ref{Cauchy}).
\item in Section \ref{section end of proof}, we prove that the solution to the reduced system is indeed a solution to $\eqref{EVE}$ and that it satisfies all the estimates stated in Theorem \ref{theoreme principal}.
\end{itemize}

We prove Theorem \ref{theo 2} in Section \ref{section theo 2}, using a continuity argument based on a special energy estimate which suits the wave map structure of the coupled system satisfied by $\ffi$ and $\omega$.

\par\leavevmode\par
Finally, this article contains two appendices :
\begin{itemize}
\item Appendix \ref{appendix A} presents the computations of the connection coefficients and the Ricci tensor in the elliptic gauge, as well as some formulae related to the stress-energy-momentum tensor. 
\item Appendix \ref{appendix B} presents the main tools regarding the spaces $W^{m,p}_{\delta}$ : embeddings results, product laws, and a theorem due to McOwen which allows us to solve elliptic equations on thoses spaces. It ends with some standard inequalities used in the proof.
\end{itemize}

\section{Initial data and the constraints equations}\label{initial data}

In this section, we follow \cite{hun16} and discuss the initial data for the reduced system, and in particular we solve the constraints equations. More precisely, we will show that an admissible free initial data set gives rise to a unique admissible initial data set satisfying the constraint equations.

We will then derive the initial data for $N$ and $\beta$ and prove their regularity properties. Note that, since $\fir$ and $\omegar$ are prescribed, once we have the initial data for $N$, $\beta$ and $\gamma$, we obtain the initial data for $\dr_t\ffi$ and $\dr_t\omega$.

We will only care about highlighting the dependence on $\e$ and $C_{high}$ in the following estimates and will use the notation $\lesssim$ where the implicit constant only depends on $\delta$, $R$ or on any constants coming from embeddings results. 

\par\leavevmode\par
Before we go into solving the constraint equations, let us prove a simple lemma which will allows us to deal with the $e^{-4\ffi}$ and $e^{\pm2\gamma}$ factors, which will occur many times in the equations. 

\begin{lem}\label{useful lem}
Let $\gamma=-\alpha\chi\ln+\Tilde{\gamma}$ be a function on $\R^2$ such that $0\leq\alpha\leq 1$, $\|\Tilde{\gamma}\|_{H^2_{\delta}}\leq 1$ and $\ffi\in H^3$ a compactly supported function on $\R^2$ such that $\l\ffi\r_{W^{1,4}}\leq\e$. Then, for all functions $f$ on $\R^2$ and $\nu\in\R$, the following estimates holds for $k=0,1,2$ :
\begin{align}
    \left\|e^{-2\gamma}f \right\|_{H^k_{\nu}}&\lesssim \|f\|_{H^k_{\nu+2\alpha}}\label{useful gamma 1},\\
    \left\|e^{2\gamma}f \right\|_{H^k_{\nu}}&\lesssim \|f\|_{H^k_{\nu}}\label{useful gamma 1a},\\
    \l e^{-4\ffi}f\r_{H^{k}_\nu}&\lesssim \l f\r_{H^{k}_\nu}+k\l\nabla\ffi\r_{H^2}\l f\r_{H^{k'-1}}+k(k-1)\l\nabla^2\ffi f\r_{L^2}\label{useful ffi 1}.
\end{align}
Moreover, if in addition $\|\nabla\Tilde{\gamma}\|_{H^2_{\delta'+1}}<\infty$, the following estimate holds :
\begin{align}
    \left\|e^{-2\gamma}f \right\|_{H^3_{\nu}}&\lesssim \|f\|_{H^3_{\nu+2\alpha}}+\|\nabla\Tilde{\gamma}\|_{H^2_{\delta'+1}}\|f\|_{H^2_{\nu+2\alpha}},\label{useful gamma 2}\\
    \left\|e^{2\gamma}f \right\|_{H^3_{\nu}}&\lesssim \|f\|_{H^3_{\nu}}+\|\nabla\Tilde{\gamma}\|_{H^2_{\delta'+1}}\|f\|_{H^2_{\nu}}.\label{useful gamma 2a}
\end{align}
\end{lem}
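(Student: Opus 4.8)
The plan is to prove all six estimates by reducing them to the product laws and embedding results for weighted Sobolev spaces listed in Appendix \ref{appendix B}, after first establishing pointwise and weighted-norm control of the exponential factors $e^{\pm 2\gamma}$ and $e^{-4\ffi}$ themselves. First I would handle $e^{\pm 2\gamma}$. Write $e^{2\gamma}=e^{-2\alpha\chi\ln}e^{2\Tilde{\gamma}}=|x|^{-2\alpha\chi(|x|)}\,e^{2\Tilde\gamma}$. The factor $|x|^{-2\alpha\chi}$ is $\equiv 1$ on $B_2$ and behaves like $\langle x\rangle^{-2\alpha}$ (up to a smooth, bounded, nonvanishing multiplier) for $|x|\geq 2$, with all derivatives decaying one power faster; this is exactly what produces the shift $\nu\mapsto\nu+2\alpha$ in \eqref{useful gamma 1}, \eqref{useful gamma 2} (and the trivial/opposite-sign shift, absorbed since $\alpha\geq 0$, in \eqref{useful gamma 1a}, \eqref{useful gamma 2a}). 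For the $e^{\pm 2\Tilde\gamma}$ piece I would use that $H^2_\delta\hookrightarrow C^0_\delta$ with $-1<\delta<0$ (so $\Tilde\gamma\in L^\infty$ and in fact decays), whence $e^{\pm 2\Tilde\gamma}-1\in H^2_\delta$ and $e^{\pm2\Tilde\gamma}$ is bounded in $L^\infty$ with $\|e^{\pm2\Tilde\gamma}-1\|_{H^2_\delta}\lesssim 1$; composition with the smooth function $z\mapsto e^{\pm 2z}-1$ vanishing at $0$ is the standard Moser-type estimate on weighted spaces, using $\|\Tilde\gamma\|_{H^2_\delta}\leq 1$ to keep the constant uniform. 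Then \eqref{useful gamma 1}--\eqref{useful gamma 1a} follow from the weighted product law $\|uv\|_{H^k_{\mu+\nu}}\lesssim\|u\|_{H^k_\mu}\|v\|_{H^k_\nu}$ (valid in dimension $2$ for $k=2$, and trivially for $k=0,1$ after using $H^2_\delta$-control of the factor), splitting $e^{\pm 2\gamma}f$ according to $e^{\pm 2\gamma}=(\text{power weight})\cdot(1+(e^{\pm2\Tilde\gamma}-1))$.

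Next I would treat $e^{-4\ffi}$, which is genuinely different because $\ffi$ is only controlled in $H^3$ (unweighted) and is compactly supported, with only $\|\ffi\|_{W^{1,4}}\leq\e$ small. Here $e^{-4\ffi}-1$ is compactly supported, so no weights intervene and I only need to bound $\|(e^{-4\ffi}-1)f\|_{H^k_\nu}$. For $k=0$: $\|e^{-4\ffi}f\|_{L^2_\nu}\lesssim\|e^{-4\ffi}\|_{L^\infty}\|f\|_{L^2_\nu}\lesssim\|f\|_{L^2_\nu}$, using $\|\ffi\|_{L^\infty}\lesssim\|\ffi\|_{H^3}\lesssim 1$ — but wait, the statement's constant is meant to be uniform under $\|\ffi\|_{W^{1,4}}\leq\e$ only; since $e^{-4\ffi}$ is bounded on $\R^2$ once $\ffi\in L^\infty$, and compact support plus $W^{1,4}\hookrightarrow C^0$ on a fixed ball gives $\|\ffi\|_{L^\infty}\lesssim\e$, the $L^\infty$ bound is in fact $\lesssim 1$ uniformly, which is all that is needed for $k=0$. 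For $k=1,2$ I would differentiate: $\nabla(e^{-4\ffi}f)=e^{-4\ffi}(\nabla f-4f\nabla\ffi)$, and $\nabla^2(e^{-4\ffi}f)$ produces terms $e^{-4\ffi}\nabla^2 f$, $e^{-4\ffi}\nabla\ffi\cdot\nabla f$, $e^{-4\ffi}(\nabla\ffi)^2 f$, $e^{-4\ffi}\nabla^2\ffi\, f$. Each is estimated by pulling out $\|e^{-4\ffi}\|_{L^\infty}\lesssim 1$, then using the Gagliardo--Nirenberg/Ladyzhenskaya-type inequalities collected at the end of Appendix \ref{appendix B}: $\|\nabla\ffi\cdot\nabla f\|_{L^2}\lesssim\|\nabla\ffi\|_{L^4}\|\nabla f\|_{L^4}\lesssim\|\nabla\ffi\|_{H^1}\|f\|_{H^2}$ (absorbing into the $\|\nabla\ffi\|_{H^2}\|f\|_{H^{k'-1}}$ term with $k'=k$), $\|(\nabla\ffi)^2 f\|_{L^2}\lesssim\|\nabla\ffi\|_{L^\infty}\|\nabla\ffi\|_{L^4}\|f\|_{L^4}$, again bounded by $\|\nabla\ffi\|_{H^2}\|f\|_{H^1}$, and the genuinely worst term $\|e^{-4\ffi}\nabla^2\ffi\,f\|_{L^2}$, which cannot be split (it would cost $\|\nabla^2\ffi\|_{L^4}$, not available from $H^3$ in dimension $2$ in general — or rather, $\nabla^2\ffi\in H^1\hookrightarrow L^p$ for all $p<\infty$ but with no uniform control) and is therefore left as the explicit term $\|\nabla^2\ffi\, f\|_{L^2}$ in \eqref{useful ffi 1}; the factor $k(k-1)$ records that it only appears at $k=2$.

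Finally, the two $H^3$ estimates \eqref{useful gamma 2}, \eqref{useful gamma 2a}: here the product law at top order in dimension $2$ fails for the naive pairing $H^3\times H^3$, so instead I would estimate $\|e^{\pm2\gamma}f\|_{H^3_\nu}$ by distributing the three derivatives — the term with all three on $f$ gives $\|e^{\pm2\gamma}\|_{L^\infty}\|f\|_{H^3_{\nu+2\alpha}}$ (resp. $\|f\|_{H^3_\nu}$); the terms with at least one derivative on $e^{\pm2\gamma}$ involve $\nabla\Tilde\gamma$ (the power-weight part $\nabla(|x|^{-2\alpha\chi})$ is bounded and decaying, contributing harmlessly to the $\|f\|_{H^2}$ term) paired with at most two derivatives of $f$, handled by the product law $H^2_{\delta'+1}\times H^2_{\nu+2\alpha}\to$ suitable weighted $L^2$, which holds in dimension $2$. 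The main obstacle, and the reason the lemma is stated in this slightly awkward form with explicit error terms, is precisely the top-order term $\nabla^2\ffi\cdot f$ in \eqref{useful ffi 1}: one cannot close it by Hölder because $H^1$ in two dimensions does not embed in $L^\infty$, so it must be carried along and dealt with by the user of the lemma (who will pair it against an $f$ that has enough regularity, e.g. $f\in H^2$). Keeping track of which exponential factors need weighted control (the $\gamma$'s, via the power weight and the Moser estimate on $\Tilde\gamma$) versus which need only $L^\infty$ control (the $e^{-4\ffi}$, compactly supported) is the bookkeeping heart of the argument; none of the individual steps is deep, but the dimension-$2$ failure of $H^k\times H^k\to H^k$ at $k=3$ forces the split-by-derivative-count bookkeeping throughout.
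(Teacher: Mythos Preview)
Your plan is correct and follows essentially the same route as the paper: split $e^{\pm2\gamma}$ into the power-weight factor $e^{\mp2\alpha\chi\ln}$ (which produces the shift $\nu\mapsto\nu+2\alpha$ and whose derivatives decay like $\langle x\rangle^{-|a|}$) times the bounded factor $e^{\pm2\Tilde\gamma}$ (controlled via $H^2_\delta\hookrightarrow L^\infty$), then expand the derivatives of the product and estimate each term with the weighted product law; for $e^{-4\ffi}$ use compact support and $W^{1,4}\hookrightarrow L^\infty$ to get $\|e^{-4\ffi}\|_{L^\infty}\lesssim 1$, differentiate, and isolate the $\nabla^2\ffi\, f$ term that cannot be closed at this regularity.

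One small correction: your stated reason for the $H^3$ split --- that ``$H^3\times H^3\to H^3$ fails in dimension $2$'' --- is not accurate (in fact $H^s$ is an algebra for $s>1$ in $\R^2$). The genuine reason the $H^3$ case requires the extra hypothesis $\|\nabla\Tilde\gamma\|_{H^2_{\delta'+1}}<\infty$ is simply that under $\|\Tilde\gamma\|_{H^2_\delta}\leq 1$ alone you only control two derivatives of $\Tilde\gamma$, so the terms $\nabla^3\Tilde\gamma\cdot f$, $\nabla\Tilde\gamma\,\nabla^2\Tilde\gamma\, f$, $(\nabla\Tilde\gamma)^3 f$ arising in $\nabla^3(e^{\pm2\gamma}f)$ are not bounded. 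The paper makes exactly this observation: after listing all the terms in the expansion of $\|e^{-2\gamma}f\|_{H^3_\nu}$, it notes that only those three require $\|\nabla\Tilde\gamma\|_{H^2_{\delta'+1}}$, which is why \eqref{useful gamma 1} holds for $k\leq 2$ without it. Your derivative-by-derivative bookkeeping reaches the same conclusion; just be careful with the justification.
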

\begin{proof}

We recall the embedding $H^2_{\delta}\xhookrightarrow{}L^{\infty}$, which implies that $\left| e^{-2\Tilde{\gamma}}\right|\lesssim 1$, which allows us to forget about these factors in the following computations. Similarly, we have $\left|e^{2\alpha\chi\ln} \right|\lesssim \langle x\rangle^{2\alpha}$, which will be responsible for the change of decrease order (this remark also implies that proving \eqref{useful gamma 1} and \eqref{useful gamma 2} is enough to get \eqref{useful gamma 1a} and \eqref{useful gamma 2a}). 

Moreover, we only prove \eqref{useful gamma 2}, since it will be clear that its proof will include the proof of \eqref{useful gamma 1}. With these remarks in mind, we compute directly :
\begin{align*}
    \left\| e^{-2\gamma}f \right\|_{H^3_{\nu}} & \lesssim  \left\|f \right\|_{H^3_{\nu+2\alpha}}+ \left\|\nabla\gamma f \right\|_{L^2_{\nu+2\alpha+1}}
     + \left\|\nabla^2\gamma f \right\|_{L^2_{\nu+2\alpha+2}}\\&\qquad+ \left\|\left( \nabla\gamma\right)^2f \right\|_{L^2_{\nu+2\alpha+2}} +\left\| \nabla\gamma\nabla f\right\|_{L^2_{\nu+2\alpha+2}} +\left\|\nabla^2\gamma\nabla f \right\|_{L^2_{\nu+2\alpha+3}}
     \\&\qquad+\left\|\left(\nabla\gamma \right)^2\nabla f \right\|_{L^2_{\nu+2\alpha+3}}+\left\|\nabla\gamma\nabla^2 f \right\|_{L^2_{\nu+2\alpha+3}}
     \\&\qquad+\left\| \nabla^3\gamma f\right\|_{L^2_{\nu+2\alpha+3}}+\left\|\nabla\gamma\nabla^2\gamma f \right\|_{L^2_{\nu+2\alpha+3}}+\left\|\left(\nabla\gamma \right)^3f \right\|_{L^2_{\nu+2\alpha+3}}
\end{align*}

Because of $\left|\nabla^{a}(\chi\ln) \right|\lesssim \langle x\rangle^{-|a|}$ (which is valid for every multi-index $ a\neq 0$), we can forget about the $\chi\ln$ part in $\gamma$ and pretend that $\gamma$ is replaced by $\Tilde{\gamma}$. Using the product estimate (see Proposition \ref{prop prod}), we can deal with all these terms :

\begin{center}
\begin{tabular}{ c c }
 $\left\|\nabla\Tilde{\gamma} f \right\|_{L^2_{\nu+2\alpha+1}}\lesssim\|\nabla\Tilde{\gamma}\|_{H^1_{\delta+1}}\|f\|_{H^1_{\nu+2\alpha+1}},\quad$  & $\left\|\nabla^2\Tilde{\gamma} f \right\|_{L^2_{\nu+2\alpha+2}}\lesssim \left\| \nabla^2\Tilde{\gamma}\right\|_{L^2_{\delta+2}}\| f\|_{H^2_{\nu+2\alpha}}$,  \\ 
 $\left\|\left(\nabla\Tilde{\gamma}\right)^2 f \right\|_{L^2_{\nu+2\alpha+2}}\lesssim \|\nabla\Tilde{\gamma}\|_{H^1_{\delta+1}}^2\|f\|_{H^2_{\nu+2\alpha}},\quad$ & $\left\|\nabla\Tilde{\gamma} \nabla f \right\|_{L^2_{\nu+2\alpha+2}}\lesssim\| \nabla\Tilde{\gamma}\|_{H^1_{\delta+1}}\|\nabla f\|_{H^1_{\nu+2\alpha+1}}$, \\
 $\left\|\nabla^2\Tilde{\gamma} \nabla f \right\|_{L^2_{\nu+2\alpha+3}}\lesssim \left\|\nabla^2\Tilde{\gamma} \right\|_{L^2_{\delta+2}}\|\nabla f\|_{H^2_{\nu+2\alpha+1}},\quad$ & $\left\|\left(\nabla\Tilde{\gamma}\right)^2 \nabla f \right\|_{L^2_{\nu+2\alpha+3}}\lesssim \|\nabla\Tilde{\gamma}\|_{H^1_{\delta+1}}^2\|\nabla f\|_{H^2_{\nu+2\alpha+1}},$ \\
 $\left\|\nabla\Tilde{\gamma} \nabla^2f \right\|_{L^2_{\nu+2\alpha+3}}\lesssim \|\nabla\Tilde{\gamma}\|_{H^1_{\delta+1}}\left\|\nabla^2 f\right\|_{H^1_{\nu+2\alpha+2}},\quad$ & $\left\|\nabla^3\Tilde{\gamma} f \right\|_{L^2_{\nu+2\alpha+3}}\lesssim \left\|\nabla^3\Tilde{\gamma}\right\|_{L^2_{\delta'+3}}\|f\|_{H^2_{\nu+2\alpha}},$\\
 $\left\|\nabla\Tilde{\gamma}\nabla^2\Tilde{\gamma} f \right\|_{L^2_{\nu+2\alpha+3}}\lesssim \|\nabla\Tilde{\gamma}\|_{H^2_{\delta'+1}}\left\|\nabla^2\Tilde{\gamma}f \right\|_{L^2_{\nu+2\alpha+2}},\quad$ & $\left\|\left(\nabla\Tilde{\gamma}\right)^3 f \right\|_{L^2_{\nu+2\alpha+3}}\lesssim \|\nabla\Tilde{\gamma}\|_{H^2_{\delta'+1}}\left\|\left(\nabla\Tilde{\gamma}\right)^2 f \right\|_{L^2_{\nu+2\alpha+3}}.$
\end{tabular}
\end{center}
Note that the last two estimates involve $\left\|\nabla^2\Tilde{\gamma}f \right\|_{L^2_{\nu+2\alpha+2}}$ and $ \left\|\left(\nabla\Tilde{\gamma}\right)^2 f \right\|_{L^2_{\nu+2\alpha+3}}$, which have already been estimated. Looking at these estimates, we see that the only ones which uses $\| \nabla\Tilde{\gamma}\|_{H^2_{\delta'+1}}$ are the three last ones. Those terms don't appear if we only differentiate twice or less, it is therefore clear why \eqref{useful gamma 1} is also proved. The proof of \eqref{useful ffi 1} is identical, using the embeddings $W^{1,4}\xhookrightarrow{}L^{\infty}$ and $H^2\xhookrightarrow{}L^{\infty}$.
\end{proof}

\subsection{The constraints equations}\label{section constraints equations}

We are now ready to solve the constraints equations, which we rewrite in terms of $\fir$ and $\omegar$ :
\begin{align}
    \dr^iH_{ij}&=-2\fir\dr_j\ffi-\frac{1}{2}e^{-4\ffi}\omegar\dr_j\omega, \label{C1}\\
    \Delta\gamma&=-e^{-2\gamma}\left(\fir^2+\frac{1}{4}e^{-4\ffi}\omegar^2+\frac{1}{2}\vert H\vert^2\right)-\vert\nabla\ffi\vert^2-\frac{1}{4}e^{-4\ffi}|\nabla\omega|^2.\label{C2}
\end{align}

\begin{lem}\label{CI sur H}
The equation \eqref{C1} admits a unique solution $H\in H^3_{\delta+1}$, a symmetric traceless covariant 2-tensor with $\Vert H\Vert_{H^1_{\delta+1}}\lesssim \e^2$.
\end{lem}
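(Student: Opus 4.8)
The plan is to solve the first constraint equation \eqref{C1} by exploiting the conformal Killing operator structure: since $H$ is symmetric, traceless, and divergence-constrained, I would look for $H$ in the form $H = \tfrac{1}{2} L\xi$ for some vector field $\xi$ on $\R^2$, where $L$ is the euclidean conformal Killing operator defined in the notation section. With this ansatz, $\dr^i(L\xi)_{ij} = \Delta\xi_j$ (a standard computation: the divergence of the conformal Killing operator in two dimensions reduces to the componentwise Laplacian, since the trace-correction terms cancel against the mixed second derivatives), so \eqref{C1} becomes the vector Poisson equation
\begin{equation*}
\Delta\xi_j = -2\left(2\fir\dr_j\ffi+\tfrac{1}{2}e^{-4\ffi}\omegar\dr_j\omega\right),\quad j=1,2.
\end{equation*}

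Next I would invoke the McOwen-type solvability theorem for the Laplacian on weighted Sobolev spaces (stated in Appendix \ref{appendix B}) to solve this equation. The right-hand side $f_j \vcentcolon= -2(2\fir\dr_j\ffi + \tfrac12 e^{-4\ffi}\omegar\dr_j\omega)$ is a sum of products of two compactly supported $H^2$ functions, hence lies in, say, $H^1$ with compact support, and in particular in $H^1_{\delta+3}$ for any $-1<\delta<0$; its $H^1$-norm is $\lesssim \e^2$ by the product estimate (Proposition \ref{prop prod}) combined with the smallness assumption $\|\fir\|_{L^4}, \|\nabla\ffi\|_{L^4},\dots \leq \e$ controlling the $L^4\times L^4 \hookrightarrow L^2$ pieces and $C_{high}$ absorbed — more carefully, one low-order factor is taken in $L^4$ and the other in $W^{1,4}$, so the whole product is $\lesssim \e^2$ in $H^1$. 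The crucial point for solvability is that the orthogonality condition \eqref{orthogonality condition} says precisely $\int_{\R^2} f_j\, \d x = 0$, which is exactly the obstruction that must vanish for $\Delta\xi_j = f_j$ to admit a solution $\xi_j$ decaying at infinity in the relevant weighted space (the cokernel of $\Delta$ between the appropriate weighted spaces is spanned by the constants). McOwen's theorem then gives a unique $\xi_j \in H^3_{\delta}$ (the Laplacian gaining two derivatives and shifting the weight by $2$), with $\|\xi\|_{H^3_\delta} \lesssim \|f\|_{H^1_{\delta+2}} \lesssim \e^2$.

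Then I would set $H \vcentcolon= \tfrac12 L\xi$. By construction $H$ is symmetric and traceless with respect to $\delta$ (these are built into the definition of $L$), it solves \eqref{C1}, and since $L$ is a first-order operator, $\|H\|_{H^3_{\delta+1}} \lesssim \|\nabla\xi\|_{H^2_{\delta+1}} + (\text{lower order}) \lesssim \|\xi\|_{H^3_\delta} \lesssim \e^2$; in particular $\|H\|_{H^1_{\delta+1}} \lesssim \e^2$ as claimed. For uniqueness, if $H'$ is another symmetric traceless solution in $H^3_{\delta+1}$, then $H - H'$ is divergence-free, symmetric, traceless; one checks that such a tensor in the weighted space with this decay must vanish — e.g. by noting $\dr^i(H-H')_{ij}=0$ together with tracelessness and symmetry forces $(H-H')_{ij}$ to be (the Hessian part of) a harmonic function type object decaying at infinity, hence zero; alternatively, uniqueness in the weighted Sobolev class follows directly from the injectivity part of McOwen's theorem applied to the potential $\xi$.

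The main obstacle I anticipate is bookkeeping rather than conceptual: matching the weight exponents so that the right-hand side lands in the exact weighted space for which McOwen's theorem delivers a solution in $H^3_{\delta+1}$ for $H$ (equivalently $H^3_\delta$ for the potential $\xi$), and verifying that the compact support of the matter data indeed places $f_j$ in every weighted space needed (trivial once one observes compactly supported $\Rightarrow$ in $H^1_\mu$ for all $\mu$) — and, more substantively, confirming that \eqref{orthogonality condition} is the precise cokernel condition in the version of McOwen's theorem being used. Everything else is a routine application of the product law in weighted spaces and the $L^4$-smallness hypothesis to extract the $\e^2$ bound.
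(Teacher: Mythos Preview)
Your approach is the same as the paper's: write $H=LY$ (the paper uses $Y$ rather than $\tfrac12\xi$, but this is immaterial), reduce \eqref{C1} to a componentwise Poisson equation $\Delta Y_j = f_j$, use \eqref{orthogonality condition} to kill the constant cokernel in McOwen's theorem, and for uniqueness note that a symmetric traceless divergence-free $2$-tensor has harmonic components in the weighted space, hence vanishes.

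However, your bookkeeping is off in two places. First, you claim $\|f\|_{H^1}\lesssim\e^2$, but this is false: differentiating the product $\fir\,\dr_j\ffi$ once produces terms like $\nabla\fir\,\dr_j\ffi$, and $\|\nabla\fir\|_{L^4}$ is only controlled by $C_{high}$, not by $\e$. Only the $L^2_{\delta+2}$ norm of $f$ is $\lesssim\e^2$ (via $L^4\times L^4\hookrightarrow L^2$), and this is what gives $\|Y\|_{H^2_\delta}\lesssim\e^2$ and hence $\|H\|_{H^1_{\delta+1}}\lesssim\e^2$. Second, your derivative count is off by one: since $H=LY$ involves \emph{one} derivative of $Y$, the bound $\|H\|_{H^3_{\delta+1}}$ requires $\|Y\|_{H^4_\delta}$, which in turn requires $f\in H^2_{\delta+2}$ (not $H^1$). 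The paper does exactly this, bounding $\|f\|_{H^2}\lesssim C_{high}^2$ using that $H^2$ is an algebra in dimension $2$; the $H^3_{\delta+1}$ regularity of $H$ thus comes with constant $C_{high}^2$, not $\e^2$. Both issues are the sort of weight/regularity bookkeeping you flagged as the main obstacle; once corrected, your argument coincides with the paper's.
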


\begin{proof}
We look for a solution under the form $H=LY$ where $Y$ is a 1-form. We have $\dr^iH_{ij}=\Delta Y_j$ and $Y$ solves 
\begin{equation*}
    \Delta Y_j=-2\fir\dr_j\ffi-\frac{1}{2}e^{-4\ffi}\omegar\dr_j\omega.
\end{equation*}
Using the definition of $L$, it's easy to check that $LY$ is a traceless symmetric 2-tensor. We use the Theorem \ref{mcowens 1} in the case $p=2$ and $m=0$, the range of the Laplacian is then the functions $f\in H^0_{\delta+2}$ such that $\int f=0$. By assumption, $\int_{\R^2}\left(-2\fir\dr_j\ffi-\frac{e^{-4\ffi}}{2}\omegar\dr_j\omega\right)\d x=0$ and thanks to the support property, the Hölder inequality and \eqref{useful ffi 1} we have :
\begin{equation*}
    \left\| \Delta Y_j\right\|_{ H^0_{\delta+2}}\lesssim  \left\| \fir\dr_j\ffi\right\|_{ L^2}+\left\| \omegar\dr_j\omega\right\|_{ L^2}\lesssim \l\fir\r_{L^4}\l\nabla\ffi\r_{L^4}+\l\omegar\r_{L^4}\l\nabla\omega\r_{L^4}\lesssim \e^2.
\end{equation*}
Thus, there exists a unique solution $Y_j\in H^2_{\delta}$. Moreover we have $\Vert Y_j\Vert_{H^2_{\delta}}\leq \e^2$, which implies $\Vert H\Vert_{H^1_{\delta+1}}\leq \e^2$. 

We can improve the regularity of $H$, by noting that
\begin{equation*}
    \Vert H\Vert_{H^3_{\delta+1}}\leq \Vert Y\Vert_{H^4_{\delta}}\lesssim\Vert \fir\nabla\ffi\Vert_{H^2}+\Vert \omegar\nabla\omega\Vert_{H^2}\lesssim C_{high}^2.
\end{equation*}
In the last inequality we use the fact that in dimension 2, $H^2$ is an algebra.

Our solution $H\in H^3_{\delta+1}$ is unique, because of the following fact : if $H\in H^3_{\delta+1}$ is a traceless symmetric divergence free 2-tensor, we have componentwise $\Delta H_{ij}=0$, which implies $H=0$, again thanks to Theorem \ref{mcowens 1}.
\end{proof}

\begin{lem}\label{lem CI gamma}
For $\e$ sufficiently small, the equation \eqref{C2} admits a unique solution $\gamma=-\alpha \chi\ln+\Tilde{\gamma}$ with $\Tilde{\gamma}\in H^4_{\delta}$, $\Vert \Tilde{\gamma}\Vert_{H^2_{\delta}}\lesssim \e^2$ and $0\leq\alpha\lesssim \e^2$.
\end{lem}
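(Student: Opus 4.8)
The plan is to solve the constraint equation \eqref{C2} by the usual splitting trick: isolate the obstruction to solvability on weighted Sobolev spaces — namely that the Laplacian from $H^2_\delta$ surjects only onto mean-zero functions in $H^0_{\delta+2}$ — by subtracting off a multiple of $\chi\ln$, whose Laplacian is a compactly supported bump with nonzero (indeed normalized) integral. Concretely, recall $\Delta(\chi\ln)=:\psi$ is smooth, compactly supported, and $\int_{\R^2}\psi\,\d x=2\pi$ (up to the normalization convention; I would fix $\alpha$ so the constant works out). Write the right-hand side of \eqref{C2} as $F[\gamma]$, which depends on $\gamma$ through the $e^{\pm 2\gamma}$ factors, and on the already-constructed data $\fir,\omegar,\nabla\ffi,\nabla\omega,H$. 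The idea is to choose $\alpha$ so that $F[\gamma]+\alpha\psi$ has zero mean, then apply Theorem \ref{mcowens 1} (McOwen) to solve $\Delta\Tilde\gamma=F[\gamma]+\alpha\psi$ for $\Tilde\gamma\in H^4_\delta$ — but since $F$ depends on $\gamma=-\alpha\chi\ln+\Tilde\gamma$, this is a fixed-point problem in the pair $(\alpha,\Tilde\gamma)$.

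The steps I would carry out, in order: (1) First show that $F[\gamma]\in H^0_{\delta+2}$ with $\|F[\gamma]\|_{H^0_{\delta+2}}\lesssim \e^2$ whenever $\gamma=-\alpha\chi\ln+\Tilde\gamma$ lies in the ball $0\le\alpha\le 1$, $\|\Tilde\gamma\|_{H^2_\delta}\le 1$; this uses Lemma \ref{useful lem} (the estimates \eqref{useful gamma 1}, \eqref{useful ffi 1}) to control the $e^{-2\gamma}$ and $e^{-4\ffi}$ weights, the bound $\|H\|_{H^1_{\delta+1}}\lesssim\e^2$ from Lemma \ref{CI sur H}, the support property of $\ffi,\omega,\nabla\ffi,\nabla\omega$, Hölder, and the $W^{1,4}$-smallness $\e$. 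Note the quadratic terms $(\nabla\ffi)^2$ etc.\ are compactly supported so they land in every weighted space for free. (2) Define $\alpha:=\alpha(\gamma)$ by $\alpha=-\frac{1}{2\pi}\int_{\R^2}F[\gamma]\,\d x$ (correct normalization), which is well-defined since $F[\gamma]\in L^1$, and satisfies $|\alpha|\lesssim\e^2$; in particular $\alpha\ge 0$ follows because $F[\gamma]\le 0$ pointwise — every term on the RHS of \eqref{C2} has a manifest sign (note $|H|^2,\fir^2,|\nabla\ffi|^2,\omegar^2,|\nabla\omega|^2\ge 0$ and the exponential prefactors are positive). (3) With this $\alpha$, the function $F[\gamma]+\alpha\psi$ has zero mean and lies in $H^0_{\delta+2}$, so Theorem \ref{mcowens 1} gives a unique $\Tilde\gamma\in H^2_\delta$ solving $\Delta\Tilde\gamma=F[\gamma]+\alpha\psi$, with $\|\Tilde\gamma\|_{H^2_\delta}\lesssim \|F[\gamma]+\alpha\psi\|_{H^0_{\delta+2}}\lesssim\e^2$. (4) Set up the map $\Phi:(\alpha,\Tilde\gamma)\mapsto(\alpha(\gamma),\Tilde\gamma')$ on the closed ball $\{|\alpha|\le C\e^2,\ \|\Tilde\gamma\|_{H^2_\delta}\le C\e^2\}$ and check it maps this ball to itself (immediate from steps (1)–(3) once $\e$ is small enough, $C$ chosen as the implicit constant) and is a contraction: estimate $F[\gamma_1]-F[\gamma_2]$ using the Lipschitz bound on $x\mapsto e^{-2x}$ on bounded sets together with Lemma \ref{useful lem}-type product estimates, gaining a factor $\e^2$ (from the smallness of the matter terms multiplying the difference $\gamma_1-\gamma_2$), so for $\e$ small the map contracts both in $|\alpha|$ and in $\|\cdot\|_{H^2_\delta}$. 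Banach fixed point then gives a unique $(\alpha,\Tilde\gamma)$ hence a unique $\gamma$ in this class solving \eqref{C2}. (5) Upgrade regularity: feed the solution $\gamma$ back into $\Delta\Tilde\gamma=F[\gamma]+\alpha\psi$; using $\|H\|_{H^3_{\delta+1}}\lesssim C_{high}^2$, the $H^2$-algebra property in dimension $2$, and the higher-order estimates \eqref{useful gamma 2}, \eqref{useful ffi 1}, bootstrap $F[\gamma]+\alpha\psi\in H^2_{\delta+2}$, whence Theorem \ref{mcowens 1} (with $m=2$) yields $\Tilde\gamma\in H^4_\delta$, with a bound depending on $C_{high}$ (not on $\e$).

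The main obstacle is the self-referential nature of the problem: $\alpha$ is determined by an integral of $F[\gamma]$, which in turn depends on $\gamma$ through exponential weights that affect the behavior at spacelike infinity — so one must verify that the contraction closes simultaneously for the scalar $\alpha$ and the function $\Tilde\gamma$, and that the ``bad'' weighted terms in Lemma \ref{useful lem} (the ones costing an extra $2\alpha$ in the decay exponent) do not prevent $F[\gamma]$ from lying in $H^0_{\delta+2}$. Here it is crucial that the genuinely non-compactly-supported contribution to $F[\gamma]$ comes only from the $|H|^2$ term, which sits in $H^1_{\delta+1}\cdot H^1_{\delta+1}\hookrightarrow H^0_{\delta+2}$ with room to spare via \eqref{useful gamma 1} (the weight shift by $2\alpha$ is harmless because $\delta+1$ has slack against the threshold), while all matter terms are compactly supported and hence cause no decay issue at all. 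A secondary point requiring care is ensuring $\e_0$ is chosen small enough that the fixed-point ball stays inside the region $0\le\alpha\le1$, $\|\Tilde\gamma\|_{H^2_\delta}\le1$ where Lemma \ref{useful lem} applies — but since all the relevant bounds are $O(\e^2)$, this is automatic.
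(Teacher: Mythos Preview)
Your proposal is correct and follows essentially the same approach as the paper: a Banach fixed-point argument in the pair $(\alpha,\Tilde\gamma)$ on a small ball, using Corollary~\ref{mcowens 2} (i.e.\ McOwen's theorem plus the $\chi\ln$ splitting) to invert the Laplacian at each step, then a regularity upgrade to $H^4_\delta$ via the $H^2$-algebra property and the higher-order bound on $H$ from Lemma~\ref{CI sur H}. The only cosmetic differences are that the paper works on the ball $[0,\e]\times B_{H^2_\delta}(0,\e)$ rather than your $O(\e^2)$-ball, and your reference to ``Theorem~\ref{mcowens 1} (with $m=2$)'' for the upgrade should read ``with $s=2$'' in the paper's notation---neither affects the argument.
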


\begin{proof}
We are going to use a fixed point argument in $[0,\e]\times B_{H^2_{\delta}}(0,\e)$. We define on this space the application $\phi:(\alpha^{(1)},\Tilde{\gamma}^{(1)})\longmapsto(\alpha^{(2)},\Tilde{\gamma}^{(2)})$, where $\gamma^{(2)}$ is the unique solution of
\begin{equation}
    \Delta\gamma^{(2)}=-\vert\nabla\varphi\vert^2-\frac{1}{4}e^{-4\ffi}|\nabla\omega|^2-e^{-2\gamma^{(1)}}\left( \frac{1}{2}\vert H\vert^2+\fir^2+\frac{1}{4}e^{-4\ffi}\omegar^2\right),\label{contraction 1}
\end{equation}
with the notation $\gamma^{(i)}=-\alpha^{(i)}\chi(r)\ln(r)+\Tilde{\gamma}^{(i)}$. We want to prove that if $\e$ is small enough, $\phi$ is indeed a contraction.
Let us show that the RHS of \eqref{contraction 1} is in $H^0_{\delta+2}$. By assumption on $\ffi$ and $\omega$ we can write, using Hölder's inequality and \eqref{useful ffi 1} :
\begin{equation*}
    \left\| \vert \nabla \ffi \vert^2+\frac{1}{4}e^{-4\ffi}|\nabla\omega|^2\right\|_{ H^0_{\delta+2}}\lesssim \left\| \vert \nabla \ffi \vert^2\right\|_{ L^2}+\left\| \vert \nabla \omega \vert^2\right\|_{ L^2}\lesssim \e^2.
\end{equation*}

For the term $e^{-2\gamma^{(1)}}|H|^2$, we use \eqref{useful gamma 1}, the product estimate and choose $\e$ small enough :
\begin{equation*}
    \left\| e^{-2\gamma^{(1)}}|H|^2 \right\|_{ H^0_{\delta+2}}\lesssim \left\| |H|^2\right\|_{H^0_{\delta+2(1+\e)}} \lesssim \Vert H\Vert_{H^1_{\delta+1}}^2\lesssim \e^4.
\end{equation*}
The last terms is handled with the same arguments :
\begin{equation*}
    \left\| e^{-2\gamma^{(1)}}\left(\fir^2+\frac{1}{4}e^{-4\ffi}\omegar^2 \right)\right\|_{H^0_{\delta+2}} \lesssim \left\| \fir^2\right\|_{L^2} +\left\| \omegar^2\right\|_{L^2}\lesssim \e^2.
\end{equation*}
We next prove the bound on $\alpha^{(2)} =-\frac{1}{2\pi}\int_{\R^2}(\text{RHS of \eqref{contraction 1}})$, its positivity being clear. We have 
\begin{align*}
    \left|\alpha^{(2)}\right| & \lesssim \left\| \nabla\ffi \right\|_{L^2}^2+ \left\| \nabla\omega \right\|_{L^2}^2 + \left\|e^{-\gamma^{(1)}} H  \right\|_{L^2}^2 + \left\| e^{-\gamma^{(1)}}\fir \right\|_{L^2}^2+ \left\| e^{-\gamma^{(1)}-2\ffi}\omega \right\|_{L^2}^2
    \\& \lesssim\left\| \nabla\ffi \right\|_{L^4}^2+\left\| \nabla\omega \right\|_{L^4}^2+  \left\|H\right\|_{H^0_{\e}}^2+\left\| \fir \right\|_{L^4}^2+\left\| \omegar \right\|_{L^4}^2
    \\& \lesssim \e^2,
\end{align*}
where we used Hölder's inequality, \eqref{useful gamma 1} (for the three last terms) and the support property of $\ffi$, $\fir$, $\omega$ and $\omegar$. In conclusion, thanks to Corollary \ref{mcowens 2}, if $\e$ is small enough, $\phi$ is indeed an application from $[0,\e]\times B_{H^2_{\delta}}(0,\e)$ to itself and we can prove in the same way that this is a contraction.
\par\leavevmode\par
We can improve the regularity of $\Tilde{\gamma}$, using \eqref{useful gamma 1} and \eqref{useful ffi 1} :
\begin{align*}
    \left\| \Tilde{\gamma}\right\|_{H^4_{\delta}}&\lesssim \left\| e^{-2\gamma}H^2 \right\|_{ H^2_{\delta+2}} + \left\| e^{-2\gamma}\fir^2\right\|_{H^2}+ + \left\| \vert \nabla \ffi \vert^2\right\|_{ H^2}
    \\& \lesssim \left\| H^2 \right\|_{H^2_{\delta+2\e+2}}+\left\| \fir^2\right\|_{H^2}+\left\| \vert \nabla \ffi \vert^2\right\|_{ H^2}
    \\& \lesssim \|H\|_{H^3_{\delta+1}}^2+C_{high}^2,
\end{align*}
where in the last inequality, we used the product estimate (with $\e$ small enough) for the first term and the algebraic structure of $H^2$ for the remaining terms. Thanks to Lemma \ref{CI sur H}, the final quantity is finite, which concludes the proof.

\end{proof}

\subsection{Initial data to the reduced system}

The equations satisfied by $N$ and $\beta$ are :
\begin{align}
     \Delta N & =e^{-2\gamma}N\left(\vert H\vert^2+\fir^2+\frac{1}{4}e^{-4\ffi}\omegar^2\right) , \label{CI sur N}\\
   L\beta & =2e^{-2\gamma}NH.\label{CI sur beta}
\end{align}
The equation \eqref{CI sur N} comes from \eqref{EQ N} in the case $\tau=0$, and the equation \eqref{CI sur beta} comes from \eqref{appendix beta}.
\begin{lem}\label{lem CI N}
For $\e$ sufficiently small, the equation \eqref{CI sur N} admits a unique solution $N=1+N_a\chi\ln+\Tilde{N}$ with $\Tilde{N}\in H^4_{\delta}$, $\Vert \Tilde{N}\Vert_{H^2_{\delta}}\lesssim \e^2$ and $0\leq N_a\lesssim \e^2$.
\end{lem}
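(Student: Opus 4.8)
The strategy is a fixed point argument, exactly parallel to the one used for $\gamma$ in Lemma \ref{lem CI gamma}, but somewhat simpler because equation \eqref{CI sur N} is \emph{linear} in $N$ once $\gamma$, $H$, $\ffi$, $\omega$, $\omegar$, $\fir$ are given as data from the previous lemmas. First I would rewrite \eqref{CI sur N} as an equation for the perturbation. Writing $N=1+N_a\chi\ln+\Tilde{N}$, we have $\Delta(N_a\chi\ln)=N_a\Delta(\chi\ln)$, and $\Delta(\chi\ln)$ is a smooth compactly supported function (it vanishes where $\chi\ln$ is harmonic, i.e. away from the transition region of $\chi$) with $\int_{\R^2}\Delta(\chi\ln)\,\d x=2\pi$. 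So the equation becomes
\begin{equation*}
\Delta\Tilde{N}=-N_a\Delta(\chi\ln)+e^{-2\gamma}N\left(|H|^2+\fir^2+\tfrac14 e^{-4\ffi}\omegar^2\right)=:\mathcal{F},
\end{equation*}
and $N_a$ is forced to be $N_a=-\frac{1}{2\pi}\int_{\R^2}e^{-2\gamma}N(|H|^2+\fir^2+\tfrac14 e^{-4\ffi}\omegar^2)\,\d x$ in order for $\mathcal{F}$ to have zero integral (the compatibility condition for Theorem \ref{mcowens 1} with $p=2$, $m=2$, so that one can solve $\Delta\Tilde N=\mathcal F$ with $\Tilde N\in H^4_\delta$ from $\mathcal F\in H^2_{\delta+2}$). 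Positivity of $N_a$ is immediate since the integrand is manifestly nonnegative (and $N>0$ on the region under consideration, which itself is part of the fixed point setup).

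Next I would set up the fixed point on the set $\{(N_a^{(1)},\Tilde N^{(1)}): 0\le N_a^{(1)}\le \e,\ \|\Tilde N^{(1)}\|_{H^2_\delta}\le\e\}$ (we may even take $\e$; the bound $\lesssim\e^2$ comes out at the end) — wait, more carefully: we should close on a ball whose radius is of the order of a fixed constant, say $1$, since $N\equiv 1$ contributes at order $1$; but the \emph{perturbation} $N_a,\Tilde N$ will be estimated at order $\e^2$, so closing on a ball of radius $\e$ works. Given $(N_a^{(1)},\Tilde N^{(1)})$, one forms $N^{(1)}=1+N_a^{(1)}\chi\ln+\Tilde N^{(1)}$, defines $N_a^{(2)}$ by the integral formula above with $N^{(1)}$ in place of $N$, and $\Tilde N^{(2)}$ as the unique $H^4_\delta$ solution of $\Delta\Tilde N^{(2)}=\mathcal F[N^{(1)}]$ via McOwen's theorem (Corollary \ref{mcowens 2}). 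The estimates on the RHS are the same ones already carried out: using \eqref{useful gamma 1} to absorb $e^{-2\gamma}$ at the cost of $2\alpha\lesssim\e^2$ in the weight, \eqref{useful ffi 1} for $e^{-4\ffi}$, the product estimate of Proposition \ref{prop prod}, Hölder together with the compact support of $\fir,\omegar,\ffi,\omega$, and the bounds $\|H\|_{H^1_{\delta+1}}\lesssim\e^2$ from Lemma \ref{CI sur H}, $0\le\alpha\lesssim\e^2$ from Lemma \ref{lem CI gamma}. Since $\|N^{(1)}\|_{L^\infty}\lesssim 1$ (from $|N_a^{(1)}\chi\ln|\lesssim\e$ on the support of $\chi\ln$ and $H^2_\delta\hookrightarrow L^\infty$ for $\Tilde N^{(1)}$), one gets $|N_a^{(2)}|\lesssim\e^2$ and $\|\Tilde N^{(2)}\|_{H^2_\delta}\lesssim\e^2$, so $\phi$ maps the $\e$-ball to itself for $\e$ small. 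Contraction follows identically: the map $N^{(1)}\mapsto\mathcal F[N^{(1)}]$ is affine in $N^{(1)}$, so differences are controlled linearly, and the coefficient is $\lesssim\|e^{-2\gamma}(|H|^2+\fir^2+\tfrac14 e^{-4\ffi}\omegar^2)\|_{H^2_{\delta+2}}\lesssim\e^2<1$.

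Having obtained the solution with $\|\Tilde N\|_{H^2_\delta}\lesssim\e^2$, $0\le N_a\lesssim\e^2$, I would upgrade to $\Tilde N\in H^4_\delta$ by running the elliptic estimate once more at top order: from Theorem \ref{mcowens 1} (or Corollary \ref{mcowens 2}) at level $m=2$, $\|\Tilde N\|_{H^4_\delta}\lesssim\|\mathcal F\|_{H^2_{\delta+2}}$, and then bound $\|e^{-2\gamma}N(|H|^2+\fir^2+\tfrac14 e^{-4\ffi}\omegar^2)\|_{H^2_{\delta+2}}$ using \eqref{useful gamma 1}, \eqref{useful ffi 1}, the fact that $H^2(\R^2)$ is an algebra, $\|H\|_{H^3_{\delta+1}}\lesssim C_{high}^2$ (Lemma \ref{CI sur H}), $\|\Tilde\gamma\|_{H^4_\delta}<\infty$ (Lemma \ref{lem CI gamma}), and $\|\Tilde N\|_{H^2_\delta}\lesssim\e^2$ already known, concluding $\|\Tilde N\|_{H^4_\delta}\lesssim C_{high}^2<\infty$. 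Uniqueness of the decomposition: if two decompositions $N=1+N_a\chi\ln+\Tilde N=1+N_a'\chi\ln+\Tilde N'$ solve the equation with $\Tilde N,\Tilde N'\in H^4_\delta$, subtract to get $\Delta((N_a-N_a')\chi\ln+(\Tilde N-\Tilde N'))=0$; integrating and using $\int\Delta(\chi\ln)=2\pi$ and that $\Delta\Tilde N,\Delta\Tilde N'\in H^2_{\delta+2}$ have zero integral forces $N_a=N_a'$, hence $\Tilde N-\Tilde N'$ is harmonic in $H^4_\delta$, hence zero. The only point requiring a little care — the main (minor) obstacle — is making sure the nonnegativity of $N$ on the relevant region is genuinely available when one invokes it in the fixed point (it is, since $N=1+O(\e)$ there), and correctly tracking that the $2\alpha$ loss in the weight from \eqref{useful gamma 1} is harmless because $\alpha\lesssim\e^2$ and one only needs the weight $\delta+2+2\alpha$ to stay below $2$ (equivalently $\delta+2\alpha<0$, which holds for $\e$ small since $\delta<0$). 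Everything else is a verbatim repetition of the arguments in Lemmas \ref{CI sur H} and \ref{lem CI gamma}.
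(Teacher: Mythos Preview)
Your proposal is correct and follows essentially the same fixed-point argument as the paper's proof (contraction on $[0,\e]\times B_{H^2_\delta}(0,\e)$ via Corollary~\ref{mcowens 2}, then elliptic bootstrap to $H^4_\delta$). One trivial slip: the sign in your formula for $N_a$ should be $N_a=+\frac{1}{2\pi}\int_{\R^2}e^{-2\gamma}N(\cdots)\,\d x$ (since $\Delta\Tilde N=-N_a\Delta(\chi\ln)+\cdots$ and $\int\Delta(\chi\ln)=2\pi$), which is exactly what makes your positivity claim work.
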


\begin{proof}
We look for a solution of the form $N=1+N_a\chi(r)\ln(r)+\Tilde{N}$, with $N_a\geq 0$. On the space $[0,\e]\times B_{H^2_{\delta}}(0,\e)$, we define the application $\phi(N_a^{(1)},\Tilde{N}^{(1)})=(N_a^{(2)},\Tilde{N}^{(2)})$ where (with the notation $N^{(i)}=1+N_a^{(i)}\chi(r)\ln(r)+\Tilde{N}^{(i)}$), $N^{(2)}$ is the solution of
\begin{equation}
    \Delta N^{(2)}=e^{-2\gamma}N^{(1)}(\vert H\vert^2+\fir^2+\frac{1}{4}e^{-4\ffi}\omegar^2).
\end{equation}
Let's show that the RHS is in $H^0_{\delta+2}$.
Thanks to the support property of $\fir$ and $\omegar$, the first term is handled quite easily using \eqref{useful gamma 1}, \eqref{useful ffi 1} and the fact that $\left\|N^{(1)} \right\|_{L^{\infty}(B_R)}\lesssim 1$ (note the embedding $H^2_\delta\xhookrightarrow{}L^{\infty}$) :
\begin{equation*}
    \left\| e^{-2\gamma}N^{(1)}\left(\fir^2+\frac{1}{4}e^{-4\ffi}\omegar^2\right)\right\|_{H^0_{\delta+2}}\lesssim \left\|N^{(1)} \right\|_{L^{\infty}(B_R)}\left(\left\|\fir^2\right\|_{L^2}+\l\omegar^2\r_{L^2}\right)\lesssim\e^2.
\end{equation*}
Using again \eqref{useful gamma 1}, the fact that $\vert \chi\ln\vert\lesssim\langle x\rangle^{\frac{\delta+1}{2}} $, the embedding $H^2_{\delta}\xhookrightarrow{}L^{\infty}$ (used for $\Tilde{N}^{(1)}$) and the product estimate, we handle the second term :
\begin{align*}
    \left\| e^{-2\gamma}N^{(1)}|H|^2\right\|_{H^0_{\delta+2}} & \lesssim\left\| N^{(1)}|H|^2\right\|_{H^0_{\delta+2+2\e}}\nonumber\\ & \lesssim \left\| |H|^2\right\|_{H^0_{\delta+2+2\e}}\left(1+\left\|\Tilde{N}^{(1)}\right\|_{H^2_{\delta}}\right)+\e\left\|  |H|^2\right\|_{H^0_{\delta+2+2\e+\frac{\delta+1}{2}}}
    \\&\lesssim(1+\e)\left\| H\right\|_{H^1_{\delta+1}}^2
    \\&\lesssim \e^4.
\end{align*}
We showed that, for $\e$ small enough, we have $\Vert\Delta N^{(2)}\Vert_{H^0_{\delta+2}}\lesssim \e^2$.
\\
We have :
\begin{equation}
    2\pi N_a^{(2)}=\int_{\R^2}e^{-2\gamma}N^{(1)}|H|^2+\int_{\R^2}e^{-2\gamma}N^{(1)}\fir^2+\frac{1}{4}\int_{\R^2}e^{-2\gamma-4\ffi}N^{(1)}\omegar^2.
\end{equation}
If $\e$ is small enough, we have $N^{(1)}\geq 0$ (using the embedding $H^2_{\delta}\xhookrightarrow{}L^{\infty}$) so that $N_a^{(2)}\geq 0$. With the same kind of arguments than previously, we can easily show that $N_a^{(2)}\lesssim\e^2$. 
\\
This concludes the fact that $\phi$ is well defined (providing $\e$ is small and thanks to Corollary \ref{mcowens 2}), and that this is a contraction (the calculations are likewise, since the equation is linear). 
\par\leavevmode\par
We can improve the regularity of $\Tilde{N}$, using \eqref{useful gamma 1} and \eqref{useful ffi 1} :
\begin{align}
    \left\|\Tilde{N} \right\|_{H^4_{\delta}}&\lesssim \left\|e^{-2\gamma}N|H|^2 \right\|_{H^2_{\delta+2}}+ \left\|e^{-2\gamma}N\fir^2 \right\|_{H^2}+ \left\|e^{-2\gamma-4\ffi}N\omegar^2 \right\|_{H^2} \label{idem1}
    \\& \lesssim \left\| |H|^2\right\|_{H^2_{\delta+2+2\e}}\left(1+\left\|\Tilde{N}^{(1)}\right\|_{H^2_{\delta}}\right)+\e\left\| \chi\ln |H|^2\right\|_{H^2_{\delta+2+2\e}}
    + \left\|N\right\|_{L^{\infty}(B_R)}\left(\left\|    \fir^2 \right\|_{H^2}+\l\omegar^2\r_{H^2}\right).\nonumber 
\end{align}
Using $\vert \chi\ln\vert\lesssim\langle x\rangle^{\frac{\delta+1}{2}} $ and $\left| \nabla^a(\chi\ln)\right|\lesssim\langle x\rangle^{-|a|}$ (for $a\neq0$), we easily show that $\left\| \chi\ln |H|^2\right\|_{H^2_{\delta+2+2\e}}\lesssim \left\|  |H|^2\right\|_{H^2_{\delta+2+2\e+\frac{\delta+1}{2}}}$ to obtain :
\begin{equation*}
    \left\| \Tilde{N}\right\|_{H^4_{\delta}}\lesssim (1+\e)\left\| H\right\|_{H^3_{\delta+1}}^2+\|\fir\|_{H^2}^2+\|\omegar\|_{H^2}^2,
\end{equation*}
which is finite, thanks to Lemma \ref{CI sur H}.
\end{proof}

The following simple lemma will be useful in order to use Theorem \ref{mcowens 1} for $\beta$ :

\begin{lem}\label{divergence nulle}
Let $m\in\N$, $\nu\in\R$ and $u=(u_1,u_2)$ be a fonction from $\R^2$ to $\R^2$ such that $u_i\in H^m_{\nu}$. If $m\geq 2$ and $\nu>0$, then 
\begin{equation*}
    \int_{\R^2}\dive(u)=0
\end{equation*}
\end{lem}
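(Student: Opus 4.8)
The statement is that for $u=(u_1,u_2)$ with each $u_i\in H^m_\nu$, $m\geq 2$, $\nu>0$, we have $\int_{\R^2}\dive(u)\,\d x=0$. The natural approach is a density/cutoff argument: first establish the identity for $u\in C^\infty_c$, where it is immediate from the divergence theorem (the integral of a compactly supported divergence vanishes), and then pass to the limit using that $C^\infty_c$ is dense in $H^m_\nu$ by Definition \ref{wss}. The only real content is checking that the functional $u\mapsto\int_{\R^2}\dive(u)\,\d x$ is continuous on $H^m_\nu$ — or, more precisely, that it is well-defined and continuous on a subspace large enough to contain both $C^\infty_c$ and the given $u$, so that the value $0$ propagates.

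First I would verify integrability: since $m\geq 1$, $\dive(u)=\dr_1u_1+\dr_2u_2$ is controlled by $\|\nabla u\|_{L^2_{\nu+1}}\lesssim\|u\|_{H^m_\nu}$, and the weight $\langle x\rangle^{-(\nu+1)}$ belongs to $L^2(\R^2)$ precisely because $\nu+1>1$ (here I use $\nu>0$, which gives $\nu+1>1$; in fact even $\nu>-1+\text{(something)}$ would do, but the hypothesis is comfortably sufficient). By Cauchy--Schwarz,
\begin{equation*}
\int_{\R^2}|\dive(u)|\,\d x\leq\left\|\langle x\rangle^{\nu+1}\dive(u)\right\|_{L^2}\left\|\langle x\rangle^{-(\nu+1)}\right\|_{L^2}\lesssim\|u\|_{H^m_\nu},
\end{equation*}
so the linear functional $\ell(u)\vcentcolon=\int_{\R^2}\dive(u)\,\d x$ is bounded on $H^m_\nu$ (indeed on $H^1_\nu$). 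Next, for $u\in C^\infty_c$ the divergence theorem gives $\ell(u)=0$ directly. Finally, given the actual $u\in H^m_\nu$, pick $u^{(k)}\in C^\infty_0$ with $u^{(k)}\to u$ in $H^m_\nu$; then $\ell(u)=\lim_k\ell(u^{(k)})=0$ by continuity of $\ell$.

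\emph{The main obstacle} is essentially bookkeeping with the weights — one must be slightly careful that the weight exponent appearing on $\dive(u)$ in the $H^m_\nu$-norm is $\nu+1$ (one derivative of a degree-$\nu$ object raises the weight power by one, per Definition \ref{wss}), and then that $\langle x\rangle^{-(\nu+1)}\in L^2(\R^2)$ requires $2(\nu+1)>2$, i.e. $\nu>0$. This is where the hypothesis $\nu>0$ enters; the hypothesis $m\geq 2$ is not strictly needed for this lemma but is kept because it is what the later application supplies. There is no deeper difficulty: the result is a routine consequence of density of $C^\infty_c$ in the weighted space together with a Cauchy--Schwarz estimate showing the integral functional is continuous.
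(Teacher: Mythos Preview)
Your argument is correct, and it is a genuinely different route from the paper's. The paper applies Stokes on the ball $B_R$ to get
\[
\bigg|\int_{B_R}\dive(u)\bigg|=\bigg|\int_{\partial B_R}u\cdot n\,\d\sigma\bigg|\lesssim \|u\|_{C^0_{\nu+1}}R^{-\nu},
\]
and then invokes the weighted Sobolev embedding $H^m_\nu\hookrightarrow C^0_{\nu+1}$ (Proposition~\ref{embedding}, which requires $m\geq 2$) together with $\nu>0$ to send $R\to\infty$. You instead use density of $C^\infty_c$ in $H^m_\nu$ and check by Cauchy--Schwarz that $u\mapsto\int\dive(u)$ is continuous on $H^1_\nu$, since $\langle x\rangle^{-(\nu+1)}\in L^2(\R^2)$ exactly when $\nu>0$. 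Your approach is slightly more elementary (it avoids the weighted embedding theorem) and, as you observe, actually only needs $m\geq 1$; the paper's approach genuinely uses $m\geq 2$ to access pointwise decay. Both identify $\nu>0$ as the sharp threshold.
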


\begin{proof}
We fix $R>0$ and use the Stokes formula :
\begin{align*}
    \bigg| \int_{B_R}\dive(u)\bigg| = \bigg| \int_{\dr B_R}u.n\,\d\sigma\bigg| \leq  \int_{\dr B_R}\langle x\rangle^{-\nu-1}\langle x\rangle^{\nu+1}|u.n|\,\d\sigma \lesssim \Vert u\Vert_{C^0_{\nu+1}}R^{-\nu}
\end{align*}
If $m\geq 2$ and $\nu>0$ we have the Sobolev embeddings $H^m_{\nu}\subset C^0_{\nu+1}$, which concludes the proof since the last inequality implies 
\begin{equation*}
    \lim_{R\to +\infty}\int_{B_R}\dive(u)=0.
\end{equation*}
\end{proof}

\begin{lem}\label{lem CI beta}
For $\e$ sufficiently small, the equation \eqref{CI sur beta} admits a unique solution $\beta\in H^4_{\delta'}$ with $\Vert \beta\Vert_{H^2_{\delta'}}\lesssim\e^2$.
\end{lem}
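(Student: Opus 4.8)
The plan is to solve \eqref{CI sur beta} by the same scheme used in Lemma \ref{CI sur H}: since the left-hand side is the conformal Killing operator $L$ acting on the $1$-form $\beta$, and $\dr^i(L\beta)_{ij} = \Delta\beta_j$ (using that $L\beta$ is the traceless symmetric gradient part), it suffices to first solve the vector Laplace equation
\begin{equation*}
    \Delta\beta_j = 2\,\dr^i\left(e^{-2\gamma}NH_{ij}\right), \quad j=1,2,
\end{equation*}
and then check that the resulting $\beta$ actually satisfies $L\beta = 2e^{-2\gamma}NH$ rather than just the divergence of it. The latter point follows because $2e^{-2\gamma}NH - L\beta$ is then a symmetric traceless divergence-free $2$-tensor lying in the appropriate weighted space, hence vanishes by the uniqueness argument already invoked at the end of Lemma \ref{CI sur H} (componentwise it is harmonic and in $H^m_{\nu'+1}$ with $\nu'+1>1$, forcing it to be zero via Theorem \ref{mcowens 1}).

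The main work is the elliptic estimate. I would apply Theorem \ref{mcowens 1} (McOwen) with $p=2$: the source $f_j := 2\,\dr^i(e^{-2\gamma}NH_{ij})$ must lie in $H^0_{\delta'+1}$, and its integral must vanish so that it is in the range of $\Delta$ acting into that weighted space. The integral condition $\int_{\R^2} f_j\,\d x = \int_{\R^2}\dive(2e^{-2\gamma}NH_{\cdot j})\,\d x = 0$ is exactly Lemma \ref{divergence nulle}, applied to $u_i = 2e^{-2\gamma}NH_{ij}$, which lies in $H^m_{\nu}$ with $\nu>0$ (here $\nu$ is essentially $\delta+1$ up to the $\e$-shift from the $e^{-2\gamma}$ factor, and $\delta+1>0$ since $\delta>-1$); one must only check $m\geq 2$, which is fine since $H\in H^3_{\delta+1}$ and $N-1\in H^4_\delta + \R\chi\ln$. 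For the norm bound, I would use Lemma \ref{useful lem}: by \eqref{useful gamma 1} and \eqref{useful ffi 1}, together with the product estimate from Proposition \ref{prop prod} and the fact that $H^2$ is an algebra in dimension $2$ and $N$ is bounded ($\|N\|_{L^\infty}\lesssim 1$ via $H^2_\delta\hookrightarrow L^\infty$), one gets
\begin{equation*}
    \|e^{-2\gamma}NH\|_{H^3_{\delta+1}} \lesssim \|H\|_{H^3_{\delta+1+2\e}}\left(1+\|\widetilde N\|_{H^2_\delta}\right) + \text{(lower order)} \lesssim \|H\|_{H^3_{\delta+1}}\left(1 + \e\right),
\end{equation*}
where the shift $\delta\mapsto\delta'=\delta-\e$ in the conclusion absorbs the $2\e$ loss of decay coming from $e^{2\alpha\chi\ln}\lesssim\langle x\rangle^{2\alpha}$ with $\alpha\lesssim\e^2$. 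Feeding this into McOwen gives $\beta\in H^4_{\delta'}$ with $\|\beta\|_{H^4_{\delta'}}\lesssim \|H\|_{H^3_{\delta+1}}$, hence finite by Lemma \ref{CI sur H}. For the small-norm statement I would instead estimate at the $H^1_{\delta'}$ level: $\|e^{-2\gamma}NH\|_{H^0_{\delta'+1}}\lesssim \|H\|_{H^0_{\delta+1}}\lesssim \|H\|_{H^1_{\delta+1}}\lesssim\e^2$ by Lemma \ref{CI sur H}, so McOwen yields $\|\beta\|_{H^2_{\delta'}}\lesssim\e^2$.

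Uniqueness follows from the uniqueness clause of Theorem \ref{mcowens 1} for $\Delta$ on these weighted spaces (the relevant kernel being trivial for the weights in the admissible range $-1<\delta'<0$), combined with the remark above that $L\beta = 2e^{-2\gamma}NH$ pins down $\beta$ from $\Delta\beta$. The step I expect to require the most care is verifying the hypotheses of Lemma \ref{divergence nulle} and of McOwen's theorem simultaneously — in particular tracking the exact weights: the source naturally lands in $H^0_{\delta+1+2\e}$ but we need it in $H^0_{\delta'+1} = H^0_{\delta+1-\e}$, so one has to be slightly careful that the decay of $H$ and the polynomial growth $\langle x\rangle^{2\alpha}$ with $\alpha\lesssim\e^2$ are compatible, i.e. that $\delta + 1 + 2\e$ versus $\delta' + 1$ works out after also accounting for the one extra derivative $\dr^i$ lowering the decay index by one (handled by going from $H^3_{\delta+1}$ to $H^2_{\delta+2}$ before differentiating, or directly via the product law). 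Everything else is routine application of the machinery in Appendix \ref{appendix B} and Lemma \ref{useful lem}.

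\begin{proof}
We seek $\beta$ of the form $\beta = LY$-type reasoning as in Lemma \ref{CI sur H}; equivalently we first solve, for $j=1,2$,
\begin{equation*}
    \Delta\beta_j = 2\,\dr^i\left(e^{-2\gamma}NH_{ij}\right).
\end{equation*}
Set $u_i^{(j)} = 2e^{-2\gamma}NH_{ij}$. By \eqref{useful gamma 1} and \eqref{useful ffi 1} and the product estimate of Proposition \ref{prop prod}, using $\|N\|_{L^\infty(B_R)}\lesssim 1$ and $H^2$ being an algebra, $u^{(j)}\in H^3_{\delta'+1}$ with $\|u^{(j)}\|_{H^3_{\delta'+1}}\lesssim \|H\|_{H^3_{\delta+1}}$, and similarly $\|u^{(j)}\|_{H^1_{\delta'+1}}\lesssim\|H\|_{H^1_{\delta+1}}\lesssim\e^2$. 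Since $\delta'+1>0$ and $u^{(j)}\in H^2_{\delta'+1}$, Lemma \ref{divergence nulle} gives $\int_{\R^2}\dive(u^{(j)})\,\d x = 0$, so the right-hand side $\dive(u^{(j)})$ lies in the range of $\Delta: H^2_{\delta'}\to H^0_{\delta'+2}$ described by Theorem \ref{mcowens 1} with $p=2$, $m=0$, and with $\|\dive(u^{(j)})\|_{H^0_{\delta'+2}}\lesssim\|u^{(j)}\|_{H^1_{\delta'+1}}$. Hence there is a unique $\beta_j\in H^2_{\delta'}$ solving the above, with $\|\beta_j\|_{H^2_{\delta'}}\lesssim\e^2$; improving regularity, $\|\dive(u^{(j)})\|_{H^2_{\delta'+2}}\lesssim\|u^{(j)}\|_{H^3_{\delta'+1}}\lesssim\|H\|_{H^3_{\delta+1}}$ gives $\beta_j\in H^4_{\delta'}$, finite by Lemma \ref{CI sur H}.

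It remains to check $L\beta = 2e^{-2\gamma}NH$. The tensor $S := 2e^{-2\gamma}NH - L\beta$ is symmetric, traceless, and satisfies $\dr^i S_{ij} = \dr^i(2e^{-2\gamma}NH_{ij}) - \Delta\beta_j = 0$; moreover $S\in H^3_{\delta'+1}$. Componentwise, for a symmetric traceless divergence-free $2$-tensor one has $\Delta S_{ij}=0$, so by Theorem \ref{mcowens 1} (no nontrivial harmonic functions in $H^3_{\delta'+1}$ as $\delta'+1>0$) we get $S=0$. Thus $\beta$ solves \eqref{CI sur beta}. Uniqueness: if $\beta'$ is another solution in $H^4_{\delta'}$, then $L(\beta-\beta')=0$, hence $\Delta(\beta-\beta')_j=0$, hence $\beta=\beta'$ by the uniqueness part of Theorem \ref{mcowens 1}.
\end{proof}
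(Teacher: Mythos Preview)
Your proposal is correct and follows essentially the same route as the paper: take the divergence of \eqref{CI sur beta} to reduce to a Poisson equation for each $\beta_j$, use Lemma \ref{divergence nulle} for the zero-mean condition, apply McOwen's theorem for existence and the $H^2_{\delta'}$ estimate, upgrade to $H^4_{\delta'}$ via the $H^3_{\delta+1}$ bound on $H$, and conclude $L\beta=2e^{-2\gamma}NH$ plus uniqueness by the harmonic-traceless-symmetric-divergence-free argument.

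One small slip in your formal proof: you invoke $\|N\|_{L^\infty(B_R)}\lesssim 1$, but $H$ is not compactly supported (only $\ffi,\omega,\fir,\omegar$ are), so you cannot localize to $B_R$. The logarithmic growth of $N=1+N_a\chi\ln+\Tilde N$ must be absorbed exactly as you describe in your plan paragraph, via $|N_a|\lesssim\e^2$ and $\e|\chi\ln|\lesssim\langle x\rangle^{\e/2}$, which together with the $e^{-2\gamma}$ loss is what forces the shift $\delta\to\delta'=\delta-\e$. The paper handles this explicitly; your plan identifies it correctly but your formal proof glosses over it.
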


\begin{proof}
We take the divergence of \eqref{CI sur beta} to obtain the following elliptic equation :
\begin{equation}
    \Delta\beta_j=\dr^i(2Ne^{-2\gamma}H_{ij})\label{laplacien beta CI}
\end{equation}
Thanks to Lemma \ref{divergence nulle}, $\int_{\R^2}\dr^i(2Ne^{-2\gamma}H_{ij})=0$ (the fact that $e^{-2\gamma}NH\in H^2_{\delta'+1}$ will be proved in the sequel of this proof). Thus, in order to apply Theorem \ref{mcowens 1}, it remains to show that $\Vert \dr^i(2Ne^{-2\gamma}H_{ij})\Vert_{H^0_{\delta'+2}}\lesssim \e^2$. For that, we use \eqref{useful gamma 1}, $\e|\chi\ln|\lesssim\langle x\rangle^{\frac{\e}{2}}$, Lemmas \ref{CI sur H} and \ref{lem CI N} :
\begin{align*}
    \left\| \dr^i(2Ne^{-2\gamma}H_{ij})\right\|_{H^0_{\delta'+2}} &\lesssim \left\| e^{-2\gamma}NH\right\|_{H^1_{\delta'+1}}\\&\lesssim \left\| H\right\|_{H^1_{\delta+1}}\left(1+\left\|\Tilde{N} \right\|_{H^2_{\delta}} \right)+\left\|  H\right\|_{H^1_{\delta'+1+C\e^2+\frac{\e}{2}}}
    \\& \lesssim \e^2,
\end{align*}
where in the last inequality, we take $\e$ such that $C\e^2\leq\frac{\e}{2}$. Thus, we can apply Theorem \ref{mcowens 1} to obtain the existence of a solution to \eqref{laplacien beta CI}. We can improve the regularity of this solution using \eqref{useful gamma 2} :
\begin{align*}
    \left\|\beta \right\|_{H^4_{\delta'}}&\lesssim \left\| e^{-2\gamma}NH\right\|_{H^3_{\delta'+1}}
    \\& \lesssim\left\|NH \right\|_{H^3_{\delta'+C\e^2+1}}+\left\|\nabla\Tilde{\gamma} \right\|_{H^2_{\delta'+1}}\left\| NH\right\|_{H^2_{\delta'+C\e^2+1}}
    \\& \lesssim\left(1+\left\|\nabla\Tilde{\gamma} \right\|_{H^2_{\delta'+1}} \right)\left(\left\| H\right\|_{H^3_{\delta+1}}\left(1+\left\|\Tilde{N} \right\|_{H^4_{\delta}} \right)+\left\|  H\right\|_{H^3_{\delta'+1+C\e^2+\frac{\e}{2}}}\right).
\end{align*}
Taking $\e$ such that $C\e^2\leq\frac{\e}{2}$, we conclude using  Lemmas \ref{CI sur H}, \ref{lem CI gamma} and \ref{lem CI N} that $\left\|\beta \right\|_{H^4_{\delta'}}<\infty$.
\par\leavevmode\par
It remains to show that our solution $\beta$ satisfies $L\beta=2Ne^{-2\gamma}H$. We have shown that $L\beta-2Ne^{-2\gamma}H$ is a covariant symmetric traceless divergence free 2-tensor, it implies that its components are harmonic, and thus vanishes (because they belong to $H^4_{\delta'}$). We use the same argument to show that the solution is unique.
\end{proof}

In order to have $\tau_{|\Sigma_0}=0$, we must have the following :

\begin{lem}
We set $e_0\gamma=\frac{1}{2}\dive(\beta)$. Then, we have $e_0\gamma\in H^3_{\delta'+1}$ and $\Vert e_0\gamma\Vert_{H^1_{\delta'+1}}\leq\e^2$.
\end{lem}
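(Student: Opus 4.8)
The definition $e_0\gamma\vcentcolon=\tfrac12\dive(\beta)$ makes this essentially a regularity statement about the divergence of the shift, for which the relevant information is already contained in Lemma \ref{lem CI beta}. The plan is: first record that $\beta\in H^4_{\delta'}$ with $\|\beta\|_{H^2_{\delta'}}\lesssim\e^2$ (Lemma \ref{lem CI beta}), so that $\dive(\beta)=\dr^i\beta_i$ loses exactly one derivative and one power of decay, giving $\dive(\beta)\in H^3_{\delta'+1}$ with $\|\dive(\beta)\|_{H^1_{\delta'+1}}\lesssim\e^2$ directly from the definition of the weighted norms. This already yields $e_0\gamma=\tfrac12\dive(\beta)\in H^3_{\delta'+1}$ and $\|e_0\gamma\|_{H^1_{\delta'+1}}\lesssim\e^2$, hence $\leq\e^2$ after possibly shrinking $\e$ (equivalently absorbing the implicit constant, which depends only on $\delta$, $R$).

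Alternatively — and this is the version I would actually write, since it is what the downstream argument wants — one can avoid even differentiating $\beta$ by going back to the equation $L\beta=2e^{-2\gamma}NH$ of \eqref{CI sur beta}: taking the trace of $L\beta$ with respect to $\delta$ gives $\tr_\delta(L\beta)=2\dive(\beta)-2\dive(\beta)\cdot\tfrac{\text{(dim)}}{}$... more precisely, from $(L\xi)_{ij}=\delta_{j\ell}\dr_i\xi^\ell+\delta_{i\ell}\dr_j\xi^\ell-\delta_{ij}\dr_k\xi^k$ in dimension $2$ one has $\delta^{ij}(L\xi)_{ij}=2\dive\xi-2\dive\xi=0$, so tracing is useless; instead I would simply use $\dive(\beta)=\dr^i\beta_i$ together with the already-proven bound $\|\beta\|_{H^4_{\delta'}}<\infty$ to get the $H^3_{\delta'+1}$ membership, and the bound $\|\beta\|_{H^2_{\delta'}}\lesssim\e^2$ to get $\|\dive(\beta)\|_{H^1_{\delta'+1}}\lesssim\e^2$. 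The point is that the weighted space is designed precisely so that $\nabla$ maps $H^{m+1}_\nu\to H^m_{\nu+1}$ continuously (immediate from Definition \ref{wss}).

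There is essentially no obstacle here; the only mild point to be careful about is the bookkeeping of weights and the passage from $\lesssim\e^2$ to $\leq\e^2$. For the latter, since the implicit constant is independent of $\e$ and $C_{high}$, it suffices to have chosen $\e_0$ small enough (depending on $\delta,R$ only) in all the preceding lemmas that $C\e^2\leq\e^2$ in the sense that the final constant is $\leq 1$; in practice one states the bound as $\|e_0\gamma\|_{H^1_{\delta'+1}}\lesssim\e^2$ and then notes that for $\e\leq\e_0$ small this is $\leq\e^2$. The reason this lemma is isolated is structural: $e_0\gamma$ is \emph{defined} rather than solved for, precisely so that the maximality condition $\tau_{|\Sigma_0}=0$ holds at $t=0$ via \eqref{EQ tau} (namely $N\tau=-2e_0\gamma+\dive\beta=0$), and so that $\dr_t\gamma_{|\Sigma_0}=e_0\gamma+\beta^i\dr_i\gamma$ is then determined; I would close the proof by remarking that this is exactly the quantity needed to initialize the transport/wave part of the reduced system.
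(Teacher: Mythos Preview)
Your proposal is correct and takes essentially the same approach as the paper: the paper's proof is a one-liner invoking Lemma~\ref{lem CI beta} (giving $\beta\in H^4_{\delta'}$ and $\|\beta\|_{H^2_{\delta'}}\lesssim\e^2$) together with Lemma~\ref{B1} (which is exactly your observation that $\nabla:H^{m+1}_\nu\to H^m_{\nu+1}$). Your digression about tracing $L\beta$ is correctly identified as useless (the conformal Killing operator is traceless by design in dimension~2), and your remarks about absorbing the constant and about why $e_0\gamma$ is \emph{defined} this way (to enforce $\tau_{|\Sigma_0}=0$) are accurate and helpful context, though not needed for the proof itself.
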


\begin{proof}
It follows directly from the estimates on $\beta$ proved in Lemma \ref{lem CI beta} and from Lemma \ref{B1}.
\end{proof}

We summarise in the next corollary our results about the constraints equations and the initial data :

\begin{coro}\label{coro premiere section}
For $\e$ sufficiently small depending only on $\delta$, given a free initial data set, there exists an initial data set to the reduced system such that the constraints equations are satisfied and $\tau_{|\Sigma_0}=0$ . Moreover, we have the following estimates :
\begin{itemize}[label=\textbullet]
    \item there exists $C>0$ depending only on $\delta$ and $R$ such that :
    \begin{equation}
        \Vert H\Vert_{H^1_{\delta+1}}+\vert\alpha\vert+\l\Tilde{\gamma}\r_{H^2_{\delta}}+\Vert e_0\gamma\Vert_{H^1_{\delta'+1}}+\vert N_a\vert+\l\Tilde{N}\r_{H^2_{\delta}}+\Vert \beta\Vert_{H^2_{\delta}}\leq C\e^2\label{CI petit}
    \end{equation}
    \item there exists $C_i>0$ depending on $\delta$, $R$ and $C_{high}$ such that :
    \begin{equation}
        \Vert H\Vert_{ H^3_{\delta+1}}+\l\Tilde{\gamma}\r_{ H^4_{\delta}}+\Vert e_0\gamma\Vert_{H^3_{\delta'+1}}+\l\Tilde{N}\r_{ H^4_{\delta}}+\Vert\beta\Vert_{ H^4_{\delta'}}\leq C_i\label{CI gros}
    \end{equation}
\end{itemize}
\end{coro}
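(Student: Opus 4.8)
The plan is to prove Corollary~\ref{coro premiere section} simply by assembling the lemmas proved earlier in this section, so the "proof" is essentially a matter of bookkeeping rather than new analysis. First I would fix $\e$ small enough so that all the smallness requirements appearing in Lemmas~\ref{CI sur H}, \ref{lem CI gamma}, \ref{lem CI N} and \ref{lem CI beta} are simultaneously met (there are finitely many such conditions, each of the form $\e \le \e_i(\delta,R)$ or $C\e^2 \le \e/2$, so their minimum is again a constant depending only on $\delta$ and $R$); note that these thresholds do \emph{not} involve $C_{high}$, which is why $\e_0$ is independent of $C_{high}$. Given an admissible free initial data set, Lemma~\ref{CI sur H} produces the unique traceless symmetric $H \in H^3_{\delta+1}$ solving \eqref{C1}; Lemma~\ref{lem CI gamma} then produces the unique $\gamma = -\alpha\chi\ln + \Tilde\gamma$ with $\Tilde\gamma \in H^4_\delta$ solving \eqref{C2} (this uses $H$ as an input, which is already available); Lemmas~\ref{lem CI N} and \ref{lem CI beta} produce $N = 1 + N_a\chi\ln + \Tilde N$ with $\Tilde N \in H^4_\delta$ and $\beta \in H^4_{\delta'}$ solving \eqref{CI sur N} and \eqref{CI sur beta}; and the last lemma defines $e_0\gamma = \tfrac12\dive(\beta) \in H^3_{\delta'+1}$, which by construction forces $\tau_{|\Sigma_0} = 0$ (recall \eqref{EQ tau} with $N\tau = -2e_0\gamma + \dive\beta$). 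Together with $\dr_t\ffi$ and $\dr_t\omega$, which are then determined from $\fir,\omegar,\gamma,N,\beta$ via $\fir = e^{2\gamma}\Ll\ffi$ and $\omegar = e^{2\gamma}\Ll\omega$ together with $e_0 = N\Ll$, this gives the full admissible initial data set for the reduced system satisfying the constraints.

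It remains to collect the quantitative bounds. The estimates \eqref{CI petit} follow directly by adding the $\e^2$-bounds asserted in the four lemmas: $\Vert H\Vert_{H^1_{\delta+1}} \lesssim \e^2$ from Lemma~\ref{CI sur H}, $|\alpha| + \Vert\Tilde\gamma\Vert_{H^2_\delta} \lesssim \e^2$ from Lemma~\ref{lem CI gamma}, $|N_a| + \Vert\Tilde N\Vert_{H^2_\delta} \lesssim \e^2$ from Lemma~\ref{lem CI N}, $\Vert\beta\Vert_{H^2_{\delta'}} \lesssim \e^2$ from Lemma~\ref{lem CI beta}, and $\Vert e_0\gamma\Vert_{H^1_{\delta'+1}} \le \e^2$ from the final lemma. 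Similarly, \eqref{CI gros} collects the higher-regularity bounds $\Vert H\Vert_{H^3_{\delta+1}} \lesssim C_{high}^2$, $\Vert\Tilde\gamma\Vert_{H^4_\delta} \lesssim \Vert H\Vert_{H^3_{\delta+1}}^2 + C_{high}^2$, $\Vert\Tilde N\Vert_{H^4_\delta} \lesssim (1+\e)\Vert H\Vert_{H^3_{\delta+1}}^2 + \Vert\fir\Vert_{H^2}^2 + \Vert\omegar\Vert_{H^2}^2$, $\Vert\beta\Vert_{H^4_{\delta'}} < \infty$ (with the explicit dependence on $\Vert H\Vert_{H^3_{\delta+1}}$, $\Vert\Tilde N\Vert_{H^4_\delta}$ and $\Vert\nabla\Tilde\gamma\Vert_{H^2_{\delta'+1}}$ exhibited in Lemma~\ref{lem CI beta}), and $\Vert e_0\gamma\Vert_{H^3_{\delta'+1}} \lesssim \Vert\beta\Vert_{H^4_{\delta'}}$ from Lemma~\ref{B1}. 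Chasing these dependencies back, each upper bound is ultimately a continuous function of $C_{high}$ (and $\delta$, $R$), so one may define $C_i = C_i(\delta,R,C_{high})$ as, say, the sum of the right-hand sides; finiteness is guaranteed by the assumption $C_{high} < \infty$.

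I do not expect any genuine obstacle here — the only things to be careful about are (i) checking that the smallness threshold on $\e$ is a single constant depending only on $\delta$ and $R$, uniformly over $C_{high}$ (this is the whole point and is immediate since each lemma's threshold has this property), and (ii) verifying that the four elliptic problems can be solved in the stated order without circular dependence, i.e. that $H$ is solved first, then $\gamma$ (needing $H$), then $N$ (needing $H$ and $\gamma$), then $\beta$ (needing $H$, $\gamma$, $N$), then $e_0\gamma$ (needing $\beta$); this ordering is exactly the one set up in the preceding subsections and is why the constraint equations decouple. The uniqueness claim also comes for free, since each lemma already asserts uniqueness of its solution in the relevant weighted space.
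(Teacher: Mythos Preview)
Your proposal is correct and matches the paper's approach exactly: the corollary is presented there purely as a summary of the preceding Lemmas~\ref{CI sur H}, \ref{lem CI gamma}, \ref{lem CI N}, \ref{lem CI beta} and the lemma defining $e_0\gamma$, with no additional argument given. Your bookkeeping of the order of dependence, the uniformity in $C_{high}$ of the smallness threshold, and the provenance of each bound in \eqref{CI petit}--\eqref{CI gros} is precisely what the paper intends (and in fact your write-up is more explicit than the paper itself).
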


\section{Solving the reduced system}\label{section solving the reduced system}

In this section, we solve the reduced system of equations introduced in Section \ref{section reduced system} by an iteration methode. We first prove that we can construct a sequence, defined in Section \ref{section iteration scheme} and bounded in a small space (this is done in Section \ref{uniforme boundedness}). Then we prove in Section \ref{Cauchy} that the sequence is Cauchy in a larger space, which will imply the existence and uniqueness of solutions to the reduced system of equations.

\subsection{Iteration scheme}\label{section iteration scheme}

In order to solve the reduced system \eqref{EQ N}-\eqref{EQ ffi}, we construct the sequence 
\begin{equation*}
    (N^{(n)}=1+N_a^{(n)}\chi\ln+\Tilde{N}^{(n)},\tau^{(n)},H^{(n)},\beta^{(n)}, \gamma^{(n)}=-\alpha\chi\ln+\Tilde{\gamma}^{(n)},\ffi^{(n)},\omega^{(n)})
\end{equation*}
iteratively as follows : for $n=1,2$, let $N^{(n)},\tau^{(n)},H^{(n)},\beta^{(n)}, \gamma^{(n)},\ffi^{(n)}$ be time-independent, with initial data as in Section \ref{initial data}. For $n\geq 2$, given the $n$-th iterate, the $(n+1)$-st iterate is then defined by solving the following system with initial data as in Section \ref{initial data} :
\begin{align}
    \Delta N^{(n+1)}&=e^{-2\gamma^{(n)}}N^{(n)}\left| H^{(n)}\right|^2+\frac{\left(\tau^{(n)}\right)^2}{2}e^{2\gamma^{(n)}}N^{(n)}\nonumber\\&\qquad\quad\quad+\frac{2e^{2\gamma^{(n)}}}{N^{(n)}}\left(e_0^{(n-1)}\ffi^{(n)}\right)^2+\frac{e^{2\gamma^{(n)}-4\ffi^{(n)}}}{2N^{(n)}}\left(e_0^{(n-1)}\omega^{(n)}\right)^2 \label{reduced system N}\\
    L\beta^{(n+1)}&=2e^{-2\gamma^{(n)}}N^{(n)}H^{(n)} \label{reduced system beta}\\
    \tau^{(n+1)}&=-2\Ll^{(n-1)}\gamma^{(n)}+\frac{\dive\left(\beta^{(n)}\right)}{N^{(n-1)}} \label{reduced system tau}\\
    e_0^{(n+1)}\left(H^{(n+1)}\right)_{ij}&=-2e^{-2\gamma^{(n)}}N^{(n)}\left(H^{(n)}\right)_i^{\;\,\ell}\left(H^{(n)}\right)_{j\ell}+\dr_{(j}\left(\beta^{(n)}\right)^k\left(H^{(n)}\right)_{i)k}\nonumber\\&\qquad-\frac{1}{2}\left(\dr_i\tb\dr_j\right)N^{(n)}+\left(\delta_i^k\tb\dr_j\gamma^{(n)}\right)\dr_kN^{(n)}\label{reduced system H}\\&\qquad-\left(\dr_i\ffi^{(n)}\tb\dr_j\ffi^{(n)}\right)N^{(n)}-\frac{1}{4}e^{-4\ffi^{(n)}}\left(\dr_i\omega^{(n)}\tb\dr_j\omega^{(n)}\right)N^{(n)} \nonumber\\
    \left(\Ll^{(n)}\right)^2\gamma^{(n+1)}-e^{-2\gamma^{(n)}}\Delta \gamma^{(n+1)}&=-\frac{\left(\tau^{(n)}\right)^2}{2}+\frac{1}{2N^{(n)}}e_0^{(n-1)}\left(\frac{\dive\left(\beta^{(n)}\right)}{N^{(n-1)}}\right)\nonumber\\&\qquad\quad\quad+e^{-2\gamma^{(n)}}\left(\frac{\Delta N^{(n)}}{2N^{(n)}}+\left|\nabla\ffi^{(n)}\right|^2+\frac{1}{4}e^{-4\ffi^{(n)}}\left|\nabla\omega^{(n)} \right|^2\right) \label{reduced system gamma}\\
    \left(\Ll^{(n)}\right)^2\ffi^{(n+1)}-e^{-2\gamma^{(n)}}\Delta \ffi^{(n+1)}&=\frac{e^{-2\gamma^{(n)}}}{N^{(n)}}\nabla \ffi^{(n)}\cdot\nabla N^{(n)}+\frac{\tau^{(n)} e_0^{(n-1)}\ffi^{(n)}}{N^{(n)}}\nonumber\\&\qquad\qquad\qquad +\frac{1}{2}e^{-4\ffi^{(n)}}\left( \left(e_0^{(n-1)}\omega^{(n)}\right)^2+\left|\nabla\omega^{(n)}\right|^2 \right)\label{reduced system fi}\\
    \left(\Ll^{(n)}\right)^2\omega^{(n+1)}-e^{-2\gamma^{(n)}}\Delta \omega^{(n+1)}&=\frac{e^{-2\gamma^{(n)}}}{N^{(n)}}\nabla \omega^{(n)}\cdot\nabla N^{(n)}+\frac{\tau^{(n)} e_0^{(n-1)}\omega^{(n)}}{N^{(n)}}\nonumber\\&\qquad\qquad\qquad-4e_0^{(n-1)}\omega^{(n)} e_0^{(n-1)}\ffi^{(n)}-4\nabla\omega^{(n)}\cdot\nabla\ffi^{(n)},\label{reduced system omega}
\end{align}

This system is not a linear system in the $(n+1)$-th iterate, because of the term $e_0^{(n+1)}H^{(n+1)}$ in \eqref{reduced system H} (which contains $\beta^{(n+1)}\cdot\nabla H^{(n+1)}$). The local well-posedness of this system follows from the estimates we are about to prove. Note that we use the following notation :
\begin{equation*}
e_0^{(k)} = \dr_t - \nabla \beta^{(k)}\cdot \nabla \quad \text{and}\quad \Ll^{(k)}=\frac{e_0^{(k)}}{N^{(k)}}.
\end{equation*}

\subsection{Boundedness of the sequence}\label{uniforme boundedness}

The first step is to show that the sequence is uniformly bounded in appropriate function spaces. We proceed by strong induction and suppose that the following estimates hold for all $k$ up to some $n\geq 2$ and for all $t\in [0,T]$. Here, $A_0\ll A_1\ll A_2\ll A_3\ll A_4$ are all sufficiently large constants independent of $\e$ or $n$ to be choosen later. We also set $\delta'=\delta-\e$ and take $\e$ small enough so that $-1<\delta'$. We also choose $\lambda>0$ a small constant such that $\lambda<\delta+1$.

\begin{itemize}[label=\textbullet]
    \item $N^{(k)}$ is of the form $N^{(k)}=1+N_a^{(k)}\chi\ln+\Tilde{N}^{(k)}$ with $N_a^{(k)}\geq 0$ and
    \begin{align}
        \left| N_a^{(k)}\right|+\left\|\Tilde{N}^{(k)}\right\|_{H^2_{\delta}}&\leq \e\label{HR N 1},\\
        \left|\dr_tN_a^{(k)}\right|+\left\|\Tilde{N}^{(k)}\right\|_{H^3_{\delta}}+\left\|\dr_t\Tilde{N}^{(k)}\right\|_{H^2_{\delta}}&\leq 2C_i\label{HR N 2},
        \\ \left\| \Tilde{N}^{(k)}\right\|_{H^4_{\delta}} & \leq A_2 C_i^2\label{HR N 3}.
    \end{align}
    \item $\beta^{(k)}$ satisfies 
    \begin{align}
        \left\|\beta^{(k)}\right\|_{H^2_{\mathbf{\delta'}}}&\leq \e\label{HR beta 1},\\
        \left\|\beta^{(k)}\right\|_{H^3_{\mathbf{\delta'}}}&\leq A_0 C_i\label{HR beta 2},\\
        \left\| \nabla e_0^{(k-1)}\beta^{(k)}\right\|_{L^2_{\delta'+1}}&\leq C_i\label{HR beta 2.5},\\
        \left\| e_0^{(k-1)}\beta^{(k)}\right\|_{H^2_{\delta'}}&\leq A_1C_i\label{HR beta 3},\\
        \left\| e_0^{(k-1)}\beta^{(k)}\right\|_{H^3_{\delta'}}&\leq A_4C_i^2.\label{HR beta 4}
    \end{align}
    \item $H^{(k)}$ satisfies
    \begin{align}
        \left\| H^{(k)}\right\|_{H^2_{\delta+1}}&\leq 2C_i\label{HR H 1},\\
        \left\| e_0^{(k)}H^{(k)}\right\|_{L^2_{1+\lambda}}&\leq \e,\label{HR H 1.5}\\
        \left\| e_0^{(k)}H^{(k)}\right\|_{H^1_{\delta+1}}&\leq A_0C_i,\label{HR H 2}\\
        \left\| e_0^{(k)}H^{(k)}\right\|_{H^2_{\delta+1}}&\leq A_3C_i^2.\label{HR H 3}
    \end{align}
    \item $\tau^{(k)}$ satisfies
    \begin{align}
        \left\| \tau^{(k)}\right\|_{H^2_{\delta'+1}}&\leq A_1C_i\label{HR tau 1},\\
        \left\| \dr_t\tau^{(k)}\right\|_{L^2_{\delta'+1}}&\leq A_2C_i\label{HR tau 2},\\
        \left\| \dr_t\tau^{(k)}\right\|_{H^1_{\delta'+1}}&\leq A_3C_i.\label{HR tau 3}
    \end{align}
    
    \item $\gamma^{(k)}$ is of the form $\gamma^{(k)}=-\alpha\chi\ln+\Tilde{\gamma}^{(k)}$ with $\alpha$ as previously and $\Tilde{\gamma}^{(k)}$ satisfies
    \begin{align}
        \sum_{|\alpha|\leq 2}  \left\Vert\Ll^{(k-1)}\nabla^{\alpha}\Tilde{\gamma}^{(k)}\right\Vert_{L^2_{\delta'+1+|\alpha|}}+\left\| \nabla \Tilde{\gamma}^{(k)}\right\|_{H^2_{\delta'+1}}&\leq 8 C_i,\label{HR gamma 1}
        \\ \left\Vert\dr_t\left(\Ll^{(k-1)}\Tilde{\gamma}^{(k)}\right)\right\Vert_{L^2_{\delta'+1}}&\leq A_0C_i,\label{HR gamma 2}
        \\ \left\Vert\dr_t\left(\Ll^{(k-1)}\Tilde{\gamma}^{(k)}\right)\right\Vert_{H^1_{\delta'+1}}&\leq A_2C_i.\label{HR gamma 3}
    \end{align}
    \item $\ffi^{(k)}$ and $\omega^{(k)}$ are compactly supported in 
    \begin{equation*}
        \enstq{(t,x)\in[0,T]\times\R^2}{\vert x\vert\leq R+C_s(1+R^{\e})t},
    \end{equation*}
    where $C_s>0$ is to be choosen in Lemma \ref{support fi n+1}. Choosing $T$ smaller if necessary, we assume that the above set is a subset of $[0,T]\times B_{2R}$. Moreover, the following estimates hold :
    \begin{align}
        \left\|\dr_t\ffi^{(k)}\right\|_{H^2}+\left\|\nabla\ffi^{(k)}\right\|_{H^2}+\left\Vert\dr_t\left(\Ll^{(k-1)}\ffi^{(k)}\right)\right\Vert_{H^1}&\leq A_0C_i,\label{HR fi 1}\\
        \left\|\dr_t\omega^{(k)}\right\|_{H^2}+\left\|\nabla\omega^{(k)}\right\|_{H^2}+\left\Vert\dr_t\left(\Ll^{(k-1)}\omega^{(k)}\right)\right\Vert_{H^1}&\leq A_0C_i,\label{HR omega 1}
    \end{align}
\end{itemize}

Recalling the statement of Theorem \ref{theoreme principal}, $C_{high}$ is a potentially large constant on which $T$ can depend, but $\e_{0}$ has to be independent of $C_{high}$ and $C_i$ (which, as explained in Corollary \ref{coro premiere section}, depends on $C_{high}$). Therefore, in the following estimates, we will keep trace of $C_i$, and $\e C_i$ is not a small constant.

We will use the symbol $\lesssim$ where the implicit constants are independent of $A_0$, $A_1$, $A_2$, $A_3$, $A_4$ and $C_i$ and use $C$ as the notation for such a constant. Moreover, $C(A_i)$ will denote a constant depending on $A_i$, but not on $C_i$. At the end of the proof, we will choose $A_0$, $A_1$, $A_2$, $A_3$ and $A_4$ such that $C(A_i)\ll A_{i+1}$ for all $i=0,\dots,3$.
\par\leavevmode\par
Our goal now is to prove that all this estimates are still true for the next iterate. For most of these, we will in fact show that they hold with better constants on the RHS.

\subsubsection{Preliminary estimates}

The next result will be very useful in the sequel :

\begin{prop}\label{commutation estimate}
The following estimate holds :
\begin{equation*}
    \left\|\Ll^{(n-1)}\Tilde{\gamma}^{(n)}\right\|_{H^2_{\delta'+1}}\leq 9C_i.\label{commutation estimate eq}
\end{equation*}
\end{prop}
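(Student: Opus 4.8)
The plan is to bound $\|\Ll^{(n-1)}\Tilde{\gamma}^{(n)}\|_{H^2_{\delta'+1}}$ by controlling it through the already-available hypotheses on $\Tilde{\gamma}^{(n)}$, namely the bootstrap assumption \eqref{HR gamma 1}. The point is that \eqref{HR gamma 1} gives us, for each $|\alpha|\leq 2$, control of $\|\Ll^{(n-1)}\nabla^{\alpha}\Tilde{\gamma}^{(n)}\|_{L^2_{\delta'+1+|\alpha|}}$ — that is, the derivatives of $\Tilde{\gamma}^{(n)}$ are differentiated \emph{before} applying $\Ll^{(n-1)}$, whereas the quantity we want has $\Ll^{(n-1)}$ applied first and then two spatial derivatives fall on the product $\Ll^{(n-1)}\Tilde{\gamma}^{(n)}$, including onto the coefficients of $\Ll^{(n-1)} = \frac{1}{N^{(n-1)}}(\dr_t - \nabla\beta^{(n-1)}\cdot\nabla)$. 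So the proof is essentially a commutator estimate: write $\nabla^{\alpha}(\Ll^{(n-1)}\Tilde{\gamma}^{(n)}) = \Ll^{(n-1)}\nabla^{\alpha}\Tilde{\gamma}^{(n)} + [\nabla^{\alpha},\Ll^{(n-1)}]\Tilde{\gamma}^{(n)}$, estimate the first term directly by \eqref{HR gamma 1}, and bound the commutator term using the hypotheses on $N^{(n-1)}$ and $\beta^{(n-1)}$.

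Concretely, I would first expand $\Ll^{(n-1)}\Tilde{\gamma}^{(n)} = \frac{1}{N^{(n-1)}}\dr_t\Tilde{\gamma}^{(n)} - \frac{1}{N^{(n-1)}}\nabla\beta^{(n-1)}\cdot\nabla\Tilde{\gamma}^{(n)}$ and apply up to two spatial derivatives by Leibniz. The terms where all derivatives hit $\dr_t\Tilde{\gamma}^{(n)}$ or $\nabla\Tilde{\gamma}^{(n)}$ reassemble (up to the $1/N^{(n-1)}$ and $\beta^{(n-1)}$ factors) into exactly the quantities appearing in \eqref{HR gamma 1}; the remaining terms carry at least one derivative on $\frac{1}{N^{(n-1)}}$ or on $\beta^{(n-1)}$ and are lower order. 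For the $\frac{1}{N^{(n-1)}}$ factor I would use that $N^{(n-1)} = 1 + N_a^{(n-1)}\chi\ln + \Tilde{N}^{(n-1)}$ with \eqref{HR N 1}–\eqref{HR N 3}, the embedding $H^2_{\delta}\hookrightarrow L^{\infty}$, and the bound $|\nabla^a(\chi\ln)|\lesssim\langle x\rangle^{-|a|}$ to see $\frac{1}{N^{(n-1)}}$ is bounded with controlled derivatives (this is the same mechanism as in Lemma \ref{useful lem}); for $\beta^{(n-1)}$ I would use \eqref{HR beta 1}–\eqref{HR beta 2}. Then the weighted product estimates of Appendix \ref{appendix B} (Proposition \ref{prop prod}), combined with the fact that differentiating $\nabla\Tilde{\gamma}^{(n)}$ picks up the extra decay weight matching the $|\alpha|$-dependence in \eqref{HR gamma 1}, let me sum up all the pieces. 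The net effect is $\|\Ll^{(n-1)}\Tilde{\gamma}^{(n)}\|_{H^2_{\delta'+1}} \leq \|\nabla\Tilde{\gamma}^{(n)}\|_{H^2_{\delta'+1}}$-type terms plus $8C_i$ from the good part plus an $O(\e C_i)$ or $O(\e)\cdot 8C_i$ error from the commutators, which for $\e$ small is at most $C_i$, giving the stated $9C_i$.

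The main obstacle I anticipate is purely bookkeeping: keeping the weights aligned. One has to check that when a spatial derivative lands on $\nabla\Tilde{\gamma}^{(n)}$ the resulting term $\nabla^{\alpha}\Tilde{\gamma}^{(n)}$ sits in $L^2_{\delta'+1+|\alpha|}$ exactly as \eqref{HR gamma 1} provides, and that the commutator terms — where a derivative is "spent" on a coefficient rather than on $\Tilde{\gamma}^{(n)}$ — land in a \emph{better} (more negative relative) weighted space so that the product law closes and produces a small constant times $C_i$ rather than something of size $A_0 C_i$ or worse. I would also need to be mild about the $\chi\ln$ tails in both $N^{(n-1)}$ and $\gamma^{(n-1)}$: as in the proof of Lemma \ref{useful lem}, since $|\nabla^a(\chi\ln)|\lesssim\langle x\rangle^{-|a|}$ for $a\neq 0$, any derivative falling on these logarithmic pieces only helps. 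No genuinely new idea is needed beyond the commutator decomposition and the weighted product estimates already set up; the content is verifying that the bootstrap hypotheses were chosen with precisely the right weights to make this elementary.
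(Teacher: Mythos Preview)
Your approach is essentially identical to the paper's: decompose $\nabla^{\alpha}\Ll^{(n-1)}\Tilde\gamma^{(n)}=\Ll^{(n-1)}\nabla^{\alpha}\Tilde\gamma^{(n)}+[\nabla^{\alpha},\Ll^{(n-1)}]\Tilde\gamma^{(n)}$, control the first piece by \eqref{HR gamma 1}, and bound the commutator using only the $\e$-small hypotheses \eqref{HR N 1} and \eqref{HR beta 1} together with the weighted product law. The one technical wrinkle you did not flag is that some commutator terms (for instance $\frac{\nabla^2 N^{(n-1)}}{N^{(n-1)}}\Ll^{(n-1)}\Tilde\gamma^{(n)}$ in the $|\alpha|=2$ case) force the full quantity $\|\Ll^{(n-1)}\Tilde\gamma^{(n)}\|_{H^2_{\delta'+1}}$ back onto the right-hand side with an $\e$ prefactor, so the paper closes by absorbing this term into the left for $\e$ small; this is routine and fits within your sketch.
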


\begin{proof}
In view of \eqref{HR gamma 1}, we have to commute $\Ll^{(n-1)}$ with $\nabla^{\alpha}$ (for $|\alpha|\leq 2$). Indeed :
\begin{equation*}
    \left\|\Ll^{(n-1)}\Tilde{\gamma}^{(n)}\right\|_{H^2_{\delta'+1}}\leq \sum_{|\alpha|\leq 2}\left( \left\Vert\Ll^{(n-1)}\nabla^{(\alpha)}\Tilde{\gamma}^{(n)}\right\Vert_{L^2_{\delta'+1+|\alpha|}}+\left\|\left[ \Ll^{(n-1)},\nabla^{\alpha}\right]\Tilde{\gamma}^{(n)}\right\|_{L^2_{\delta'+1+|\alpha|}}\right)
\end{equation*}
Using the commutation formula $[e_0^{(n-1)},\nabla]=\nabla\beta^{(n-1)}\nabla$, we compute
\begin{equation*}
    \left[\Ll^{(n-1)},\nabla\right] \Tilde{\gamma}^{(n)} =\frac{\nabla\beta^{(n-1)}}{N^{(n-1)}}\nabla \Tilde{\gamma}^{(n)}-\frac{\nabla N^{(n-1)}}{N^{(n-1)}}\Ll^{(n-1)}\Tilde{\gamma}^{(n)}
\end{equation*}
We need smallness for the metric component so we use on one hand \eqref{HR beta 1} the product estimate, the fact that $|\frac{1}{N^{(n-1)}}|\lesssim 1$ and on the other hand the fact that $|\nabla(\chi\ln)|\lesssim \langle x\rangle^{-1}$ and \eqref{HR N 1} to write
\begin{align*}
    \left\|\left[ \Ll^{(n-1)},\nabla\right]\Tilde{\gamma}^{(n)}\right\|_{L^2_{\delta'+2}}&\lesssim \left\|\nabla\beta^{(n-1)}\right\|_{H^1_{\delta'+1}}\left\|\nabla\Tilde{\gamma}^{(n)}\right\|_{H^1_{\delta'+1}}+\left|N_a^{(n-1)}\right|\left\|\Ll^{(n-1)}\Tilde{\gamma}^{(n)}\right\|_{L^2_{\delta'+1}} \\ & \quad+ \left\|\nabla\Tilde{N}^{(n-1)}\right\|_{H^1_{\delta+1}}\left\|\Ll^{(n-1)}\Tilde{\gamma}^{(n)}\right\|_{H^1_{\delta'+1}}\\
    &\lesssim \e\left(\left\|\nabla\Tilde{\gamma}^{(n)}\right\|_{H^2_{\delta'+1}}+  \left\|\Ll^{(n-1)}\Tilde{\gamma}^{(n)}\right\|_{L^2_{\delta'+1}}\right)+\e\left\|\Ll^{(n-1)}\Tilde{\gamma}^{(n)}\right\|_{H^2_{\delta'+1}}.
\end{align*}
Now we compute $\left[\Ll^{(n-1)},\nabla^2\right] \Tilde{\gamma}^{(n)}$ :
\begin{align*}
    \left[\Ll^{(n-1)},\nabla^2\right] \Tilde{\gamma}^{(n)}&=2\frac{\nabla\beta^{(n-1)}}{N^{(n-1)}}\nabla^2 \Tilde{\gamma}^{(n)}-2\frac{\nabla N^{(n-1)}}{N^{(n-1)}}\Ll^{(n-1)}\nabla\Tilde{\gamma}^{(n)} +\left( \frac{\nabla^2\beta^{(n-1)}}{N^{(n-1)}}+\frac{\nabla N^{(n-1)}\nabla\beta^{(n-1)}}{\left(N^{(n-1)}\right)^2}\right)\nabla \Tilde{\gamma}^{(n)}\\
    &-\left(\frac{\nabla^2 N^{(n-1)}}{N^{(n-1)}}+\left(\frac{\nabla N^{(n-1)}}{N^{(n-1)}}  \right)^2 \right)\Ll^{(n-1)}\Tilde{\gamma}^{(n)}.
\end{align*}
Using the product estimate and $|\frac{1}{N^{(n-1)}}|\lesssim 1$ we do the following :
\begin{align*}
    &\left\|\left[ \Ll^{(n-1)},\nabla^2\right]  \Tilde{\gamma}^{(n)}  \right\|_{L^2_{\delta'+3}}&  \\&\lesssim \left\| \nabla^2\Tilde{\gamma}^{(n)} \right\|_{H^1_{\delta'+2}} \left\| \nabla\beta^{(n-1)} \right\|_{H^1_{\delta'+1}} +\left\|N_a^{(n-1)}\nabla(\chi\ln)\Ll^{(n-1)}\nabla\Tilde{\gamma}^{(n)}  \right\|_{L^2_{\delta'+3}} +\left\| \nabla\Tilde{N}^{(n-1)} \right\|_{H^1_{\delta+1}}\left\|\Ll^{(n-1)}\nabla\Tilde{\gamma}^{(n)} \right\|_{H^1_{\delta'+2}} \\
    & \qquad+\left\| \nabla\Tilde{\gamma}^{(n)} \right\|_{H^2_{\delta'+1}}\left(\left\| \nabla^2\beta^{(n-1)} \right\|_{L^2_{\delta'+2}}+\left\| \nabla N^{(n-1)}\nabla\beta^{(n-1)} \right\|_{L^2_{\delta'+2}} \right)\\
    & \qquad+ \left\| \Ll^{(n-1)}\Tilde{\gamma}^{(n)} \right\|_{H^2_{\delta'+1}}\left( \left\| \nabla^2N^{(n-1)} \right\|_{L^2_{\delta+2}} +\left\| \left(\nabla\Tilde{N}^{(n-1)}\right)^2+N_a^{(n-1)}\nabla(\chi\ln)\nabla\Tilde{N}^{(n-1)} \right\|_{L^2_{\delta+2}} \right)\\
    &\qquad+\left\| \left( N_a^{(n-1)}\nabla(\chi\ln)\right)^2\Ll^{(n-1)}\Tilde{\gamma}^{(n)} \right\|_{L^2_{\delta'+3}}.
\end{align*}
Now using \eqref{HR N 1}, \eqref{HR beta 1}, $|\nabla(\chi\ln)|\lesssim \langle x\rangle^{-1}$ we have :
\begin{align*}
    \left\|\left[ \Ll^{(n-1)},\nabla^2\right]  \Tilde{\gamma}^{(n)}  \right\|_{L^2_{\delta'+3}} \lesssim & \;\e\left( \sum_{|\alpha|\leq 2}  \left\Vert\Ll^{(n-1)}\nabla^{\alpha}\Tilde{\gamma}^{(n)}\right\Vert_{L^2_{\delta'+1+|\alpha|}}+\left\|\nabla\Tilde{\gamma}^{(n)}\right\|_{H^2_{\delta'+1}}\right) \\
     & +\e\left\| \Ll^{(n-1)}\Tilde{\gamma}^{(n)} \right\|_{H^2_{\delta'+1}}+\e\left\| \Ll^{(n-1)}\nabla\Tilde{\gamma}^{(n)} \right\|_{H^1_{\delta'+2}}
\end{align*}
It remains to deal with the last term in this last inequality. Using the same type of arguments as above we can show that :
\begin{equation*}
    \left\|\Ll^{(n-1)}\nabla\Tilde{\gamma}^{(n)} \right\|_{H^1_{\delta'+2}}\lesssim   \left\|\Ll^{(n-1)}\nabla\Tilde{\gamma}^{(n)} \right\|_{L^2_{\delta'+2}} + \left\|\Ll^{(n-1)}\nabla^2\Tilde{\gamma}^{(n)} \right\|_{L^2_{\delta'+3}} +\e\left\| \nabla^2\Tilde{\gamma}^{(n)} \right\|_{H^1_{\delta'+2}}
\end{equation*}
Summarising, we get :   
\begin{align*}
    \left\|\Ll^{(n-1)}\Tilde{\gamma}^{(n)}\right\|_{H^2_{\delta'+1}} \lesssim & (1+\e)\left(\sum_{|\alpha|\leq 2}  \left\Vert\Ll^{(n-1)}\nabla^{\alpha}\Tilde{\gamma}^{(n)}\right\Vert_{L^2_{\delta'+1+|\alpha|}}+\left\|\nabla\Tilde{\gamma}^{(n)}\right\|_{H^2_{\delta'+1}}\right)\\&\qquad+\e \left\|\Ll^{(n-1)}\Tilde{\gamma}^{(n)}\right\|_{H^2_{\delta'+1}}
\end{align*}
By choosing $\e$ small enough, we can absorb the last term of the RHS into the LHS and using \eqref{HR gamma 1} we finally prove the desired result.
\end{proof}

We continue with a propagation of smallness result.

\begin{prop}\label{prop propsmall}
The following estimates hold for $T$ sufficiently small and $C_p>0$ a constant depending on $\delta$ and $R$ only :
\begin{align}
    \left\|\dr_t\ffi^{(n)}\right\|_{L^4}+\left\|\nabla\ffi^{(n)}\right\|_{L^4}+\left\|\dr_t\omega^{(n)}\right\|_{L^4}+\left\|\nabla\omega^{(n)}\right\|_{L^4}&\leq C_p\e\label{propsmall fi},\\
    \left\| H^{(n)}\right\|_{H^1_{\delta+1}}&\leq C_p\e^2\label{propsmall H},\\
    \left\| \Tilde{\gamma}^{(n)}\right\|_{H^2_{\delta'}}&\leq C_p \e^2\label{propsmall gamma},\\
    \left\|\Ll^{(n-1)}\Tilde{\gamma}^{(n)} \right\|_{H^1_{\delta'+1}}&\leq C_p\e^2\label{propsmall eo gamma},\\
    \left\|\tau^{(n)}\right\|_{H^1_{\delta'+1}}&\leq C_p\e^2.\label{propsmall tau}
\end{align}
\end{prop}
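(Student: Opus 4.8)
The statement to prove is Proposition \ref{prop propsmall}, the propagation of the smallness estimates \eqref{propsmall fi}--\eqref{propsmall tau} for the $(n)$-th iterate, assuming the full bootstrap hierarchy \eqref{HR N 1}--\eqref{HR omega 1} holds up to step $n$ and that the initial data are small as in \eqref{CI petit}. The guiding principle is that each of these quantities satisfies (for the $(n)$-th iterate) an equation whose right-hand side is quadratic in quantities that are \emph{themselves} small at order $\e$ or $\e^2$, so that a Grönwall/continuity argument on $[0,T]$, with $T$ chosen small depending on $\delta,R$ only, closes the smallness with a universal constant $C_p$. Crucially $C_p$ must not depend on the $A_i$ or on $C_i$; this is why one keeps only the \emph{small} norms on the right-hand side and absorbs every occurrence of a large norm either into a factor of $\e$ or into the smallness of $T$.

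\textbf{Order of the steps.} First I would prove \eqref{propsmall H}: the transport equation \eqref{reduced system H} for $H^{(n)}$ has initial data bounded by $C\e^2$ by \eqref{CI petit}, and its right-hand side is schematically $e^{-2\gamma^{(n-1)}}N^{(n-1)}(H^{(n-1)})^2 + \dr\beta^{(n-1)}H^{(n-1)} + \dr^2 N^{(n-1)} + \dr\gamma^{(n-1)}\dr N^{(n-1)} + N^{(n-1)}(\dr\ffi^{(n-1)})^2 + N^{(n-1)}e^{-4\ffi^{(n-1)}}(\dr\omega^{(n-1)})^2$; using Lemma \ref{useful lem}, the product law, the support property, and Hölder to turn $L^2$-products of gradients into $L^4$-products, every term is $O(\e^2)$ in $H^1_{\delta+1}$ (after discarding the $\dr^2 N$ term which is not present at this low order, or rather controlling it via the constraint/elliptic structure), so integrating the transport equation along the flow of $e_0^{(n-1)}$ over a time $T$ and using $\|\dr\beta^{(n-1)}\|_{L^\infty}\lesssim\e$ gives $\|H^{(n)}\|_{H^1_{\delta+1}}\le C\e^2 + CT(\cdots)\le C_p\e^2$. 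Next, \eqref{propsmall gamma}: $\Tilde\gamma^{(n)}$ satisfies the wave-type equation \eqref{reduced system gamma}, which I would treat as $\Ll^2\Tilde\gamma^{(n)} - e^{-2\gamma^{(n-1)}}\Delta\Tilde\gamma^{(n)} = (\text{small source})$; writing $\gamma^{(n)}$ as initial data plus time integral and using the elliptic gain from inverting $\Delta$ at fixed time together with Proposition \ref{commutation estimate} to control the commutator terms, the right-hand side is again quadratically small, and one gets \eqref{propsmall gamma} and simultaneously \eqref{propsmall eo gamma} (which is just the $L^2_{\delta'+1}$ bound on $\Ll^{(n-1)}\Tilde\gamma^{(n)}$ appearing in the energy). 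Then \eqref{propsmall tau}: by \eqref{reduced system tau}, $\tau^{(n)} = -2\Ll^{(n-1)}\gamma^{(n-1)} + \dive(\beta^{(n-1)})/N^{(n-1)}$, so \eqref{propsmall tau} follows \emph{directly} from \eqref{propsmall eo gamma} applied at step $n-1$ and from \eqref{propsmall H} at step $n-1$ (since $\beta^{(n-1)}$ solves $L\beta^{(n-1)} = 2e^{-2\gamma^{(n-2)}}N^{(n-2)}H^{(n-2)}$ and hence $\|\beta^{(n-1)}\|_{H^2_{\delta'}}\lesssim\|H^{(n-2)}\|_{H^1_{\delta+1}}\lesssim\e^2$), with no time integration needed. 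Finally \eqref{propsmall fi}: the fields $\ffi^{(n)},\omega^{(n)}$ satisfy the wave equations \eqref{reduced system fi}--\eqref{reduced system omega}; for the $W^{1,4}$-smallness I would run a first-order energy estimate in $L^2$ plus a Gagliardo--Nirenberg/Sobolev interpolation $\|\dr U\|_{L^4}\lesssim\|\dr U\|_{L^2}^{1/2}\|\dr U\|_{H^1}^{1/2}$ — but since the $H^1$ norm is only bounded by the large $A_0C_i$, one instead controls $\|\dr U\|_{L^4}$ directly by propagating the $L^4$ norm along characteristics using the equation in the form $\Ll^{(n-1)}(\dr U^{(n)}) = (\text{lower order}) + e^{-2\gamma}\Delta U^{(n)}/\!(\cdots)$, the key point being that the right-hand side of \eqref{reduced system fi}--\eqref{reduced system omega} is a sum of products each of which contains at least one factor that is $O(\e)$ in $L^\infty$ or $L^4$ by the inductive $W^{1,4}$-smallness of $\ffi^{(n-1)},\omega^{(n-1)}$, so a Grönwall argument on $[0,T]$ with $T$ small yields $\|\dr\ffi^{(n)}\|_{L^4}+\|\dr\omega^{(n)}\|_{L^4}\le C\e + CT\,C(A_i,C_i)\cdot\e \le C_p\e$ once $T$ is chosen small enough in terms of $\delta,R$ (here the largeness of $C(A_i,C_i)$ is allowed to be swallowed by the smallness of $T$, since the final existence time $T$ is permitted to depend on $C_{high}$).

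\textbf{Main obstacle.} The delicate point is \eqref{propsmall fi}: unlike $H$, $\gamma$, $\tau$, which are either transported or satisfy equations with a manifest quadratic smallness at the level of $\e^2$ regardless of $T$, the $W^{1,4}$ bound for $\ffi^{(n)},\omega^{(n)}$ genuinely requires the smallness of $T$, and one must be careful that the constant multiplying $T$ — which \emph{does} involve the large constants $A_i$ and $C_i$ through the $H^2$-norms of the lower iterates and through the lapse/shift estimates — is separated cleanly from the $\e$-scaling, so that after choosing $T = T(C_{high},\delta,R)$ small the bound reads $C_p\e$ with $C_p = C_p(\delta,R)$ alone. In other words, the structural challenge is to arrange the estimate so that every large factor appears multiplied by $T$ and every $\e^2$ (resp.\ $\e$) coefficient appears without large factors; this is exactly the hierarchy the paper has set up, and carrying it through for the wave equations \eqref{reduced system fi}--\eqref{reduced system omega} (especially handling the $e^{-4\ffi^{(n-1)}}(\dr\omega^{(n-1)})^2$ and $\dr\omega^{(n-1)}\cdot\dr\ffi^{(n-1)}$ terms, which are quadratic in the small quantities, and the term $e^{-2\gamma^{(n-1)}}\nabla U^{(n-1)}\cdot\nabla N^{(n-1)}/N^{(n-1)}$, where $\nabla N^{(n-1)}$ is small by \eqref{CI petit} propagated via \eqref{HR N 1}) is the only place real care is needed; the remaining estimates are bookkeeping with Lemma \ref{useful lem}, the product law of Proposition \ref{prop prod}, and Proposition \ref{commutation estimate}.
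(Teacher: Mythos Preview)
Your approach is substantially more complicated than what the paper does, and in one place has a real gap.

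The paper's proof is a one-line application of the fundamental theorem of calculus: for each quantity $u$ in the list, one simply writes
\[
\|u(t)\| \le \|u(0)\| + \int_0^T \|\dr_t u(s)\|\,\d s,
\]
observes that $\|u(0)\|$ is $O(\e)$ or $O(\e^2)$ by Corollary~\ref{coro premiere section}, and bounds $\|\dr_t u\|$ directly by the \emph{large} bootstrap constants $C(A_i,C_i)$ using nothing more than \eqref{HR H 1}--\eqref{HR omega 1} and Proposition~\ref{commutation estimate}. For instance, $\|\dr_t \nabla\ffi^{(n)}\|_{L^4}\lesssim\|\dr_t\ffi^{(n)}\|_{H^2}\le A_0C_i$ straight from \eqref{HR fi 1}, and $\|\dr_t H^{(n)}\|_{H^1_{\delta+1}}\le\|e_0^{(n)}H^{(n)}\|_{H^1_{\delta+1}}+\|\beta^{(n)}\nabla H^{(n)}\|_{H^1_{\delta+1}}\lesssim C_i$ from \eqref{HR H 2}, \eqref{HR H 1}, \eqref{HR beta 1}. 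Then choosing $T$ small (depending on $C_i$, which is permitted) makes the time-integrated contribution negligible. No structural smallness of sources is needed; the whole argument is four or five lines of this type.

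By contrast you go back to the evolution equations \eqref{reduced system H}, \eqref{reduced system gamma}, \eqref{reduced system fi}, \eqref{reduced system omega} and try to argue that their right-hand sides are $O(\e^2)$ in the relevant norms. This introduces a genuine gap for $H^{(n)}$: the term $-\tfrac12(\dr_i\tb\dr_j)N^{(n-1)}$ in \eqref{reduced system H} is controlled in $H^1_{\delta+1}$ only by $\|\Tilde N^{(n-1)}\|_{H^2_\delta}+|N_a^{(n-1)}|$, which the bootstrap \eqref{HR N 1} gives as $O(\e)$, not $O(\e^2)$. Your parenthetical ``after discarding the $\dr^2 N$ term which is not present at this low order, or rather controlling it via the constraint/elliptic structure'' is not a proof; making it work would require re-deriving the improved bound \eqref{HR+1 N 1} for $N^{(n-1)}$ before Proposition~\ref{prop propsmall}, reversing the logical order of the section. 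Similarly, your plan for $\ffi^{(n)},\omega^{(n)}$ (propagate the $L^4$ norm along characteristics, handle $e^{-2\gamma}\Delta U$ separately) is non-standard and would need its own justification, whereas the paper obtains the same conclusion from $H^1\hookrightarrow L^4$ and the $H^2$ bootstrap bound on $\dr_t\ffi^{(n)}$ with no analysis at all.

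You do correctly identify the mechanism in your final paragraph---that large constants multiplied by $T$ are harmless---but the point is that this mechanism applies \emph{from the outset}, not just as a last step after an elaborate source estimate. The bootstrap hypotheses already encode bounds on the time derivatives; you do not need to open up the equations again.
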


\begin{proof}
By Corollary \ref{coro premiere section}, all these quantities satisfy the desired smallness estimates at $t=0$.
The fact that these estimates are true for all $t\in[0,T]$ will then follow from calculus inequalities of the type 
\begin{equation*}
    \sup_{s\in[0,T]}\|u\|_{W^{m,p}_{\eta}}(s)\leq C'\left( \|u\|_{W^{m,p}_{\eta}}(0)+\int_0^T\|\dr_tu\|_{W^{m,p}_{\eta}}(s)\d s\right).
\end{equation*}
Therefore, it remains to show that the $\dr_t$ derivatives (we recall that $\dr_t=e_0^{(n)}+\beta^{(n)}\cdot\nabla=e_0^{(n-1)}+\beta^{(n-1)}\cdot\nabla$) of all these terms in the relevant norms are bounded by a constant depending on $A_0$, $A_1$, $A_2$, $A_3$, $A_4$ or  $C_i$, and then to choose $T$ small enough. We proceed as follows :
\begin{itemize}
    \item for $\nabla\ffi^{(n)}$ and $\nabla\omega^{(n)}$, we use the embedding $H^1\xhookrightarrow{}L^4$ and \eqref{HR fi 1} :
    \begin{equation}
        \left\|\dr_t\nabla\ffi^{(n)}\right\|_{L^4}\lesssim \left\|\nabla\dr_t\ffi^{(n)}\right\|_{H^1}\lesssim \left\|\dr_t\ffi^{(n)}\right\|_{H^2}\lesssim A_0C_i,\label{propsmall a}
    \end{equation}
    and we do the same for $\nabla\omega^{(n)}$, using \eqref{HR omega 1}.    
    \item for $\dr_t\ffi^{(n)}$ and $\dr_t\omega^{(n)}$, we use the support property of $\ffi^{(n)}$, the embedding $H^1\xhookrightarrow{}L^4$, \eqref{HR fi 1}, \eqref{HR beta 1}, \eqref{HR beta 3}, \eqref{HR N 2} and \eqref{propsmall a} :
    \begin{align*}
        \left\|\dr_t^2\ffi^{(n)}\right\|_{L^4}&\leq \left\|N^{(n-1)}\dr_t\left(\Ll^{(n-1)}\ffi^{(n)}\right)\right\|_{L^4}+\left\|\Ll^{(n-1)}\ffi^{(n)}\dr_tN^{(n-1)}\right\|_{L^4}+
        \left\|\dr_t\left( \beta^{(n)}\cdot\nabla\ffi^{(n)}\right)\right\|_{L^4}\\
        & \lesssim \left\|\dr_t\left(\Ll^{(n-1)}\ffi^{(n)}\right)\right\|_{H^1}+\left\|\dr_t\ffi^{(n)}\right\|_{H^2}+ \l \dr_t\beta^{(n)} \r_{H^2} \left\|\nabla\ffi^{(n)}\right\|_{H^2}+ \left\|\dr_t\nabla\ffi^{(n)}\right\|_{L^4}\\& \lesssim A_0C_i+ \e A_1C_i,
    \end{align*}
    and we do the same for $\dr_t\omega^{(n)}$, using \eqref{HR omega 1}.
    \item for $H^{(n)}$, we use \eqref{HR H 1}, \eqref{HR H 2}, \eqref{HR beta 1} and the product estimate :
    \begin{equation*}
        \left\|\dr_t H^{(n)} \right\|_{H^1_{\delta+1}}\leq \left\| e_0^{(n)}H^{(n)} \right\|_{H^1_{\delta+1}}+\left\|\beta^{(n)}\nabla H^{(n)} \right\|_{H^1_{\delta+1}}\lesssim C_i.
    \end{equation*}
    \item for $\Tilde{\gamma}^{(n)}$, we use \eqref{commutation estimate eq}, \eqref{HR beta 1} and \eqref{HR gamma 1} :
    \begin{align*}
        \left\| \dr_t\Tilde{\gamma}^{(n)}\right\|_{H^2_{\delta'}}&\leq \left\| N^{(n-1)}\Ll^{(n-1)}\Tilde{\gamma}^{(n)} \right\|_{H^2_{\delta'}}+\left\|\beta^{(n-1)}\cdot\nabla\Tilde{\gamma}^{(n)} \right\|_{H^2_{\delta'}}\\
        & \lesssim\left\| \Ll^{(n-1)}\Tilde{\gamma}^{(n)} \right\|_{H^2_{\delta'+1}}+ \left\|\nabla\Tilde{\gamma}^{(n)} \right\|_{H^2_{\delta'+1}}\left\|\beta^{(n-1)} \right\|_{H^2_{\delta'+1}}\\
        &\lesssim C_i+A_0C_i^2.
    \end{align*}
    \item for $\Ll^{(n-1)}\Tilde{\gamma}^{(n)}$ and $\tau^{(n)}$, we simply use \eqref{HR gamma 3} and \eqref{HR tau 3}, which give directly the result.
\end{itemize}
\end{proof}

\subsubsection{Elliptic estimates}

We begin with the two elliptic equations (the ones for $N$ and $\beta$). These are the most difficult to handle, because we can't rely on the smallness of a time parameter and therefore have to keep properly trace of the $\e$, $C_i$ and $A_i$.

\begin{prop}\label{hr+1 N prop}
For $n\geq 2$, $N^{(n+1)}$ admits a decomposition 
\begin{equation*}
    N^{(n+1)}=1+N_a^{(n+1)}\chi\ln+\Tilde{N}^{(n+1)},
\end{equation*}
with $N_a^{(n+1)}\geq 0$ and such that 
\begin{align}
        \left| N_a^{(n+1)}\right|+\left\|\Tilde{N}^{(n+1)}\right\|_{H^2_{\delta}}&\lesssim \e^2\label{HR+1 N 1},\\
        \left|\dr_tN_a^{(n+1)}\right|+\left\|\Tilde{N}^{(n+1)}\right\|_{H^3_{\delta}}+\left\|\dr_t\Tilde{N}^{(n+1)}\right\|_{H^2_{\delta}}&\lesssim \e C(A_3)C_i\label{HR+1 N 2},
        \\ \left\| \Tilde{N}^{(n+1)}\right\|_{H^4_{\delta}}&\lesssim \e^2C(A_2)C_i^2+C(A_0)C_i^2.\label{HR+1 N 3}
    \end{align}
\end{prop}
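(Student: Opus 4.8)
The plan is to solve the elliptic equation \eqref{reduced system N} for $N^{(n+1)}$ by McOwen's theorem, exactly as in the initial-data case treated in Lemma \ref{lem CI N}, and then to propagate the three layers of estimates \eqref{HR+1 N 1}--\eqref{HR+1 N 3} by controlling the right-hand side of \eqref{reduced system N}, which we denote $F^{(n)}$, in $H^0_{\delta+2}$, $H^1_{\delta+2}$ and $H^2_{\delta+2}$, together with a bound on $\| \dr_t F^{(n)} \|_{H^0_{\delta+2}}$. Two structural remarks come first. Since $N_a^{(n)} \geq 0$ and $\chi\ln \geq 0$, and since $\| \Tilde N^{(n)} \|_{L^\infty} \lesssim \| \Tilde N^{(n)} \|_{H^2_\delta} \leq \e$ by the embedding $H^2_\delta \hookrightarrow L^\infty$, we have $N^{(n)} \geq 1 - C\e > 0$ for $\e$ small; hence the factors $1/N^{(n)}$ in $F^{(n)}$ are well defined, bounded on compact sets, and $F^{(n)} \geq 0$ pointwise, being a sum of the squares $|H^{(n)}|^2$, $(\tau^{(n)})^2$, $(e_0^{(n-1)}\ffi^{(n)})^2$, $(e_0^{(n-1)}\omega^{(n)})^2$ multiplied by positive coefficients. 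As in Lemma \ref{lem CI N}, the cokernel obstruction in Theorem \ref{mcowens 1} then forces the decomposition $N^{(n+1)} = 1 + N_a^{(n+1)}\chi\ln + \Tilde N^{(n+1)}$ with $N_a^{(n+1)} = \frac{1}{2\pi}\int_{\R^2} F^{(n)} \geq 0$ (meaningful because $F^{(n)} \in H^0_{\delta+2} \hookrightarrow L^1$) and $\| \Tilde N^{(n+1)} \|_{H^{m+2}_\delta} \lesssim \| F^{(n)} \|_{H^m_{\delta+2}}$ for $m = 0, 1, 2$ by Corollary \ref{mcowens 2}.

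For \eqref{HR+1 N 1} I would estimate $\| F^{(n)} \|_{H^0_{\delta+2}}$ term by term: the exponential factors $e^{\pm 2\gamma^{(n)}}$ and $e^{-4\ffi^{(n)}}$ are disposed of by Lemma \ref{useful lem}, at the cost of a weight shift by $2\alpha \lesssim \e^2$ which for $\e$ small keeps us inside the admissible range of McOwen's theorem; then Hölder's inequality, the compact support of $\ffi^{(n)}, \omega^{(n)}$, and the smallness of Proposition \ref{prop propsmall} (namely $\| H^{(n)} \|_{H^1_{\delta+1}} \lesssim \e^2$, $\| \tau^{(n)} \|_{H^1_{\delta'+1}} \lesssim \e^2$, $\| \nabla\ffi^{(n)} \|_{L^4} + \| \dr_t\ffi^{(n)} \|_{L^4} \lesssim \e$, hence $\| e_0^{(n-1)}\ffi^{(n)} \|_{L^4} \lesssim \e$ after absorbing $\beta^{(n-1)}\cdot\nabla\ffi^{(n)}$ via \eqref{HR beta 1}, and similarly for $\omega^{(n)}$) give $\| F^{(n)} \|_{H^0_{\delta+2}} \lesssim \e^2$, which is \eqref{HR+1 N 1}.

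For \eqref{HR+1 N 2}, the bound on $\| \Tilde N^{(n+1)} \|_{H^3_\delta}$ reduces to estimating $\| F^{(n)} \|_{H^1_{\delta+2}}$, for which I would use the product inequality $\| uv \|_{H^1} \lesssim \| u \|_{L^4} \| v \|_{W^{1,4}} + \| u \|_{W^{1,4}} \| v \|_{L^4}$ (and its weighted version, Proposition \ref{prop prod}) so as to keep one factor in a small $L^4$-type norm while the other sits in a norm bounded by $C(A_i)C_i$ through the embeddings $H^1 \hookrightarrow L^4$, $H^2 \hookrightarrow W^{1,4}$ applied to \eqref{HR H 1}, \eqref{HR tau 1}, \eqref{HR fi 1} and \eqref{HR omega 1}. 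The bounds on $\dr_t N_a^{(n+1)}$ and $\| \dr_t\Tilde N^{(n+1)} \|_{H^2_\delta}$ are obtained by differentiating the equation, $\Delta\dr_t N^{(n+1)} = \dr_t F^{(n)}$, whence $\dr_t N_a^{(n+1)} = \frac{1}{2\pi}\int_{\R^2}\dr_t F^{(n)}$ and $\| \dr_t\Tilde N^{(n+1)} \|_{H^2_\delta} \lesssim \| \dr_t F^{(n)} \|_{H^0_{\delta+2}}$; expanding by Leibniz, a $\dr_t$ hitting a square leaves one small $L^4$ factor times a differentiated factor controlled in $L^4$ by $\| \dr_t H^{(n)} \|_{H^1_{\delta+1}}$, by $\| \dr_t\tau^{(n)} \|_{H^1_{\delta'+1}} \leq A_3 C_i$ (from \eqref{HR tau 3}), and by $\| \dr_t(e_0^{(n-1)}\ffi^{(n)}) \|_{H^1}$ (itself controlled via \eqref{HR fi 1}, \eqref{HR beta 3}, the equation \eqref{reduced system fi} and the $\dr_t$-estimates on $N^{(n-1)}$), while a $\dr_t$ hitting an exponential or $N^{(n)}$ leaves two small square-roots; this gives $\| \dr_t F^{(n)} \|_{H^0_{\delta+2}} \lesssim \e\, C(A_3)C_i$, hence \eqref{HR+1 N 2}.

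Finally, for \eqref{HR+1 N 3} I would bound $\| F^{(n)} \|_{H^2_{\delta+2}}$ using that $H^2$ is an algebra in dimension two (weighted version from Proposition \ref{prop prod}) and the refined conformal-factor estimate \eqref{useful gamma 2}, whose $H^3$-level action costs an extra factor $\| \nabla\Tilde\gamma^{(n)} \|_{H^2_{\delta'+1}} \leq 8 C_i$ from \eqref{HR gamma 1}. At this order the smallness is only partial: the constant $1$ inside $N^{(n)}$ multiplying $|H^{(n)}|^2$ yields $\| H^{(n)} \|_{H^2_{\delta+1}}^2 \lesssim C_i^2$, producing the term $C(A_0)C_i^2$; the $\tau^{(n)}$, $e_0^{(n-1)}\ffi^{(n)}$ and $e_0^{(n-1)}\omega^{(n)}$ terms each retain a factor small in $L^4$, and the correction part $N_a^{(n)}\chi\ln + \Tilde N^{(n)}$ of $N^{(n)}$ is itself $O(\e)$, so their contributions carry an extra $\e^2$ and are bounded by $\e^2 C(A_2)C_i^2$. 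Summing gives \eqref{HR+1 N 3}. Throughout, the main obstacle is the bookkeeping of constants: one must check in every term of $F^{(n)}$ and $\dr_t F^{(n)}$, at each differentiability order, that exactly one factor can be placed in a norm carrying the small parameter $\e$ while the surviving high-order norms contribute at most the allowed power of $C_i$ — one power at orders $0$ and $1$, two powers at the top order — and that the weight shifts produced by Lemma \ref{useful lem} remain in McOwen's admissible range; deciding which product estimate to use at each order (the $L^4 \times W^{1,4}$ splitting versus the $H^2$-algebra bound) is the heart of the argument.
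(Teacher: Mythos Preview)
Your approach is the same as the paper's and is essentially correct, but there is one concrete error in your bookkeeping for \eqref{HR+1 N 3}. You attribute the non-small contribution $C(A_0)C_i^2$ to the $|H^{(n)}|^2$ term and claim that the matter terms $(e_0^{(n-1)}\ffi^{(n)})^2$, $(e_0^{(n-1)}\omega^{(n)})^2$ ``each retain a factor small in $L^4$'' at the $H^2$ level. This is backwards. When two spatial derivatives land on $(e_0^{(n-1)}\ffi^{(n)})^2$, the term $\big(\nabla(e_0^{(n-1)}\ffi^{(n)})\big)^2$ appears, and $\l\nabla(e_0^{(n-1)}\ffi^{(n)})\r_{L^4}$ carries no smallness: it is controlled only via $H^1\hookrightarrow L^4$ by $\l e_0^{(n-1)}\ffi^{(n)}\r_{H^2}\lesssim A_0 C_i$, yielding precisely the $C(A_0)C_i^2$ in \eqref{HR+1 N 3}. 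Conversely, the paper shows that the $|H^{(n)}|^2$ term is in fact bounded by $\e^2 C(A_2)C_i^2$, using \eqref{propsmall H} to place one factor of $H^{(n)}$ in a small norm. Your overestimate on the $H$ side is harmless, but your underestimate on the matter side would make the argument fail as written; once the attributions are swapped, the proof goes through exactly as in the paper.

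A minor side remark: for the $H^2_{\delta+2}$ bound on $F^{(n)}$ you only need \eqref{useful gamma 1} (valid up to $k=2$), not the $H^3$-level estimate \eqref{useful gamma 2}; invoking the latter introduces an unnecessary extra factor $\l\nabla\Tilde\gamma^{(n)}\r_{H^2_{\delta'+1}}$.
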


\begin{proof}
We claim that :
\begin{equation*}
    \left\| \text{RHS of \eqref{reduced system N}} \right\|_{L^2_{\delta+2}}\leq C\e^2.
\end{equation*}
Except for the term $e^{2\gamma^{(n)}}N^{(n)}\left( \tau^{(n)}\right)^2$, all the terms in \eqref{reduced system N} can be estimated in an identical manner as in Lemma \ref{lem CI N}, except that we estimate the norms using Proposition \ref{prop propsmall} instead of using the assumtions on the reduced data and the estimates in Lemmas \ref{CI sur H} and \ref{lem CI gamma}. It therefore remains to control $e^{2\gamma^{(n)}}N^{(n)}\left( \tau^{(n)}\right)^2$. Using \eqref{useful gamma 1a} and \eqref{HR N 1}, we see that $\left\|e^{2\gamma^{(n)}} N^{(n)}\right\|_{C^0_\e}\lesssim 1$. We finally use the product estimate and \eqref{propsmall tau} to handle $\left( \tau^{(n)}\right)^2$ :
\begin{equation*}
    \left\|e^{2\gamma^{(n)}}N^{(n)}\left( \tau^{(n)}\right)^2 \right\|_{L^2_{\delta+2}} \lesssim \left\| e^{2\gamma^{(n)}}  N^{(n)}\right\|_{C^0_\e} \left\| \tau^{(n)} \right\|_{H^1_{\delta'+1}}^2\lesssim \e^4.
\end{equation*}
This proves the claim. Applying Corollary \ref{mcowens 2} to $N^{(n+1)}-1$ yields the existence of the decomposition of $N^{(n+1)}$, as well as the estimate \eqref{HR+1 N 1}.
\par\leavevmode\par
We now turn to the proof of \eqref{HR+1 N 2}. To obtain the $H^3_{\delta}$ bound for $\Tilde{N}^{(n+1)}$, we need to control the RHS of \eqref{reduced system N} in $H^1_{\delta+2}$ :
\begin{itemize}
    \item for the term $e^{-2\gamma^{(n)}}N^{(n)}|H^{(n)}|^2$, we do exactly the same calculations as in \eqref{idem1}, but in $H^1_{\delta+2}$ instead of $H^2_{\delta+2}$. In contrast to \eqref{idem1}, here we have less liberty to bound the term $|H^{(n)}|^2$ (because we need $C_i$ and not $C_i^2$ bounds), therefore we use \eqref{HR H 1} and \eqref{propsmall H} to write 
    \begin{equation*}
        \l e^{-2\gamma^{(n)}}N^{(n)}\left|H^{(n)}\right|^2\r_{H^1_{\delta+2}} \lesssim\left\| \left|H^{(n)}\right|^2\right\|_{H^1_{\delta+2+2\e+\frac{\delta+1}{2}}}\lesssim \left\|H^{(n)} \right\|_{H^1_{\delta+1}}\left\|H^{(n)} \right\|_{H^2_{\delta+1}}\lesssim \e^2C_i.
    \end{equation*}
    \item for the term $e^{2\gamma^{(n)}}N^{(n)}( \tau^{(n)})^2$, we note that $\tau^{(n)}$ and $H^{(n)}$ satisfy the exact same estimate (according to \eqref{HR H 1}, \eqref{HR tau 1}, \eqref{propsmall H} and \eqref{propsmall tau}), except for a slight difference of weights ($\delta'$ instead of $\delta$) and constants ($A_1$ compared to 2). Therefore we treat this term exactly as the previous one and omit the details.
    \item we now discuss the term $\frac{e^{2\gamma^{(n)}}}{N^{(n)}}\left(e_0^{(n-1)}\ffi^{(n)}\right)^2$. Since the smallness for $e_0^{(n-1)}\ffi^{(n)}$ is at the $L^4$-level (thanks to \eqref{propsmall fi}), any spatial derivative of $e_0^{(n-1)}\ffi^{(n)}$ destroys the $\e$-smallness, and therefore we have to be precise. Thanks to \eqref{useful gamma 1a}, we can forget the $e^{2\gamma^{(n)}}$ factor, thanks to \eqref{HR N 1} we have $\left| \frac{1}{N^{(n)}}\right|\lesssim 1$ (we also forget about $\nabla(\chi\ln)$) and thanks to \eqref{HR N 2} and the embedding $H^2_{\delta+1}\xhookrightarrow{}L^{\infty}$ we have $\left\|\nabla \Tilde{N}^{(n)}\right\|_{L^{\infty}}\lesssim C_i$ :
    \begin{align*}
        \left\|\frac{e^{2\gamma^{(n)}}}{N^{(n)}}\left(e_0^{(n-1)}\ffi^{(n)}\right)^2 \right\|_{H^1_{\delta+2}}&\lesssim \left\|\left( e_0^{(n-1)}\ffi^{(n)}\right)^2 \right\|_{L^2}\left( 1+ \left\|\nabla \Tilde{N}^{(n)}\right\|_{L^{\infty}}\right)
        +\left\|e_0^{(n-1)}\ffi^{(n)}\nabla\left( e_0^{(n-1)}\ffi^{(n)}\right) \right\|_{L^2}\\
        &\lesssim \left\| e_0^{(n-1)}\ffi^{(n)} \right\|_{L^4}^2\left( 1+ \left\|\nabla \Tilde{N}^{(n)}\right\|_{L^{\infty}}\right)+\l e_0^{(n-1)}\ffi^{(n)} \r_{L^4}\l e_0^{(n-1)}\ffi^{(n)} \r_{H^2},
    \end{align*}
    where in the last inequality we used Hölder's inequality and the Sobolev injection $H^1\xhookrightarrow{}L^4$. We now use \eqref{propsmall fi} and \eqref{HR fi 1} to obtain :
    \begin{align*}
        \left\|\frac{e^{2\gamma^{(n)}}}{N^{(n)}}\left(e_0^{(n-1)}\ffi^{(n)}\right)^2 \right\|_{H^1_{\delta+2}}&\lesssim \e^2(1+C_i)+\e A_0C_i\lesssim \e C(A_0)C_i.
    \end{align*}
    \item the term $\frac{e^{2\gamma^{(n)}-4\ffi^{(n)}}}{N^{(n)}}\left(e_0^{(n-1)}\omega^{(n)}\right)^2$ is handled in a similar way, using first \eqref{useful ffi 1} to get ride of the $e^{-4\ffi^{(n)}}$ factor, and then using \eqref{HR omega 1} instead of \eqref{HR fi 1}.
\end{itemize}
This concludes the proof of the estimate $\left\| \Tilde{N}^{(n+1)}\right\|_{H^3_{\delta}}\lesssim \e C(A_1)C_i$. 
\par\leavevmode\par
We now turn to the estimate for $\dr_tN^{(n+1)}$, including both $\dr_tN_a^{(n+1)}$ and $\dr_t\Tilde{N}^{(n+1)}$. Since the RHS of \eqref{reduced system N} is differentiable in $t$, it is easy to see that $\dr_tN^{(n+1)}=\dr_tN_a^{(n+1)}\chi\ln+\dr_t\Tilde{N}^{(n+1)}$ is the solution given by Corollary \ref{mcowens 2} to the equation 
\begin{equation*}
    \Delta f=\dr_t(\text{RHS of \eqref{reduced system N}}).
\end{equation*}
Therefore, to finish the proof of \eqref{HR+1 N 2}, it suffices to bound the integral of $\dr_t(\text{RHS of \eqref{reduced system N}})$ with respect to $\d x$ and to bound $\dr_t(\text{RHS of \eqref{reduced system N}})$ in $L^2_{\delta+2}$.

Since the estimate for $\dr_t\tau^{(n)}$ are worse than those for $\dr_tH^{(n)}$, and those for $\tau^{(n)}$ and $H^{(n)}$ are similar, we will treat the term $\dr_t \left( e^{2\gamma^{(n)}}N^{(n)}( \tau^{(n)})^2\right) $ and leave the easier term  $\dr_t \left( e^{2\gamma^{(n)}}N^{(n)}| H^{(n)}|^2\right) $ to the reader. We use \eqref{useful gamma 1a} for the $e^{2\gamma^{(n)}}$ factor and the fact that $|\chi\ln|\lesssim \langle x\rangle^{\e}$ :
\begin{align*}
    \left\|\dr_t \left( e^{2\gamma^{(n)}}N^{(n)}( \tau^{(n)})^2\right) \right\|_{L^2_{\delta+2}} &\lesssim\left\|N^{(n)} \right\|_{C^0_{\e}} \left(
    \left\|\tau^{(n)}\dr_t\tau^{(n)} \right\|_{L^2_{\delta+2+\e}}+\left\|\dr_t\Tilde{\gamma}^{(n)}( \tau^{(n)})^2 \right\|_{L^2_{\delta+2+\e}}\right)\\&\qquad +\left|\dr_tN_a^{(n)} \right|\left\| ( \tau^{(n)})^2\right\|_{L^2_{\delta+2+3\e}} +\left\| \dr_t\Tilde{N}^{(n)}( \tau^{(n)})^2\right\|_{L^2_{\delta+2+2\e}}
    \\& \lesssim\e^2C(A_3)C_i.
\end{align*}
where in the last inequality we use $\left\|N^{(n)} \right\|_{C^0_{\e}}\lesssim 1$ (which comes from \eqref{HR N 1}), $\left\| \dr_t\Tilde{\gamma}^{(n)}\right\|_{H^2_{\delta'+1}}\lesssim C_i$ (wich comes from \eqref{commutation estimate eq}), \eqref{HR N 2} and the product estimate together with \eqref{propsmall tau} (for $(\tau^{(n)})^2$). For $\tau^{(n)}\dr_t\tau^{(n)}$, we use the Hölder's inequality ($L^4_{\delta'+2}\times L^4_{\delta'+1}\xhookrightarrow{}L^2_{\delta+2+\e}$), the embedding $H^1_{\delta'+1}\xhookrightarrow{}L^4_{\delta'+1}$, \eqref{propsmall tau} and \eqref{HR tau 3}. 

We now turn to the compactly supported term $\dr_t\left(\frac{e^{2\gamma^{(n)}}}{N^{(n)}}\left(e_0^{(n-1)}\ffi^{(n)}\right)^2\right)$. We use \eqref{useful gamma 1a} for the $e^{2\gamma^{(n)}}$ factor and $\left|\frac{1}{N^{(n)}}\right|+\left\| \chi\ln\right\|_{L^{\infty}(B_{2R})}\lesssim 1$ :
\begin{align*}
    \left\| \dr_t\left(\frac{e^{2\gamma^{(n)}}}{N^{(n)}}\left(e_0^{(n-1)}\ffi^{(n)}\right)^2\right)\right\|_{L^2} &\lesssim \left\|e_0^{(n-1)}\ffi^{(n)}\dr_t\left(e_0^{(n-1)}\ffi^{(n)}\right) \right\|_{L^2}+\left\|\dr_t\Tilde{\gamma}^{(n)}\left(e_0^{(n-1)}\ffi^{(n)}\right)^2 \right\|_{L^2}\\&\qquad+\left\|\dr_t\Tilde{N}^{(n)}\left(e_0^{(n-1)}\ffi^{(n)}\right)^2 \right\|_{L^2}+\left|\dr_tN_a^{(n)}\right|\left\|\left(e_0^{(n-1)}\ffi^{(n)}\right)^2 \right\|_{L^2}
    \\&\lesssim \left\|e_0^{(n-1)}\ffi^{(n)} \right\|_{L^4}\left(C_i\left\|e_0^{(n-1)}\ffi^{(n)} \right\|_{L^4}+ \left\|\dr_t\left(e_0^{(n-1)}\ffi^{(n)}\right) \right\|_{H^1}\right)
    \lesssim \e C_i,
\end{align*}
where in the last inequality we use $\left\| \dr_t\Tilde{\gamma}^{(n)}\right\|_{H^2_{\delta'+1}}\lesssim C_i$, \eqref{HR N 2} and the embedding $L^4\times H^1\xhookrightarrow{}L^2$. The term $\dr_t\left(\frac{e^{2\gamma^{(n)}-4\ffi^{(n)}}}{N^{(n)}}\left(e_0^{(n-1)}\omega^{(n)}\right)^2\right)$ is handled in the same way, using \eqref{useful ffi 1} and \eqref{propsmall fi} to get rid of the $e^{-4\ffi^{(n)}}$. Combining all these estimates concludes the proof of \eqref{HR+1 N 2}.
\par\leavevmode\par
We now turn to the proof of \eqref{HR+1 N 3}. To obtain the $H^4_{\delta}$ bound for $\Tilde{N}^{(n+1)}$, we need to control the RHS of \eqref{reduced system N} in $H^2_{\delta+2}$. Since we already know that the RHS of \eqref{reduced system N} is in $H^1_{\delta+2}$ with bound $\e C(A_1)C_i$, it remains to bound the $L^2_{\delta+4}$ norm of the second derivative the RHS of \eqref{reduced system N} :
\begin{itemize}
    \item for the term $e^{-2\gamma^{(n)}}N^{(n)}|H^{(n)}|^2$, we first use \eqref{useful gamma 1}, and then the embedding $H^1_{\delta+1}\xhookrightarrow{}L^4_{\delta+1}$, \eqref{HR H 1}, \eqref{propsmall H}, \eqref{HR N 1}, \eqref{HR N 2} and \eqref{HR N 3} :
    \begin{align*}
        \l  \nabla^2\left(e^{-2\gamma^{(n)}}N^{(n)}|H^{(n)}|^2\right) \r_{L^2_{\delta+4}} &\lesssim \l\nabla^2N^{(n)}(H^{(n)})^2 \r_{L^2_{\delta+4}} + \l \nabla N^{(n)}H^{(n)}\nabla H^{(n)} \r_{L^2_{\delta+4}}  \\&\qquad+\l N^{(n)} H^{(n)} \nabla^2H^{(n)} \r_{L^2_{\delta+4}} +\l N^{(n)} (\nabla H^{(n)})^2 \r_{L^2_{\delta+4}} 
        \\& \lesssim\e^2 C(A_2) C_i^2
    \end{align*}
    where we used $L^\infty$ bounds for $N^{(n)}$, $\nabla N^{(n)}$ and $\nabla^2 N^{(n)}$ (see \eqref{HR N 1}, \eqref{HR N 2} and \eqref{HR N 3} respectively). Note that for $N$ the logarithmic growth is handled by adding a small weight. We also used \eqref{HR H 1} and \eqref{propsmall H} and the product law to handle the $H^{(n)}$ terms.
    \item for the term $e^{2\gamma^{(n)}}N^{(n)}( \tau^{(n)})^2$, we note that $\tau^{(n)}$ and $H^{(n)}$ satisfy the exact same estimate (according to \eqref{HR H 1}, \eqref{HR tau 1}), except for a slight difference of weights ($\delta'$ instead of $\delta$) and constants ($A_1$ compared to 2). Therefore we treat this term exactly as the previous one and omit the details.
    \item we next discuss the compactly supported term $\frac{e^{2\gamma^{(n)}}}{N^{(n)}}\left(e_0^{(n-1)}\ffi^{(n)}\right)^2$. We first use \eqref{useful gamma 1} :
    \begin{align*}
        \left\|\nabla^2\left( \frac{e^{2\gamma^{(n)}}}{N^{(n)}}\left(e_0^{(n-1)}\ffi^{(n)}\right)^2\right) \right\|_{L^2} &\lesssim \left\| e_0^{(n-1)}\ffi^{(n)}\nabla^2\left(e_0^{(n-1)}\ffi^{(n)}\right) \right\|_{L^2}+\left\|\left(\nabla\left( e_0^{(n-1)}\ffi^{(n)}\right)\right)^2 \right\|_{L^2}\\&+\left\|\nabla N^{(n)} e_0^{(n-1)}\ffi^{(n)}\nabla\left(e_0^{(n-1)}\ffi^{(n)}\right)\right\|_{L^2}+\left\|\nabla^2N^{(n)}\left( e_0^{(n-1)}\ffi^{(n)}\right)^2 \right\|_{L^2}\\&+\left\|\left( \nabla N^{(n)}\right)^2\left( e_0^{(n-1)}\ffi^{(n)}\right)^2 \right\|_{L^2} \\& \lesssim\e^2C(A_2)C_i^2+C(A_0)C_i^2,
    \end{align*}
    where we used \eqref{HR N 2}, \eqref{HR fi 1} and \eqref{propsmall fi}. The idea is to use $L^{\infty}$-bounds for $\nabla N^{(n)}$ and $\nabla^2 N^{(n)}$ and the Hölder's inequality to deal with the product of terms depending on $\ffi^{(n)}$.
    \item the term $\frac{e^{2\gamma^{(n)}-4\ffi^{(n)}}}{N^{(n)}}\left(e_0^{(n-1)}\omega^{(n)}\right)^2$ is handled in a similar way, but we have to be careful about the case where two derivatives hit $e^{-4\ffi^{(n)}}$. Using \eqref{useful ffi 1}, \eqref{useful gamma 1a} and $1\lesssim N^{(n)}$, this leads to estimating the following term :
    \begin{align*}
    \l\nabla^2\ffi^{(n)}\left(e_0^{(n-1)}\omega^{(n)}\right)^2 \r_{L^2}\lesssim \l \nabla^2\ffi^{(n)}\r_{L^4}\l e_0^{(n-1)}\omega^{(n)}\r_{L^4}\l e_0^{(n-1)}\omega^{(n)}\r_{L^\infty}\lesssim \e C(A_0)C_i^2,
    \end{align*}
    where we used \eqref{HR fi 1}, \eqref{HR omega 1} and \eqref{propsmall fi}.
  
\end{itemize}
This concludes the proof of \eqref{HR+1 N 3}.
\end{proof}

The following lemma will allow us to estimate the $H^1$ norm of solutions of elliptic equations.

\begin{lem}
Let $\xi=(\xi^1,\xi^2)$ a vector field on $\R^2$, for all $\sigma<1$ the following holds :
\begin{align}
    \l\nabla\xi \r_{L^2_{\sigma}} \lesssim \l L\xi\r_{L^2_{1}}\label{killing operator H1}.
\end{align}
\end{lem}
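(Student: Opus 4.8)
The plan is: by a routine density argument it suffices to treat $\xi\in C^\infty_0(\R^2;\R^2)$, the general case following on the completion of $C^\infty_0$. The starting point is the pointwise algebraic identity, special to dimension two,
\[
|L\xi|^2 = 2|\nabla\xi|^2 - 4\,\det(\nabla\xi),\qquad \det(\nabla\xi):=\dr_1\xi^1\dr_2\xi^2-\dr_1\xi^2\dr_2\xi^1,
\]
which is immediate from the explicit form of $L$ (one has $(L\xi)_{11}=-(L\xi)_{22}=\dr_1\xi^1-\dr_2\xi^2$ and $(L\xi)_{12}=(L\xi)_{21}=\dr_1\xi^2+\dr_2\xi^1$). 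The Jacobian determinant is a null Lagrangian, $\det(\nabla\xi)=\dr_1(\xi^1\dr_2\xi^2)-\dr_2(\xi^1\dr_1\xi^2)$, so $\int_{\R^2}\det(\nabla\xi)\,\d x=0$ and integrating the identity yields the exact bound $\|\nabla\xi\|_{L^2}\lesssim\|L\xi\|_{L^2}$. For $\sigma\leq 0$ this already finishes the proof: since $\langle x\rangle\geq 1$ we have $\langle x\rangle^{\sigma}\leq 1$, whence $\|\nabla\xi\|_{L^2_\sigma}\leq\|\nabla\xi\|_{L^2}\lesssim\|L\xi\|_{L^2}\leq\|L\xi\|_{L^2_1}$.

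It remains to treat $0<\sigma<1$, which is the real content. Componentwise $\xi_j$ solves the Poisson equation $\Delta\xi_j=\dr^i(L\xi)_{ij}$ (the identity $\dr^i(L\xi)_{ij}=\Delta\xi_j$ is exactly the one already used for $H=LY$ in the proof of Lemma \ref{CI sur H}). Since the right-hand side is smooth, compactly supported and of vanishing integral (being the divergence of a compactly supported field), and since $\xi_j\in C^\infty_0$ decays at infinity, $\xi_j$ must coincide with the Newtonian potential of $\dr^i(L\xi)_{ij}$; integrating by parts once in the representation formula then expresses each $\dr_k\xi_j$ as a fixed linear combination of classical Calderón--Zygmund operators (convolution with kernels homogeneous of degree $-2$, smooth and of vanishing mean on circles) applied to the components of $L\xi$. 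Since $\langle x\rangle^{2\sigma}$ is a Muckenhoupt $A_2$ weight on $\R^2$ precisely when $|\sigma|<1$, the weighted Calderón--Zygmund theory gives $\|\nabla\xi\|_{L^2_\sigma}\lesssim\|L\xi\|_{L^2_\sigma}$, and one concludes with $\|L\xi\|_{L^2_\sigma}\leq\|L\xi\|_{L^2_1}$, again because $\langle x\rangle^{\sigma}\leq\langle x\rangle$. (Equivalently, this is the first-order part of the a priori estimate for $\Delta$ at the \emph{non-exceptional} weight index $\sigma-1\in(-1,0)$, in the spirit of the weighted elliptic theory of Appendix \ref{appendix B}.)

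The main difficulty is the borderline character of the weight $\sigma=1$. The estimate recovers the \emph{full} gradient of $\xi$ from $L\xi$, i.e.\ it is a first-order a priori bound with a gain of one derivative, and such a bound for $\Delta$ degenerates exactly at the weight index $0$, equivalently at $\sigma=1$; this is why the hypothesis $\sigma<1$ is needed and why the mismatch between the weight $1$ on the right and an arbitrary $\sigma<1$ on the left cannot be improved. The null-Lagrangian identity is what disposes of the range $\sigma\leq 0$ for free, so that the weighted elliptic (equivalently, weighted Calderón--Zygmund) input is only called upon for $0<\sigma<1$.
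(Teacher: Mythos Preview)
Your proof is correct and takes a genuinely different route from the paper. The paper's argument is more elementary but longer: it multiplies the equation $\Delta\xi_j=\dr^i(L\xi)_{ij}$ by $\langle x\rangle^{2\sigma}\xi_j$ and integrates by parts to obtain a weighted energy identity, which after Cauchy--Schwarz and absorption yields $\|\nabla\xi\|_{L^2_\sigma}\lesssim\|\xi\|_{L^2_{\sigma-1}}+\|L\xi\|_{L^2_\sigma}$; the lower-order term $\|\xi\|_{L^2_{\sigma-1}}$ is then controlled by choosing $r>2$ with $\sigma<2/r$, using the Newtonian representation $\xi\sim (L\xi)*\frac{1}{|\cdot|}$, Hardy--Littlewood--Sobolev $\|\xi\|_{L^r}\lesssim\|L\xi\|_{L^{2r/(2+r)}}$, and the weighted H\"older embedding $L^2_1\hookrightarrow L^{2r/(2+r)}$. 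By contrast you use the pointwise null-Lagrangian identity $|L\xi|^2=2|\nabla\xi|^2-4\det(\nabla\xi)$ to dispose of $\sigma\leq 0$ outright, and for $0<\sigma<1$ invoke weighted Calder\'on--Zygmund theory (Riesz transforms with $A_2$ weight $\langle x\rangle^{2\sigma}$) to get the sharper $\|\nabla\xi\|_{L^2_\sigma}\lesssim\|L\xi\|_{L^2_\sigma}$. Your argument is cleaner and gives a slightly stronger intermediate conclusion, at the cost of importing Muckenhoupt-weighted CZ theory as a black box; the paper's argument stays within the tools already set up in its Appendix~\ref{appendix B} (McOwen-type weighted estimates and HLS). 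Note also that your CZ step alone covers all $|\sigma|<1$, so the null-Lagrangian portion is really only needed for $\sigma\leq -1$, though splitting at $\sigma=0$ as you do is harmless.
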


\begin{proof}
We set $A_{ij}\vcentcolon=(L\xi)_{ij}$ and take the divergence to obtain $\Delta\xi^i=\delta^{ij}\dr^kA_{kj}$. Let $w(x)=\langle x\rangle^{2\sigma}$. We multiply this equation by $w\xi^{\ell}$, contract it with $\delta_{i\ell}$ and integrate over $\R^2$ to get (after integrating by parts) :
\begin{equation*}
    \delta_{i\ell}\int_{\R^2}\nabla(w\xi^{\ell})\cdot\nabla\xi^i\d x=\int_{\R^2}\dr^k(w\xi^{\ell})A_{k\ell}\d x,
\end{equation*}
which becomes 
\begin{equation*}
    \l \nabla\xi\r_{L^2_{\sigma}}^2=\frac{1}{2} \int_{\R^2}\Delta w|\xi|^2\d x + \int_{\R^2}w\dr^k\xi^{\ell}A_{k\ell}\d x +\int_{\R^2}\dr^k w\xi^{\ell}A_{k\ell}\d x .
\end{equation*}
Using the Cauchy-Schwarz inequality and the trick $ab\leq \eta a^2+\frac{1}{\eta} b^2$, we have
\begin{equation*}
    \int_{\R^2}w\dr^k\xi^{\ell}A_{k\ell}\d x\lesssim \l\nabla\xi \r_{L^2_{\sigma}} \l A \r_{L^2_{\sigma}}\lesssim \eta\l\nabla\xi \r_{L^2_{\sigma}}^2+\frac{1}{\eta}\l A \r_{L^2_{\sigma}}^2.
\end{equation*}
We note that $|\nabla w|\lesssim \langle x \rangle^{2\sigma-1}$ and $|\Delta w|\lesssim \langle x \rangle^{2\sigma-2}$, which imply that
\begin{equation*}
    \int_{\R^2}\dr^k w\xi^{\ell}A_{k\ell}\d x \lesssim \l\xi \r_{L^2_{\sigma-1}} \l A \r_{L^2_{\sigma}}\lesssim \l\xi \r_{L^2_{\sigma-1}}^2+\l A \r_{L^2_{\sigma}}^2\quad\text{and}\quad
    \frac{1}{2} \int_{\R^2}\Delta w|\xi|^2\d x \lesssim \l\xi \r_{L^2_{\sigma-1}}^2.
\end{equation*}
Thus,
\begin{equation*}
    \l \nabla\xi\r_{L^2_{\sigma}}^2 \lesssim \l\xi \r_{L^2_{\sigma-1}}^2+\left( 1+\frac{1}{\eta}\right)\l A \r_{L^2_{\sigma}}^2+\eta\l\nabla\xi \r_{L^2_{\sigma}}^2.
\end{equation*}
We take $\eta$ small enough in order to absorb $\eta\l\nabla\xi \r_{L^2_{\sigma}}^2$ into the LHS. Taking the square root of the inequality we obtained, we get :
\begin{equation*}
    \l \nabla\xi\r_{L^2_{\sigma}} \lesssim \l\xi \r_{L^2_{\sigma-1}}+\l A \r_{L^2_{\sigma}}.
\end{equation*}
It remains to show that $\l\xi \r_{L^2_{\sigma-1}}\lesssim \l A\r_{L^2_1}$. For that, we start by using Lemma \ref{prop holder 2} : $\sigma<1$ so there exists $r>2$ such that $\sigma<\frac{2}{r}<1$. According to Lemma \ref{prop holder 2}, we have $\l\xi \r_{L^2_{\sigma-1}}\lesssim \l\xi\r_{L^r}$. Recalling that $\Delta\xi^i=\delta^{ij}\dr^kA_{kj}$ we have :
\begin{equation*}
    \xi^i(x)=\frac{\delta^{ij}}{2\pi}\int_{\R^2}\ln|x-y|\dr^kA_{kj}\d y=-\frac{\delta^{ij}}{2\pi}\int_{\R^2}\frac{y^k-x^k}{|x-y|^2}A_{kj}\d y.\
\end{equation*}
Therefore we can use the Hardy-Littlewood-Sobolev inequality (Proposition \ref{prop HLS}), that
\begin{equation}
    \l\xi \r_{L^r}\lesssim \l A*\frac{1}{|\cdot|} \r_{L^r} \lesssim \l A\r_{L^{\frac{2r}{2+r}}} .
\end{equation}
We again use Lemma \ref{prop holder 2} to get the embedding $L^2_1\xhookrightarrow{}L^{\frac{2r}{2+r}}$ (recall that $r>2$), which conclude the proof of \eqref{killing operator H1}.

\end{proof}

\begin{prop}\label{hr+1 beta prop}
For $n\geq 2$, the following estimates hold :
\begin{align}
        \left\|\beta^{(n+1)}\right\|_{H^2_{\delta'}}&\lesssim \e^2\label{HR+1 beta 1},\\
        \left\|\beta^{(n+1)}\right\|_{H^3_{\delta'}}&\lesssim C_i\label{HR+1 beta 2},\\
        \left\|\nabla e_0^{(n)}\beta^{(n+1)}\right\|_{L^2_{\delta'+1}}&\lesssim \e C_i\label{HR+1 beta 2.5},\\
        \left\| e_0^{(n)}\beta^{(n+1)}\right\|_{H^2_{\delta'}}&\lesssim A_0C_i\label{HR+1 beta 3},\\
        \left\| e_0^{(n)}\beta^{(n+1)}\right\|_{H^3_{\delta'}}&\lesssim A_3C_i^2.\label{HR+1 beta 4}
    \end{align}
\end{prop}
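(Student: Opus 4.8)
The approach mirrors Lemma~\ref{lem CI beta}. Taking the divergence of \eqref{reduced system beta} turns it into the genuine elliptic equation
\begin{equation*}
\Delta\beta^{(n+1)}_j=\dr^i\!\left(2e^{-2\gamma^{(n)}}N^{(n)}H^{(n)}_{ij}\right),
\end{equation*}
whose right-hand side has vanishing integral by Lemma~\ref{divergence nulle} (once we know $e^{-2\gamma^{(n)}}N^{(n)}H^{(n)}\in H^2_{\delta'+1}$, which the estimates below provide). Theorem~\ref{mcowens 1} then produces $\beta^{(n+1)}$, and the harmonic--traceless--divergence-free argument of Lemma~\ref{lem CI beta} recovers the exact identity $L\beta^{(n+1)}=2e^{-2\gamma^{(n)}}N^{(n)}H^{(n)}$. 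Thus \eqref{HR+1 beta 1} and \eqref{HR+1 beta 2} reduce to estimating $\|e^{-2\gamma^{(n)}}N^{(n)}H^{(n)}\|_{H^1_{\delta'+1}}$ and $\|e^{-2\gamma^{(n)}}N^{(n)}H^{(n)}\|_{H^2_{\delta'+1}}$: using \eqref{useful gamma 1} (and \eqref{useful gamma 2} at the third-derivative level, available since $\|\nabla\Tilde\gamma^{(n)}\|_{H^2_{\delta'+1}}\leq 8C_i$ by \eqref{HR gamma 1}) to absorb $e^{-2\gamma^{(n)}}$ into a harmless weight shift of size $2\alpha\lesssim\e^2$, handling the $\chi\ln$ part of $N^{(n)}$ exactly as in Lemma~\ref{lem CI N}, and invoking the $L^\infty$ bound from \eqref{HR N 1}, one is left with the product of $N^{(n)}$-factors against $\|H^{(n)}\|_{H^1_{\delta+1}}\lesssim\e^2$ (from \eqref{propsmall H}) for \eqref{HR+1 beta 1}, and against $\|H^{(n)}\|_{H^2_{\delta+1}}\leq 2C_i$ (from \eqref{HR H 1}) for \eqref{HR+1 beta 2}.

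For the $e_0^{(n)}$-estimates the essential point is that one must \emph{not} write $e_0^{(n)}\beta^{(n+1)}=\dr_t\beta^{(n+1)}-\beta^{(n)}\cdot\nabla\beta^{(n+1)}$ and bound each term, since the transport term would cost a derivative on $\beta^{(n+1)}$ beyond the available $H^3_{\delta'}$ regularity. Instead, since $L$ has constant coefficients it commutes with $\dr_t$, and
\begin{equation*}
L\!\left(e_0^{(n)}\beta^{(n+1)}\right)=2\,e_0^{(n)}\!\left(e^{-2\gamma^{(n)}}N^{(n)}H^{(n)}\right)-\left[L,\beta^{(n)}\cdot\nabla\right]\beta^{(n+1)},
\end{equation*}
where the commutator is a sum of terms of the schematic form $\nabla\beta^{(n)}\cdot\nabla\beta^{(n+1)}$, carrying only first derivatives. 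One then applies the divergence + Theorem~\ref{mcowens 1} scheme to $e_0^{(n)}\beta^{(n+1)}$ itself (the integral condition again holding by Lemma~\ref{divergence nulle}), so that \eqref{HR+1 beta 3}--\eqref{HR+1 beta 4} follow from bounds on $\|L(e_0^{(n)}\beta^{(n+1)})\|_{H^1_{\delta'+1}}$ and $\|L(e_0^{(n)}\beta^{(n+1)})\|_{H^2_{\delta'+1}}$. Note that $e_0^{(n)}$ falling on $H^{(n)}$ produces $e_0^{(n)}H^{(n)}$ with matching index, directly controlled by \eqref{HR H 2}--\eqref{HR H 3}; the terms where $e_0^{(n)}$ hits $e^{-2\gamma^{(n)}}$ or $N^{(n)}$ are multiplied by the small factor $H^{(n)}$ (and for $e_0^{(n)}\gamma^{(n)}$ one converts $e_0^{(n)}=e_0^{(n-1)}+(\beta^{(n-1)}-\beta^{(n)})\cdot\nabla$, using \eqref{propsmall eo gamma} and $\|\beta^{(k)}\|_{H^2_{\delta'}}\leq\e$); and the commutator and the transport correction $\dr_tH^{(n)}-e_0^{(n)}H^{(n)}=\beta^{(n)}\cdot\nabla H^{(n)}$ each carry the small factor $\|\beta^{(n)}\|_{H^2_{\delta'}}\leq\e$. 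Hence the dominant contribution is exactly $e^{-2\gamma^{(n)}}N^{(n)}e_0^{(n)}H^{(n)}$, which gives the bounds $A_0C_i$ in $H^1_{\delta+1}$ and $A_3C_i^2$ in $H^2_{\delta+1}$ stated in \eqref{HR+1 beta 3}--\eqref{HR+1 beta 4}, all other terms being of order $\e\,C(A_i)C_i$ or $C(A_i)C_i^2$ with $i\leq 2$ and hence absorbable.

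The remaining estimate \eqref{HR+1 beta 2.5} is handled separately via the Hardy-type inequality \eqref{killing operator H1} (applicable since $\delta'+1<1$), which yields $\|\nabla e_0^{(n)}\beta^{(n+1)}\|_{L^2_{\delta'+1}}\lesssim\|L(e_0^{(n)}\beta^{(n+1)})\|_{L^2_1}$; here the single power of $\e$ is extracted from the fact that the leading term $e^{-2\gamma^{(n)}}N^{(n)}e_0^{(n)}H^{(n)}$ is \emph{small} in $L^2_{1+\lambda}\hookrightarrow L^2_1$ by \eqref{HR H 1.5}, while every other term contains one of the small factors described above.

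\textbf{Main obstacle.} The bookkeeping of the three scales $\e\ll C_i\ll A_i$ in the $e_0^{(n)}$-estimates: at each term one must verify either that a small factor ($\e$ or $\|H^{(n)}\|_{H^1_{\delta+1}}\lesssim\e^2$) is present --- which is what makes the gain of $\e$ in \eqref{HR+1 beta 2.5} possible --- or, when no smallness is available, that the unavoidable $C_i$ or $C_i^2$ growth comes with a constant strictly below the target level, so that the eventual choice $C(A_i)\ll A_{i+1}$ closes the induction. The secondary technical point is, as noted, that the naive product-rule expansion of $e_0^{(n)}\beta^{(n+1)}$ loses a derivative and must be replaced by the commuted elliptic equation above.
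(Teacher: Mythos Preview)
Your proposal is correct and follows essentially the same route as the paper. The only organizational differences are cosmetic: for \eqref{HR+1 beta 2.5} the paper first splits $e_0^{(n)}\beta^{(n+1)}=\dr_t\beta^{(n+1)}-\beta^{(n)}\cdot\nabla\beta^{(n+1)}$, bounds the transport piece directly in $H^1_{\delta'}$ by $\e^2$, and applies \eqref{killing operator H1} to $\dr_t\beta^{(n+1)}$ (using $L\dr_t\beta^{(n+1)}=2\dr_t(e^{-2\gamma^{(n)}}N^{(n)}H^{(n)})$ and \eqref{HR H 1.5}); and for \eqref{HR+1 beta 3}--\eqref{HR+1 beta 4} the paper commutes $e_0^{(n)}$ directly with $\Delta$ rather than with $L$, obtaining $\Delta(e_0^{(n)}\beta^{(n+1)})=I+II$ with $II=[\Delta,e_0^{(n)}]\beta^{(n+1)}$ schematically $\nabla\beta^{(n)}\nabla^2\beta^{(n+1)}+\nabla^2\beta^{(n)}\nabla\beta^{(n+1)}$. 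Since $\dr^i(L\xi)_{ij}=\Delta\xi_j$, your $L$-commutation and the paper's $\Delta$-commutation produce the same terms after taking the divergence, and your identification of $e^{-2\gamma^{(n)}}N^{(n)}e_0^{(n)}H^{(n)}$ as the leading contribution (controlled by \eqref{HR H 2} and \eqref{HR H 3}) is exactly what the paper finds.
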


\begin{proof}
In view of Proposition \ref{prop propsmall} and \eqref{HR N 1}, the existence and uniqueness of $\beta^{(n+1)}$ and the estimate \eqref{HR+1 beta 1} can be proven in exactly the same manner as in Lemma \ref{CI sur beta} and we omit the details.
\par\leavevmode\par
We begin by the proof of \eqref{HR+1 beta 2}. We take the divergence of \eqref{reduced system beta} to get
\begin{equation}
    \Delta(\beta^{(n+1)})^i=2\delta^{i\ell}\delta^{jk}\dr_k\left( e^{-2\gamma^{(n)}}N^{(n)}(H^{(n)})_{j\ell} \right).\label{laplacien beta}
\end{equation}
Note that the RHS has 0 mean (by Lemma \ref{divergence nulle}) and therefore by Theorem \ref{mcowens 1}, in order to prove \eqref{HR+1 beta 2}, it suffices to bound the RHS of \eqref{laplacien beta} in $H^1_{\delta'+2}$ by $CC_i$. Using \eqref{useful gamma 1}, \eqref{HR H 1}, \eqref{HR N 1} and $\e\left|\chi\ln \right|\lesssim \langle x \rangle^{\frac{\e}{2}}$ and taking $\e$ small enough, we get :
\begin{align}
    \left\|\dr_k\left( e^{-2\gamma^{(n)}}N^{(n)}(H^{(n)})_{j\ell}\right)\right\|_{H^1_{\delta'+2}}&\lesssim \left\|N^{(n)}H^{(n)}\right\|_{H^2_{\delta'+2\e^2+1}} \label{NnHn}\\
    &\lesssim \left\| H^{(n)}\right\|_{H^2_{\delta+1}}\left( 1+ \left\|\Tilde{N}^{(n)} \right\|_{H^2_{\delta}} \right)+
    \left\|  H^{(n)}\right\|_{H^2_{\delta'+2\e^2+\frac{\e}{2}+1}}\nonumber\\& \lesssim C_i.\nonumber
\end{align}

We now turn to the proof of \eqref{HR+1 beta 2.5}. We have $e_0^{(n)}\beta^{(n+1)}=\dr_t\beta^{(n+1)}-\beta^{(n)}\cdot\nabla\beta^{(n+1)}$. Using \eqref{HR beta 1}, \eqref{HR+1 beta 1} and the product estimate we have
\begin{equation*}
    \l \beta^{(n)}\cdot\nabla\beta^{(n+1)}\r_{H^1_{\delta'}} \lesssim \l\beta^{(n)} \r_{H^2_{\delta'}}\l \beta^{(n+1)}\r_{H^2_{\delta'}} \lesssim \e^2.
\end{equation*}
Applying $\dr_t$ to \eqref{reduced system beta}, $\dr_t\beta^{(n+1)}$ satisfy $(L\dr_t\beta^{(n+1)})_{ij}=2\dr_t(e^{-2\gamma^{(n)}}N^{(n)}(H^{(n)})_{ij})$. We apply \eqref{killing operator H1} with $\sigma =\delta'+1$ and use $|\chi\ln|\lesssim \langle x\rangle^{\eta}$ (where $\eta$ is as small as we want) :
\begin{align*}
    \l\nabla\dr_t\beta^{(n+1)} \r_{L^2_{\delta'+1}} &\lesssim \l\dr_t\left(e^{-2\gamma^{(n)}}N^{(n)}(H^{(n)})_{ij}\right) \r_{L^2_{1}}
    \\&\lesssim \l \dr_t\Tilde{\gamma}^{(n)}H^{(n)}\r_{L^2_{1+2\e^2+\eta}}+\l \dr_t\Tilde{\gamma}^{(n)}\Tilde{N}^{(n)}H^{(n)}\r_{L^2_{1+2\e^2}} + \l\dr_t N_a^{(n)}H^{(n)} \r_{L^2_{1+2\e^2+\eta}} \\&\qquad+ \l \dr_t\Tilde{N}^{(n)}H^{(n)} \r_{L^2_{1+2\e^2}} + \l \dr_tH^{(n)} \r_{L^2_{1+2\e^2+\eta}}+ \l \Tilde{N}^{(n)}\dr_tH^{(n)} \r_{L^2_{1+2\e^2}}
    \\& \lesssim  \e(1+ C_i),
\end{align*}
where used \eqref{HR gamma 1}, \eqref{HR N 1}, \eqref{HR N 2}, \eqref{propsmall H} and \eqref{HR H 1.5} (with $\e$ and $\eta$ small enough, depending on $\lambda$).

We now turn to the proof of \eqref{HR+1 beta 3} and \eqref{HR+1 beta 4}. Applying $e_0^{(n)}$ to \eqref{laplacien beta}, we show that the following equation is satisfied :
\begin{equation}
    \Delta(e_0^{(n)}\beta^{(n+1)})^i=2\delta^{i\ell}\delta^{jk}e_0^{(n)}\dr_k\left( e^{-2\gamma^{(n)}}N^{(n)}(H^{(n)})_{j\ell} \right)+\left[ \Delta,e_0^{(n)} \right](\beta^{(n+1)})^i=\vcentcolon I+II.\label{laplacien e0 beta}
\end{equation}
It's easy to check the RHS of \eqref{laplacien e0 beta} has 0 mean, as a consequence we can apply Theorem \ref{mcowens 1}, so that in order to prove the estimate \eqref{HR+1 beta 3}, it suffices to bound the RHS of \eqref{laplacien e0 beta} in $L^2_{\delta'+2}$ by $C_i$ :
\begin{itemize}
    \item For $I$, we first commute $\nabla$ and $e_0^{(n)}$ :
    \begin{equation}
        | I| \lesssim \left|\nabla e_0^{(n)}\left( e^{-2\gamma^{(n)}}N^{(n)}H^{(n)} \right) \right|+\left| \nabla\beta^{(n)}\right| \left|\nabla\left( e^{-2\gamma^{(n)}}N^{(n)}H^{(n)} \right) \right|.\label{commutation estimate''}
    \end{equation}
    It implies, using \eqref{useful gamma 1} :
    \begin{align}
        \| I\|_{L^2_{\delta'+2}} & \lesssim \left\| (e_0^{(n)}\gamma^{(n)})N^{(n)}H^{(n)} \right\|_{H^1_{\delta'+2\e^2+1}}+\left\|(e_0^{(n)}N^{(n)})H^{(n)} \right\|_{H^1_{\delta'+2\e^2+1}}\label{I L^2}\\&\qquad+\left\|N^{(n)}(e_0^{(n)}H^{(n)}) \right\|_{H^1_{\delta'+2\e^2+1}}+\left\|\nabla\beta^{(n)} \right\|_{L^{\infty}}  \left\|N^{(n)}H^{(n)} \right\|_{H^1_{\delta'+2\e^2+1}}.\nonumber
    \end{align}
    Thanks to \eqref{HR beta 2} and \eqref{propsmall H}, the last term is bounded by $\e A_0 C_i$. Thanks to \eqref{HR H 2}, the third term is handled as $N^{(n)}H^{(n)}$ in \eqref{NnHn} and is therefore bounded by $A_0C_i$. Thanks to \eqref{HR N 1}, \eqref{HR N 2}, \eqref{HR beta 1}, \eqref{HR H 1} and \eqref{propsmall H}, the second term is bounded by $C_i$. The first term is similar to the second, and actually easier to bound, so we omit the details. We have shown that $\| I\|_{L^2_{\delta'+2}}\lesssim A_0C_i$.
    \item For $II$, we use the following commutation estimate : 
    \begin{equation}
        \left| \left[ \Delta,e_0^{(n)} \right](\beta^{(n+1)})^i \right| \lesssim \left|\nabla\beta^{(n)} \right|\left|\nabla^2\beta^{(n+1)} \right|+\left|\nabla^2\beta^{(n)} \right|\left|\nabla\beta^{(n+1)} \right|.\label{commutation estimate'}
    \end{equation}
    Now, using in addition \eqref{HR beta 1}, \eqref{HR beta 2}, \eqref{HR+1 beta 1} and \eqref{HR+1 beta 2} and the product estimate :
    \begin{align*}
        \left\| II\right\|_{L^2_{\delta'+2}} &\lesssim \left\|\nabla\beta^{(n)} \right\|_{H^1_{\delta'+1}}\left\|\nabla^2\beta^{(n+1)} \right\|_{H^1_{\delta'+2}}+\left\| \nabla^2\beta^{(n)}\right\|_{H^1_{\delta'+2}}\left\|\nabla\beta^{(n+1)} \right\|_{H^1_{\delta'+1}} \\&\lesssim\e A_0C_i.
    \end{align*}
\end{itemize}
Similarly, in order to prove \eqref{HR+1 beta 4}, we have to bound the RHS of \eqref{laplacien e0 beta} in $H^1_{\delta'+2}$ by $CC_i^2$ :
\begin{itemize}
    \item For $I$, we again use \eqref{commutation estimate''}. Instead of using $L^{\infty}$ bounds for $\nabla\beta^{(n)}$, we use the product estimate and then \eqref{useful gamma 1}. For the terms where $e_0^{(n)}$ appears, we simply use \eqref{useful gamma 1} :
    \begin{align*}
        \| I\|_{H^1_{\delta'+2}} & \lesssim \left\| (e_0^{(n)}\gamma^{(n)})N^{(n)}H^{(n)} \right\|_{H^2_{\delta'+2\e^2+1}}+\left\|(e_0^{(n)}N^{(n)})H^{(n)} \right\|_{H^2_{\delta'+2\e^2+1}}\\&\qquad+\left\|N^{(n)}(e_0^{(n)}H^{(n)}) \right\|_{H^2_{\delta'+2\e^2+1}}+\left\|\nabla\beta^{(n)}\right\|_{H^2_{\delta'+1}}\left\|  N^{(n)}H^{(n)} \right\|_{H^2_{\delta'+2\e^2+1}}.\nonumber
    \end{align*}
    The last term is handled thanks to \eqref{HR beta 2} and \eqref{NnHn} and is indeed bounded by $A_0C_i^2$. The third term is similar to the last one (because $e_0^{(n)}H^{(n)}$ satisfies \eqref{HR H 3}) and is therefore handled as in \eqref{NnHn}, finally it is bounded by $A_3C_i^2$. We handled the two first terms as we did in \eqref{I L^2}, using \eqref{HR H 1} instead of \eqref{propsmall H}, this change explains why we get $C_i^2$ instead of $C_i$.

    \item For $II$, we use again the commutation estimate \eqref{commutation estimate'}, \eqref{HR beta 2} and \eqref{HR+1 beta 2} and the product estimate :
    \begin{align*}
        \left\| \left[ \Delta,e_0^{(n)} \right](\beta^{(n+1)})^i\right\|_{L^2_{\delta'+2}} &\lesssim \left\|\nabla\beta^{(n)} \right\|_{H^2_{\delta'+1}}\left\|\nabla^2\beta^{(n+1)} \right\|_{H^1_{\delta'+2}}+\left\| \nabla^2\beta^{(n)}\right\|_{H^1_{\delta'+2}}\left\|\nabla\beta^{(n+1)} \right\|_{H^2_{\delta'+1}} \\&\lesssim A_0^2C_i^2.
    \end{align*}
\end{itemize}

\end{proof}

We have finished all the elliptic estimates, in the sequel we deal with evolution equations, and we will use the freedom of taking $T$ as small as we want, in order to recover our estimates.

\subsubsection{The transport equation and $\tau^{(n+1)}$}

We begin this section by prooving the estimates on $H^{(n+1)}$. We first prove a technical lemma about the transport equation :

\begin{lem}\label{transport inegalite}
Let $\sigma\in\R$. If $f$ and $h$ satisfy 
\begin{equation*}
    e_0^{(n+1)}f=h,
\end{equation*}
then,
\begin{equation*}
    \sup_{t\in[0,T]}\| f\|_{L^2_{\sigma}}(t)\leq 2 \| f\|_{L^2_{\sigma}}(0)+2\sqrt{T}\sup_{t\in[0,T]}\| h\|_{L^2_{\sigma}}(t)  .
\end{equation*}
\end{lem}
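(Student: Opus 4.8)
The plan is to run a standard weighted-energy estimate for the transport equation $e_0^{(n+1)} f = h$, where $e_0^{(n+1)} = \partial_t - \beta^{(n)} \cdot \nabla$ (note: the superscript convention in the statement means $\beta^{(n)}$ is the relevant shift, with $\|\beta^{(n)}\|_{H^2_{\delta'}} \leq \e$ small by \eqref{HR beta 1}, and in particular $\|\operatorname{div}\beta^{(n)}\|_{L^\infty} \lesssim \e$ after suitable embeddings). First I would fix the weight $w(x) = \langle x\rangle^{2\sigma}$, multiply the equation by $w f$ and integrate over $\R^2$. Since $w$ is time-independent, $\int w f \partial_t f\, \d x = \tfrac12 \tfrac{\d}{\d t}\|f\|_{L^2_\sigma}^2$, while the transport term produces, after integration by parts, $-\int w f\, \beta^{(n)}\cdot\nabla f\, \d x = \tfrac12 \int \operatorname{div}(w\beta^{(n)}) |f|^2\, \d x$, which is controlled by $\left(\|\operatorname{div}\beta^{(n)}\|_{L^\infty} + \|\beta^{(n)}\cdot\nabla w / w\|_{L^\infty}\right)\|f\|_{L^2_\sigma}^2$. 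Using $|\nabla w| \lesssim \langle x\rangle^{-1} w$ and the decay of $\beta^{(n)}$ (it lies in $H^2_{\delta'}$ with $\delta' > -1$, so $\langle x\rangle^{-1}\beta^{(n)}$ is bounded, in fact $\langle x \rangle |\beta^{(n)}|$ need not be bounded but $|\beta^{(n)} \cdot \nabla w|/w \lesssim \langle x\rangle^{-1}|\beta^{(n)}| \lesssim \|\beta^{(n)}\|_{C^0_{\delta'+1}}\langle x\rangle^{-\delta'-2}$ is bounded since $\delta' > -1$), this term is $\lesssim \e \|f\|_{L^2_\sigma}^2$.

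Next I would combine this with the contribution of $h$: $\left|\int w f h\, \d x\right| \leq \|f\|_{L^2_\sigma}\|h\|_{L^2_\sigma}$ by Cauchy--Schwarz. Putting it together gives
\begin{equation*}
    \tfrac12 \frac{\d}{\d t}\|f\|_{L^2_\sigma}^2 \leq C\e \|f\|_{L^2_\sigma}^2 + \|f\|_{L^2_\sigma}\|h\|_{L^2_\sigma},
\end{equation*}
hence $\frac{\d}{\d t}\|f\|_{L^2_\sigma} \leq C\e\|f\|_{L^2_\sigma} + \|h\|_{L^2_\sigma}$ (dividing by $\|f\|_{L^2_\sigma}$, with the usual approximation argument to handle possible vanishing). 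Grönwall's inequality then yields
\begin{equation*}
    \|f\|_{L^2_\sigma}(t) \leq e^{C\e t}\left(\|f\|_{L^2_\sigma}(0) + \int_0^t \|h\|_{L^2_\sigma}(s)\, \d s\right) \leq e^{C\e T}\left(\|f\|_{L^2_\sigma}(0) + \sqrt{T}\sup_{[0,T]}\|h\|_{L^2_\sigma}\right),
\end{equation*}
using Cauchy--Schwarz in time for the last step. Finally, choosing $T$ (or equivalently $\e$, via the bootstrap assumptions) small enough that $e^{C\e T} \leq 2$ gives the claimed inequality with the constant $2$.

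The one genuine subtlety worth being careful about is that $w$ is not integrable and $f$ need not decay fast, so the integration by parts $\int w f\, \beta^{(n)}\cdot\nabla f = -\tfrac12\int\operatorname{div}(w\beta^{(n)})|f|^2$ requires justifying that the boundary terms at infinity vanish; this follows by a cutoff/approximation argument since $f \in L^2_\sigma$ and $\beta^{(n)}$ decays, or alternatively by first proving the estimate for Schwartz $f$ and $h$ and passing to the limit by density. I do not expect this to be a real obstacle — it is the standard way weighted energy estimates are set up, and the decay rate $\delta' > -1$ is exactly what makes the weight-derivative term bounded. The rest is routine Grönwall bookkeeping, and the factor $2$ in the statement is simply the slack absorbed from $e^{C\e T}$ by taking $T$ small, consistent with the remark preceding the lemma that one is free to shrink $T$.
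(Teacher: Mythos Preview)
Your approach is essentially the same as the paper's: multiply by $wf$ with $w=\langle x\rangle^{2\sigma}$, integrate, handle the $\beta$-transport term by integration by parts, bound the source by Cauchy--Schwarz, and close with Gr\"onwall. Two small corrections are in order. First, $e_0^{(n+1)}=\partial_t-\beta^{(n+1)}\cdot\nabla$, not $\beta^{(n)}$; this is harmless since the relevant estimates \eqref{HR+1 beta 1}--\eqref{HR+1 beta 2} for $\beta^{(n+1)}$ are already established at this point. Second, and more substantively, $\|\beta\|_{H^2_{\delta'}}\leq\e$ does \emph{not} give $\|\dive\beta\|_{L^\infty}\lesssim\e$ in dimension $2$, since $H^1\not\hookrightarrow L^\infty$. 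The paper instead uses $\|\beta^{(n+1)}\|_{H^3_{\delta'}}\lesssim C_i$ from \eqref{HR+1 beta 2} (so $\nabla\beta^{(n+1)}\in H^2_{\delta'+1}\hookrightarrow C^0_1$), which makes the Gr\"onwall constant $C(C_i)$ rather than $C\e$; one then takes $T$ small depending on $C_i$ to get the factor $2$. With that adjustment your argument goes through verbatim.
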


\begin{proof}Let $w(x)=\langle x\rangle^{2\sigma}$. We multiply the equation $e_0^{(n+1)}f=h$ by $w f$ and integrate over $\R^2$. Writing $e_0^{(n+1)}=\dr_t-\beta^{(n+1)}\cdot\nabla$, we get :
\begin{equation*}
    \frac{\d}{\d t}\left(\|f\|_{L^2_{\sigma}}^2\right)=2\int_{\R^2}w fh\,\d x+\int_{\R^2}w \beta^{(n+1)}\cdot\nabla \left(f^2\right)\,\d x.
\end{equation*}
We integrate by part the last term in order to get :
\begin{align*}
    \frac{\d}{\d t}\left(\|f\|_{L^2_{\sigma}}^2\right)&=2\int_{\R^2}w fh\,\d x-\int_{\R^2} f^2\dive\left(w\beta^{(n+1)}\right)\d x .
\end{align*}
For the last term, we use \eqref{HR+1 beta 2} (and the embedding $H^2_{\delta'+1}\xhookrightarrow{}C^0_1$) and $\left| \nabla w\right|\lesssim\frac{w}{\langle x \rangle}$ to obtain :
\begin{align*}
    -\int_{\R^2} f^2\dive\left(w\beta^{(n+1)}\right)\d x  \lesssim C_i \| f\|_{L^2_{\sigma}}
\end{align*}
For the first term, we simply use the Cauchy-Scwharz inequality and $2ab\leq a^2+b^2$ to obtain :
\begin{equation*}
    2\int_{\R^2}w  f h\,\d x\leq 2\left(\int_{\R^2}w f^2 \d x\right)^{\frac{1}{2}}\left(\int_{\R^2}w h^2 \d x\right)^{\frac{1}{2}}\leq \| f\|_{L^2_{\sigma}}^2+ \| h\|_{L^2_{\sigma}}^2.
\end{equation*}
Summarising, we get :
\begin{equation*}
    \frac{\d}{\d t}\left(\|f\|_{L^2_{\sigma}}^2\right)\leq C(C_i)  \| f\|_{L^2_{\sigma}}^2+ \| h\|_{L^2_{\sigma}}^2.
\end{equation*}
We apply Gronwall's Lemma, take $T$ small enough and use $\sqrt{a^2+b^2}\leq a+b$ to get :
\begin{equation*}
    \sup_{t\in[0,T]}\| f\|_{L^2_{\sigma}}(t)\leq 2 \| f\|_{L^2_{\sigma}}(0)+2\sqrt{T}\sup_{t\in[0,T]}\| h\|_{L^2_{\sigma}}(t)  .
\end{equation*}

\end{proof}

\begin{prop}\label{HR+1 H prop}
For $n\geq 2$, the following estimates hold :
\begin{align}
        \left\Vert H^{(n+1)}\right\Vert_{H^2_{\delta+1}}&\leq 2C_i\label{HR+1 H 1},\\
        \left\Vert e_0^{(n+1)}H^{(n+1)}\right\Vert_{L^2_{1+\lambda}}&\lesssim \e^2,\label{HR+1 H 1.5}\\
        \left\Vert e_0^{(n+1)}H^{(n+1)}\right\Vert_{H^1_{\delta+1}}&\lesssim C_i,\label{HR+1 H 2}\\
        \left\Vert e_0^{(n+1)}H^{(n+1)}\right\Vert_{H^2_{\delta+1}}&\lesssim A_2C_i^2.\label{HR+1 H 3}
    \end{align}
\end{prop}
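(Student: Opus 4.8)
The plan is to exploit that $H^{(n+1)}$ solves the transport equation \eqref{reduced system H}, which I write $e_0^{(n+1)}H^{(n+1)}=F^{(n)}$, where $F^{(n)}$ is the right-hand side of \eqref{reduced system H} and depends only on the $n$-th iterates (it is a sum of quadratic-in-derivatives expressions in $\gamma^{(n)},N^{(n)},\beta^{(n)},H^{(n)},\ffi^{(n)},\omega^{(n)}$, together with the linear second-order term $(\dr_i\tb\dr_j)N^{(n)}$). Since $e_0^{(n+1)}H^{(n+1)}=F^{(n)}$ identically, the estimates \eqref{HR+1 H 1.5}, \eqref{HR+1 H 2} and \eqref{HR+1 H 3} are literally bounds on $F^{(n)}$ in $L^2_{1+\lambda}$, $H^1_{\delta+1}$ and $H^2_{\delta+1}$; only \eqref{HR+1 H 1} will use the transport structure itself.

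For these three, I would go through the terms of $F^{(n)}$ one at a time: peel off the factors $e^{\pm2\gamma^{(n)}}$ and $e^{-4\ffi^{(n)}}$ with Lemma \ref{useful lem}, split products with the weighted product laws of Proposition \ref{prop prod}, and insert the induction hypotheses together with the smallness of Proposition \ref{prop propsmall} and the already-established elliptic bounds of Propositions \ref{hr+1 N prop} and \ref{hr+1 beta prop}. For \eqref{HR+1 H 1.5} the key point is that the term $(\dr_i\tb\dr_j)N^{(n)}=2\dr_i\dr_jN^{(n)}-\delta_{ij}\Delta N^{(n)}$ may not be estimated through the $H^4$ bound \eqref{HR N 3} (which would only give an $O(\e)$ factor): one instead uses the elliptic equation \eqref{reduced system N}, writing $\Delta\Tilde N^{(n)}$ as (the right-hand side of \eqref{reduced system N}) minus the compactly supported term $N_a^{(n)}\Delta(\chi\ln)$ — the former bounded by $C\e^2$ in $L^2_{\delta+2}$ as in the proof of Proposition \ref{hr+1 N prop}, the latter of size $\lesssim\e^2$ — and then the corresponding elliptic estimate for $\nabla^2\Tilde N^{(n)}$; the remaining terms of $F^{(n)}$ are quadratically small by \eqref{propsmall H}, \eqref{propsmall gamma} and \eqref{propsmall fi}, and one checks, using the weighted Sobolev embeddings of Appendix \ref{appendix B}, that the natural decay of these quadratic quantities is compatible with the target weight $1+\lambda$ (this is where the constraint $\lambda<\delta+1$ is used). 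For \eqref{HR+1 H 2} one bounds $\nabla F^{(n)}$: the size $C_i$ comes from $\nabla^3N^{(n)}$ (controlled by \eqref{HR N 2}) and from the term $\nabla\beta^{(n)}H^{(n)}$, while in any product involving $\ffi^{(n)}$ or $\omega^{(n)}$ one derivative factor is placed in $L^4$ via \eqref{propsmall fi}, so that the stray $C(A_0)$ constants are absorbed by the $\e$'s (recall $\e_0$ may be chosen small relative to the fixed $A_i$). For \eqref{HR+1 H 3} one bounds $\nabla^2F^{(n)}$: here $\nabla^2\big((\dr_i\tb\dr_j)N^{(n)}\big)$ involves $\nabla^4N^{(n)}$, bounded by $A_2C_i^2$ via \eqref{HR N 3} — which dictates the claimed size — and all other contributions are at worst $\lesssim C(A_1)C_i^2\ll A_2C_i^2$.

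For the last estimate \eqref{HR+1 H 1} I would use the transport structure: applying $\nabla^a$ for $|a|\leq2$ gives $e_0^{(n+1)}\nabla^aH^{(n+1)}=\nabla^aF^{(n)}+[\nabla^a,e_0^{(n+1)}]H^{(n+1)}$, and, using $[e_0^{(n+1)},\nabla]=\nabla\beta^{(n+1)}\cdot\nabla$, the commutator is a sum of terms $\nabla^b\beta^{(n+1)}\cdot\nabla^cH^{(n+1)}$ with $1\leq|b|$ and $|b|+|c|\leq3$, each controlled in the appropriate weighted $L^2$ either by absorbing it into the Gronwall constant (the $|b|=1$ piece, via $\|\nabla\beta^{(n+1)}\|_{L^\infty}\lesssim C_i$ from \eqref{HR+1 beta 2}) or by $C(A_i)C_i^2$ (the $|b|=2$ piece, via Proposition \ref{hr+1 beta prop}). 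Then I would apply Lemma \ref{transport inegalite} componentwise, re-running its Gronwall step with $T$ small enough to produce a constant $1+o_T(1)$ in front of the initial data rather than $2$; combined with the bound $\|H\|_{H^3_{\delta+1}}(0)\leq C_i$ from Corollary \ref{coro premiere section} and the fact that $\sqrt T$ times the bounds on $\nabla^aF^{(n)}$ and on the commutators (which are $\lesssim C(A_2)C_i^2$) can be made arbitrarily small, shrinking $T$ (it may depend on $C_{high}$ and on the $A_i$) yields $\sup_{[0,T]}\|H^{(n+1)}\|_{H^2_{\delta+1}}\leq2C_i$.

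The main obstacle is the sharp quadratic smallness \eqref{HR+1 H 1.5}: since $\e^2$, and not merely $\e$, is needed, every term of $F^{(n)}$ — in particular the second-order Hessian-of-$N^{(n)}$ term — has to be quadratically small, which forces one to unwind the elliptic equation for $N^{(n)}$ instead of using the available $H^4$ control; combined with the bookkeeping of the weights (the target $1+\lambda$ against the natural decay of the nonlinearities) and of the constant hierarchy $\e\ll A_j^{-1}$, this is the delicate part. By contrast \eqref{HR+1 H 1} (transport inequality) and \eqref{HR+1 H 2}, \eqref{HR+1 H 3} (direct weighted product estimates) are comparatively routine once the right tool is identified.
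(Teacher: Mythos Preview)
Your approach is correct and matches the paper's: estimates \eqref{HR+1 H 1.5}, \eqref{HR+1 H 2}, \eqref{HR+1 H 3} are obtained by bounding the right-hand side of \eqref{reduced system H} term by term in the appropriate weighted spaces (with the $(\dr_i\tb\dr_j)N^{(n)}$ term supplying the leading constants $C_i$ and $A_2C_i^2$ via \eqref{HR N 2} and \eqref{HR N 3}), and \eqref{HR+1 H 1} then follows from Lemma \ref{transport inegalite} after commuting $\nabla^a$ through $e_0^{(n+1)}$ and absorbing the resulting $\sqrt T\,\|H^{(n+1)}\|_{H^2_{\delta+1}}$ term. The one substantive difference is your treatment of $(\dr_i\tb\dr_j)N^{(n)}$ in \eqref{HR+1 H 1.5}: the paper simply invokes \eqref{HR N 1} together with the embedding $L^2_{\delta+2}\hookrightarrow L^2_{1+\lambda}$ coming from $\lambda<\delta+1$, while you propose to unwind the elliptic equation \eqref{reduced system N} --- your route is a bit more careful about obtaining the full $\e^2$ rather than merely $\e$ (note, incidentally, that your parenthetical reference to \eqref{HR N 3} should be to \eqref{HR N 1}), though the paper's is more direct.
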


\begin{proof}
To prove \eqref{HR+1 H 1.5} we just bound the RHS of \eqref{reduced system H} in $L^2_{\delta+1}$ using the weighted product estimates $H^1\times H^1\xhookrightarrow{} L^2$ and using weighted $L^{\infty}$ estimates for $N^{(n)}$ in the first and last terms. More concretely, we use \eqref{HR N 1}, \eqref{propsmall H}, \eqref{HR beta 1}, \eqref{propsmall gamma}, \eqref{propsmall fi}, \eqref{useful gamma 1} and \eqref{useful ffi 1}, and we recall that $\lambda<\delta+1$ :
\begin{align*}
    \left\Vert e_0^{(n+1)}H^{(n+1)}\right\Vert_{L^2_{1+\lambda}} &\lesssim \l N^{(n)} (H^{(n)})^2 \r_{L^2_{1+\lambda}} + \l\nabla\beta^{(n)} H^{(n)} \r_{L^2_{1+\lambda}}+\l \nabla^2 N^{(n)}\r_{L^2_{1+\lambda}} \\&\qquad+ \l \nabla\gamma^{(n)}\nabla N^{(n)}\r_{L^2_{1+\lambda}} + \l N^{(n)} (\nabla \ffi^{(n)})^2 \r_{L^2}+ \l N^{(n)} (\nabla \omega^{(n)})^2 \r_{L^2}\\&\lesssim \e^2.
\end{align*}

We continue by the proof of \eqref{HR+1 H 2} and \eqref{HR+1 H 3}, which amounts to bounding the $H^1_{\delta+1}$ and $H^2_{\delta+1}$ norms of the RHS of \eqref{reduced system H}. First notice that the terms $e^{-2\gamma^{(n)}}N^{(n)}(H^{(n)})_i^{\;\,\ell}(H^{(n)})_{j\ell}$, $(\dr_i\ffi^{(n)}\tb\dr_j\ffi^{(n)})N^{(n)}$ and $(\dr_i\omega^{(n)}\tb\dr_j\omega^{(n)})N^{(n)}$ are analogous to terms in \eqref{reduced system N} (because $\nabla\ffi^{(n)}$ and $\dr_t\ffi^{(n)}$ satisfy the same estimates, samewise for $\omega^{(n)}$), and can be treated as in Proposition \ref{hr+1 N prop}. We recall the estimates obtained :
\begin{align*}
    \l e^{-2\gamma^{(n)}}N^{(n)}(H^{(n)})_i^{\;\,\ell}(H^{(n)})_{j\ell} \r_{H^1_{\delta+1}} & \lesssim \e^2 C_i,\\
    \l e^{-2\gamma^{(n)}}N^{(n)}(H^{(n)})_i^{\;\,\ell}(H^{(n)})_{j\ell}\r_{H^2_{\delta+1}} & \lesssim \e^2 C(A_2) C_i^2,\\
    \l(\dr_i\ffi^{(n)}\tb\dr_j\ffi^{(n)})N^{(n)} \r_{H^1_{\delta+1}}+\l e^{-4\ffi}(\dr_i\omega^{(n)}\tb\dr_j\omega^{(n)})N^{(n)} \r_{H^1_{\delta+1}} & \lesssim \e C(A_0)C_i,\\
    \l(\dr_i\ffi^{(n)}\tb\dr_j\ffi^{(n)})N^{(n)} \r_{H^2_{\delta+1}}+\l e^{-4\ffi}(\dr_i\omega^{(n)}\tb\dr_j\omega^{(n)})N^{(n)} \r_{H^2_{\delta+1}} & \lesssim \e^2 C(A_2) C_i^2+C(A_0)C_i^2.
\end{align*}
The remaining terms are treated as follows :
\begin{itemize}
    \item We first use \eqref{HR H 1} and \eqref{HR beta 1} and the product estimate :
    \begin{align*}
        \left\|\dr_{(j}(\beta^{(n)})^k(H^{(n)})_{i)k} \right\|_{H^1_{\delta+1}}& \lesssim \left\|H^{(n)}\right\|_{H^2_{\delta+1}}\left\|\beta^{(n)}\right\|_{H^2_{\delta'+1}}\lesssim \e  C_i.
    \end{align*}
    Then we use \eqref{propsmall H}, \eqref{HR H 1} and \eqref{HR beta 2} and the product estimate :
    \begin{align*}
        \left\|\dr_{(j}(\beta^{(n)})^k(H^{(n)})_{i)k} \right\|_{H^2_{\delta+1}}&\lesssim \left\| \nabla\beta^{(n)}H^{(n)} \right\|_{H^1_{\delta+1}}+\left\| \nabla^3\beta^{(n)}H^{(n)}\right\|_{L^2_{\delta+3}}\\&\qquad+\left\| \nabla^2\beta^{(n)}\nabla H^{(n)}\right\|_{L^2_{\delta+3}}+\left\| \nabla\beta^{(n)}\nabla^2H^{(n)}\right\|_{L^2_{\delta+3}}
        \\&\lesssim\left\|\nabla\beta^{(n)}\right\|_{H^2_{\delta'+1}}\left\|H^{(n)}\right\|_{H^1_{\delta+1}} + \l\nabla^3\beta^{(n)} \r_{L^2_{\delta'+3}}\l H^{(n)} \r_{H^2_{\delta+1}} \\&\qquad+\l\nabla^2\beta^{(n)} \r_{H^1_{\delta'+2}} \l\nabla H^{(n)} \r_{H^1_{\delta+2}} +\left\| \nabla\beta^{(n)}\right\|_{H^2_{\delta'+1}}   \left\|\nabla^2H^{(n)}\right\|_{L^2_{\delta+3}}\\
        &\lesssim \e^2 A_0 C_i+A_0C_i^2.
    \end{align*}
    \item We use \eqref{HR N 2} and the fact that $\langle x\rangle^{\alpha}\in L^2$ if and only if $\alpha<-1$ :
    \begin{equation*}
        \left\| (\dr_i\tb\dr_j)N^{(n)} \right\|_{H^1_{\delta+1}}\leq \left\|\nabla^2\Tilde{N}^{(n)}\right\|_{H^1_{\delta+1}}+\left| N_a^{(n)} \right|\left\|\nabla^2(\chi\ln)\right\|_{H^1_{\delta+1}}\lesssim C_i.
    \end{equation*}
    We then use \eqref{HR N 3} for the $H^2$ estimate :
    \begin{equation*}
        \left\| (\dr_i\tb\dr_j)N^{(n)} \right\|_{H^2_{\delta+1}} \leq \left\|\nabla^2\Tilde{N}^{(n)}\right\|_{H^2_{\delta+1}}+\left| N_a^{(n)} \right|\left\|\nabla^2(\chi\ln)\right\|_{H^2_{\delta+1}}\lesssim A_2 C_i^2.
    \end{equation*}
    \item For the following term, we get both $H^1$ and $H^2$ estimates by using \eqref{HR N 1}, \eqref{HR N 2} and \eqref{HR gamma 1} and the product estimate :
    \begin{align*}
        \left\| (\delta_i^k\tb\dr_j\gamma^{(n)})\dr_kN^{(n)}\right\|_{H^2_{\delta+1}}&\leq \left\|\nabla\Tilde{\gamma}^{(n)}\nabla\Tilde{N}^{(n)}\right\|_{H^2_{\delta+1}}+|\alpha|\left\|\nabla(\chi\ln)\nabla\Tilde{N}^{(n)}\right\|_{H^2_{\delta+1}}\\&\quad+\left|N_a^{(n)}\right|\left\|\nabla(\chi\ln)\nabla\Tilde{\gamma}^{(n)}\right\|_{H^2_{\delta+1}}+|\alpha|\left|N_a^{(n)}\right|\left\|(\nabla(\chi\ln))^2\right\|_{H^2_{\delta+1}}
        \\&\lesssim
        \left\|\nabla\Tilde{\gamma}^{(n)}\right\|_{H^2_{\delta+1}}
        \left\|\nabla\Tilde{N}^{(n)}\right\|_{H^2_{\delta+1}} 
        +\e\left( \left\|\nabla\Tilde{N}^{(n)}\right\|_{H^2_{\delta}}+\left\|\nabla\Tilde{\gamma}^{(n)}\right\|_{H^2_{\delta}} \right)\\&\qquad+
        \e^2\left\|\langle x\rangle^{\delta-1}\right\|_{L^2}
        \\&\lesssim\e C_i^2.
    \end{align*}
\end{itemize}
We now prove \eqref{HR+1 H 1}. We recall the following commutation formula :
\begin{align*}
    \left|\left[ e_0^{(n+1)},\nabla  \right]H^{(n+1)}\right|&\lesssim \left|\nabla\beta^{(n+1)} \right|\left|\nabla H^{(n+1)} \right|,\\
    \left|\left[ e_0^{(n+1)},\nabla^2  \right]H^{(n+1)}\right|&\lesssim \left|\nabla\beta^{(n+1)} \right|\left|\nabla^2 H^{(n+1)} \right|+\left|\nabla^2\beta^{(n+1)} \right|\left|\nabla H^{(n+1)} \right|.
\end{align*}
Hence, using \eqref{HR+1 beta 2} :
\begin{equation}
    \left\|e_0^{(n+1)}\nabla^{\alpha}H^{(n+1)}_{ij} \right\|_{L^2_{\delta+1+|\alpha|}}\lesssim \left\|e_0^{(n+1)}H^{(n+1)}_{ij} \right\|_{H^2_{\delta+1}}+C_i\left\|H^{(n+1)}\right\|_{H^2_{\delta+1}}\label{transport H}
\end{equation}
where $|\alpha|\leq 2$. We apply the Lemma \ref{transport inegalite} with $\sigma=\delta+1+|\alpha|$ and $f=\nabla^{\alpha} H^{(n+1)}$ :
\begin{align*}
    \sup_{t\in[0,T]}\left\| \nabla^{\alpha} H^{(n+1)} \right\|_{L^2_{\delta+1+|\alpha|}}(t) &\leq 2\left\| \nabla^{\alpha} H^{(n+1)} \right\|_{L^2_{\delta+1+|\alpha|}}(0)+2\sqrt{T}\sup_{t\in[0,T]}\left\|e_0^{(n+1)}\nabla^{\alpha}H^{(n+1)}_{ij} \right\|_{L^2_{\delta+1+|\alpha|}} 
    \\& \lesssim 2\left\| \nabla^{\alpha} H^{(n+1)} \right\|_{L^2_{\delta+1+|\alpha|}}(0) +2\sqrt{T}C_i^2+2C_i\sqrt{T}\left\|H^{(n+1)}\right\|_{H^2_{\delta+1}},
\end{align*}
where in the last inequality we use \eqref{transport H} and \eqref{HR+1 H 3}. 

We sum over all $|\alpha|\leq 2$ and absorb the term $\left\|H^{(n+1)}\right\|_{H^2_{\delta+1}}$ of the RHS into the LHS (choosing $T$ small enough). Recalling that $\left\| H^{(n+1)} \right\|_{H^2_{\delta+1}}(0)\leq C_i$ ends the proof of \eqref{HR+1 H 1}.

\end{proof}

Next, we prove the estimates for $\tau^{(n+1)}$, gathered in the following proposition :

\begin{prop}\label{hr+1 tau prop}
For $n\geq 2$, the following estimates hold :
\begin{align}
        \left\| \tau^{(n+1)}\right\|_{H^2_{\delta'+1}}&\lesssim A_0C_i\label{HR+1 tau 1},\\
        \left\| \dr_t\tau^{(n+1)}\right\|_{L^2_{\delta'+1}}&\lesssim A_1C_i\label{HR+1 tau 2},\\
        \left\| \dr_t\tau^{(n+1)}\right\|_{H^1_{\delta'+1}}&\lesssim A_2 C_i\label{HR+1 tau 3}.
\end{align}
\end{prop}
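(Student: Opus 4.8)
The plan is to read off all three estimates directly from the defining equation \eqref{reduced system tau}, namely
\[
\tau^{(n+1)}=-2\Ll^{(n-1)}\gamma^{(n)}+\frac{\dive\left(\beta^{(n)}\right)}{N^{(n-1)}},
\]
which, unlike the elliptic and transport equations treated before, expresses the new iterate algebraically in terms of previously controlled quantities. First I would note that $\Ll^{(n-1)}\gamma^{(n)}=\Ll^{(n-1)}\Tilde{\gamma}^{(n)}-\alpha\,\Ll^{(n-1)}(\chi\ln)$, and the second piece is harmless since $|\nabla^a(\chi\ln)|\lesssim\langle x\rangle^{-|a|}$, $\beta^{(n-1)}$ is small in $H^2_{\delta'}$ (estimate \eqref{HR beta 1}), $|1/N^{(n-1)}|\lesssim 1$ by \eqref{HR N 1}, and $0\le\alpha\lesssim\e^2$ by Corollary \ref{coro premiere section}; thus $\alpha\,\Ll^{(n-1)}(\chi\ln)$ is bounded in $H^2_{\delta'+1}$ (and even in smaller weighted norms) by $C\e^2$ and contributes a negligible amount to all estimates. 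For the $\dive(\beta^{(n)})/N^{(n-1)}$ term I would use that $1/N^{(n-1)}$ is bounded together with its derivatives — writing $N^{(n-1)}=1+N_a^{(n-1)}\chi\ln+\Tilde N^{(n-1)}$, estimates \eqref{HR N 1}--\eqref{HR N 2} give $H^2_{\delta}$-type control of $\Tilde N^{(n-1)}$ with the $\chi\ln$ part handled by a small extra weight — so that $\|\dive(\beta^{(n)})/N^{(n-1)}\|_{H^2_{\delta'+1}}\lesssim\|\nabla\beta^{(n)}\|_{H^2_{\delta'+1}}\lesssim\|\beta^{(n)}\|_{H^3_{\delta'}}\lesssim A_0C_i$ by \eqref{HR beta 2}, using the weighted product estimate (Proposition \ref{prop prod}).

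For \eqref{HR+1 tau 1} I would then combine Proposition \ref{commutation estimate} (which gives $\|\Ll^{(n-1)}\Tilde\gamma^{(n)}\|_{H^2_{\delta'+1}}\le 9C_i$) with the bound $A_0C_i$ just obtained for the shift term, yielding $\|\tau^{(n+1)}\|_{H^2_{\delta'+1}}\lesssim A_0C_i$ as required. For \eqref{HR+1 tau 2} and \eqref{HR+1 tau 3} I would differentiate \eqref{reduced system tau} in $t$. Writing $\dr_t\big(\Ll^{(n-1)}\Tilde\gamma^{(n)}\big)$, this is directly controlled in $L^2_{\delta'+1}$ by $A_0C_i$ via \eqref{HR gamma 2} and in $H^1_{\delta'+1}$ by $A_2C_i$ via \eqref{HR gamma 3}; the $\dr_t$ derivative of the $\chi\ln$ piece is $\dr_tN_a^{(n-1)}$ times something, again small by \eqref{HR N 2}. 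For $\dr_t\big(\dive(\beta^{(n)})/N^{(n-1)}\big)$ I would expand into $\dive(\dr_t\beta^{(n)})/N^{(n-1)}-\dive(\beta^{(n)})\,\dr_tN^{(n-1)}/(N^{(n-1)})^2$; the first term costs $\|\nabla\dr_t\beta^{(n)}\|$, which is available since $\dr_t\beta^{(n)}=e_0^{(n-1)}\beta^{(n)}+\beta^{(n-1)}\cdot\nabla\beta^{(n)}$ and \eqref{HR beta 2.5}, \eqref{HR beta 3}, \eqref{HR beta 1}, \eqref{HR beta 2} give the needed control (in $L^2_{\delta'+1}$ directly by \eqref{HR beta 2.5}, in $H^1_{\delta'+1}$ by \eqref{HR beta 3} plus a product estimate); the second term uses $\dr_tN^{(n-1)}=\dr_tN_a^{(n-1)}\chi\ln+\dr_t\Tilde N^{(n-1)}$ controlled by \eqref{HR N 2}, multiplied against $\nabla\beta^{(n)}$ via the product estimate.

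Collecting these contributions gives $\|\dr_t\tau^{(n+1)}\|_{L^2_{\delta'+1}}\lesssim A_1C_i$ (the $A_1$ coming from \eqref{HR gamma 2}'s $A_0$ together with the shift contribution, which I expect to close with room to spare since the actual constants there are $A_0$ and $C_i$, comfortably below $A_1$) and $\|\dr_t\tau^{(n+1)}\|_{H^1_{\delta'+1}}\lesssim A_2C_i$ from \eqref{HR gamma 3}. The only mild subtlety — the step I would watch most carefully — is keeping the weight bookkeeping consistent: $\tau$ lives at weight $\delta'+1=\delta-\e+1$ rather than $\delta+1$, and the $N^{(n-1)}$ and $\chi\ln$ factors each shift the decay order slightly, so one must verify $\delta'+1$ absorbs the $O(\e)$ losses from $e^{\pm}$-type factors (here $1/N^{(n-1)}$) exactly as in the analogous computations of Lemma \ref{lem CI beta} and Proposition \ref{hr+1 beta prop}; since all such losses are controlled by Lemma \ref{useful lem} and $\e$ is chosen small, this presents no real obstacle. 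No smallness-in-$T$ is needed here: unlike the wave and transport estimates, equation \eqref{reduced system tau} is purely algebraic in the iterates, so the bounds are immediate consequences of the inductive hypotheses.
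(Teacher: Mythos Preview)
Your proposal is correct and follows essentially the same approach as the paper: read the estimates directly from the algebraic definition \eqref{reduced system tau}, split $\gamma^{(n)}=-\alpha\chi\ln+\Tilde\gamma^{(n)}$, invoke Proposition~\ref{commutation estimate} for $\Ll^{(n-1)}\Tilde\gamma^{(n)}$ and \eqref{HR beta 2} for $\dive(\beta^{(n)})/N^{(n-1)}$, then differentiate in $t$ and use \eqref{HR gamma 2}--\eqref{HR gamma 3} together with the $\dr_t\beta^{(n)}$ bounds coming from \eqref{HR beta 2.5} and \eqref{HR beta 3}. The paper's write-up is slightly more explicit (it expands $\nabla\dr_t\tau^{(n+1)}$ into six labelled terms and observes a hierarchy among them), but the ingredients and the weight bookkeeping are exactly those you identify.
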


\begin{proof}
In view of $\eqref{reduced system tau}$, the estimates for $\tau^{(n+1)}$ can be obtained by directly controlling 
\begin{equation*}
     -2\Ll^{(n-1)}\gamma^{(n)}+\frac{\dive\left(\beta^{(n)}\right)}{N^{(n-1)}} .
\end{equation*}
We bound the two terms separately, using first \eqref{HR beta 2}, \eqref{HR N 1}, \eqref{HR N 2} and $\left|\frac{1}{N^{(n-1)}} \right|\lesssim 1$ :
\begin{align*}
    \left\| \frac{\dive\left(\beta^{(n)}\right)}{N^{(n-1)}} \right\|_{H^2_{\delta'+1}} & \lesssim 
    \left\| \beta^{(n)}\right\|_{H^3_{\delta'}} + \l \nabla\Tilde{N}^{(n-1)}\nabla \beta^{(n)} \r_{L^2_{\delta'+2}} \\&\qquad+ \l\nabla^2\Tilde{N}^{(n-1)}\nabla \beta^{(n)} \r_{L^2_{\delta'+3}}+\l\nabla\Tilde{N}^{(n-1)}\nabla^2 \beta^{(n)} \r_{L^2_{\delta'+3}}\\
    &\lesssim \left\| \beta^{(n)}\right\|_{H^3_{\delta'}}+ \l \nabla\Tilde{N}^{(n-1)}\r_{H^1_{\delta+1}}\l\nabla\beta^{(n)} \r_{H^1_{\delta'+1}} \\& \qquad +\l\nabla^2\Tilde{N}^{(n-1)}\r_{H^1_{\delta+2}} \l\nabla\beta^{(n)} \r_{H^1_{\delta'+1}}+\l \nabla\Tilde{N}^{(n-1)}\r_{H^1_{\delta+1}}\l \nabla^2\beta^{(n)}\r_{H^1_{\delta'+2}}
    \\& \lesssim (1+\e)A_0C_i.
\end{align*}
Using in addition Proposition \ref{commutation estimate} we get :
\begin{align*}
    \left\|\Ll^{(n-1)}\gamma^{(n)} \right\|_{H^2_{\delta'+1}} \leq \left\| \Ll^{(n-1)}\Tilde{\gamma}^{(n)} \right\|_{H^2_{\delta'+1}} +|\alpha|\left\|\frac{\beta^{(n-1)}\cdot\nabla(\chi\ln)}{N^{(n-1)}} \right\|_{H^2_{\delta'+1}}\lesssim (1+\e)C_i,
\end{align*}
which concludes the proof of \eqref{HR+1 tau 1}. 
\par\leavevmode\par
We now turn to the estimates concerning $\dr_t\tau^{(n+1)}$, which has the following expression :
\begin{equation*}
    \dr_t\tau^{(n+1)}=-2\dr_t\left( \Ll^{(n-1)}\Tilde{\gamma}^{(n)} \right)+2\alpha\nabla(\chi\ln)\cdot\dr_t\left( \frac{\beta^{(n-1)}}{N^{(n-1)}} \right)+\dr_t\left( \frac{\dive(\beta^{(n)})}{N^{(n-1)}} \right)
\end{equation*}
By \eqref{HR gamma 2}, $\left\|\dr_t( \Ll^{(n-1)}\Tilde{\gamma}^{(n)})\right\|_{L^2_{\delta'+1}}\leq A_0 C_i$. Then, we note, that thanks to \eqref{HR beta 1} and \eqref{HR beta 3}, we have $\|\dr_t\beta^{(n)}\|_{H^1_{\delta'+1}}\lesssim  A_1C_i$ (and the same  with $n$ replaced by $n-1$). For the second term, we do the following :
\begin{align}
    \left\| 2\alpha\nabla(\chi\ln)\cdot\dr_t\left( \frac{\beta^{(n-1)}}{N^{(n-1)}} \right)\right\|_{L^2_{\delta'+1}}&\lesssim\;\e\left( \left\|\dr_t\beta^{(n-1)}\right\|_{H^1_{\delta'+1}}+\left| \dr_tN_a^{(n-1)} \right|\left\|\beta^{(n-1)}\right\|_{L^2_{\delta'}}+\left\|\dr_t\Tilde{N}^{(n-1)}\right\|_{L^2_{\delta}}\left\|\beta^{(n-1)}\right\|_{L^{\infty}}\right)\nonumber\\&\lesssim \e A_1C_i,\label{lkj}
\end{align}
where we used \eqref{HR N 2} and \eqref{HR beta 1}. The third term is very similar :
\begin{align*}
    \left\| \dr_t\left( \frac{\dive(\beta^{(n)})}{N^{(n-1)}} \right)\right\|_{L^2_{\delta'+1}}&\lesssim \left\|\dr_t\beta^{(n)}\right\|_{H^1_{\delta'}}+ \left\|\nabla\beta^{(n)}\right\|_{H^1_{\delta'+1}}\left(  \left\|\dr_t\Tilde{N}^{(n-1)}\right\|_{H^1_{\delta}} +\left| \dr_tN_a^{(n-1)} \right|\right)\\& \lesssim A_1C_i.
\end{align*}
This finishes the proof of \eqref{HR+1 tau 2}.
\par\leavevmode\par
We now turn to the proof of \eqref{HR+1 tau 3}. In view of \eqref{HR+1 tau 2}, we just have to bound $\|\nabla \dr_t\tau^{(n+1)}\|_{L^2_{\delta'+2}}$ by $C_i^2$. We have the following expression :
\begin{align*}
    \nabla \dr_t\tau^{(n+1)}&=-2\nabla\dr_t\left( \Ll^{(n-1)}\Tilde{\gamma}^{(n)} \right)+2\alpha\nabla^2(\chi\ln)\cdot \dr_t\left( \frac{\beta^{(n-1)}}{N^{(n-1)}} \right)+2\alpha\nabla(\chi\ln)\dr_t\left( \frac{\nabla\beta^{(n-1)}}{N^{(n-1)}} \right)\\
    &\quad -2\alpha\nabla(\chi\ln)\dr_t\left( \frac{\beta^{(n-1)} \nabla N^{(n-1)}}{\left(N^{(n-1)}\right)^2} \right)+ \dr_t\left( \frac{\dive(\nabla\beta^{(n)})}{N^{(n-1)}} \right)-\dr_t\left( \frac{\nabla N^{(n-1)}\dive(\beta^{(n)})}{\left(N^{(n-1)}\right)^2} \right)
    \\&=\vcentcolon I + II + III+IV+V+VI.
\end{align*}
The term $I$ is easily handle thanks to \eqref{HR gamma 3} : we have $\left\|I\right\|_{L^2_{\delta'+2}}\leq A_2 C_i$. For the other terms, we make the following remarks :
\begin{itemize}
    \item the term $VI$ is worse than the term $IV$,
    \item the term $V$ is worse than the terms $II$ and $III$. 
\end{itemize}
Thus, it only remains to bound the terms $V$ and $VI$, for which we use \eqref{HR N 2}, \eqref{HR beta 1} and \eqref{HR beta 3} :
\begin{align*}
    \l V\r_{L^2_{\delta'+2}} &\lesssim \l\dr_t\beta^{(n)} \r_{H^2_{\delta'}} +\l\nabla^2\beta^{(n)} \r_{L^2_{\delta'+2}}\l\dr_t\Tilde{N}^{(n-1)} \r_{H^2_{\delta}}\lesssim A_1C_i+\e C_i\lesssim A_1 C_i.\\
    \l VI \r_{L^2_{\delta'+2}} & \lesssim \l\nabla\Tilde{N}^{(n-1)} \r_{H^1_{\delta+1}}\l\dr_t\beta^{(n)} \r_{H^1_{\delta'}}+\l\nabla\dr_t\Tilde{N}^{(n-1)} \r_{H^1_{\delta+1}}\l\nabla\beta^{(n-1)} \r_{H^1_{\delta'+1}}\\&\qquad +\l\dr_t\Tilde{N}^{(n-1)} \r_{H^2_{\delta}}\l\nabla\Tilde{N}^{(n-1)} \r_{H^1_{\delta+1}}\l\nabla\beta^{(n-1)}
    \r_{H^1_{\delta'+1}}
    \\&\lesssim \e C_i. 
\end{align*}
This concludes the proof of \eqref{HR+1 tau 3}.
\par\leavevmode\par

\end{proof}

\subsubsection{Energy estimate for $\Box_{g^{(n)}}$} 

In this section, we establish the usual energy estimate for the operator $\Box_{g^{(n)}}$. 

\begin{lem}\label{inegalite d'energie lemme 1}
Let $\sigma\in\R$. If $h$ is a solution of
\begin{equation}\label{partie principale de box}
    \left(\Ll^{(n)} \right)^2h-e^{-2\gamma^{(n)}}\Delta h=f, 
\end{equation}
then, if $T$ is sufficiently small, we have for all $t\in[0,T]$ 
\begin{align}
    \left\Vert\Ll^{(n)}h\right\Vert_{L^2_{\sigma}}(t)&+\left\Vert e^{-\gamma^{(n)}}\nabla h\right\Vert_{L^2_{\sigma}}(t)\nonumber\\&  \leq 2\left( \left\Vert\Ll^{(n)}h\right\Vert_{L^2_{\sigma}}(0)+\left\Vert e^{-\gamma^{(n)}}\nabla h\right\Vert_{L^2_{\sigma}}(0)+\sqrt{2T}\sup_{s\in[0,T]}\left\| fN^{(n)}\right\|_{L^2_{\sigma}}\right).\label{inégalité d'énergie}
\end{align}
\end{lem}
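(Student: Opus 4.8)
The estimate \eqref{inégalité d'énergie} is the standard energy identity for the wave operator $\left(\Ll^{(n)}\right)^2-e^{-2\gamma^{(n)}}\Delta$, adapted to the weighted spaces $L^2_\sigma$. The plan is to multiply \eqref{partie principale de box} by the natural multiplier and integrate over $\R^2$ against the weight $w(x)=\langle x\rangle^{2\sigma}$. Since $\Ll^{(n)}=\frac{1}{N^{(n)}}e_0^{(n)}$ and $e_0^{(n)}=\dr_t-\beta^{(n)}\cdot\nabla$, the right multiplier is $N^{(n)}w\,\Ll^{(n)}h$ (equivalently, multiply by $w\,e_0^{(n)}h$ after clearing the $N^{(n)}$ from the first term), so that the first term becomes $\frac{1}{2N^{(n)}}e_0^{(n)}\left(\left(\Ll^{(n)}h\right)^2\right)$ times $N^{(n)}w$, and integrating by parts in the elliptic term $-e^{-2\gamma^{(n)}}\Delta h$ against $N^{(n)}w\,\Ll^{(n)}h$ produces, up to harmless commutator and weight terms, $\frac12 e_0^{(n)}\left(e^{-2\gamma^{(n)}}|\nabla h|^2\right)$ times $w$. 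This yields an energy density $E(t)=\int_{\R^2} w\left(\left(\Ll^{(n)}h\right)^2+e^{-2\gamma^{(n)}}|\nabla h|^2\right)\d x$ whose $\dr_t$-derivative is controlled.

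First I would set $u=\Ll^{(n)}h$ and write out $\frac{\d}{\d t}E(t)$ explicitly. The main term is $2\int w\,N^{(n)}f\,u\,\d x$, which by Cauchy--Schwarz and $2ab\le a^2+b^2$ is bounded by $\|u\|_{L^2_\sigma}^2+\|N^{(n)}f\|_{L^2_\sigma}^2$. All the remaining terms are error terms coming from: (i) the divergence of the transport vector field $\beta^{(n)}$ hitting $w$ (as in Lemma~\ref{transport inegalite}, controlled using \eqref{HR beta 1} and $|\nabla w|\lesssim w/\langle x\rangle$); (ii) $e_0^{(n)}$ falling on $e^{-2\gamma^{(n)}}$, which produces $e_0^{(n)}\gamma^{(n)}=N^{(n)}\Ll^{(n)}\gamma^{(n)}$ — controlled by the smallness \eqref{propsmall eo gamma} and $|\alpha|\lesssim\e^2$ together with $L^\infty$ bounds on $e^{-2\gamma^{(n)}}$ from \eqref{useful gamma 1a}/the embedding $H^2_\delta\hookrightarrow L^\infty$; (iii) the commutator $[\nabla,e_0^{(n)}]=\nabla\beta^{(n)}\cdot\nabla$ and the factor $\nabla N^{(n)}$ appearing when one unpacks $\Ll^{(n)}$, again small by \eqref{HR beta 1}, \eqref{HR N 1}. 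Since $e^{-2\gamma^{(n)}}$ is bounded above and below (on the support of the weight $w$, modulo the $\langle x\rangle^{2\alpha}$ growth which is absorbed into the smallness of $\alpha$), $E(t)$ is comparable to $\left(\|\Ll^{(n)}h\|_{L^2_\sigma}+\|e^{-\gamma^{(n)}}\nabla h\|_{L^2_\sigma}\right)^2$. Collecting everything gives a differential inequality of the form $\frac{\d}{\d t}E\le C(C_i)\,E+\|N^{(n)}f\|_{L^2_\sigma}^2$.

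Then I would apply Gronwall's lemma on $[0,T]$: $E(t)\le e^{C(C_i)T}\left(E(0)+T\sup_{[0,T]}\|N^{(n)}f\|_{L^2_\sigma}^2\right)$. Choosing $T$ small enough that $e^{C(C_i)T}\le 2$, taking square roots and using $\sqrt{a+b}\le\sqrt a+\sqrt b$ and $\sqrt{2T}$ for the bookkeeping of constants (exactly as at the end of Lemma~\ref{transport inegalite}), one reaches \eqref{inégalité d'énergie}, where the factor $2$ on the outside absorbs both the $\sqrt 2$ from $e^{C(C_i)T}\le 2$ and the equivalence constants between $E$ and the sum of the two norms.

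The main obstacle is the bookkeeping of the error terms — in particular making sure that every term in $\frac{\d}{\d t}E$ that is \emph{not} the $N^{(n)}f\,u$ term is genuinely controlled by $C(C_i)\,E$ with no loss of weight or derivative. The delicate points are the weight-derivative terms $\int (\dr_t w + \text{div}(w\beta^{(n)}))(\cdots)$, which must be handled via $|\nabla w|\lesssim \langle x\rangle^{-1}w$ together with the $L^\infty$-bound on $\beta^{(n)}$ from $H^2_{\delta'+1}\hookrightarrow C^0_1$, and the term involving $e_0^{(n)}\gamma^{(n)}$, where one must use the weighted smallness \eqref{propsmall eo gamma} rather than a crude bound (a crude bound would cost a power of $\langle x\rangle$). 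Once these are seen to close, the rest is the routine Gronwall-and-square-root argument already used in Lemma~\ref{transport inegalite}.
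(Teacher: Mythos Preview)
Your proposal is correct and follows essentially the same approach as the paper: multiply by $w\,e_0^{(n)}h$, define the energy $E(t)=\int w\left((\Ll^{(n)}h)^2+e^{-2\gamma^{(n)}}|\nabla h|^2\right)$, show $\frac{\d E}{\d t}\le C(C_i)E+\|N^{(n)}f\|_{L^2_\sigma}^2$ by bounding the error terms exactly as you describe, then Gronwall and take square roots. One small correction: for the $e_0^{(n)}\gamma^{(n)}$ term the paper simply uses an $L^\infty$ bound of size $C(C_i)$ (via $H^2_{\delta'+1}\hookrightarrow C^0_1$ for $\Ll^{(n-1)}\Tilde\gamma^{(n)}$), not the smallness \eqref{propsmall eo gamma}, and there is no weight loss to worry about --- the $e^{-2\gamma^{(n)}}$ factor already sits correctly against $|\nabla h|^2$ in the energy.
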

\begin{proof}
Let $w(x)=\langle x \rangle^{2\sigma}$. We multiply the equation by $w e_0^{(n)}h$ and we integrate over $\R^2$ with respect to $\d x$. After integration by parts we obtain :
\begin{equation}
    \int_{\R^2}\frac{w}{2}e_0^{(n)}\left(\Ll^{(n)}h\right)^2\d x+\int_{\R^2}\nabla h\cdot\nabla\left(e^{-2\gamma^{(n)}}w e_0^{(n)}h\right)\d x=\int_{\R^2}w f e_0^{(n)}h\,\d x.\label{IPP}
\end{equation}
We define the energy $E(t)\vcentcolon=\int_{\R^2}w\left( \left(\Ll^{(n)}h\right)^2+e^{-2\gamma^{(n)}}|\nabla h|^2\right)(t,x)\d x$ and compute its time derivative, writing $\dr_t=e_0^{(n)}+\beta^{(n)}\cdot\nabla$ and integrating by parts the terms coming from $\beta^{(n)}\cdot\nabla$ :
\begin{align*}
\frac{\d E}{\d t}(t)  = \int_{\R^2} we_0^{(n)}\left(\Ll^{(n)}h\right)^2\d x + \int_{\R^2} w e_0^{(n)}& \left(e^{-2\gamma^{(n)}} |\nabla h|^2 \right) \d x \\&-\int_{\R^2}\dive  (w\beta^{(n)})\left( \left(\Ll^{(n)}h\right)^2+e^{-2\gamma^{(n)}}|\nabla h|^2\right)\d x 
\end{align*}
We now use \eqref{IPP} to express the first integral in $\frac{\d E}{\d t}$ :
\begin{align*}
\frac{\d E}{\d t}(t)& =2\int_{\R^2}w f e_0^{(n)}h\,\d x - 2\int_{\R^2}\nabla h\cdot\nabla\left(e^{-2\gamma^{(n)}}w e_0^{(n)}h\right)\d x
\\& \quad  + \int_{\R^2} w e_0^{(n)} \left(e^{-2\gamma^{(n)}} |\nabla h|^2 \right) \d x -\int_{\R^2}\dive  (w\beta^{(n)})\left( \left(\Ll^{(n)}h\right)^2+e^{-2\gamma^{(n)}}|\nabla h|^2\right)\d x
\end{align*}
We now expand the second integral and commute $\nabla$ and $e_0^{(n)}$ :
\begin{align*}
- 2\int_{\R^2}\nabla h\cdot\nabla\left(e^{-2\gamma^{(n)}}w e_0^{(n)}h\right)\d x & =- \int_{\R^2}w e^{-2\gamma^{(n)}} e_0^{(n)}\left( |\nabla h|^2 \right)  \d x +2 \int_{\R^2} w e^{-2\gamma^{(n)}} \dr_i h \nabla h \cdot \nabla\beta^{(n)i} \d x
\\& \quad -2 \int_{\R^2} e^{-2\gamma^{(n)}}e_0^{(n)}h \nabla h\cdot\nabla w \d x  -2 \int_{\R^2} w e_0^{(n)} h\nabla h\cdot\nabla\left( e^{-2\gamma^{(n)}}\right)  \d x
\end{align*}
With this, we see that the $\dr h \dr^2h$ terms in $\frac{\d E}{\d t}$ cancel each other. Thus, we obtain the following energy equality :
\begin{align}
\frac{\d E}{\d t}(t)& = -\int_{\R^2}\dive  (w\beta^{(n)})\left( \left(\Ll^{(n)}h\right)^2+e^{-2\gamma^{(n)}}|\nabla h|^2\right)\d x  \nonumber
 +2 \int_{\R^2} w e^{-2\gamma^{(n)}} \dr_i h \nabla h \cdot \nabla\beta^{(n)i} \d x\\& \quad -2 \int_{\R^2} e^{-2\gamma^{(n)}}e_0^{(n)}h \nabla h\cdot\nabla w \d x + 2\int_{\R^2}w f e_0^{(n)}h\,\d x + R_{\gamma^{(n)}}(t) \label{energy equality}
\end{align}
with
\begin{equation*}
    R_{\gamma^{(n)}}(t)\vcentcolon=-2\int_{\R^2}e^{-2\gamma^{(n)}}w|\nabla h|^2\dr_t\gamma^{(n)}\d x+ 4\int_{\R^2}e^{-2\gamma^{(n)}}w e_0^{(n)}h\nabla h\cdot\nabla\gamma^{(n)}\d x-2\int_{\R^2} e^{-2\gamma^{(n)}}w|\nabla h|^2\beta^{(n)}\cdot \nabla\gamma^{(n)}\d x.
\end{equation*}
This sort of remainder contains all the term involving derivatives of $\gamma^{(n)}$, therefore it would vanish if $e^{-2\gamma^{(n)}}$ didn't appear in \eqref{partie principale de box}. We then show that the first three integrals in \eqref{energy equality} can be bounded by $E(t)$ :
\begin{itemize}
    \item Let's show that $|\dive(w\beta^{(n)})|\leq C(C_i)w$. We have $\dive(w\beta^{(n)})=w\dive(\beta^{(n)})+\nabla w\cdot\beta^{(n)}$. We have $|\nabla w|\lesssim\frac{ w}{\langle x\rangle}$ and $\beta^{(n)}$ bounded so $\nabla w\cdot\beta^{(n)}$ is indeed bounded by $w$. For $w\dive(\beta^{(n)})$, we use the embedding $H^2_{\delta'+1}\xhookrightarrow{}L^{\infty}$ and the estimate \eqref{HR beta 2}. This shows that 
    \begin{equation*}
        -\int_{\R^2}\dive  (w\beta^{(n)})\left( \left(\Ll^{(n)}h\right)^2+e^{-2\gamma^{(n)}}\nabla h\right)\d x\lesssim A_0C_iE(t).
    \end{equation*}
    \item Let's show that $|e^{-\gamma^{(n)}}N^{(n)}\nabla w|\lesssim w$. We have $|e^{-\gamma^{(n)}}|\lesssim \langle x\rangle^{\e^2}$ and $|N^{(n)}|\lesssim\langle x\rangle^{\frac{1}{2}}$ so $|e^{-\gamma^{(n)}}N^{(n)}\nabla w|\lesssim w \langle x\rangle^{\e^2-\frac{1}{2}}\lesssim w$, providing $\e$ is small. This allows us to do the following (using $2ab\leq a^2+b^2$) : 
    \begin{equation*}
        -2\int_{\R^2}e^{-2\gamma^{(n)}}e_0^{(n)}h\nabla h\cdot\nabla w\,\d x \lesssim \int_{\R^2} w\left|\frac{e_0^{(n)}h}{N^{(n)}}\right|e^{-\gamma^{(n)}}|\nabla h|\,\d x\lesssim E(t).
    \end{equation*}
    \item We already used the fact that $\nabla\beta^{(n)}$ is bounded by $A_0C_i$ so we simply do
    \begin{equation*}
        2\int_{\R^2}e^{-2\gamma^{(n)}}w\dr_ih\nabla h\cdot\nabla\beta^{(n)i}\,\d x\lesssim A_0C_i\int_{\R^2}e^{-2\gamma^{(n)}}w|\nabla h|^2\,\d x \lesssim A_0C_iE(t).
    \end{equation*}
\end{itemize}
We now show that $R_{\gamma^{(n)}}(t)$ can also be bounded by $E(t)$ :
\begin{itemize}
    \item Let's show that $\dr_t\gamma^{(n)}$ is bounded. Since $\alpha$ doesn't depend on time, we have $\dr_t\gamma^{(n)}=N^{(n-1)}\Ll^{(n-1)}\Tilde{\gamma}^{(n)}+\beta^{(n)}\cdot\nabla\Tilde{\gamma}^{(n)}$. For the first term we use the Proposition \ref{commutation estimate} and the embedding $H^2_{\delta'+1}\xhookrightarrow{}C^0_1$ (together with the fact that $|N^{(n-1)}|\lesssim\langle x\rangle$). For the second term we simply use the embedding $H^2_{\delta'}\xhookrightarrow{}L^{\infty}$. We thus get 
    \begin{equation*}
        2\int_{\R^2}e^{-2\gamma^{(n)}} w|\nabla h|^2\dr_t\gamma^{(n)}\d x \leq C(C_i) E(t).
    \end{equation*}
    \item Let's show that $e^{-\gamma^{(n)}}N^{(n)}\nabla\gamma^{(n)}$ is bounded. We only deal with the $\chi\ln$ part of $N^{(n)}$ (since $\Tilde{N}^{(n)}$ is bounded), and only with the $\nabla\Tilde\gamma^{(n)}$ part in $\nabla\gamma^{(n)}$ (because $\nabla(\chi\ln)$ decrease more than $e^{-\gamma^{(n)}}$). Using $|\chi\ln|\lesssim \langle x\rangle^{\e^2}$ and $e^{-\gamma^{(n)}}\lesssim \langle x\rangle^{\e^2}$, we write
    \begin{equation*}
        \left|e^{-\gamma^{(n)}}\chi\ln\nabla\Tilde{\gamma}^{(n)}\right|\lesssim \|\nabla\Tilde{\gamma}^{(n)}\|_{C^0_{2\e^2}}.
    \end{equation*}
    If $\e$ is small enough we have the embedding $H^2_{\delta'+1}\xhookrightarrow{}C^0_{2\e^2}$ which together with \eqref{HR gamma 1} allows us to say
    \begin{equation*}
        4\int_{\R^2}e^{-2\gamma^{(n)}}w e_0^{(n)}h\nabla h\cdot\nabla\gamma^{(n)}\d x \lesssim C(C_i) \int_{\R^2}w\left|\frac{e_0^{(n)}h}{N^{(n)}}\right|e^{-\gamma^{(n)}}|\nabla h|\,\d x \leq C(C_i) E(t).
    \end{equation*}
    \item We already used multiple times that $\beta^{(n)}$ and $\nabla\gamma^{(n)}$ are bounded (by $\e$ and $C(C_i)$ respectively), and thus
    \begin{equation*}
        2\int_{\R^2} e^{-2\gamma^{(n)}}w|\nabla h|^2\beta^{(n)}\cdot \nabla\gamma^{(n)}\d x\leq C(C_i) E(t).
    \end{equation*}
\end{itemize}
For the last integral in \eqref{energy equality} we apply Cauchy-Schwarz inequality :
\begin{equation*}
    2\int_{\R^2}w  f e_0^{(n)}h\,\d x\leq 2\left(\int_{\R^2}w f^2 N^{(n)2}\d x\right)^{\frac{1}{2}}\left(\int_{\R^2}w \left(\Ll^{(n)}h\right)^2 \d x\right)^{\frac{1}{2}}\leq E(t)+\int_{\R^2}w f^2 N^{(n)2}\d x .
\end{equation*}
Summarising all the estimates, we get :
\begin{equation*}
    \frac{\d E}{\d t}(t)\leq C(C_i)E(t)+\int_{\R^2}w f^2 N^{(n)2}(t,x)\,\d x .
\end{equation*}
We apply Gronwall's inequality with $T$ sufficiently small to obtain 
\begin{equation*}
    E(t)\leq 2\left( E(0)+T\sup_{s\in[0,T]}\int_{\R^2}w f^2 N^{(n)2}(s,x)\,\d x\right) .
\end{equation*}
We recognize in $E(t)$ a weighted Sobolev norm, and using inequality such as $\frac{1}{\sqrt{2}}(a+b)\leq \sqrt{a^2+b^2}\leq a+b$, we obtain the inequality of the lemma.

\end{proof}

\begin{lem}\label{inegalite d'energie lemme}
If $h$ is a solution of \eqref{partie principale de box} then, if $T$ is sufficiently small, we have for all $t\in[0,T]$ 
\begin{align}
    &\sum_{|\alpha|\leq 2} \left( \left\Vert\Ll^{(n)}\nabla^{\alpha}h\right\Vert_{L^2_{\delta'+1+|\alpha|}}(t)+\left\Vert e^{-\gamma^{(n)}}\nabla (\nabla^{\alpha}h)\right\Vert_{L^2_{\delta'+1+|\alpha|}}(t)\right)\nonumber \\
    &    \leq 3\sum_{|\alpha|\leq 2} \left( \left\Vert\Ll^{(n)}\nabla^{\alpha}h\right\Vert_{L^2_{\delta'+1+|\alpha|}}(0)+\left\Vert e^{-\gamma^{(n)}}\nabla (\nabla^{\alpha}h)\right\Vert_{L^2_{\delta'+1+|\alpha|}}(0)\right)+C(C_i)\sqrt{T}\sup_{s\in[0,T]}\left\| fN^{(n)}\right\|_{H^2_{\delta'+1}}(s). \label{inegalite d'energie equation}
\end{align}
\end{lem}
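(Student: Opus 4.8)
The plan is to deduce the third-order estimate \eqref{inegalite d'energie equation} from the zeroth-order estimate of Lemma \ref{inegalite d'energie lemme 1} by commuting spatial derivatives through the equation. Write $P^{(n)}\vcentcolon=\left(\Ll^{(n)}\right)^2-e^{-2\gamma^{(n)}}\Delta$ for the operator on the left of \eqref{partie principale de box}. First, for each multi-index $\alpha$ with $|\alpha|\leq 2$, applying $\nabla^\alpha$ to \eqref{partie principale de box} shows that $\nabla^\alpha h$ again solves an equation of the form \eqref{partie principale de box}, namely $P^{(n)}\left(\nabla^\alpha h\right)=f_\alpha$ with $f_\alpha\vcentcolon=\nabla^\alpha f+\left[P^{(n)},\nabla^\alpha\right]h$. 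I would then apply Lemma \ref{inegalite d'energie lemme 1} to $\nabla^\alpha h$ with weight $\sigma=\delta'+1+|\alpha|$ and source $f_\alpha$, and sum over $|\alpha|\leq 2$. Writing $\mathcal{E}(t)$ for the left-hand side of \eqref{inegalite d'energie equation} at time $t$, this produces the master inequality
\begin{equation*}
    \mathcal{E}(t)\leq 2\,\mathcal{E}(0)+2\sqrt{2T}\sum_{|\alpha|\leq 2}\;\sup_{s\in[0,T]}\left\| f_\alpha\,N^{(n)}\right\|_{L^2_{\delta'+1+|\alpha|}}(s),
\end{equation*}
so that everything reduces to estimating $\left\| f_\alpha\,N^{(n)}\right\|_{L^2_{\delta'+1+|\alpha|}}$ for the two pieces of $f_\alpha$.

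For the genuine-source piece, I would use $N^{(n)}\nabla^\alpha f=\nabla^\alpha\!\left(fN^{(n)}\right)-\sum_{0<\beta\leq\alpha}\binom{\alpha}{\beta}\nabla^\beta N^{(n)}\,\nabla^{\alpha-\beta}f$: the first term is literally one of the summands of $\left\| fN^{(n)}\right\|_{H^2_{\delta'+1}}$, while the remaining ones carry at least one derivative of $N^{(n)}$ and fewer derivatives of $f$, and are handled with the weighted product laws of Appendix \ref{appendix B}, the bounds \eqref{HR N 1}--\eqref{HR N 3} on $N^{(n)}$ and its derivatives together with the weighted Sobolev embeddings, and the identity $f=\left(N^{(n)}\right)^{-1}\!\left(fN^{(n)}\right)$ which trades each derivative of $f$ for one of $fN^{(n)}$. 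This yields $\sum_{|\alpha|\leq 2}\left\| N^{(n)}\nabla^\alpha f\right\|_{L^2_{\delta'+1+|\alpha|}}\lesssim C(C_i)\left\| fN^{(n)}\right\|_{H^2_{\delta'+1}}$.

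The main obstacle is the commutator $\left[P^{(n)},\nabla^\alpha\right]h$, which a priori contains third-order derivatives of $h$ not directly controlled by $\mathcal{E}$. The idea is to rewrite it so that every top-order contribution is a product of at most two derivatives of the metric coefficients $N^{(n)},\gamma^{(n)},\beta^{(n)}$ with one of the quantities appearing in $\mathcal{E}$, or with the source. Two manipulations achieve this: whenever a factor $\left(\Ll^{(n)}\right)^2h$ is produced — it appears because commuting $\nabla$ past $\left(\Ll^{(n)}\right)^2$ creates it, via $\left[\Ll^{(n)},\nabla\right]h=\frac{\nabla\beta^{(n)}}{N^{(n)}}\nabla h+\frac{\nabla N^{(n)}}{N^{(n)}}\Ll^{(n)}h$ — I substitute $\left(\Ll^{(n)}\right)^2h=e^{-2\gamma^{(n)}}\Delta h+f$; and I re-commute $\nabla$ with $\Ll^{(n)}$ so that every surviving top-order factor is displayed as $\Ll^{(n)}\nabla^{\alpha'}h$ or $e^{-\gamma^{(n)}}\nabla^{\alpha'+1}h$ with $|\alpha'|\leq|\alpha|$. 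Since each derivative landing on a metric coefficient improves its decay by exactly one power of $\langle x\rangle$, the weights balance, and each such term is bounded — by Hölder's inequality and the embeddings $H^2_\nu\hookrightarrow L^\infty$ — by $C(C_i)$ times the corresponding norm in $\mathcal{E}$, using \eqref{HR N 1}--\eqref{HR N 3}, \eqref{HR beta 1}--\eqref{HR beta 2}, \eqref{HR gamma 1} and Proposition \ref{commutation estimate}. The substituted $f$'s absorb into $C(C_i)\left\| fN^{(n)}\right\|_{H^2_{\delta'+1}}$, and the lower-order commutator terms (more derivatives on the metric, fewer on $h$) into $C(C_i)\,\mathcal{E}$, giving $\sum_{|\alpha|\leq 2}\left\|\left[P^{(n)},\nabla^\alpha\right]h\;N^{(n)}\right\|_{L^2_{\delta'+1+|\alpha|}}\leq C(C_i)\bigl(\sup_{[0,T]}\mathcal{E}+\left\| fN^{(n)}\right\|_{H^2_{\delta'+1}}\bigr)$.

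Finally, inserting these two bounds into the master inequality, taking the supremum over $t\in[0,T]$, and choosing $T$ small enough that $C(C_i)\sqrt{T}\leq\tfrac13$, I would absorb the $\sup_{[0,T]}\mathcal{E}$ contribution into the left-hand side; this is exactly what produces the factor $3$ in front of $\mathcal{E}(0)$ and the constant $C(C_i)\sqrt{T}$ in front of the source in \eqref{inegalite d'energie equation}. I expect the genuinely delicate point to be the weight bookkeeping inside the commutator — checking that the decay gained by differentiating $N^{(n)}$, $\gamma^{(n)}$ and $\beta^{(n)}$ precisely offsets the growing weight $\delta'+1+|\alpha|$, and that the logarithmically growing pieces $N_a^{(n)}\chi\ln$ and $\alpha\chi\ln$ (whose derivatives decay polynomially) are harmless; the energy identity itself is supplied verbatim by Lemma \ref{inegalite d'energie lemme 1}.
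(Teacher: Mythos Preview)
Your proposal is correct and follows essentially the same route as the paper: commute $\nabla^\alpha$ through $P^{(n)}$, apply Lemma \ref{inegalite d'energie lemme 1} at each weight $\sigma=\delta'+1+|\alpha|$, bound $N^{(n)}\nabla^\alpha f$ by the Leibniz trick, bound the commutator terms by $C(C_i)\big(\mathcal{E}+\|fN^{(n)}\|_{H^2_{\delta'+1}}\big)$ after substituting the equation for $\left(\Ll^{(n)}\right)^2h$, and absorb via small $T$. Two cosmetic points: your commutator formula has a sign typo (it should read $\left[\Ll^{(n)},\nabla\right]h=\frac{\nabla\beta^{(n)}}{N^{(n)}}\nabla h-\frac{\nabla N^{(n)}}{N^{(n)}}\Ll^{(n)}h$, which is harmless for the estimates), and for $|\alpha|=2$ the commutator also produces a term $\left(\Ll^{(n)}\right)^2\nabla h$, which the paper handles by first closing the $|\alpha|=1$ case and then invoking the derived bound $\left\|\left(\Ll^{(n)}\right)^2\nabla h\right\|_{L^2_{\delta'+2}}\leq C(C_i)\big(\mathcal{E}+\|fN^{(n)}\|_{H^2_{\delta'+1}}\big)$ --- a detail you implicitly subsume in your ``re-commute and substitute'' description.
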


\begin{proof}
For the sake of clarity, we set
\begin{equation*}
    \mathcal{E}^{(n)}[h](t)=\sum_{|\alpha|\leq 2} \left( \left\Vert\Ll^{(n)}\nabla^{\alpha}h\right\Vert_{L^2_{\delta'+1+|\alpha|}}(t)+\left\Vert e^{-\gamma^{(n)}}\nabla (\nabla^{\alpha}h)\right\Vert_{L^2_{\delta'+1+|\alpha|}}(t)\right)
\end{equation*}
If $h$ satisfies \eqref{partie principale de box}, then, applying $\nabla^{\alpha}$ to the equation, we show that $\nabla^{\alpha}h$ satisfies
\begin{equation}
   \left( \Ll^{(n)} \right)^2\nabla^{\alpha}h-e^{-2\gamma^{(n)}}\Delta (\nabla^{\alpha}h)=\nabla^{\alpha}f+\left[\nabla^{\alpha},e^{-2\gamma^{(n)}}\right]\Delta h+\left[\left( \Ll^{(n)} \right)^2,\nabla^{\alpha}\right]h. \label{eq derivee}
\end{equation}
Thanks to the previous lemma, in order to prove \eqref{inegalite d'energie equation}, we have to bound $\sum_{|\alpha|\leq 2}\Vert (\text{RHS of \eqref{eq derivee}})\times N^{(n)}\Vert_{L^2_{\delta'+1+|\alpha|}}$.
\begin{itemize}
    \item First step is bounding $\Vert N^{(n)}\nabla^{\alpha}f \Vert_{L^2_{\delta'+1+|\alpha|}}$ (using the fact that $\frac{1}{N^{(n)}}\in L^{\infty}$, $|\nabla^{\alpha}(\chi\ln)|\leq \langle x\rangle^{-|\alpha|}$, the product estimates and \eqref{HR N 2}). If $|\alpha|=1$ :
    \begin{align*}
        \Vert N^{(n)}\nabla^{\alpha}f \Vert_{L^2_{\delta'+2}} &\lesssim \Vert\nabla^{\alpha}(fN^{(n)})\Vert_{L^2_{\delta'+2}}+  \Vert f\nabla^{\alpha} N^{(n)}\Vert_{L^2_{\delta'+2}}\\
        &\lesssim \Vert fN^{(n)}\Vert_{H^2_{\delta'+1}}+\Vert N^{(n)} f\nabla^{\alpha}(\chi\ln)\Vert_{L^2_{\delta'+2}}+\Vert N^{(n)} f\nabla^{\alpha}\Tilde{N}^{(n)}\Vert_{L^2_{\delta'+2}}\\
        &\lesssim  \Vert fN^{(n)}\Vert_{H^2_{\delta'+1}} \left( 2+ \Vert \nabla^{\alpha}\Tilde{N}^{(n)}\Vert_{H^{2}_{\delta+1}}\right)\\
        &\leq C(C_i)\Vert fN^{(n)}\Vert_{H^2_{\delta'+1}}.
    \end{align*}
    If $|\alpha|=2$, then there exist $\alpha_1,\alpha_2$ with $|\alpha_1|=|\alpha_2|=1$ such that :
    \begin{equation*}
        \Vert N^{(n)}\nabla^{\alpha}f\Vert_{L^2_{\delta'+3}}\lesssim \Vert \nabla^{\alpha}(fN^{(n)}) \Vert_{L^2_{\delta'+3}}+\Vert f\nabla^{\alpha}N^{(n)} \Vert_{L^2_{\delta'+3}}+\Vert \nabla^{\alpha_1}N^{(n)}\nabla^{\alpha_2}f \Vert_{L^2_{\delta'+3}}.
    \end{equation*}
    The two first terms can be handled as in the case $|\alpha|=1$. For the last term we do the following :
    \begin{align*}
        \Vert \nabla^{\alpha_1}N^{(n)}\nabla^{\alpha_2}f \Vert_{L^2_{\delta'+3}} & \lesssim \Vert N^{(n)} \nabla^{\alpha_1}N^{(n)}\nabla
        ^{\alpha_2}f \Vert_{L^2_{\delta'+3}}\\
        &\lesssim \Vert N^{(n)}\nabla^{\alpha_2}f \Vert_{L^2_{\delta'+2}}\Vert \nabla^{\alpha_1}N^{(n)} \Vert_{H^2_{\delta+1}}\\
        &\leq C(C_i)\Vert fN^{(n)}\Vert_{H^2_{\delta'+1}},
    \end{align*}
    where in the last inequality we use the calculation of the $|\alpha|=1$ case. Summarising, we get :
    \begin{equation}
        \sum_{|\alpha|\leq 2}\|N^{(n)}\nabla^{\alpha}f \|_{L^2_{\delta'+1+|\alpha|}}\leq C(C_i)\Vert fN^{(n)}\Vert_{H^2_{\delta'+1}}\label{first step}.
    \end{equation}
    \item Second step is bounding $\left\| N^{(n)}\left[\nabla^{\alpha},e^{-2\gamma^{(n)}}\right]\Delta h \right\|_{L^2_{\delta'+1+|\alpha|}}$. If $|\alpha|=1$ we have $\left[\nabla^{\alpha},e^{-2\gamma^{(n)}}\right]\Delta h=-2e^{-2\gamma^{(n)}}\nabla^{\alpha}\gamma^{(n)}\Delta h$.
    Using the fact that $\left|\Tilde{N}^{(n)}\right|\leq\e$, \eqref{useful gamma 1}, $|\chi\ln|\lesssim\langle x\rangle^{\e^2}$ and the expression of $\gamma^{(n)}$ we have
    \begin{align*}
        \left\| e^{-2\gamma^{(n)}}N^{(n)}\nabla^{\alpha}\gamma^{(n)}\Delta h\right\|_{L^2_{\delta'+2}} & \lesssim \left\| \nabla^{\alpha}\gamma^{(n)}\Delta h\right\|_{L^2_{\delta'+2+3\e^2}} \\ & \lesssim\left\| \nabla^{\alpha}(\chi\ln)\Delta h\right\|_{L^2_{\delta'+2+3\e^2}}+\left\| \nabla^{\alpha}\Tilde{\gamma}^{(n)}\Delta h\right\|_{L^2_{\delta'+2+3\e^2}}.
    \end{align*}
    Since $|\alpha|=1$, we have $|\nabla^{\alpha}(\chi\ln)|\lesssim \langle x \rangle^{-1}$ and $\nabla^{\alpha}\Tilde{\gamma}^{(n)}\in H^2_{\delta'+1}$ which embeds in $C^0_1$. This implies :
    \begin{equation*}
        \left\| e^{-2\gamma^{(n)}}N^{(n)}\nabla^{\alpha}\gamma^{(n)}\Delta h\right\|_{L^2_{\delta'+2}} \leq C(C_i) \|\Delta h\|_{L^2_{\delta'+1+3\e^2}}\leq C(C_i)\sum_{|\alpha'|=1}\left\Vert e^{-\gamma^{(n)}}\nabla (\nabla^{\alpha'}h)\right\Vert_{L^2_{\delta'+2}},
    \end{equation*}
    where in the last inequality, we used that $1\lesssim |e^{-\gamma^{(n)}}|$ and took $\e$ small enough. If $|\alpha|=2$, then there exist $\alpha_1,\alpha_2$ with $|\alpha_1|=|\alpha_2|=1$ such that :
    \begin{align*}
        \left\| N^{(n)}\left[\nabla^{\alpha},e^{-2\gamma^{(n)}}\right]\Delta h \right\|_{L^2_{\delta'+3}} &\lesssim \left\|e^{-2\gamma^{(n)}}N^{(n)}\nabla^{\alpha}\gamma^{(n)} \Delta h \right\|_{L^2_{\delta'+3}}+\left\|e^{-2\gamma^{(n)}}N^{(n)}\nabla^{\alpha_1}\gamma^{(n)} \nabla^{\alpha_2}\Delta h \right\|_{L^2_{\delta'+3}}\\
        &\quad + \l e^{-2\gamma^{(n)}} N^{(n)} \nabla^{\alpha_1}\gamma^{(n)}\nabla^{\alpha_2}\gamma^{(n)}\Delta h \r_{L^2_{\delta'+3}} 
        \\& \lesssim \left\|\nabla^{\alpha}\gamma^{(n)} \Delta h\right\|_{L^2_{\delta'+3+3\e^2}}+\left\|\nabla^{\alpha_1}\gamma^{(n)} \nabla^{\alpha_2}\Delta h \right\|_{L^2_{\delta'+3+3\e^2}}
        \\&\quad + \l  \nabla^{\alpha_1}\gamma^{(n)}\nabla^{\alpha_2}\gamma^{(n)}\Delta h \r_{L^2_{\delta'+3+3\e^2}}
    \end{align*}
    For the first term, we use the fact that $|\nabla^{\alpha}(\chi\ln)|\lesssim \langle x \rangle^{-2}$, $\nabla^{\alpha}\Tilde{\gamma}^{(n)}\in H^1_{\delta'+2}$ (since $|\alpha|=2$) and the product estimate to get :
    \begin{equation*}
        \left\|\nabla^{\alpha}\gamma^{(n)} \Delta h\right\|_{L^2_{\delta'+3+3\e^2}}\leq C(C_i) \|\Delta h\|_{H^1_{\delta'+2}}\leq C(C_i)\sum_{|\alpha'|=1,2}\left\|e^{-\gamma^{(n)}}\nabla(\nabla^{\alpha'}h)\right\|_{L^2_{\delta'+1+|\alpha'|}}.
    \end{equation*}
    For the second term, we again use that $\nabla^{\alpha_1}\Tilde{\gamma}^{(n)},\nabla^{\alpha_1}(\chi\ln)\in C^0_1$ (since $|\alpha_1|=1$) to get 
    \begin{equation*}
        \left\|\nabla^{\alpha_1}\gamma^{(n)} \nabla^{\alpha_2}\Delta h \right\|_{L^2_{\delta'+3+3\e^2}}\leq C(C_i) \|\nabla^{\alpha_2}\Delta h\|_{L^2_{\delta'+2+
        3\e^2}}\leq C(C_i)\sum_{|\alpha'|=2}\left\|e^{-\gamma^{(n)}}\nabla(\nabla^{\alpha'}h)\right\|_{L^2_{\delta'+3}}.
    \end{equation*}
    The third term is easier to handle than the first one. Summarising, we get :
    \begin{equation}
        \sum_{|\alpha|\leq 2}\left\| N^{(n)}\left[\nabla^{\alpha},e^{-2\gamma^{(n)}}\right]\Delta h \right\|_{L^2_{\delta'+1+|\alpha|}}\leq C(C_i) \mathcal{E}^{(n)}[h].\label{second step}
    \end{equation}
    \item Third step is bounding $\left\| N^{(n)}\left[\left( \Ll^{(n)}\right)^2,\nabla^{\alpha}\right]h \right\|_{L^2_{\delta'+1+|\alpha|}}$. 
    Given the expression of $\mathcal{E}^{(n)}[h]$, we are allowed to bound this term by norms involving $\Ll^{(n)}\nabla^{\mu}$, $\nabla^{\nu}$ (for $|\mu|\leq 2$ and $|\nu|\leq 3$) and $\left( \Ll^{(n)}\right)^2$. The strategy is then to express $N^{(n)}\left[\left( \Ll^{(n)}\right)^2,\nabla \right]h$ and $N^{(n)}\left[\left( \Ll^{(n)}\right)^2,\nabla^2 \right]h$ in terms of those operators acting on $h$, using the commutation formula
    \begin{equation*}
        \left[\Ll^{(n)},\nabla\right] h =\frac{\nabla\beta^{(n)}}{N^{(n)}}\nabla h-\frac{\nabla N^{(n)}}{N^{(n)}}\Ll^{(n)}h.
    \end{equation*}
    Doing so, we find the following formula (we don't write the irrelevant numerical constants) :
    \begin{align}
        N^{(n)}\left[\left( \Ll^{(n)}\right)^2,\nabla \right]h & =\left(e_0^{(n)}\left(\frac{\nabla\beta^{(n)}}{N^{(n)}}\right) +\frac{\left(\nabla\beta^{(n)}\right)^2}{N^{(n)}} \right)\nabla h  +\left(\frac{\nabla\beta^{(n)}\nabla N^{(n)}}{N^{(n)}} +e_0^{(n)}\left( \frac{\nabla N^{(n)}}{N^{(n)}} \right) \right)\Ll^{(n)}h\label{commutateur 1}\\& \qquad\qquad+2\nabla\beta^{(n)}\Ll^{(n)}\nabla h -2\nabla N^{(n)}\left( \Ll^{(n)}\right)^2h.\nonumber
    \end{align}
    We recall that $\left( \Ll^{(n)}\right)^2h=e^{-2\gamma^{(n)}}\Delta h+f$, so that the $\left( \Ll^{(n)}\right)^2$ term in \eqref{commutateur 1} has already been estimate during the two first steps. The coefficients in front of $\nabla h$ and $\Ll^{(n)} h$ are all in $\mathcal{C}^1_0$ except the two involving $e_0\nabla N^{(n)}$, for wich we use the product law $H^1\times H^1$ and \eqref{HR N 2}  :
    \begin{align*}
    \l e_0^{(n)}\left( \frac{\nabla N^{(n)}}{N^{(n)}} \right)\Ll^{(n)}h \r_{L^2_{\delta'+2}} & \lesssim \l\nabla \dr_t N^{(n)} \r_{H^1_{\delta+1}} \l \Ll^{(n)} h \r_{H^1_{\delta'+1}} \lesssim C(C_i)\mathcal{E}^{(n)}[h].
    \end{align*}
    We only need to bound the coefficient in front of $\Ll^{(n)}\nabla$ in $L^{\infty}$, which is easily thanks to \eqref{HR N 3}. This allows us to handle the case $|\alpha|=1$ : \begin{align}
        &\left\| N^{(n)}\left[\left( \Ll^{(n)}\right)^2,\nabla^{\alpha}\right]h \right\|_{L^2_{\delta'+2}}\nonumber\\&\leq C(C_i)\left( \sum_{|\alpha'|\leq 1} \left( \left\Vert\frac{e_0^{(n)}\nabla^{\alpha'}h}{N^{(n)}}\right\Vert_{L^2_{\delta'+1+|\alpha'|}}+\left\| e^{-\gamma^{(n)}}\nabla (\nabla^{\alpha'}h)\right\|_{L^2_{\delta'+1+|\alpha'|}} \right)+\Vert fN^{(n)}\Vert_{H^2_{\delta'+1}}    \right).\label{alpha 1}
    \end{align}
    Before turning to the case $|\alpha|=2$, let's remark that, in view of \eqref{eq derivee}, so far we have proved that 
    \begin{equation}
        \left\| \left( \Ll^{(n)}\right)^2\nabla h\right\|_{L^2_{\delta'+2}}\leq C(C_i) \left( \mathcal{E}^{(n)}[h]+\Vert fN^{(n)}\Vert_{H^2_{\delta'+1}}\right).\label{L carré}
    \end{equation}
    This means that, even if  $\left( \Ll^{(n)}\right)^2\nabla h$ doesn't appear in the expression of $\mathcal{E}^{(n)}[h]$, we are allowed to use it in the sequel of the third step.
    
    We now turn to the case $|\alpha|=2$ and push our calculations further to get (we still don't write the irrelevant numerical constants) :
    \begin{align}
        & N^{(n)}\left[\left( \Ll^{(n)}\right)^2,\nabla^2 \right]h  =\nabla N^{(n)}\left( \Ll^{(n)}\right)^2\nabla h +\nabla N^{(n)}\left[\left( \Ll^{(n)}\right)^2,\nabla \right]h \nonumber\\
        & \left( N^{(n)}\nabla\Ll^{(n)}\left( \frac{\nabla\beta^{(n)}}{N^{(n)}}\right)+N^{(n)}\nabla\left(\left(\frac{\nabla\beta^{(n)}}{N^{(n)}}\right)^2\right) +\frac{\nabla N^{(n)}\left(\nabla\beta^{(n)}\right)^2}{N^{(n)} }+ \nabla\beta^{(n)}\Ll^{(n)}\left(\frac{\nabla N^{(n)}}{N^{(n)}}\right)\right)\nabla h \nonumber\\
        &+\left(  e_0^{(n)}\left(\frac{\nabla\beta^{(n)}}{N^{(n)}} \right)+\frac{\left(\nabla\beta^{(n)}\right)^2}{N^{(n)}}\right)\nabla^2 h +N^{(n)}\nabla\left(\frac{\nabla N^{(n)}}{N^{(n)}}\right) \left( \Ll^{(n)}\right)^2 h + 
        \nabla\beta^{(n)}\Ll^{(n)}\nabla^2 h\nonumber\\
        & +\left(   N^{(n)}\nabla\left(\frac{\nabla\beta^{(n)}\nabla N^{(n)}}{N^{(n)}}\right)+N^{(n)}\nabla\Ll^{(n)}\left( \frac{\nabla N^{(n)}}{N^{(n)}}\right)  +\frac{\nabla \beta^{(n)}\left(\nabla N^{(n)}\right)^2}{N^{(n)} }+ \nabla N^{(n)}\Ll^{(n)}\left(\frac{\nabla N^{(n)}}{N^{(n)}}\right) \right)\Ll^{(n)} h\nonumber\\
        & +\left( \nabla\beta^{(n)}\nabla N^{(n)} +e_0^{(n)}\left( \frac{\nabla N^{(n)}}{N^{(n)}}\right)+N^{(n)}\nabla\left( \frac{\nabla\beta^{(n)}}{N^{(n)}} \right)+\frac{\nabla\beta^{(n)}\nabla N^{(n)}}{N^{(n)}} \right)\Ll^{(n)}\nabla h\label{commutateur 2}.
    \end{align}

We need to estimate the $L^2_{\delta'+3}$ norm of \eqref{commutateur 2}. The term $\left( \Ll^{(n)}\right)^2 h$ has already been handled since $h$ satisfies \eqref{partie principale de box}. Since $\nabla N^{(n)}\in C^0_1$ we can use $\eqref{L carré}$ to estimate the term $\left( \Ll^{(n)}\right)^2\nabla h$. With the same argument, using \eqref{alpha 1} we handle the $\left[\left( \Ll^{(n)}\right)^2,\nabla \right]h$ term. Thanks to \eqref{HR beta 2} and \eqref{HR beta 3}, the coefficients in front of $\Ll^{(n)}\nabla^2h$ and $\nabla^2h$ are in the appropriate weighted $L^\infty$-based spaces ($L^\infty$ and $C^0_1$ respectively). The only problematic terms are the ones where two spatial derivatives hit $\beta^{(n)}$ or when at least one spatial derivative and $\Ll^{(n)}$ hit $N^{(n)}$. For them, we use the product estimate (see Proposition \ref{prop prod}). Let us give two examples, the first one using the embedding $H^1\times H^1\xhookrightarrow{}L^2$ (with appropriate weights) : 
\begin{align*}
\l \nabla^2\beta^{(n)} \Ll^{(n)}\nabla h \r_{L^2_{\delta'+3}} & \lesssim \l\nabla^2\beta^{(n)} \r_{H^1_{\delta'+2}} \l \Ll^{(n)}\nabla h \r_{H^1_{\delta'+2}} \lesssim C(C_i) \mathcal{E}^{(n)}[h].
\end{align*}
The second example uses the embedding $L^2\times H^2\xhookrightarrow{}L^2$ (with appropriate weights) :
\begin{align*}
\l \nabla \Ll^{(n)} \nabla N^{(n)}\Ll^{(n)}h\r_{L^2_{\delta'+3}} \lesssim \left( 1+ \l \nabla^2 \dr_t \Tilde{N}^{(n)}  \r_{L^2_{\delta+2}} \right) \l \Ll^{(n)} h \r_{H^2_{\delta'+1}} \lesssim C(C_i) \mathcal{E}^{(n)}[h].
\end{align*}
This allows us to handle entirely the case $|\alpha|=2$. Summarising the third step, we get : 
\begin{equation}
    \sum_{|\alpha|\leq 2}\left\| N^{(n)}\left[\left( \Ll^{(n)}\right)^2,\nabla^{\alpha}\right]h \right\|_{L^2_{\delta'+1+|\alpha|}} \leq C(C_i)\left( \mathcal{E}^{(n)}[h]+\Vert fN^{(n)}\Vert_{H^2_{\delta'+1}}\right).\label{third step}
\end{equation}
\end{itemize}
Combining \eqref{first step}, \eqref{second step} and \eqref{third step}, we get for all $t\in [0,T]$ :
\begin{equation}
    \mathcal{E}^{(n)}[h](t)\leq 2 \mathcal{E}^{(n)}[h](0)+C(C_i)\sqrt{T}\left( \sup_{s\in[0,T]}\Vert fN^{(n)}\Vert_{H^2_{\delta'+1}}(s)+\mathcal{E}^{(n)}[h](t)\right).
\end{equation}

By choosing $T$ sufficiently small, we can absorb the term $\mathcal{E}^{(n)}[h](t)$ of the RHS into the LHS and conclude the proof of the lemma. 
\end{proof}

With this energy estimate, we are ready to prove estimates on $\Tilde{\gamma}^{(n+1)}$, $\ffi^{(n+1)}$ and $\omega^{(n+1)}$. The spatial term in the energy $\mathcal{E}^{(n)}[h]$ is different from what appear in \eqref{HR gamma 1}, but \eqref{propsmall gamma} implies that $1\lesssim e^{-\gamma^{(n)}}$ so we will get back the estimates we want for $\Tilde{\gamma}^{(n+1)}$, $\ffi^{(n+1)}$ and $\omega^{(n+1)}$ if we bound $\mathcal{E}^{(n)}$, using Lemma \ref{inegalite d'energie lemme}.

\subsubsection{Hyperbolic estimates} 

We use our energy estimate to prove the estimates on $\Tilde{\gamma}^{(n+1)}$. Since we are not getting $\gamma^{(n+1)}$ from an elliptic equation, we cannot obtain the decomposition $\gamma^{(n+1)}=-\alpha\chi\ln+\Tilde{\gamma}^{(n+1)}$ directly. Our strategy is to solve for $\gamma^{(n+1)}+\alpha\chi\ln$ to artificially recover our decomposition after having set $\Tilde{\gamma}^{(n+1)}\vcentcolon=\gamma^{(n+1)}+\alpha\chi\ln$. For the sake of clarity, we gather in the following lemma the estimates of the extra terms due to $\alpha\chi\ln$ :

\begin{lem}\label{op chi ln lemme}
We set $\Psi^{(n)}= N^{(n)}\left(\left(\Ll^{(n)}\right)^2(\chi\ln)- e^{-2\gamma^{(n)}}\Delta (\chi\ln) \right)$.
For $n\geq 2$, the following estimate hold :
\begin{align}
    \left\| \Psi^{(n)} \right\|_{H^2_{\delta'+1}}&\leq C(C_i),\label{op chi ln 1}\\
    \left\| \Psi^{(n)} \right\|_{H^1_{\delta'+1}}&\leq A_0C_i.\label{op chi ln 3}
\end{align}
\end{lem}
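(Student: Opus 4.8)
The plan is to expand $\Psi^{(n)}$ explicitly and to observe that, apart from one compactly supported piece, every resulting term carries at least one \emph{spatial} derivative of $\chi\ln$ — hence a genuine gain of decay — while the remaining factors are metric coefficients or their derivatives already controlled by the bootstrap hierarchy.

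Since $\chi\ln$ is independent of $t$, one has $e_0^{(n)}(\chi\ln)=-\beta^{(n)k}\dr_k(\chi\ln)$, and applying $\Ll^{(n)}=\tfrac{1}{N^{(n)}}e_0^{(n)}$ once more (using also $e_0^{(n)}\dr_k(\chi\ln)=-\beta^{(n)\ell}\dr_\ell\dr_k(\chi\ln)$) gives the schematic identity
\begin{equation*}
    N^{(n)}\left(\Ll^{(n)}\right)^2(\chi\ln)=-\frac{\left(e_0^{(n)}\beta^{(n)k}\right)\dr_k(\chi\ln)}{N^{(n)}}+\frac{\beta^{(n)k}\beta^{(n)\ell}\dr_k\dr_\ell(\chi\ln)}{N^{(n)}}+\frac{\beta^{(n)k}\dr_k(\chi\ln)\,e_0^{(n)}N^{(n)}}{\left(N^{(n)}\right)^2},
\end{equation*}
so that $\Psi^{(n)}$ equals the sum of these three terms minus $N^{(n)}e^{-2\gamma^{(n)}}\Delta(\chi\ln)$. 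This last term is harmless: since $\Delta(\ln|x|)=0$ away from the origin and $\chi$ is locally constant outside the annulus $\{1\leq|x|\leq2\}$, the function $\Delta(\chi\ln)$ is smooth and supported in that annulus, so weighted norms are there equivalent to ordinary ones; the bounds $\|N^{(n)}e^{-2\gamma^{(n)}}\Delta(\chi\ln)\|_{H^2_{\delta'+1}}\lesssim C(C_i)$ and $\lesssim1$ in $H^1_{\delta'+1}$ then follow from the embedding $H^2_\delta\hookrightarrow L^\infty$ and the bounds \eqref{HR N 1}--\eqref{HR N 3}, \eqref{HR gamma 1}, \eqref{propsmall gamma} for $N^{(n)}$ and $\gamma^{(n)}$ restricted to $B_2$.

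For the three remaining terms I would use $|\dr^a(\chi\ln)|\lesssim\langle x\rangle^{-|a|}$ ($a\neq0$) and $|1/N^{(n)}|\lesssim1$ to trade each factor $\dr(\chi\ln)$ for one unit of decay and to discard the $N^{(n)}$ prefactors (their logarithmic growth being absorbed into an arbitrarily small extra weight via $\e|\chi\ln|\lesssim\langle x\rangle^{\e/2}$, exactly as in Lemma \ref{lem CI N}). The quadratic term $\beta^{(n)k}\beta^{(n)\ell}\dr_k\dr_\ell(\chi\ln)/N^{(n)}$ is then $\lesssim\|\beta^{(n)}\|_{H^2_{\delta'}}^2\lesssim\e^2$, and the term containing $e_0^{(n)}N^{(n)}=\dr_tN^{(n)}-\beta^{(n)}\cdot\nabla N^{(n)}$ is $\lesssim\e C_i$ after using \eqref{HR N 1}, \eqref{HR N 2} and \eqref{HR beta 1}; both are negligible. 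The only contributing term is $-(e_0^{(n)}\beta^{(n)})\cdot\nabla(\chi\ln)/N^{(n)}$, whose $H^k_{\delta'+1}$ norm is controlled by $\|e_0^{(n)}\beta^{(n)}\|_{H^k_{\delta'}}$. Converting to $e_0^{(n-1)}$ and $\dr_t$ through $e_0^{(n)}\beta^{(n)}=\dr_t\beta^{(n)}-\beta^{(n)}\cdot\nabla\beta^{(n)}$ and $\dr_t\beta^{(n)}=e_0^{(n-1)}\beta^{(n)}+\beta^{(n-1)}\cdot\nabla\beta^{(n)}$, the assumptions \eqref{HR beta 1}, \eqref{HR beta 2}, \eqref{HR beta 3} and the product estimate (Proposition \ref{prop prod}) give $\|e_0^{(n)}\beta^{(n)}\|_{H^2_{\delta'}}\lesssim A_1C_i$, which yields \eqref{op chi ln 1}. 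For the sharper \eqref{op chi ln 3} the crude bound \eqref{HR beta 3} is too weak, so I would instead combine \eqref{HR beta 2.5} with the embedding $L^2_{\delta'+1}\hookrightarrow L^2_{\delta'}$ and a weighted Poincaré inequality $\|u\|_{L^2_{\delta'}}\lesssim\|\nabla u\|_{L^2_{\delta'+1}}$ (available from the weighted Sobolev tools of Appendix \ref{appendix B} since $\delta'\in(-1,0)$) to get $\|e_0^{(n-1)}\beta^{(n)}\|_{H^1_{\delta'}}\lesssim C_i$, hence $\|\Psi^{(n)}\|_{H^1_{\delta'+1}}\lesssim C_i\leq A_0C_i$.

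The main obstacle is just careful bookkeeping of constants: one must ensure that the contribution of $e_0^{(n)}\beta^{(n)}$ to the $H^1_{\delta'+1}$ estimate is $O(C_i)$ with a universal constant rather than $O(A_1C_i)$, which is precisely why \eqref{HR beta 2.5} together with a weighted Poincaré inequality must be used in place of \eqref{HR beta 3}. Everything else is routine once one records the two structural facts that $\Delta(\chi\ln)$ is compactly supported and that every term of $\left(\Ll^{(n)}\right)^2(\chi\ln)$ differentiates $\chi\ln$ at least once in space.
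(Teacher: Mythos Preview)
Your expansion of $\Psi^{(n)}$ and the treatment of the compactly supported piece $\Delta(\chi\ln)$ match the paper's proof, and for \eqref{op chi ln 1} the arguments coincide.

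For \eqref{op chi ln 3} the paper takes a more direct route: it simply bounds $\left\|\dr_t\!\left(\beta^{(n)}/N^{(n)}\right)\right\|_{H^1_{\delta'}}$ using \eqref{HR beta 1}, \eqref{HR N 1}, \eqref{HR N 2} and \eqref{HR beta 3}, with no Poincar\'e-type step. You correctly notice that \eqref{HR beta 3} literally yields a constant $A_1C_i$ rather than $A_0C_i$; the paper nonetheless asserts $A_0C_i$. This discrepancy is harmless because in the only place the lemma is used (Proposition~\ref{hr+1 gamma prop}) the term is $\alpha\Psi^{(n)}$ with $\alpha\lesssim\e^2$, so the exact $A$-subscript never matters. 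Your proposed workaround --- combining \eqref{HR beta 2.5} with a ``weighted Poincar\'e inequality'' $\|u\|_{L^2_{\delta'}}\lesssim\|\nabla u\|_{L^2_{\delta'+1}}$ --- is a gap as written: no such inequality appears in Appendix~\ref{appendix B}, and while a weighted Hardy inequality of this form does hold for $-1<\delta'<0$, you would have to prove it separately. The simpler fix is to follow the paper and invoke \eqref{HR beta 3} directly, accepting the $A_1C_i$ constant (or, if you insist on matching the stated $A_0C_i$, to note explicitly that after multiplication by $\alpha\lesssim\e^2$ the distinction disappears).
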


\begin{proof}
To prove \eqref{op chi ln 1}, we don't need to be very precise about the dependence on $C_i$ of the bound, so we don't give many details. For the first part of $\Psi^{(n)}$ :
\begin{align*}
    \left\| N^{(n)}\left(\Ll^{(n)}\right)^2(\chi\ln)\right\|_{H^2_{\delta'+1}} & \lesssim \left\| \dr_t\left( \frac{\beta^{(n)}}{N^{(n)}}\right) \right\|_{H^2_{\delta'}}+\left\|\beta^{(n)} \right\|_{H^2_{\delta'}}\left( \left\|\frac{\beta^{(n)}}{N^{(n)}} \right\|_{H^2_{\delta'-1}}+\left\|\nabla\left( \frac{\beta^{(n)}}{N^{(n)}} \right) \right\|_{H^2_{\delta'}} \right)\\
    &\leq C(C_i).
\end{align*}
For the second part of $\Psi^{(n)}$,  we notice that $\Delta(\chi\ln)=\Delta(\chi)\ln+\nabla\chi\cdot\nabla\ln$ is a smooth compactly supported function (its support is included in $B_2$) and in particular belongs to all $C^k$ spaces, using \eqref{useful gamma 1} and \eqref{HR N 1} :
\begin{align*}
     \left\|e^{-2\gamma^{(n)}} N^{(n)}\Delta(\chi\ln) \right\|_{H^2_{\delta'+1}} & \lesssim\left\|\Delta(\chi\ln) \right\|_{C^2} \left\| N^{(n)} \right\|_{H^2(B_2)}\lesssim 1.
\end{align*}
 
We now turn to the proof of \eqref{op chi ln 3}. For the first part of $\Psi^{(n)}$, we use \eqref{HR beta 1} (and the product estimate $H^2_{\delta'}\times H^1_{\eta}\xhookrightarrow{}H^1_{\eta}$), \eqref{HR N 1}, \eqref{HR N 2} and \eqref{HR beta 3}, and actually the only term that will bring some $C_i$ are $\dr_t\beta^{(n)}$ and $\dr_t N^{(n)}$ :
\begin{align*}
    \left\| N^{(n)}\left(\Ll^{(n)}\right)^2(\chi\ln)\right\|_{H^1_{\delta'+1}} & \lesssim \left\|\dr_t\left(\frac{\beta^{(n)}}{N^{(n)}} \right)\right\|_{H^1_{\delta'}}+\left\|\beta^{(n)} \right\|_{H^2_{\delta'}}\left( \left\|\frac{\beta^{(n)}}{N^{(n)}} \right\|_{H^1_{\delta'-1}}+\left\|\nabla\left( \frac{\beta^{(n)}}{N^{(n)}} \right) \right\|_{H^1_{\delta'}} \right)
    \\&\lesssim A_0C_i+\e.
\end{align*}
For the second part of $\Psi^{(n)}$, we again use the properties of $\Delta(\chi\ln)$, \eqref{useful gamma 1} and \eqref{HR N 1} :
\begin{align*}
    \left\|e^{-2\gamma^{(n)}} N^{(n)}\Delta(\chi\ln) \right\|_{H^1_{\delta'+1}}
    &\lesssim \left\|\Delta(\chi\ln) \right\|_{C^1} \left\| N^{(n)} \right\|_{H^1(B_2)}\lesssim 1.
\end{align*}

\end{proof}

\begin{prop}\label{hr+1 gamma prop}
For $n\geq 2$ the following estimates hold :
\begin{align}
    \sum_{|\alpha|\leq 2}  \left\Vert\Ll^{(n)}\nabla^{\alpha}\Tilde{\gamma}^{(n+1)}\right\Vert_{L^2_{\delta'+1+|\alpha|}}+\left\|\nabla\Tilde{\gamma}^{(n+1)}\right\|_{H^2_{\delta'+1}}&\leq 8 C_i,\label{HR+1 gamma 1}\\ 
    \left\| \dr_t\left(\Ll^{(n)}\Tilde{\gamma}^{(n+1)}\right)
    \right\|_{L^2_{\delta'+1}}&\lesssim C_i,\label{HR+1 gamma 2}\\
    \left\| \dr_t\left(\Ll^{(n)}\Tilde{\gamma}^{(n+1)}\right)\right\|_{H^1_{\delta'+1}}&\lesssim 
    A_1C_i.\label{HR+1 gamma 3}
  \end{align}
\end{prop}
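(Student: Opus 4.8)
The plan is to apply the energy estimate of Lemma~\ref{inegalite d'energie lemme} to $h=\Tilde{\gamma}^{(n+1)}:=\gamma^{(n+1)}+\alpha\chi\ln$, and then to extract \eqref{HR+1 gamma 2}--\eqref{HR+1 gamma 3} by hand from the equation. Since $\alpha$ does not depend on time, adding $\alpha\chi\ln$ to \eqref{reduced system gamma} only produces the extra term $\alpha\left((\Ll^{(n)})^2(\chi\ln)-e^{-2\gamma^{(n)}}\Delta(\chi\ln)\right)=\alpha\Psi^{(n)}/N^{(n)}$, with $\Psi^{(n)}$ as in Lemma~\ref{op chi ln lemme}. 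Hence $\Tilde{\gamma}^{(n+1)}$ solves \eqref{partie principale de box} with a right-hand side $f$ satisfying $fN^{(n)}=N^{(n)}\times(\text{RHS of \eqref{reduced system gamma}})+\alpha\Psi^{(n)}$, and, since $\gamma^{(n+1)}(0)$ equals the initial datum of Section~\ref{initial data}, we have $\Tilde{\gamma}^{(n+1)}(0)=\Tilde{\gamma}$, so Corollary~\ref{coro premiere section} (together with the sharper low-order bounds \eqref{CI petit}) gives $\mathcal{E}^{(n)}[\Tilde{\gamma}^{(n+1)}](0)\leq 2C_i$, only the third-order part being genuinely of size $C_i$, the rest being $O(\e^2)$.

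First I would bound $\|fN^{(n)}\|_{H^2_{\delta'+1}}$ term by term: the $\alpha\Psi^{(n)}$ contribution is handled by \eqref{op chi ln 1}; the term $N^{(n)}(\tau^{(n)})^2$ by \eqref{useful gamma 1a}, $\|N^{(n)}\|_{C^0_\e}\lesssim1$ and the product law with \eqref{HR tau 1}; the term $\tfrac12e^{-2\gamma^{(n)}}\Delta N^{(n)}$ by \eqref{useful gamma 1} and \eqref{HR N 3} (noting that $\Delta(\chi\ln)$ is smooth and compactly supported); the compactly supported terms $N^{(n)}e^{-2\gamma^{(n)}}|\nabla\ffi^{(n)}|^2$ and its $\omega$-analogue exactly as in Proposition~\ref{hr+1 N prop}, using \eqref{useful ffi 1}, \eqref{HR fi 1}, \eqref{HR omega 1}; and the term $\tfrac12e_0^{(n-1)}\left(\dive(\beta^{(n)})/N^{(n-1)}\right)$ by first writing $e_0^{(n-1)}\dive\beta^{(n)}=\dive(e_0^{(n-1)}\beta^{(n)})+[e_0^{(n-1)},\nabla]\beta^{(n)}$ and then invoking \eqref{HR beta 2}, \eqref{HR beta 4} and \eqref{HR N 2}. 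This yields a finite bound $C(A_0,\dots,A_4)C_i^k$. Feeding this into Lemma~\ref{inegalite d'energie lemme} gives $\mathcal{E}^{(n)}[\Tilde{\gamma}^{(n+1)}](t)\leq 6C_i+C(C_i)\sqrt{T}\,C(A_i)C_i^k$; choosing $T$ small (depending on the $A_i$) and then using $1\lesssim e^{-\gamma^{(n)}}$ from \eqref{propsmall gamma} to trade the weight $e^{-\gamma^{(n)}}$ appearing in $\mathcal{E}^{(n)}$ for the plain weighted Sobolev norm of \eqref{HR+1 gamma 1}, one obtains \eqref{HR+1 gamma 1} with the constant $8C_i$ as soon as $\e$ is small.

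For \eqref{HR+1 gamma 2} and \eqref{HR+1 gamma 3} I would use the pointwise identity $e_0^{(n)}\left(\Ll^{(n)}\Tilde{\gamma}^{(n+1)}\right)=N^{(n)}(\Ll^{(n)})^2\Tilde{\gamma}^{(n+1)}=N^{(n)}e^{-2\gamma^{(n)}}\Delta\Tilde{\gamma}^{(n+1)}+N^{(n)}f$, which, together with $\dr_t=e_0^{(n)}+\beta^{(n)}\cdot\nabla$, gives
\[ \dr_t\left(\Ll^{(n)}\Tilde{\gamma}^{(n+1)}\right)=N^{(n)}e^{-2\gamma^{(n)}}\Delta\Tilde{\gamma}^{(n+1)}+N^{(n)}f+\beta^{(n)}\cdot\nabla\left(\Ll^{(n)}\Tilde{\gamma}^{(n+1)}\right). \]
I would then estimate each of the three terms in $L^2_{\delta'+1}$ (for \eqref{HR+1 gamma 2}) and in $H^1_{\delta'+1}$ (for \eqref{HR+1 gamma 3}), using the already-proved \eqref{HR+1 gamma 1} to control $\Delta\Tilde{\gamma}^{(n+1)}$ and $\nabla(\Ll^{(n)}\Tilde{\gamma}^{(n+1)})$, together with \eqref{useful gamma 1} and the product law. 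The point is that for \eqref{HR+1 gamma 2} one must obtain a bound of size $C_i$ alone, which forces routing every estimate through the propagated smallness of Proposition~\ref{prop propsmall} (e.g.\ \eqref{propsmall tau} for $(\tau^{(n)})^2$, \eqref{propsmall fi} for $|\nabla\ffi^{(n)}|^2$ and $|\nabla\omega^{(n)}|^2$) as well as through the sharp bound \eqref{HR beta 2.5} on $\nabla e_0^{(n-1)}\beta^{(n)}$, rather than through the crude bootstrap bounds; at the $H^1$-level one may instead use \eqref{HR beta 3} and \eqref{HR tau 1}, which produces the looser constant $A_1C_i$ tolerated in \eqref{HR+1 gamma 3}.

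I expect the main obstacle to be the weight- and constant-bookkeeping in bounding $\|fN^{(n)}\|_{H^2_{\delta'+1}}$ — most delicately the term $e_0^{(n-1)}\left(\dive(\beta^{(n)})/N^{(n-1)}\right)$, which is the only one genuinely requiring the top-order transport estimate \eqref{HR beta 4} and a commutation of $e_0^{(n-1)}$ past $\dive$ — and, for \eqref{HR+1 gamma 2}, ensuring that the low-order norm of $\dr_t(\Ll^{(n)}\Tilde{\gamma}^{(n+1)})$ stays of order $C_i$: this requires carefully channelling each estimate through Proposition~\ref{prop propsmall} and \eqref{HR beta 2.5}, so as never to pick up the large constants $A_i$.
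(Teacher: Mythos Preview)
Your proposal is correct and follows essentially the same approach as the paper: define $\Tilde{\gamma}^{(n+1)}:=\gamma^{(n+1)}+\alpha\chi\ln$, bound $\|fN^{(n)}\|_{H^2_{\delta'+1}}$ term by term (using Lemma~\ref{op chi ln lemme} for the $\alpha\Psi^{(n)}$ contribution and \eqref{HR beta 4} for the $e_0^{(n-1)}(\dive\beta^{(n)}/N^{(n-1)})$ term), apply Lemma~\ref{inegalite d'energie lemme} with $T$ small to get \eqref{HR+1 gamma 1}, and then read off \eqref{HR+1 gamma 2}--\eqref{HR+1 gamma 3} from the equation itself, routing through Proposition~\ref{prop propsmall} and \eqref{HR beta 2.5} at the $L^2$ level to avoid the $A_i$ constants. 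Your inclusion of the $\beta^{(n)}\cdot\nabla(\Ll^{(n)}\Tilde{\gamma}^{(n+1)})$ term (harmless, since $\|\beta^{(n)}\|_{H^2_{\delta'}}\lesssim\e$) is in fact slightly more careful than the paper's own write-up, which omits it.
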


\begin{proof}
The strategy is to recover the decomposition of $\gamma^{(n+1)}$ by setting $\Tilde{\gamma}^{(n+1)}\vcentcolon=\gamma^{(n+1)}+\alpha\chi\ln$. In view of \eqref{reduced system gamma}, $\Tilde{\gamma}^{(n+1)}$ is solution of 
\begin{equation}
    \left(\Ll^{(n)}\right)^2\Tilde{\gamma}^{(n+1)}-e^{-2\gamma^{(n)}}\Delta \Tilde{\gamma}^{(n+1)}=\left( \text{RHS of \eqref{reduced system gamma}} \right)+ \alpha\frac{\Psi^{(n)}}{N^{(n)}}.\label{eq gamma tilde}
\end{equation}
In order to prove \eqref{HR+1 gamma 1} and in view of \eqref{inegalite d'energie equation}, we have to bound $\left\| \text{RHS of \eqref{eq gamma tilde}}\times N^{(n)} \right\|_{H^2_{\delta'+1}}$, and thanks to the factor $\sqrt{T}$, we don't need to worry about the bounds. Thanks to \eqref{op chi ln 1}, it remains to deal with the RHS of \eqref{reduced system gamma} mutltiplied by $N^{(n)}$, which gives the following expression :
\begin{align*}
    -\frac{N^{(n)}\left(\tau^{(n)}\right)^2}{2}+\frac{1}{2}e_0^{(n-1)}\left(\frac{\dive(\beta^{(n)})}{N^{(n-1)}}\right)&+e^{-2\gamma^{(n)}}\frac{\Delta N^{(n)}}{2}+e^{-2\gamma^{(n)}}N^{(n)}|\nabla\ffi^{(n)}|^2\\&+\frac{1}{4}e^{-2\gamma^{(n)}-4\ffi^{(n)}}N^{(n)} |\nabla\omega^{(n)}|^2=\vcentcolon I+II+III+IV+V.
\end{align*}
\begin{itemize}
    \item For $I$, we mainly use the fact that $H^2_{\delta'+1}$ is an algebra and \eqref{HR tau 1} (and the product estimate to deal with $\chi\ln$) :
    \begin{equation*}
        \|I\|_{H^2_{\delta'+1}}\lesssim \left\| \tau^{(n)}\right\|^2_{H^2_{\delta'+1}}\left( 1+ \left\|\Tilde{N}^{(n)}\right\|_{H^2_{\delta'+1}}\right)\lesssim C(C_i).
    \end{equation*}
    
    \item For $II$, we use the computations already performed about $\left[\Ll^{(n-1)},\nabla \right]$, \eqref{HR N 2} (which implies that $\left|\frac{1}{N^{(n-1)}}\right|$ and  $\left\|\nabla N^{(n-1)}\right\|_{C^1}$ are bounded) to get rid of the $\frac{1}{N^{(n-1)}}$ factors, in order to get :
    \begin{align}
        \| II\|_{H^2_{\delta'+1}} & \lesssim \left\|e_0^{(n-1)}N^{(n-1)} \nabla\beta^{(n)} \right\|_{H^2_{\delta'+1}}+\left\| \Ll^{(n-1)}\beta^{(n)} \right\|_{H^3_{\delta'}}\label{II}\\&\qquad +\left\|\nabla N^{(n-1)}\Ll^{(n-1)}\beta^{(n)} \right\|_{H^2_{\delta'+1}}+\left\|\nabla\beta^{(n-1)}\nabla\beta^{(n)} \right\|_{H^2_{\delta'+1}}\nonumber.
    \end{align}
    Using \eqref{HR beta 4}, it's easy to see that $\left\| \Ll^{(n-1)}\beta^{(n)} \right\|_{H^3_{\delta'}}\leq C(C_i)$, and we recall the embedding $H^3_{\delta'}\xhookrightarrow{}H^2_{\delta'+1}$. Using $|\nabla(\chi\ln)|\lesssim \langle x\rangle^{-1}$ and \eqref{HR N 2}, we see that $\left\|\nabla N^{(n-1)}\right\|_{ H^3_{\delta}}\lesssim C_i$ and thus we use the product estimate to write :
    \begin{equation*}
        \left\| \Ll^{(n-1)}\beta^{(n)} \right\|_{H^3_{\delta'}}+\left\|\nabla N^{(n-1)}\Ll^{(n-1)}\beta^{(n)} \right\|_{H^2_{\delta'+1}} \lesssim \left\| \Ll^{(n-1)}\beta^{(n)} \right\|_{H^3_{\delta'}}\left( 1+\left\|\nabla N^{(n-1)} \right\|_{H^2_{\delta}} \right) \leq C(C_i).
    \end{equation*}
    Using $\nabla\beta^{(n)}\in H^3_{\delta'+1}\xhookrightarrow{}H^2_{\delta'+2}$, $\eqref{HR N 2}$, and $\nabla N^{(n-1)}\in H^2_{\delta}$ :
    \begin{align*}
        \left\|e_0^{(n-1)}N^{(n-1)} \nabla\beta^{(n)} \right\|_{H^2_{\delta'+1}}& \lesssim \left| \dr_t N_a^{(n-1)}\right|\left\| \chi\ln \nabla\beta^{(n)} \right\|_{H^2_{\delta'+1}}+\left\|\dr_t\Tilde{N}^{(n-1)}\nabla\beta^{(n)} \right\|_{H^2_{\delta'+1}}\\&\qquad+\left\|\beta^{(n-1)}\nabla N^{(n-1)}\nabla\beta^{(n)} \right\|_{H^2_{\delta'+1}}
        \\& \lesssim\left(\left| \dr_t N_a^{(n-1)}\right|+\left\|\dr_t\Tilde{N}^{(n-1)}\right\|_{H^2_{\delta}}\right)\left\|
        \nabla\beta^{(n)} \right\|_{H^2_{\delta'+2}}\\&\qquad+\left\|\beta^{(n-1)}\right\|_{H^2_{\delta'+2}}\left\|\nabla N^{(n-1)}\right\|_{H^2_{\delta}}\left\|\nabla\beta^{(n)} \right\|_{H^2_{\delta'+2}} \\& \leq C(C_i).
    \end{align*}
    The last term in \eqref{II} doesn't present any difficulty and we get $\| II\|_{H^2_{\delta'+1}}\leq C(C_i)$.
    
    \item For $III$, we first notice that $\Delta(\chi\ln)=\Delta(\chi)\ln+\nabla\chi\cdot\nabla\ln$ is a smooth compactly supported function and therefore belongs to all $H^k$ spaces.
    We then use \eqref{useful gamma 1} and \eqref{HR N 3} :
    \begin{align*}
        \|III\|_{H^2_{\delta'+1}} &\lesssim \left| N_a^{(n)}\right|\left\|\Delta(\chi\ln) \right\|_{H^2}+\left\|\Delta\Tilde{N}^{(n)} \right\|_{H^2_{\delta+1}}\leq C(C_i).
    \end{align*}
    \item For $IV$, thanks to the support property of $\ffi^{(n)}$, we don't worry about the decrease of our functions. We simply use \eqref{useful gamma 1}, \eqref{HR N 3} (which implies that $\left\| N^{(n)}\right\|_{C^2(B_{2R})}\leq C(C_i)$) and the fact that $H^2$ is an algebra :
    \begin{align*}
        &\| IV\|_{H^2}\lesssim \left\| N^{(n)}\right\|_{C^2(B_{2R})} \left\|\nabla\ffi^{(n)} \right\|_{H^2}^2\leq C(C_i).
    \end{align*}
    \item For $V$, we do as for $IV$, using in addition \eqref{useful ffi 1}, which implies that it remains to deal with the following term :
    \begin{align*}
    \l\nabla^2\ffi^{(n)}|\nabla\omega^{(n)}|^2 \r_{L^2} \lesssim \l \nabla^2\ffi^{(n)}\r_{L^2}\l\nabla\omega^{(n)} \r_{H^2}^2\leq C(C_i).
    \end{align*}
\end{itemize}
Using Lemma \ref{inegalite d'energie lemme}, we get for all $t\in [0,T]$ :
\begin{equation*}
    \mathcal{E}^{(n)}\left[ \Tilde{\gamma}^{(n+1)} \right](t)\leq 3\, \mathcal{E}^{(n)}\left[ \Tilde{\gamma}^{(n+1)} \right](0)+C(C_i)\sqrt{T}.\label{blabla}
\end{equation*}
It remains to show that $\mathcal{E}^{(n)}\left[ \Tilde{\gamma}^{(n+1)} \right](0)$ is bounded by $C_i$. Therefore, the following calculations will be performed on $\Sigma_0$ and we can forget about the indices $(n)$ or $(n+1)$ and use the estimates \eqref{CI petit} and \eqref{CI gros}, which are more comfortable.
Using the calculations performed in Proposition \ref{commutation estimate}, we show that :
\begin{align}
     \sum_{|\alpha|\leq 2}  \left\Vert\Ll\nabla^{\alpha}\Tilde{\gamma}\right\Vert_{L^2_{\delta'+1+|\alpha|}} \leq (1+C\e)\| \Ll\Tilde{\gamma}\|_{H^2_{\delta+1}}+C\e\mathcal{E}\left[ \Tilde{\gamma} \right](0)+C\e\|\Tilde{\gamma}\|_{H^4_{\delta}}.\label{E0 temps} 
\end{align}
Using the same ideas as in the second step of the proof of Lemma \ref{inegalite d'energie lemme}, we show that :
\begin{align}
    \sum_{|\alpha|\leq 2}  \left\Vert e^{-\gamma}\nabla( \nabla^{\alpha}\Tilde{\gamma})\right\Vert_{L^2_{\delta'+1+|\alpha|}}\leq (1+C\e)\|\Tilde{\gamma}\|_{H^4_{\delta}}.\label{E0 espace}
\end{align}
Putting together \eqref{E0 temps} and \eqref{E0 espace} and using \eqref{CI petit} and \eqref{CI gros}, we get 
\begin{equation*}
    \mathcal{E}\left[ \Tilde{\gamma} \right](0) \leq (1+C\e) \left( \| \Ll\Tilde{\gamma}\|_{H^2_{\delta+1}} +\|\Tilde{\gamma}\|_{H^4_{\delta}}\right)+C\e\mathcal{E}\left[ \Tilde{\gamma} \right](0) \leq2 (1+C\e)C_i +C\e\mathcal{E}\left[ \Tilde{\gamma} \right](0).
\end{equation*}
We can absorb the last term of the RHS into the LHS by choosing $\e$ small enough. Taking $T$ small enough and remembering that $1\lesssim e^{-\gamma^{(n)}}$, we finish the proof of \eqref{HR+1 gamma 1}.
\par\leavevmode\par
We now turn to the proof of \eqref{HR+1 gamma 2} and \eqref{HR+1 gamma 3} which amounts to estimating $\dr_t\left(\Ll^{(n)}\Tilde{\gamma}^{(n+1)}\right)$, which, thanks to \eqref{eq gamma tilde}, has the following expression :
\begin{align}
    \dr_t\left(\Ll^{(n)}\Tilde{\gamma}^{(n+1)}\right)&=e^{-2\gamma^{(n)}}N^{(n)}\Delta\Tilde{\gamma}^{(n+1)}-\frac{N^{(n)}\left(\tau^{(n)}\right)^2}{2}+\frac{1}{2}e_0^{(n-1)}\left(\frac{\dive(\beta^{(n)})}{N^{(n-1)}}\right)\\&\qquad+e^{-2\gamma^{(n)}}\frac{\Delta N^{(n)}}{2}+e^{-2\gamma^{(n)}}N^{(n)}\left|\nabla\ffi^{(n)}\right|^2\\&\qquad+\frac{1}{4}e^{-2\gamma^{(n)}-4\ffi^{(n)}}N^{(n)} |\nabla\omega^{(n)}|^2+ \alpha \Psi^{(n)}\\&=\vcentcolon I+II+III+IV+V+VI+VII.
\end{align}
The term $VII$ is handled thanks to \eqref{op chi ln 3}. For the remainings terms, we first bound their $L^2_{\delta'+1}$ norms with $C_i$, and then the $H^1_{\delta'+1}$ norms of their derivatives by $C_i^2$.
\begin{itemize}
    \item For $I$, we first perform the $H^1$ estimate, using \eqref{useful gamma 1}, $\e|\chi\ln|\lesssim \langle x \rangle^{\e}$, \eqref{HR N 1}  and \eqref{HR+1 gamma 1}  :
    \begin{align*}
        \| I\|_{H^1_{\delta'+1}}&\lesssim \left\| N^{(n)}\Delta\Tilde{\gamma}^{(n+1)}\right\|_{H^1_{\delta'+1}}\lesssim \left\|\Delta\Tilde{\gamma}^{(n+1)} \right\|_{H^1_{\delta'+2}}\left(1+\left\|\Tilde{N}^{(n)} \right\|_{H^2_{\delta}} \right)\lesssim C_i.
    \end{align*}
    To get the $L^2$ estimate, we simply use the embeddings $H^1_{\delta'+1}\xhookrightarrow{}L^2_{\delta'+1}$.

    \item For $II$, we first use \eqref{HR N 1} and \eqref{propsmall tau} :
    \begin{equation*}
        \|II\|_{L^2_{\delta'+1}}\lesssim \left\|\tau^{(n)}\right\|_{H^1_{\delta'+1}}^2\left(1+ \l \Tilde{N}^{(n)}\r_{H^2_{\delta}} \right)\lesssim \e^2.
    \end{equation*}
    For the $H^1$ estimate, we use \eqref{HR N 1}, \eqref{propsmall tau} and \eqref{HR tau 1} :
    \begin{equation*}
        \|II\|_{H^1_{\delta'+1}}\lesssim \left\|\tau^{(n)}\right\|_{H^1_{\delta'+1}}\left\|\tau^{(n)}\right\|_{H^2_{\delta'+1}}\left(1+ \l \Tilde{N}^{(n)}\r_{H^2_{\delta}} \right)\lesssim \e A_1C_i.
    \end{equation*}
    \item For $III$, we use \eqref{HR beta 2.5}, \eqref{HR beta 1} and \eqref{HR N 2} :
    \begin{align*}
         \l III \r_{L^2_{\delta'+1}} &\lesssim \l\nabla\dr_t\beta^{(n)} \r_{L^2_{\delta'+1}} + \l \nabla\beta^{(n)}\r_{H^1_{\delta'+1}}\l\dr_t\Tilde{N}^{(n-1)}\r_{H^2_{\delta}}+ \l\beta^{(n-1)}\r_{H^2_{\delta'}}\l\nabla\beta^{(n)} \r_{H^1_{\delta'+1}}\\&\lesssim C_i.
    \end{align*}
    For the $H^1$ estimate, we use :
    \begin{align*}
        \l III \r_{H^1_{\delta'+1}}& \lesssim \l \nabla\dr_t\beta^{(n)}\r_{H^1_{\delta'+1}} + \l\nabla\beta^{(n)}\dr_t\Tilde{N}^{(n-1)} \r_{H^1_{\delta'+1}} + \l \beta^{(n-1)}\nabla\beta^{(n)}\r_{H^1_{\delta'+1}} \\
        &\lesssim \l \nabla\dr_t\beta^{(n)}\r_{H^1_{\delta'+1}} + \l\nabla\beta^{(n)}\r_{H^1_{\delta'+1}} \l \dr_t\Tilde{N}^{(n-1)}\r_{H^2_{\delta}} +\l\beta^{(n-1)} \r_{H^2_{\delta'}} \l\nabla\beta^{(n)} \r_{H^1_{\delta'+1}}
        \\&\lesssim A_1 C_i + \e C_i +\e^2.
    \end{align*}
    \item For $IV$, we recall that $\Delta(\chi\ln)$ is a smooth compactly supported function. We only perform the $H^1$ estimate, because the $L^2$ estimate will be a consequence of the embedding $H^1_{\delta'+1}\xhookrightarrow{}L^2_{\delta'+1}$. We use \eqref{useful gamma 1} and \eqref{HR N 2} :
    \begin{equation*}
         \| IV\|_{H^1_{\delta'+1}} \lesssim \left| N_a^{(n)}\right|\left\|\Delta(\chi\ln) \right\|_{H^1}+\left\|\Delta\Tilde{N}^{(n)}\right\|_{H^1_{\delta+1}}\lesssim C_i.
    \end{equation*}
    \item For $V$, we don't care about the decrease of our functions, thanks to the support property of $\ffi^{(n)}$. We first use \eqref{useful gamma 1} and the fact that $N^{(n)}\in L^{\infty}(B_{2R})$ (which comes from \eqref{HR N 1}), the Hölder's inequality and \eqref{propsmall fi} :
    \begin{equation*}
        \| V\|_{L^2}\leq \left\|N^{(n)} \right\|_{L^{\infty}(B_{2R})} \left\|\nabla\ffi^{(n)} \right\|_{L^4}^2\lesssim \e^2.
    \end{equation*}
    For the $H^1$ estimate, we use \eqref{useful gamma 1}, \eqref{HR N 1}, \eqref{HR N 2} and \eqref{HR fi 1} :
    \begin{align*}
        \|  \nabla V\|_{L^2}&\lesssim \l\nabla\ffi^{(n)}\nabla^2\ffi^{(n)} \r_{L^2}+ \l\nabla\Tilde{N}^{(n)} \left|\nabla\ffi^{(n)}\right|^2\r_{L^2}
        \\& \lesssim \l \nabla^2\ffi^{(n)} \r_{H^1} \l \nabla\ffi^{(n)}\r_{L^4} + \l \nabla\Tilde{N}^{(n)}\r_{H^2_{\delta+1}}\left\|\nabla\ffi^{(n)} \right\|_{L^4}^2
        \\& \lesssim\e A_0C_i.
    \end{align*}
    \item For $VI$, we do as for $V$, using in addition \eqref{useful ffi 1}.
\end{itemize}
\end{proof}

We are now interesting in proving estimates for $\ffi^{(n+1)}$ and $\omega^{(n+1)}$. We first prove their support property :

\begin{lem}\label{support fi n+1}
There exists $C_s>0$ such that for $\e$, $T$ sufficiently small (depending on $R$), $\ffi^{(n+1)}$ is supported in
\begin{equation*}
    \enstq{(t,x)\in[0,T]\times\R^2}{\vert x\vert\leq R+C_s(1+R^{\e})t}.
\end{equation*}
In particular, choosing $T$ small enough, $\mathrm{supp}(\ffi^{(n+1)})\subset [0,T]\times B_{2R}$.
\end{lem}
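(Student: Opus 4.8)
The plan is to treat \eqref{reduced system fi} as a linear second-order hyperbolic equation for $\ffi^{(n+1)}$, with coefficients and source built from the $n$-th iterate, and to run a finite-speed-of-propagation argument: one fixes $C_s$ once and for all (independently of $n$) so that the lateral cone $\{|x|=R+C_s(1+R^\e)t\}$ is spacelike for the metric $g^{(n)}$, and concludes by the energy estimate of Lemma \ref{inegalite d'energie lemme 1}, run on the time-slices of backward solid cones (equivalently, on the exterior region $\{|x|>R+C_s(1+R^\e)t\}$, the flux at spatial infinity being harmless since $\ffi^{(n+1)}$ has finite energy on each slice).

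First I would locate the support of the right-hand side $F^{(n)}$ of \eqref{reduced system fi}. It depends only on the $n$-th iterates, and each of its three terms is a product in which one factor is $\nabla\ffi^{(n)}$, $e_0^{(n-1)}\ffi^{(n)}$, $\nabla\omega^{(n)}$ or $e_0^{(n-1)}\omega^{(n)}$ (the prefactor $e^{-4\ffi^{(n)}}$ being harmless, as it multiplies such a quantity). Since $e_0^{(n-1)}=\dr_t-\beta^{(n-1)}\cdot\nabla$ does not enlarge supports, $\mathrm{supp}(F^{(n)})\subseteq\mathrm{supp}(\ffi^{(n)})\cup\mathrm{supp}(\omega^{(n)})$, which by the induction hypothesis is contained in $\{|x|\le R+C_s(1+R^\e)t\}$; and $\ffi^{(n+1)}|_{\Sigma_0}$, $\dr_t\ffi^{(n+1)}|_{\Sigma_0}$ vanish outside $B_R$ by the choice of initial data in Section \ref{initial data}.

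Next I would fix $C_s$. Writing $N^{(n)}=1+N_a^{(n)}\chi\ln+\Tilde{N}^{(n)}$ and $\gamma^{(n)}=-\alpha\chi\ln+\Tilde{\gamma}^{(n)}$, the bounds \eqref{HR beta 1}, \eqref{HR N 1}, $0\le\alpha\lesssim\e^2$ and $\|\Tilde{\gamma}^{(n)}\|_{H^2_{\delta'}}\lesssim 1$ (from \eqref{propsmall gamma}), together with $H^2_{\delta'}\hookrightarrow L^\infty$, $\e|\chi\ln|\lesssim\langle x\rangle^{\e/2}$ and $|e^{\alpha\chi\ln}|\lesssim\langle x\rangle^{\alpha}$, yield the uniform bound $|\beta^{(n)}|+N^{(n)}e^{-\gamma^{(n)}}\lesssim 1+\langle x\rangle^{\e}$; since $\langle x\rangle^\e\lesssim 1+R^\e$ on $B_{3R}$ with a constant depending on neither $R$ nor $\e$, there is a universal $C_s>0$ with $C_s(1+R^\e)>\sup_n\sup_{|x|\le 3R}\big(|\beta^{(n)}|+N^{(n)}e^{-\gamma^{(n)}}\big)$. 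Shrinking $T$ so that $R+C_s(1+R^\e)T\le 2R$, any backward cone $\Lambda=\{(t,x):0\le t\le t_0,\ |x-x_0|\le C_s(1+R^\e)(t_0-t)\}$ issued from a point $(t_0,x_0)$ with $|x_0|>R+C_s(1+R^\e)t_0$ stays inside $\{|x|>R+C_s(1+R^\e)t\}$ and inside $B_{3R}$; evaluating the characteristic symbol $-(N^{(n)})^{-2}(\dr_t\psi-\beta^{(n)}\cdot\nabla\psi)^2+e^{-2\gamma^{(n)}}|\nabla\psi|^2$ at $\psi(t,x)=|x-x_0|-C_s(1+R^\e)(t_0-t)$ shows, by the choice of $C_s$, that the lateral boundary of $\Lambda$ is spacelike for $g^{(n)}$. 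Running the energy estimate of Lemma \ref{inegalite d'energie lemme 1} on the time-slices of $\Lambda$ — the bulk terms are still controlled by $C(C_i)$ times the energy and the source vanishes there — the lateral flux enters with a favorable sign, so Gronwall's inequality and the vanishing of the data on the base $\{x:|x-x_0|\le C_s(1+R^\e)t_0\}\subseteq\{|x|>R\}$ force $\Ll^{(n)}\ffi^{(n+1)}=\nabla\ffi^{(n+1)}=0$, hence $\dr\ffi^{(n+1)}=0$, on the interior of $\Lambda$; thus $\ffi^{(n+1)}(t_0,x_0)=0$. As $(t_0,x_0)$ is arbitrary in the exterior of the cone this gives the support statement, and the last assertion follows from $R+C_s(1+R^\e)T\le 2R$. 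The same argument applied verbatim to \eqref{reduced system omega} gives the analogous property for $\omega^{(n+1)}$.

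The only genuinely delicate point is that $C_s$ must be the very constant already appearing in the induction hypothesis, hence independent of $n$, and that it should moreover not depend on $\e$ (which enters both the smallness and the weight $\delta'=\delta-\e$): this is exactly why one trades the logarithmic growth of $N^{(n)}$ and $e^{-\gamma^{(n)}}$, whose coefficients $N_a^{(n)}$ and $\alpha$ are $O(\e^2)$, for the innocuous power $\langle x\rangle^{\e}$ via $\e|\chi\ln|\lesssim\langle x\rangle^{\e/2}$, and uses $\langle 3R\rangle^{\e}\lesssim 1+R^{\e}$ with a universal constant. Everything else is the textbook domain-of-dependence argument.
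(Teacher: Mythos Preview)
Your strategy is the same as the paper's: fix $C_s$ so that the cone $\{|x|=R+C_s(1+R^\e)t\}$ is spacelike for $g^{(n)}$ and then invoke finite speed of propagation. The paper does only the spacelikeness computation and leaves the propagation step implicit; you spell it out via backward cones, which is fine in spirit but contains a slip.

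The claim that ``any backward cone $\Lambda$ issued from a point $(t_0,x_0)$ with $|x_0|>R+C_s(1+R^\e)t_0$ stays \dots\ inside $B_{3R}$'' is false: for $|x_0|$ large the cone is centered near $x_0$, not near the origin. But your choice of $C_s$ only guarantees $C_s(1+R^\e)>|\beta^{(n)}|+N^{(n)}e^{-\gamma^{(n)}}$ on $B_{3R}$; outside that ball $N^{(n)}e^{-\gamma^{(n)}}$ can be of order $\langle x\rangle^{\e}\sim |x_0|^{\e}$, and the lateral boundary of your fixed-speed backward cone need not be spacelike there. So the backward-cone argument as written handles only the range $R+C_s(1+R^\e)t_0<|x_0|\lesssim R$, not all exterior points.

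The cleanest repair is the one you already mention parenthetically, and it is exactly what the paper does: work with the \emph{single} hypersurface $\partial A=\{|x|=R+C_s(1+R^\e)t\}$. For $T$ small this surface stays inside $B_{2R}$, so your bound on $|\beta^{(n)}|+N^{(n)}e^{-\gamma^{(n)}}$ applies along all of it and shows $(g^{(n)})^{-1}(\d f,\d f)<0$ with $f(t,x)=-|x|+C_s(1+R^\e)t$. Then run the energy identity of Lemma~\ref{inegalite d'energie lemme 1} on the exterior region $\{|x|>R+C_s(1+R^\e)t\}$: the bulk terms are bounded by $C(C_i)$ times the energy, the flux through $\partial A$ has a favorable sign because $\partial A$ is spacelike, the boundary term at spatial infinity vanishes by finite energy, and the data and source are zero in the exterior. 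Gronwall then gives $\ffi^{(n+1)}\equiv 0$ outside $A$ in one stroke, with no need to vary $x_0$.
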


\begin{proof}
Since the initial data for $\ffi^{(n+1)}$ and $\dr_t\ffi^{(n+1)}$ are compactly supported and $\Box_{g^{(n)}}\ffi^{(n+1)}$ is compactly supported in  
\begin{equation*}
A\vcentcolon=\enstq{(t,x)\in[0,T]\times\R^2}{\vert x\vert \leq R+C_s(1+R^{\e})t}
\end{equation*}
we juste have to show that $\partial A$ is a spacelike hypersurface. We set $f(x,t)=-|x|+C_s(1+R^{\e})t$, in order to have $\partial A=f^{-1}(-R)$. Thus, we have to show that $(g^{(n)})^{-1}(\d f,\d f)$ is non-positive on this hypersurface. We have $\d f=-\frac{x_i}{|x|}\d x^i+C_s(1+R^{\e})\d t$, which implies :
\begin{align*}
    \left(g^{(n)}\right)^{-1}(\d f,\d f)&=\frac{x_ix_j}{|x|^2}\left(g^{(n)}\right)^{ij}+C_s^2(1+R^{\e})^2\left(g^{(n)}\right)^{tt}-\frac{2x_i}{|x|}C_s(1+R^{\e})\left(g^{(n)}\right)^{it}\\
    &=e^{-2\gamma^{(n)}}-\left(\frac{x\cdot\beta^{(n)}}{|x|N^{(n)}}\right)^2-\left( \frac{C_s(1+R^{\e})}{N^{(n)}}\right)^2-\frac{2(x\cdot\beta^{(n)})C_s(1+R^{\e})}{(N^{(n)})^2|x|}
\end{align*}
We have $e^{-2\gamma^{(n)}}\lesssim \langle x\rangle^{2\e^2}$, $\frac{|x\cdot\beta^{(n)}|}{(N^{(n)})^2|x|}+\left(\frac{x\cdot\beta^{(n)}}{|x|N^{(n)}}\right)^2\lesssim \e$, so choosing the parameters appropriately, one easily sees that $(g^{(n)})^{-1}(\d f,\d f)$ is non-positive on the hypersurface.
\end{proof}

\begin{prop}\label{hr+1 fi prop}
For $n\geq 2$, the following estimates holds :
\begin{align}
     \left\|\dr_t\ffi^{(n+1)}\right\|_{H^2}+\left\|\nabla\ffi^{(n+1)}\right\|_{H^2} & \lesssim C_i ,\label{esti ondes phi 1}\\
     \left\Vert \dr_t\left( \Ll^{(n)}\ffi^{(n+1)} \right) \right\Vert_{H^1} & \lesssim C_i.\label{esti ondes phi 2}
\end{align}
\end{prop}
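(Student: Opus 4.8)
The plan is to treat \eqref{reduced system fi} as a wave equation for $\ffi^{(n+1)}$ with principal part $\left(\Ll^{(n)}\right)^2-e^{-2\gamma^{(n)}}\Delta$ and apply the energy estimate of Lemma \ref{inegalite d'energie lemme} with $h=\ffi^{(n+1)}$ and $f$ the right-hand side of \eqref{reduced system fi}. By Lemma \ref{support fi n+1}, $\ffi^{(n+1)}$ is supported in $[0,T]\times B_{2R}$, so on its support all the weights $\langle x\rangle^{\eta}$ and the factors $e^{-\gamma^{(n)}}$ are comparable to $1$; using in addition that $N^{(n)}=1+(\text{small})$ and $\beta^{(n)}$ is small (from \eqref{HR N 1} and \eqref{HR beta 1}), together with the commutation $\Ll^{(n)}\nabla^{\alpha}=\tfrac{1}{N^{(n)}}\big(\nabla^{\alpha}\dr_t-\beta^{(n)}\cdot\nabla(\nabla^{\alpha}\,\cdot\,)\big)$, one checks that $\mathcal{E}^{(n)}[\ffi^{(n+1)}]$ controls $\|\dr_t\ffi^{(n+1)}\|_{H^2}+\|\nabla\ffi^{(n+1)}\|_{H^2}$ up to a \emph{universal} multiplicative constant. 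Hence \eqref{esti ondes phi 1} will follow once $\mathcal{E}^{(n)}[\ffi^{(n+1)}](t)\lesssim C_i$ on $[0,T]$.

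To get this I would first bound $\|fN^{(n)}\|_{H^2_{\delta'+1}}$ term by term; each term of $f$ is of exactly the type already handled as the terms $IV$ and $V$ in Proposition \ref{hr+1 gamma prop} — the factors $e^{-2\gamma^{(n)}}$ and $1/N^{(n)}$ absorbed by Lemma \ref{useful lem} and $|1/N^{(n)}|\lesssim1$, the factor $e^{-4\ffi^{(n)}}$ by \eqref{useful ffi 1}, and one uses $\|N^{(n)}\|_{C^2(B_{2R})}\lesssim C(C_i)$ (from \eqref{HR N 1}--\eqref{HR N 3}), the bootstrap bounds \eqref{HR fi 1}, \eqref{HR omega 1}, \eqref{HR tau 1} and the algebra property of $H^2$ in dimension $2$. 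This yields a crude bound $\|fN^{(n)}\|_{H^2_{\delta'+1}}\lesssim C(A_1)C_i^2$, whose exact form is irrelevant. Since $\mathcal{E}^{(n)}[\ffi^{(n+1)}](0)\lesssim C_i$ — this is where Corollary \ref{coro premiere section} enters, together with $\fir=e^{2\gamma}\Ll\ffi$, $\|\fir\|_{H^2}+\|\nabla\ffi\|_{H^2}\lesssim C_{high}\lesssim C_i$ and the boundedness of $e^{\pm2\gamma}$ on $B_R$ at $t=0$ guaranteed by Lemma \ref{lem CI gamma} — Lemma \ref{inegalite d'energie lemme} gives $\mathcal{E}^{(n)}[\ffi^{(n+1)}](t)\le 3\,\mathcal{E}^{(n)}[\ffi^{(n+1)}](0)+C(C_i)\sqrt{T}\sup_{[0,T]}\|fN^{(n)}\|_{H^2_{\delta'+1}}\lesssim C_i$ after choosing $T$ small enough that the $\sqrt{T}$-term is $\le C_i$. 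This proves \eqref{esti ondes phi 1}.

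For \eqref{esti ondes phi 2} the key is not to differentiate naively but to use the equation. From $\Ll^{(n)}=e_0^{(n)}/N^{(n)}$ one has $\left(\Ll^{(n)}\right)^2\ffi^{(n+1)}=\tfrac{1}{N^{(n)}}e_0^{(n)}\!\left(\Ll^{(n)}\ffi^{(n+1)}\right)$, so that by \eqref{reduced system fi}
\[
e_0^{(n)}\!\left(\Ll^{(n)}\ffi^{(n+1)}\right)=e^{-2\gamma^{(n)}}N^{(n)}\Delta\ffi^{(n+1)}+N^{(n)}f,\qquad \dr_t\!\left(\Ll^{(n)}\ffi^{(n+1)}\right)=e_0^{(n)}\!\left(\Ll^{(n)}\ffi^{(n+1)}\right)+\beta^{(n)}\!\cdot\nabla\!\left(\Ll^{(n)}\ffi^{(n+1)}\right).
\]
In $H^1$, the first term is $\lesssim\|\Delta\ffi^{(n+1)}\|_{H^1}\big(1+\|e^{-2\gamma^{(n)}}N^{(n)}-1\|_{H^2(B_{2R})}\big)\lesssim\|\nabla\ffi^{(n+1)}\|_{H^2}\lesssim C_i$ by \eqref{esti ondes phi 1}, using $e^{-2\gamma^{(n)}}N^{(n)}=1+(\text{small in }H^2(B_{2R}))$ (from \eqref{HR N 1} and \eqref{propsmall gamma}). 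For the second, $\|N^{(n)}f\|_{H^1}\lesssim\|f\|_{H^1}$, and the point is that every term of $f$ now carries a genuine $\e$-smallness: $\|\nabla N^{(n)}\|_{H^1(B_{2R})}\lesssim\e$ by \eqref{HR N 1}, $\|\tau^{(n)}\|_{H^1_{\delta'+1}}\lesssim\e^2$ by \eqref{propsmall tau}, and $\|\nabla\omega^{(n)}\|_{L^4}+\|e_0^{(n-1)}\omega^{(n)}\|_{L^4}\lesssim\e$ by \eqref{propsmall fi} and \eqref{HR beta 1}; combined with the $H^2$-level bounds \eqref{HR fi 1}, \eqref{HR omega 1} and the product law $H^1\cdot H^2\hookrightarrow H^1$, this gives $\|f\|_{H^1}\lesssim\e\,C(A_0)C_i\lesssim C_i$ for $\e$ small. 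Finally $\|\beta^{(n)}\cdot\nabla(\Ll^{(n)}\ffi^{(n+1)})\|_{H^1}\lesssim\|\beta^{(n)}\|_{H^2}\|\Ll^{(n)}\ffi^{(n+1)}\|_{H^2}\lesssim\e\,C_i$, where $\|\Ll^{(n)}\ffi^{(n+1)}\|_{H^2}\lesssim C_i$ is read off from $\Ll^{(n)}\ffi^{(n+1)}=\tfrac{1}{N^{(n)}}(\dr_t\ffi^{(n+1)}-\beta^{(n)}\cdot\nabla\ffi^{(n+1)})$, \eqref{esti ondes phi 1}, \eqref{HR beta 1} and the $H^2$ algebra. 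Summing the three contributions yields \eqref{esti ondes phi 2}. The estimates for $\omega^{(n+1)}$ are obtained identically from \eqref{reduced system omega}, the wave-map coupling $-4e_0^{(n-1)}\omega^{(n)}e_0^{(n-1)}\ffi^{(n)}-4\nabla\omega^{(n)}\cdot\nabla\ffi^{(n)}$ being treated exactly as the quadratic sources above via \eqref{propsmall fi}.

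The main obstacle is the bookkeeping for \eqref{esti ondes phi 2}: one must peel off the principal part $e^{-2\gamma^{(n)}}\Delta\ffi^{(n+1)}$ (controlled by the first estimate) and then recognise that the genuine source $f$ and the shift term are genuinely \emph{small} rather than merely $\lesssim C_i$ — without this the naive estimate only gives $C(A_0)C_i^2$, which is too weak for the bootstrap. For \eqref{esti ondes phi 1} the only real point is that the $\sqrt{T}$-gain in Lemma \ref{inegalite d'energie lemme} makes the crude, possibly large bound on $\|fN^{(n)}\|_{H^2_{\delta'+1}}$ harmless, while the compact support of $\ffi^{(n+1)}$ and $e^{-\gamma^{(n)}}\gtrsim1$ let one pass freely between the weighted energy and the unweighted Sobolev norms in the statement.
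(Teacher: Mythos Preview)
Your proposal is correct and follows essentially the same approach as the paper: for \eqref{esti ondes phi 1} you apply Lemma \ref{inegalite d'energie lemme} with a crude bound on $\|fN^{(n)}\|_{H^2_{\delta'+1}}$ absorbed by the $\sqrt{T}$-gain, and for \eqref{esti ondes phi 2} you use the equation to isolate the principal part $e^{-2\gamma^{(n)}}N^{(n)}\Delta\ffi^{(n+1)}$ (controlled by the first estimate) and then exploit the $\e$-smallness of each source term to avoid a $C_i^2$ bound. The only minor imprecision is in the bookkeeping: the $\tau^{(n)}$-term requires \eqref{HR tau 1} (an $A_1C_i$ bound) when the derivative lands on $\tau^{(n)}$, so the constant should read $\e\,C(A_1)C_i$ rather than $\e\,C(A_0)C_i$; this does not affect the argument since $\e$ is taken small depending on all the $A_i$.
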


\begin{proof}
First, note that since $\ffi^{(n+1)}$ is compactly supported in $B_{2R}$ for all time by previous lemma, we do not need to worry about the spatial decay in this proof. We recall the wave equation satisfied by $\ffi^{(n+1)}$ :
\begin{align*}
    \left( \Ll^{(n)} \right)^2\ffi^{(n+1)}-e^{-2\gamma^{(n)}}\Delta \ffi^{(n+1)}&=\frac{e^{-2\gamma^{(n)}}}{N^{(n)}}\nabla \ffi^{(n)}\cdot\nabla N^{(n)}+\frac{\tau^{(n)} e_0^{(n-1)}\ffi^{(n)}}{N^{(n)}}\\&\qquad+\frac{1}{2}e^{-4\ffi^{(n)}}\left( \left(e_0^{(n-1)}\omega^{(n)}\right)^2+\left|\nabla\omega^{(n)}\right|^2 \right).
\end{align*}
Using our energy estimate for this wave equation we see that to prove the first part of the proposition we have to bound 
\begin{equation*}
    \left\Vert e^{-2\gamma^{(n)}}\nabla \ffi^{(n)}\cdot\nabla N^{(n)}\right\Vert_{H^2}+\left\Vert \tau^{(n)} e_0^{(n-1)}\ffi^{(n)}\right\Vert_{H^2}+\l e^{-4\ffi^{(n)}}N^{(n)}(e_0^{(n-1)}\omega^{(n)})^2 \r_{H^2} +\l e^{-4\ffi^{(n)}}N^{(n)} |\nabla\omega^{(n)}|^2\r_{H^2} .
\end{equation*}
We mainly use the fact that in dimension 2, $H^2$ is an algebra. Noting that every norm is not taking on the whole space but only on $B_{2R}$, using \eqref{useful gamma 1} and \eqref{useful ffi 1} and thanks to the estimates made on the $n$-th iterate, it's easy to see that this quantity is bounded by some constant $C(A_i,C_i)$. We also recall that :
\begin{equation*}
    \mathcal{E}^{(n)}\left[ \ffi^{(n+1)} \right](0)\lesssim C_i.
\end{equation*}
Thanks to the Lemma \ref{inegalite d'energie lemme}, if $T$ is small enough, we have for all $t\in[0,T]$
\begin{equation*}
    \mathcal{E}^{(n)}\left[ \ffi^{(n+1)} \right](t)\lesssim C_i.
\end{equation*}
Thanks to the support property of $\ffi^{(n+1)}$, the fact that $1\lesssim e^{-\gamma^{(n)}}$ and $|N^{(n)}|\lesssim 1$ (on $B_{2R}$), we have :
\begin{equation}
     \l\dr_t\ffi^{(n+1)}\r_{H^2}+\l\nabla\ffi^{(n+1)}\r_{H^2}\lesssim \mathcal{E}^{(n)}\left[ \ffi^{(n+1)} \right]+C_i.\label{commutation energie}
\end{equation}
which concludes the proof of \eqref{esti ondes phi 1}.
\par\leavevmode\par
We next prove the estimate \eqref{esti ondes phi 2}. We use the equation satisfied by $\ffi^{(n+1)}$ to express the term we want to estimate:
\begin{align*}
    \dr_t\left( \Ll^{(n)}\ffi^{(n+1)} \right) & =e^{-2\gamma^{(n)}}N^{(n)}\Delta\ffi^{(n+1)}+e^{-2\gamma^{(n)}}\nabla \ffi^{(n)}\cdot\nabla N^{(n)}+\tau^{(n)} e_0^{(n-1)}\ffi^{(n)}+\beta^{(n)}\cdot\nabla\left( \Ll^{(n)}\ffi^{(n+1)} \right)\\&\qquad+\frac{1}{2}e^{-4\ffi^{(n)}}N^{(n)} \left(e_0^{(n-1)}\omega^{(n)}\right)^2+\frac{1}{2}e^{-4\ffi^{(n)}}N^{(n)}\left|\nabla\omega^{(n)}\right|^2 \\
    & =\vcentcolon I+II+III+IV+V+VI.
\end{align*}
Thus, it remains to bound those terms by $C_i$ in $H^1$, and the main difficulty is avoiding any $C_i^2$ bound. We mainly use the embedding of $H^1$ in $L^q$ for all $q\geq 2$ and the Hölder inequality, in particular the $L^4\times L^4\xhookrightarrow{}L^2$ and $L^8\times L^8\xhookrightarrow{}L^4$ case (note that in the following we do not write down the factors that are trivially in $L^{\infty}$) :
\begin{itemize}[label=\textbullet]
    \item for $I$, the only issues are the terms where $\Tilde{N}^{(n)}$ or $\Tilde{\gamma}^{(n)}$ get one derivative :
    \begin{align*}
        \|I\|_{H^1} & \lesssim \Vert\nabla\ffi^{(n+1)}\Vert_{H^2}+\|\nabla\Tilde{N}^{(n)}\Delta\ffi^{(n+1)}\|_{L^2}+\|\nabla\Tilde{\gamma}^{(n)}\Delta\ffi^{(n+1)}\|_{L^2}\\
        & \lesssim \Vert\nabla\ffi^{(n+1)}\Vert_{H^2}\left(1+\|\nabla\Tilde{N}^{(n)}\|_{H^1}+\|\nabla\Tilde{\gamma}^{(n)}\|_{H^1}\right)\\
        & \lesssim C_i.
    \end{align*}
    \item for $II$, we forget about the $\chi\ln$ in $N^{(n)}$, which is less problematic than $\Tilde{N}^{(n)}$ :
    \begin{align*}
        \|II\|_{H^1} & \lesssim\|\nabla\ffi^{(n)}\cdot\nabla N^{(n)}\|_{L^2}+\|\nabla^2\ffi^{(n)}\nabla N^{(n)}\|_{L^2}+\|\nabla\ffi^{(n)}\nabla^2 N^{(n)}\|_{L^2}+\|\nabla\Tilde{\gamma}^{(n)}\nabla\ffi^{(n)}\nabla N^{(n)}\|_{L^2}\\
        & \lesssim\|\nabla\ffi^{(n)}\|_{L^4}\|\Tilde{N}^{(n)}\|_{H^4}+\|\nabla\ffi^{(n)}\|_{H^2}\|\Tilde{N}^{(n)}\|_{H^2}\left( 1+\|\Tilde{\gamma}^{(n)}\|_{H^2}\right)\\
        & \lesssim C_i.
    \end{align*}
    \item for $III$, we use \eqref{HR tau 1} when no derivatives hits $e_0^{(n-1)}\ffi^{(n)}$ and \eqref{propsmall tau} when one derivative hits $e_0^{(n-1)}\ffi^{(n)}$ :
    \begin{align*}
        \| III\|_{H^1}&\lesssim \|\tau^{(n)}\|_{H^2}\left(  \Vert\dr_t\ffi^{(n)}\Vert_{L^4}+\Vert\nabla\ffi^{(n)}\Vert_{L^4}\left( 1+\left\| \nabla\beta^{(n-1)}\right\|_{H^1}\right) \right)\\& \quad+\|\tau^{(n)}\|_{H^1}\left( \|\nabla^2\ffi^{(n)}\|_{H^1}+\|\dr_t\nabla\ffi^{(n)}\|_{H^1}\right)\\& \lesssim C_i.
    \end{align*}
    \item for $IV$, we just notice that, applying the same type of arguments as in Proposition \ref{commutation estimate}, it's easy to deduce from the first part of this proof that $\left\|\Ll^{(n)}\ffi^{(n+1)}\right\|_{H^2}\lesssim C_i$ :
    \begin{align*}
        \|IV\|_{H^1} & \lesssim \left\|\Ll^{(n)}\ffi^{(n+1)}\right\|_{H^1}+ \left\|\nabla\beta^{(n)}\nabla\left( \Ll^{(n)}\ffi^{(n+1)} \right)\right\|_{L^2}+\left\|\beta^{(n)}\nabla^2\left( \Ll^{(n)}\ffi^{(n+1)} \right)\right\|_{L^2}\\
        & \lesssim \left\|\Ll^{(n)}\ffi^{(n+1)}\right\|_{H^2}\left( 1+\|\nabla\beta^{(n)}\|_{H^1}\right)\\
        & \lesssim C_i.
    \end{align*}
    \item for $V$, we use first \eqref{useful ffi 1} and the fact that $N^{(n)}$ and $\nabla N^{(n)}$ are bounded, and then \eqref{propsmall fi} and \eqref{HR omega 1} :
    \begin{align*}
    \l V\r_{H^1} & \lesssim \left( 1 + \l \nabla\Tilde{N}^{(n)} \r_{H^2} \right) \l \left(e_0^{(n-1)}\omega^{(n)}\right)^2 \r_{L^2}+ \l e_0^{(n-1)}\omega^{(n)}\nabla e_0^{(n-1)}\omega^{(n)} \r_{L^2}
    \\& \lesssim C_i \l e_0^{(n-1)}\omega^{(n)}\r_{L^4}^2+ \l e_0^{(n-1)}\omega^{(n)}\r_{L^4}\l\nabla e_0^{(n-1)}\omega^{(n)}\r_{H^1}
    \\&\lesssim \e^2 C_i+\e C_i
    \end{align*}
    \item for $VI$, we do as for $V$, since $\nabla\omega^{(n)}$ and $e_0^{(n-1)}\omega^{(n)}$ satisfy the same estimates.
    \end{itemize}
\end{proof}

\begin{prop}\label{hr+1 omega prop}
For $n\geq 2$, the following estimates holds :
\begin{align}
     \left\|\dr_t\omega^{(n+1)}\right\|_{H^2}+\left\|\nabla\omega^{(n+1)}\right\|_{H^2} & \lesssim C_i ,\label{esti ondes omega 1}\\
     \left\Vert \dr_t\left( \Ll^{(n)}\omega^{(n+1)} \right) \right\Vert_{H^1} & \lesssim C_i.\label{esti ondes omega 2}
\end{align}
\end{prop}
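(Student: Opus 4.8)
The plan is to repeat, essentially verbatim, the arguments of Propositions \ref{hr+1 fi prop} and \ref{hr+1 gamma prop}, since $\omega^{(n+1)}$ solves the wave equation \eqref{reduced system omega} whose principal part is $\Box_{g^{(n)}}$ and whose source has the same structure as that of \eqref{reduced system fi} — and $\omega^{(n)}$, $\ffi^{(n)}$ obey the same bootstrap bounds (compare \eqref{HR fi 1} and \eqref{HR omega 1}). First I would record the support property: the proof of Lemma \ref{support fi n+1} only uses that the source of the wave equation is supported in the cone $\{|x|\le R+C_s(1+R^\e)t\}$ and that this cone is spacelike for $g^{(n)}$; both facts are unchanged, so $\mathrm{supp}(\omega^{(n+1)})\subset[0,T]\times B_{2R}$ and spatial decay never enters the discussion.

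For \eqref{esti ondes omega 1} I would apply the energy estimate of Lemma \ref{inegalite d'energie lemme} with $h=\omega^{(n+1)}$ and $f$ the right-hand side of \eqref{reduced system omega}, so that it suffices to bound $\|fN^{(n)}\|_{H^2}$ on $B_{2R}$. The terms $e^{-2\gamma^{(n)}}\nabla\omega^{(n)}\cdot\nabla N^{(n)}$ and $\tau^{(n)}e_0^{(n-1)}\omega^{(n)}$ are estimated exactly as their counterparts in Proposition \ref{hr+1 fi prop}, using \eqref{useful gamma 1}, the bootstrap bounds for $N^{(n)}$, $\tau^{(n)}$, $\omega^{(n)}$, and the algebra property of $H^2$ in dimension $2$. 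The two genuinely new (wave-map coupling) terms $N^{(n)}e_0^{(n-1)}\omega^{(n)}e_0^{(n-1)}\ffi^{(n)}$ and $N^{(n)}\nabla\omega^{(n)}\cdot\nabla\ffi^{(n)}$ are products of two compactly supported factors, each bounded in $H^2(B_{2R})$ by $C(A_0)C_i$ (for the $e_0$ factors one writes $e_0^{(n-1)}=\dr_t-\beta^{(n-1)}\cdot\nabla$ and uses \eqref{HR omega 1}, \eqref{HR fi 1}, \eqref{HR beta 1}), so by the algebra property they are bounded by $C(A_0,C_i)$. Together with the initial-data bound $\mathcal{E}^{(n)}[\omega^{(n+1)}](0)\lesssim C_i$ (obtained as in the proof of \eqref{HR+1 gamma 1} from \eqref{CI gros}) and the freedom to shrink $T$, Lemma \ref{inegalite d'energie lemme} yields $\mathcal{E}^{(n)}[\omega^{(n+1)}]\lesssim C_i$; using $1\lesssim e^{-\gamma^{(n)}}$ (from \eqref{propsmall gamma}) and $|N^{(n)}|\lesssim 1$ on $B_{2R}$ exactly as in \eqref{commutation energie} converts this into \eqref{esti ondes omega 1}.

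For \eqref{esti ondes omega 2} I would use \eqref{reduced system omega} and $\dr_t=e_0^{(n)}+\beta^{(n)}\cdot\nabla$, together with $e_0^{(n)}(\Ll^{(n)}\omega^{(n+1)})=N^{(n)}(\Ll^{(n)})^2\omega^{(n+1)}$, to write
\begin{align*}
\dr_t\left(\Ll^{(n)}\omega^{(n+1)}\right)&=e^{-2\gamma^{(n)}}N^{(n)}\Delta\omega^{(n+1)}+e^{-2\gamma^{(n)}}\nabla\omega^{(n)}\cdot\nabla N^{(n)}+\tau^{(n)}e_0^{(n-1)}\omega^{(n)}+\beta^{(n)}\cdot\nabla\left(\Ll^{(n)}\omega^{(n+1)}\right)\\&\qquad-4N^{(n)}e_0^{(n-1)}\omega^{(n)}e_0^{(n-1)}\ffi^{(n)}-4N^{(n)}\nabla\omega^{(n)}\cdot\nabla\ffi^{(n)},
\end{align*}
and then bound each term in $H^1$ by $C_i$: the first four terms are handled word for word as in the proof of \eqref{esti ondes phi 2} (using $\|\Ll^{(n)}\omega^{(n+1)}\|_{H^2}\lesssim C_i$, which follows from the first part as in Proposition \ref{commutation estimate}), while the last two are treated by Hölder's inequality, placing the $L^4$-smallness from \eqref{propsmall fi} on one factor and an $H^1$/$H^2$ bound from \eqref{HR omega 1}, \eqref{HR fi 1} on the other, e.g. $\|e_0^{(n-1)}\omega^{(n)}\,\nabla e_0^{(n-1)}\ffi^{(n)}\|_{L^2}\lesssim\|e_0^{(n-1)}\omega^{(n)}\|_{L^4}\|\nabla e_0^{(n-1)}\ffi^{(n)}\|_{L^4}\lesssim\e C(A_0)C_i$.

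The only point requiring any care — and the nearest thing to an obstacle — is exactly this last step: because the smallness of $e_0^{(n-1)}\omega^{(n)}$, $\nabla\omega^{(n)}$, $e_0^{(n-1)}\ffi^{(n)}$ and $\nabla\ffi^{(n)}$ is available only at the $L^4$ level, any spatial derivative hitting one of these factors destroys the $\e$-gain, so in each product at most one factor may be differentiated; distributing derivatives this way gives $\e C_i$ (hence in particular $\lesssim C_i$) rather than $C_i^2$. Everything else is a line-by-line transcription of the proofs already given for $\ffi^{(n+1)}$ and $\Tilde\gamma^{(n+1)}$.
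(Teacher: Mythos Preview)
Your proposal is correct and follows exactly the approach the paper intends: the paper's own proof simply states that the argument is identical to that of Proposition \ref{hr+1 fi prop} (since $\ffi^{(n)}$ and $\omega^{(n)}$ satisfy the same estimates) and omits the details. Your write-up faithfully fills in those details, including the correct handling of the wave-map coupling terms via $L^4$-smallness on one factor and $H^1$/$H^2$ control on the other, which is precisely the mechanism used for the analogous terms $V$ and $VI$ in the proof of \eqref{esti ondes phi 2}.
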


\begin{proof}
The proof of Proposition \ref{hr+1 omega prop} uses the same estimates as the one of Proposition \ref{hr+1 fi prop} (since $\ffi^{(n)}$ and $\omega^{(n)}$ satisfy the same estimates), so we omit the details. 
\end{proof}

Looking at the estimates we proved for the $(n+1)$-th iterate in Propositions \ref{hr+1 N prop}, \ref{hr+1 beta prop}, \ref{HR+1 H prop}, \ref{hr+1 tau prop}, \ref{hr+1 gamma prop}, \ref{hr+1 fi prop} and \ref{hr+1 omega prop} we see that in order to recover the estimates \eqref{HR N 1}-\eqref{HR fi 1}, we have to choose the constants $A_0$, $A_1$, $A_2$, $A_3$ and $A_4$ such that $C(A_i)\ll A_{i+1}$ for all $i=0,\dots,3$ and $\e$ small, depending on the $A_i$ constants. We make such a choice.

This concludes the proof of the fact claimed above : the sequence constructed in Section \ref{section iteration scheme} is uniformly bounded. Moreover, the bounds \eqref{HR N 1}-\eqref{HR omega 1} hold for every $k\in\N$. and for every $t\in[0,T]$.

\subsection{Convergence of the sequence}\label{Cauchy}

In this section, we show that the sequence we constructed in fact converges to a limit in larger functional spaces than those used in the previous sequence, where we only proved boundedness. To this end, we will show that the sequence is a Cauchy sequence. We introduce the following distances, as in \cite{hunluk18} :
\begin{align}
    d_1^{(n)} &\vcentcolon= \left\|\Tilde{\gamma}^{(n+1)}-\Tilde{\gamma}^{(n)} \right\|_{H^1_{\delta'}} +\left\|\dr_t\left(\Tilde{\gamma}^{(n+1)}-\Tilde{\gamma}^{(n)} \right) \right\|_{L^2_{\delta'}}+\left\|H^{(n+1)}-H^{(n)} \right\|_{H^1_{\delta+1}}\nonumber\\&\qquad+\left\|\tau^{(n+1)}-\tau^{(n)} \right\|_{L^2_{\delta'+1}}+\left\|\dr_t \left(\ffi^{(n+1)}-\ffi^{(n)} \right) \right\|_{H^1} +\left\|\nabla\left(\ffi^{(n+1)}-\ffi^{(n)}\right) \right\|_{H^1}
    \\&\qquad +\left\|\dr_t \left(\omega^{(n+1)}-\omega^{(n)} \right) \right\|_{H^1} +\left\|\nabla\left(\omega^{(n+1)}-\omega^{(n)}\right) \right\|_{H^1},\nonumber\\
    d_2^{(n)} &\vcentcolon= \left| N_a^{(n+1)}-N_a^{(n)}\right|+\left\|\Tilde{N}^{(n+1)}-\Tilde{N}^{(n)} \right\|_{H^2_{\delta}}+\left\| \beta^{(n+1)}-\beta^{(n)}\right\|_{H^2_{\delta'}},\\
    d_3^{(n)} &\vcentcolon= \left\| \dr_t \left(\Ll^{(n)}\ffi^{(n+1)}-\Ll^{(n-1)}\ffi^{(n)} \right)\right\|_{L^2}+\left\| \dr_t \left(\Ll^{(n)}\omega^{(n+1)}-\Ll^{(n-1)}\omega^{(n)} \right)\right\|_{L^2},\\
    d_4^{(n)} &\vcentcolon= \left\|e_0^{(n+1)}H^{(n+1)}-e_0^{(n)}H^{(n)} \right\|_{H^1_{\delta+1}},\\
    d_5^{(n)} &\vcentcolon= \left\|\dr_t \left(\Ll^{(n)}\Tilde{\gamma}^{(n+1)}-\Ll^{(n-1)}\Tilde{\gamma}^{(n)} \right) \right\|_{L^2_{\delta'}}+\left\|\dr_t\left(\tau^{(n+1)}-\tau^{(n)} \right) \right\|_{L^2_{\delta'+1}},\\
    d_6^{(n)} &\vcentcolon= \left|\dr_t\left(N_a^{(n+1)}-N_a^{(n)} \right) \right|+\left\|\dr_t\left(\Tilde{N}^{(n+1)}-\Tilde{N}^{(n)} \right) \right\|_{H^2_{\delta}}+\left\|e_0^{(n)}\beta^{(n+1)}-e_0^{(n-1)}\beta^{(n)} \right\|_{H^2_{\delta'}}.
\end{align}

The goal is to show that each series $\sum_{n\leq 0}d_i^{(n)}$ is converging. This is a consequence of the following Proposition. At this low-level of regularity, its proof is identical to the corresponding one done in \cite{hunluk18} (see Proposition $8.19$ and Corollary $8.20$ in this article).

\begin{prop}\label{distance prop}
If $T$ and $\e$ is small enough (where $\e$ does not depend on $C_i$), the following bounds hold for every $n\geq 3$ : 
\begin{align*}
d_1^{(n)} + d_2^{(n)} + d_3^{(n)} + d_4^{(n)} +  d_5^{(n)} +  d_6^{(n)} \lesssim 2^{-n}.
\end{align*}
\end{prop}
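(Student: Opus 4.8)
The plan is to show that the six distances contract, by running the low-regularity difference estimate of \cite{hunluk18} (Proposition 8.19 and Corollary 8.20 there) adapted to the wave-map nonlinearities of \eqref{reduced system fi}--\eqref{reduced system omega}. The starting point is to write down the equations satisfied by the differences of two consecutive iterates. Subtracting \eqref{reduced system N}--\eqref{reduced system omega} at steps $n+1$ and $n$, one finds that $\delta N^{(n)}:=N^{(n+1)}-N^{(n)}$ and $\delta\beta^{(n)}$ solve elliptic equations, $\delta H^{(n)}$ (and $e_0^{(n+1)}H^{(n+1)}-e_0^{(n)}H^{(n)}$) solves a transport equation, $\delta\Tilde\gamma^{(n)}$, $\delta\ffi^{(n)}$, $\delta\omega^{(n)}$ solve wave equations with principal part $(\Ll^{(n)})^2-e^{-2\gamma^{(n)}}\Delta$, and $\delta\tau^{(n)}$ is given algebraically by the analogue of \eqref{reduced system tau}. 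In every case the right-hand side is a finite sum of terms each of which is bilinear in a \emph{difference} of iterates (at level $n-1$, or at level $n$ for quantities already treated) and a \emph{uniformly bounded} iterate; the extra contributions produced by the mismatch of the operators $e_0^{(k)},\Ll^{(k)}$ and of the coefficients $e^{-2\gamma^{(k)}},e^{-4\ffi^{(k)}}$ in the principal parts and in the lower-order terms are of exactly the same bilinear type, with the difference factor being $\delta\beta$, $\delta\gamma$, $\delta N$ or $\delta\ffi$. All bounded factors are controlled by the uniform estimates \eqref{HR N 1}--\eqref{HR omega 1} of Section \ref{uniforme boundedness} together with Propositions \ref{prop propsmall} and \ref{commutation estimate}, so that every nonlinear term carries either a factor $\e$ (from the propagation of smallness) or will be multiplied by $\sqrt T$ after the relevant energy/transport inequality, and never a bare power of $C_i$.

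Next I would estimate each difference with the tools already developed. For $\delta N^{(n)}$ and $\delta\beta^{(n)}$ (hence $d_2^{(n)}$ and $d_6^{(n)}$) I apply McOwen's theorem and Corollary \ref{mcowens 2}, together with the Killing-operator bound \eqref{killing operator H1}, exactly as in Lemmas \ref{lem CI N}, \ref{lem CI beta} and Propositions \ref{hr+1 N prop}, \ref{hr+1 beta prop}; here there is no gain from time smallness, and it is the $\e$-smallness of the self-interaction terms (via Proposition \ref{prop propsmall}) that makes these estimates close. For $\delta H^{(n)}$ and $e_0\delta H^{(n)}$ (the $H$-parts of $d_1^{(n)}$ and $d_4^{(n)}$) I use Lemma \ref{transport inegalite}, which produces a factor $\sqrt T$; the commutators $[e_0^{(n+1)},\nabla^\alpha]$ are handled as in Proposition \ref{HR+1 H prop}. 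For $\delta\Tilde\gamma^{(n)}$, $\delta\ffi^{(n)}$, $\delta\omega^{(n)}$ (the $\gamma$-, $\ffi$-, $\omega$-parts of $d_1^{(n)}$, $d_3^{(n)}$, $d_5^{(n)}$) I use the weighted energy inequalities of Lemmas \ref{inegalite d'energie lemme 1} and \ref{inegalite d'energie lemme}, again gaining $\sqrt T$; the support property of $\ffi^{(n)},\omega^{(n)}$ (Lemma \ref{support fi n+1}) removes any spatial-decay issue for the last two. Finally $\delta\tau^{(n)}$ and $\dr_t\delta\tau^{(n)}$ are estimated directly from the $\tau$-equation using the bounds just obtained for $\delta\beta^{(n)}$, $\delta N^{(n)}$, $\delta\Tilde\gamma^{(n)}$.

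The last step is the bookkeeping that makes the $6\times6$ system of inequalities close. The distances $d_1^{(n)},\dots,d_6^{(n)}$ are arranged so that each $d_i^{(n)}$ is bounded by $C(\e+\sqrt T)\sum_j d_j^{(n-1)}$ plus a \emph{small} multiple of the $d_j^{(n)}$ with $j$ strictly lower in the hierarchy (together with, for $d_4,d_6$, the elliptic/transport contributions already controlled). Treating the estimates in the order $d_1,d_2,d_3,d_4,d_5,d_6$ and choosing $\e$ and $T$ small enough --- independently of $C_i$, precisely because the level-$n$ and nonlinear terms always come with a genuinely small coefficient --- one absorbs the level-$n$ terms into the left-hand side and obtains $\sum_{i=1}^6 d_i^{(n)}\le \tfrac12\sum_{i=1}^6 d_i^{(n-1)}$ for all $n\ge 3$. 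Since $\sum_i d_i^{(3)}$ is finite by the uniform bounds of Section \ref{uniforme boundedness}, an immediate induction yields $d_1^{(n)}+\dots+d_6^{(n)}\lesssim 2^{-n}$. The main obstacle is not any single estimate --- each is a direct analogue of one already proved in Section \ref{uniforme boundedness} or in \cite{hunluk18} --- but exactly this triangular-plus-small-perturbation structure: one must verify that no difference estimate feeds a level-$n$ term of equal or higher hierarchical order back with an $O(1)$ constant, and in particular that the two genuinely elliptic estimates (for which $\sqrt T$ is unavailable) close purely on the $\e$-smallness. The wave-map terms $-4e_0\omega\,e_0\ffi-4\nabla\omega\cdot\nabla\ffi$ and $\tfrac12 e^{-4\ffi}((e_0\omega)^2+|\nabla\omega|^2)$, absent in \cite{hunluk18}, are bilinear in $\dr\ffi,\dr\omega$ with bounded coefficients and are absorbed by the same Hölder and product-law arguments (using \eqref{useful ffi 1}) employed throughout.
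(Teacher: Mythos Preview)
Your proposal is correct and follows essentially the same approach as the paper: the paper does not give an independent proof of this proposition but explicitly defers to Proposition~8.19 and Corollary~8.20 of \cite{hunluk18}, noting that at this low level of regularity the argument is identical, and you outline precisely that argument while correctly flagging the only new ingredients (the wave-map nonlinearities in \eqref{reduced system fi}--\eqref{reduced system omega} and the $e^{-4\ffi}$ factors) and how they are absorbed.
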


It shows that in the function spaces involved in the definition of the distances $d^{(n)}_i$, the sequence we constructed is Cauchy and therefore convergent to some
\begin{equation}\label{strong limit}
(N=1+N_a\chi\ln+\Tilde{N},\tau,H,\beta,\gamma=-\alpha\chi\ln+\Tilde{\gamma},\ffi,\omega).
\end{equation}
Since the sequence is bounded in a smaller space, we can find a subsequence weakly converging to some limit, which has to coincide with the strong limit \eqref{strong limit}. Consequently, \eqref{strong limit} satisfies the estimates \eqref{HR N 1}-\eqref{HR omega 1}, from which we can prove that it is a solution to the reduced system \eqref{EQ N}-\eqref{EQ omega}.

If there are two solutions to the reduced system, we can control their difference using the distances $d^{(n)}_i$ and arguing as in Proposition \ref{distance prop} we show that these two solutions coincide. This proves the uniqueness of solution to the reduced system. 

We summarize this discussion in the following
corollary :

\begin{coro}\label{coro reduced sytem}
Given the initial conditions in Section \ref{initial data}, there exists a unique solution
\begin{equation}
    (N,\beta,\tau,H,\gamma,\ffi,\omega)\label{sol}
\end{equation}
to the reduced system \eqref{EQ N}-\eqref{EQ omega} such that :
\begin{itemize}
    \item $\gamma$ and $N$ admit the decompositions
    \begin{equation*}
        \gamma=-\alpha\chi\ln+\Tilde{\gamma},\qquad N=1+N_a\chi\ln+\Tilde{N},
    \end{equation*}
    where $\alpha\geq0$ is a constant, $N_a(t)\geq 0$ is a function of $t$ alone and 
    \begin{equation*}
        \Tilde{\gamma}\in H^3_{\delta'}, \quad \Ll\Tilde{\gamma}\in H^2_{\delta'+1}, \quad \dr_t\Ll\Tilde{\gamma}\in H^1_{\delta'+1}, \quad \Tilde{N}\in H^4_{\delta}, \quad \dr_t\Tilde{N}\in H^2_{\delta},
    \end{equation*}
    with estimates depending on $C_i$, $\delta$ and $R$.
    \item $(\beta,\tau,H)$ are in the following spaces :
    \begin{equation*}
        \beta\in H^3_{\delta'},\quad e_0\beta\in H^3_{\delta'},\quad \tau\in H^2_{\delta'+1},\quad \dr_t\tau\in H^1_{\delta'+1}, \quad H,e_0H\in H^2_{\delta'+1},
    \end{equation*}
    with estimates depending on $C_i$, $\delta$ and $R$.
    \item The smallness conditions in \eqref{HR N 1} and \eqref{HR beta 1} and Proposition \ref{prop propsmall} hold (without the $(n)$).
\end{itemize}
\end{coro}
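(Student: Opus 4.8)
The statement simply collects the output of the iteration scheme, so the plan is to assemble the three ingredients already established. First, recall from Section~\ref{uniforme boundedness} that the bounds \eqref{HR N 1}--\eqref{HR omega 1}, together with the smallness estimates of Proposition~\ref{prop propsmall}, hold for every iterate $k\in\N$ and every $t\in[0,T]$, once the constants $A_0\ll A_1\ll\dots\ll A_4$ and then $\e$ have been fixed as at the end of that section. Second, by Proposition~\ref{distance prop} the series $\sum_n d_i^{(n)}$ converge for each $i$, so the sequence \eqref{sol} is Cauchy in the weaker function spaces entering the $d_i^{(n)}$ and therefore converges strongly to a limit, which is \eqref{strong limit}.

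The next step is to reconcile this strong limit with a weak limit in the stronger topology. Since the sequence is uniformly bounded in the spaces dictated by \eqref{HR N 1}--\eqref{HR omega 1}, the Banach--Alaoglu theorem furnishes a subsequence converging weakly-$*$ in those spaces; as the weak-$*$ and the strong limits must coincide as distributions, the limit \eqref{strong limit} belongs to the stronger spaces and inherits the bounds \eqref{HR N 1}--\eqref{HR omega 1} and the smallness of Proposition~\ref{prop propsmall}. From these one reads off all the regularity assertions of the corollary, namely $\Tilde\gamma\in H^3_{\delta'}$, $\Ll\Tilde\gamma\in H^2_{\delta'+1}$, $\dr_t(\Ll\Tilde\gamma)\in H^1_{\delta'+1}$, $\Tilde N\in H^4_\delta$, $\dr_t\Tilde N\in H^2_\delta$, $\beta,\,e_0\beta\in H^3_{\delta'}$, $\tau\in H^2_{\delta'+1}$, $\dr_t\tau\in H^1_{\delta'+1}$ and $H,\,e_0H\in H^2_{\delta'+1}$, with constants depending only on $C_i$, $\delta$, $R$; the decompositions $\gamma=-\alpha\chi\ln+\Tilde\gamma$ and $N=1+N_a\chi\ln+\Tilde N$ survive with $\alpha\geq 0$ and $N_a(t)\geq 0$, the signs being stable under the limit.

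It then remains to verify that \eqref{strong limit} actually solves the reduced system \eqref{EQ N}--\eqref{EQ omega}. One passes to the limit in each equation of \eqref{reduced system N}--\eqref{reduced system omega}: the low-order factors converge strongly in the $d_i^{(n)}$-topologies while the remaining factors stay bounded in the high-order norms, so every nonlinearity is continuous for the pairing (strong, low regularity) $\times$ (bounded, high regularity), interpolating where a product requires slightly more than the Cauchy topology; the elliptic equations for $N$ and $\beta$, and the transport and wave equations, all pass to the limit termwise, and the quasilinear term $\beta^{(n+1)}\cdot\nabla H^{(n+1)}$ hidden in $e_0^{(n+1)}H^{(n+1)}$ is controlled because $\beta^{(n+1)}\to\beta$ strongly in $H^2_{\delta'}$ and $\nabla H^{(n+1)}$ is uniformly bounded in $H^1_{\delta+1}$. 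Finally, uniqueness follows by running the estimates behind Proposition~\ref{distance prop} on the difference of two solutions sharing the initial data of Section~\ref{initial data}: the same computation yields a closed Gronwall-type inequality which, for $T$ small, forces the difference to vanish. The main obstacle here is organisational rather than conceptual --- one must check, exactly as in the corresponding argument of \cite{hunluk18}, that \emph{every} product in \eqref{reduced system N}--\eqref{reduced system omega} is continuous for that pairing, so that no loss occurs when taking the limit.
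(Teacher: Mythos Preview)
Your proposal is correct and follows essentially the same approach as the paper: strong convergence in the $d_i^{(n)}$-topologies from Proposition~\ref{distance prop}, weak-$*$ compactness from the uniform bounds of Section~\ref{uniforme boundedness} to upgrade the limit's regularity, passage to the limit in \eqref{reduced system N}--\eqref{reduced system omega}, and uniqueness by rerunning the difference estimates. The paper's own argument is in fact the brief discussion preceding the corollary, and your write-up is a faithful (slightly more detailed) expansion of it.
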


\section{End of the proof of Theorem \ref{theoreme principal} }\label{section end of proof}

In this section we conclude the proof of Theorem \ref{theoreme principal} in two steps. As a first step, we show that the unique solution of the reduced system obtained in Corollary \eqref{coro reduced sytem} is actually a solution of the full system \eqref{EVE}. As we will see in Proposition \ref{Gij et Goo}, this involves among other things propagating the gauge condition $\tau =0$ (the condition $\Bar{g}=e^{2\gamma}\delta$ is also a gauge condition but we don't need to propagate it). As in the harmonic gauge, this step is done using the Bianchi equation and the constraint equations. While in the harmonic gauge the Bianchi equation implies a second order hyperbolic system for the gauge, here we obtain a transport system (see Proposition \ref{usage de bianchi}). 

In a second step, we prove the remaining estimates stated in Theorem \ref{theoreme principal}, i.e the $H^4$ norm of the metric coefficients with a loss of one regularity order for each time derivative. For this, we use the full Einstein equations in the elliptic gauge, thanks to the first step.

\subsection{Solving the Einstein equations}

In order to solve \eqref{EVE} in the elliptic gauge, it only remains to prove that $G_{\mu\nu}=T_{\mu\nu}$ (the wave equations for $\ffi$ and $\omega$ being already included into the reduced system) and that $\tau=0$. 

To define properly the tensors $G$ and $T$ we need to define a metric. Let $g$ be the metric on $\R^2\times [0,T]$ defined by the geometric quantities $N$, $\gamma$ and $\beta$ (obtained from \eqref{sol}) as in \eqref{metrique elliptique}. To compute the Einstein tensor of $g$, we need the second form fundamental and its traceless part. We define $K$ with $H$, $\gamma$ and $\tau$ (obtained from \eqref{sol}) according to \eqref{def H} and \eqref{g bar}. Thanks to \eqref{EQ beta} and \eqref{EQ tau} we have
\begin{align*}
    K_{ij}=H_{ij}+\frac{1}{2}e^{2\gamma}\tau\delta_{ij}
     = -\frac{1}{2N}e_0\left( e^{2\gamma}\right)\delta_{ij}+\frac{e^{2\gamma}}{2N}\left( \dr_i\beta_j+\dr_j\beta_i \right).
\end{align*}
By \eqref{seconde forme fonda}, this proves that $K_{ij}$ is the second fundamental form of $\Sigma_t$. On the other hand, by \eqref{EQ tau}, we know that $\tau$ is the mean curvature of $\Sigma_t$. This implies that $H_{ij}$ is the traceless part of $K_{ij}$ with respect to $\Bar{g}=e^{2\gamma}\delta$. We also define the tensor $T$ with $g$ and $(\ffi,\omega)$ (obtained from \eqref{sol}) according to \eqref{tenseur energie impulsion }.

We can now use both our computations in the elliptic gauge and the reduced system to compute $G_{00}-T_{00}$ and $G_{ij}-T_{ij}$.

\begin{prop}\label{Gij et Goo}
Given a solution to \eqref{EQ N}-\eqref{EQ omega}, the Einstein tensor in the basis $(e_0,\dr_i)$ is given by :
\begin{align}
    G_{00}&=\frac{N}{2}e_0\tau+T_{00},\label{G 00}\\
    G_{ij}&=\frac{e^{2\gamma}e_0\tau}{2N}\delta_{ij}+T_{ij}.\label{G ij}
\end{align}
Moreover, we have $D^{\mu}T_{\mu\nu}=0$.
\end{prop}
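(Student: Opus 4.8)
The plan is to compute the components of the Einstein tensor $G_{\mu\nu}$ in the frame $(e_0,\dr_i)$ directly from the geometric data $(N,\gamma,\beta,H,\tau)$, using the formulae collected in Appendix \ref{appendix A}, and then substitute the reduced equations \eqref{EQ N}--\eqref{EQ omega} to simplify. First I would recall that, by construction (see the paragraph before the proposition), $K_{ij}=H_{ij}+\tfrac12 e^{2\gamma}\tau\delta_{ij}$ is genuinely the second fundamental form of $\Sigma_t$ and $\tau$ its mean curvature, so all the appendix identities expressing $R_{00}$, $R_{ij}$, $\tr_g\mathrm{Ric}$ in terms of $N$, $\gamma$, $\beta$, $H$, $\tau$ and their derivatives are available. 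For $G_{00}$: start from $G_{00}=R_{00}-\tfrac12 g_{00}R$; write $R$ in terms of $R_{00}$ and $\delta^{ij}R_{ij}$ using $g_{00}=-N^2$ and $g^{ij}=e^{-2\gamma}\delta^{ij}$ in this frame; then the combination $R_{00}-g_{00}\,\tfrac12(\text{scalar}) $ that appears should be arranged so that the \emph{reduced} equation for $N$ — which, per the discussion in Section \ref{section reduced system}, is exactly ``$R_{00}=T_{00}-g_{00}\tr_gT$ with the $e_0\tau$ term dropped'' — accounts for everything except precisely one leftover term proportional to $e_0\tau$. Tracking that dropped term gives $G_{00}=\tfrac N2 e_0\tau+T_{00}$.

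For $G_{ij}$ the bookkeeping is slightly heavier. I would split $R_{ij}$ into its trace part and traceless part with respect to $\delta$, exactly as in the derivation of \eqref{EQ H} and \eqref{EQ gamma}: the traceless part of the equation $R_{ij}-\tfrac12\delta_{ij}\delta^{k\ell}R_{k\ell}=T_{ij}-g_{ij}\tr_gT-\tfrac12\delta_{ij}\delta^{k\ell}(T_{k\ell}-g_{k\ell}\tr_gT)$ is forced by \eqref{EQ H}, and the trace part by \eqref{EQ gamma} (via \eqref{appendix tau} to replace $\Ll\tau$). Assembling $G_{ij}=R_{ij}-\tfrac12 g_{ij}R$ and reorganizing with $g_{ij}=e^{2\gamma}\delta_{ij}$, everything that the reduced system guarantees cancels against the matter side $T_{ij}$, and the only term not captured — again because \eqref{EQ N} omitted the $e_0\tau$ contribution, and \eqref{EQ gamma} was obtained after using \eqref{appendix tau} which reintroduces a curvature-of-$\tau$ discrepancy — is $\tfrac{e^{2\gamma}}{2N}e_0\tau\,\delta_{ij}$. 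Hence \eqref{G ij}. The $G_{0i}$ components are not asserted here because the momentum constraint $G_{0i}=T_{0i}$ follows separately from \eqref{EQ beta} and the divergence structure, and is not needed for this proposition.

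Finally, $D^\mu T_{\mu\nu}=0$ is a direct consequence of the wave equations for $\ffi$ and $\omega$ that are already part of the solved reduced system (equations \eqref{EQ ffi}, \eqref{EQ omega}, which by Proposition \ref{appendix box} are equivalent to $\Box_g\ffi=-\tfrac12 e^{-4\ffi}\dr^\rho\omega\dr_\rho\omega$ and $\Box_g\omega=4\dr^\rho\omega\dr_\rho\ffi$): one computes $D^\mu T_{\mu\nu}$ from \eqref{tenseur energie impulsion } and finds it equals $2\dr_\nu\ffi\,(\Box_g\ffi+\tfrac12 e^{-4\ffi}\dr^\rho\omega\dr_\rho\omega)+\tfrac12 e^{-4\ffi}\dr_\nu\omega\,(\Box_g\omega-4\dr^\rho\omega\dr_\rho\ffi)$, both factors of which vanish. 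I expect the main obstacle to be purely organizational rather than conceptual: correctly matching the frame components (note $e_0$ is \emph{not} unit, $g_{00}=-N^2$, and $g^{0i}\ne0$ in the coordinate frame but the statement is in the $(e_0,\dr_i)$ frame) and carefully checking that the single surviving $e_0\tau$ term comes out with the stated coefficients $\tfrac N2$ and $\tfrac{e^{2\gamma}}{2N}$ — i.e. keeping precise track of which terms were dropped when passing from the Einstein equations to the reduced system. The actual Ricci computations are routine given Appendix \ref{appendix A}, so I would cite those formulae rather than rederiving them.
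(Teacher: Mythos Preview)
Your plan is correct and follows essentially the same route as the paper: express $R_{00}$, $R_{ij}$ and $R$ via the Appendix~\ref{appendix A} formulae, substitute the reduced equations, and track the surviving $e_0\tau$ contributions; the conservation identity $D^\mu T_{\mu\nu}=0$ is obtained exactly as you say, by factoring out the two wave equations (cf.~\eqref{divergence de T}). Two small corrections: first, for $G_{00}$ you will need \emph{both} \eqref{EQ N} and the $\gamma$ equation, not just \eqref{EQ N}, since $G_{00}=R_{00}+\tfrac{N^2}{2}R$ involves $\delta^{ij}R_{ij}$ through $R$ --- the paper makes this explicit by first combining \eqref{EQ tau} and \eqref{EQ gamma} into the elliptic identity $\Delta\gamma=\tfrac{\tau^2}{2}e^{2\gamma}-\tfrac{e^{2\gamma}}{2N}e_0\tau-\tfrac{\Delta N}{2N}-|\nabla\ffi|^2-\tfrac14 e^{-4\ffi}|\nabla\omega|^2$ and then computing $R$; second, your parenthetical remark that $G_{0i}=T_{0i}$ ``follows separately from \eqref{EQ beta} and the divergence structure'' is not right --- \eqref{EQ beta} is only the traceless spatial equation $L\beta=2e^{-2\gamma}NH$ and does not by itself give the momentum constraint, which is precisely why the paper must establish $G_{0i}=T_{0i}$ afterwards via the Bianchi/transport argument of Proposition~\ref{usage de bianchi}.
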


\begin{proof}
In this proof we just have to put together our calculations about $R_{\mu\nu}$ and $T_{\mu\nu}$ perdormed in Appendix \eqref{appendix A} and the reduced system \eqref{EQ N}-\eqref{EQ omega}. Note that putting \eqref{EQ tau} and \eqref{EQ gamma} together gives back an elliptic equation satisfied by $\gamma$ :
\begin{equation}
    \Delta\gamma=\frac{\tau^2}{2}e^{2\gamma}-\frac{e^{2\gamma}}{2N}e_0\tau-\frac{\Delta N}{2N}-\left|\nabla\ffi\right|^2-\frac{1}{4}e^{-4\ffi}\left|\nabla\omega \right|^2.\label{vraie eq sur gamma}
\end{equation}

In order to compute $G_{\mu\nu}$, we need the scalar curvature $R$, which, thanks to \eqref{EQ N}, \eqref{vraie eq sur gamma} and \eqref{appendix R}, has the following expression :
\begin{equation*}
    R= -\Ll\tau+2e^{-2\gamma}|\nabla\ffi|^2-\frac{2}{N^2}(e_0\ffi)^2+\frac{1}{2}e^{-2\gamma-4\ffi}|\nabla\omega|^2-\frac{1}{2N^2}e^{-4\ffi}(e_0\omega)^2.
\end{equation*}
We also recall the expression of $g_{\mu\nu}$ in the $(e_0,\dr_i)$ basis : $g_{00}=-N^2$, $g_{ij}=e^{2\gamma}\delta_{ij}$ and $g_{0i}=0$.
Since $N$ satisfies \eqref{EQ N} and thanks to \eqref{appendix R00} we get :
\begin{align*}
    G_{00}&=R_{00}-\frac{1}{2}g_{00}R\\
    &=\frac{N}{2}e_0\tau+(e_0\ffi)^2+N^2e^{-2\gamma}|\nabla\ffi|^2+\frac{1}{4}e^{-4\ffi}\left(  (e_0\omega)^2+e^{-2\gamma}N^2|\nabla\omega|^2\right),
\end{align*}
which, looking at \eqref{T 00}, gives \eqref{G 00}. Thanks to \eqref{EQ H} and \eqref{appendix Rij} we get :
\begin{equation*}
    R_{ij}=\delta_{ij}\left(-\Delta\gamma+\frac{\tau^2}{2}e^{2\gamma}-\frac{e^{2\gamma}}{2}\Ll\tau-\frac{\Delta N}{2N}\right)+2\dr_i\ffi\dr_j\ffi-\delta_{ij}|\nabla\ffi|^2+\frac{1}{2}e^{-4\ffi}\dr_i\omega\dr_j\omega-\frac{1}{4}e^{-4\ffi}\delta_{ij}|\nabla\omega|^2,
\end{equation*}
which, using \eqref{vraie eq sur gamma} gives $R_{ij}=2\dr_i\ffi\dr_j\ffi+\frac{1}{2}e^{-4\ffi}\dr_i\omega\dr_j\omega$. It gives us 
\begin{equation*}
    G_{ij}=\frac{1}{2}e^{2\gamma}\Ll\tau\delta_{ij}+2\dr_i\ffi\dr_j\ffi+\frac{e^{2\gamma}}{N^2}(e_0\ffi)^2\delta_{ij}-|\nabla\ffi|^2\delta_{ij}+\frac{1}{4}e^{-4\ffi}\left( 2\dr_i\omega\dr_j\omega+\frac{e^{2\gamma}}{N^2}(e_0\omega)^2\delta_{ij}-|\nabla\omega|^2\delta_{ij}\right),
\end{equation*}
which, looking at \eqref{T ij}, gives \eqref{G ij}. The conservation law $D^{\mu}T_{\mu\nu}=0$ is just a consequence of \eqref{EQ ffi}, \eqref{EQ omega} and \eqref{divergence de T}.
\end{proof}

By Proposition \ref{Gij et Goo}, in order to show that a solution to \eqref{EQ N}-\eqref{EQ omega} is indeed a solution to \eqref{EVE} it remains
to show that $\tau=0$ and $G_{0i} - T_{0i}=0$. These will be shown simultaneously and the Bianchi identities
\begin{equation*}
    D^{\mu}G_{\mu\nu}=0
\end{equation*}
are used in the following proposition to obtain a coupled system for this two quantities. For the sake of clarity, we use the following notations :
\begin{equation*}
    A_i\vcentcolon=G_{0i}-T_{0i},\qquad B_i\vcentcolon=G_{0i}-T_{0i}-\frac{N}{2}\dr_i\tau,
\end{equation*}
and $\dive(A)=\delta^{ij}\dr_iA_j$. The important remark about these quantities is that if we manage to show that $e_0\tau=0$, $A_i=0$ and $B_i=0$, we first have $G_{0i}-T_{0i}=0$, which, looking at the expression of $B_i$ implies that $\nabla\tau =0$, which, in addition to $e_0\tau=0$ and $\tau_{|\Sigma_0}=0$ implies that $\tau=0$ in the whole space-time. 

\begin{prop}\label{usage de bianchi}
The quantities $A_i$, $B_i$ and $e_0\tau$ satisfy the following coupled system :
\begin{align}
    e_0A_i &=\frac{N}{2}\dr_ie_0\tau+\frac{\dr_iN}{2}e_0\tau+\left(\Ll N+N\tau\right)A_i+
    \dr_i\beta^j A_j ,\label{coupled system 1}\\
    e_0B_i & = \frac{\dr_iN}{2}e_0\tau+N\tau A_i+\Ll NB_i+\dr_i\beta^jB_j,\label{coupled system 3}\\
    e_0\left( e_0\tau\right)&=2e^{-2\gamma}N\dive(A)+2e^{-2\gamma}\delta^{ij}\dr_iNA_j+(2N\tau+\Ll N) e_0\tau .\label{coupled system 2}
\end{align}
\end{prop}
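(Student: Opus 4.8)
The strategy is to derive all three equations from the twice-contracted Bianchi identity $D^\mu G_{\mu\nu}=0$ together with the algebraic identities for $G$ and $T$ established in Proposition \ref{Gij et Goo}. Recall that by that proposition we have, in the frame $(e_0,\dr_i)$,
\begin{equation*}
    G_{00}=\tfrac{N}{2}e_0\tau+T_{00},\qquad G_{ij}=\tfrac{e^{2\gamma}e_0\tau}{2N}\delta_{ij}+T_{ij},\qquad D^\mu T_{\mu\nu}=0,
\end{equation*}
so the only components of $G-T$ that are not yet known to vanish are $G_{0i}-T_{0i}=A_i$. The plan is: first, expand $D^\mu G_{\mu 0}=0$ and $D^\mu G_{\mu i}=0$ using the connection coefficients computed in Appendix \ref{appendix A}, substituting the known values of $G_{00}$, $G_{ij}$ and $G_{0i}=A_i+T_{0i}$; second, use $D^\mu T_{\mu\nu}=0$ to cancel all the pure-$T$ contributions, so that what remains is a first-order system in $A_i$ and $e_0\tau$ with coefficients built from $N$, $\beta$, $\gamma$, $\tau$; third, read off \eqref{coupled system 1} and \eqref{coupled system 2} directly, and obtain \eqref{coupled system 3} by combining \eqref{coupled system 1} with the transport equation satisfied by $\dr_i\tau$ — indeed $B_i=A_i-\tfrac{N}{2}\dr_i\tau$, and $e_0(\dr_i\tau)=\dr_i(e_0\tau)-\dr_i\beta^j\dr_j\tau$, so \eqref{coupled system 3} should follow from \eqref{coupled system 1}, \eqref{coupled system 2} and the defining relation $N\tau=-2e_0\gamma+\dive\beta$ after the $\dr_i e_0\tau$ terms cancel.

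Concretely, for \eqref{coupled system 2} I would contract the Bianchi identity $D^\mu G_{\mu\nu}=0$ with $e_0^\nu$ (equivalently look at the $\nu=0$ component in the frame), writing $D^\mu G_{\mu 0}= g^{00}D_0 G_{00} + g^{ij}D_i G_{j0} + (\text{cross terms})$. The divergence of the scalar part $\tfrac{e^{2\gamma}e_0\tau}{2N}\delta_{ij}$ of $G_{ij}$ produces the term $e^{-2\gamma}N\dive(A)$ after one uses $G_{ij}-T_{ij}$ is pure trace — here the spatial divergence of $G_{0i}=A_i+T_{0i}$ enters, the $T$ part being killed by $D^\mu T_{\mu i}=0$. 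The term $\tfrac{N}{2}e_0\tau$ in $G_{00}$ differentiated along $e_0$ gives $e_0(e_0\tau)$ up to lower-order pieces, while the connection coefficients (the expressions for $\Gamma$ in terms of $\dr N$, $\dr\gamma$, $K$, etc. from Appendix \ref{appendix A}, using $K=H+\tfrac12\bar g\tau$ and $H$ traceless) generate exactly the $(2N\tau+\Ll N)e_0\tau$ and $2e^{-2\gamma}\delta^{ij}\dr_i N\,A_j$ contributions. For \eqref{coupled system 1}, I would instead contract with $\dr_i$: the leading term is $e_0 G_{0i}=e_0 A_i + e_0 T_{0i}$, and moving the $e_0 T_{0i}$ to the other side via $D^\mu T_{\mu i}=0$, what is left after accounting for the Christoffel symbols is $\tfrac{N}{2}\dr_i e_0\tau + \tfrac{\dr_i N}{2}e_0\tau$ (coming from differentiating the $G_{ij}$ trace part spatially) plus the transport-type lower order terms $(\Ll N+N\tau)A_i + \dr_i\beta^j A_j$.

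The main obstacle will be the bookkeeping of connection coefficients and the careful separation of the $T_{\mu\nu}$-contributions from the rest: because the frame $(e_0,\dr_i)$ is not coordinate-induced ($e_0=\dr_t-\beta^i\dr_i$) and not orthonormal, the covariant divergence involves several Christoffel-type terms, and one must verify that every term not proportional to $A_i$ or $e_0\tau$ cancels identically once $D^\mu T_{\mu\nu}=0$ and the reduced equations \eqref{EQ N}--\eqref{EQ omega} are invoked. I would organize this by first recording $D^\mu G_{\mu\nu}-D^\mu T_{\mu\nu}= D^\mu(G-T)_{\mu\nu}$ and noting $G-T$ has components $(G-T)_{00}=\tfrac N2 e_0\tau$, $(G-T)_{ij}=\tfrac{e^{2\gamma}}{2N}e_0\tau\,\delta_{ij}$, $(G-T)_{0i}=A_i$, so that the computation reduces to taking the covariant divergence of this explicitly-given, nearly-diagonal symmetric 2-tensor — a finite, purely mechanical computation using the formulae of Appendix \ref{appendix A}. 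The identities $N\tau=-2e_0\gamma+\dive\beta$ and $\tau_{|\Sigma_0}=0$ are not needed for deriving the system itself, only for the subsequent uniqueness/vanishing argument, so they can be set aside here.
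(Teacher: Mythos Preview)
Your plan is correct and essentially identical to the paper's own proof: one computes $D^\mu(G-T)_{\mu\nu}=0$ component-wise in the frame $(e_0,\dr_i)$ using the connection coefficients of Appendix~\ref{appendix A} and the expressions $(G-T)_{00}=\tfrac{N}{2}e_0\tau$, $(G-T)_{ij}=\tfrac{e^{2\gamma}}{2N}e_0\tau\,\delta_{ij}$, $(G-T)_{0i}=A_i$ from Proposition~\ref{Gij et Goo}, obtaining \eqref{coupled system 1} from the $\nu=i$ component and \eqref{coupled system 2} from the $\nu=0$ component, and then derives \eqref{coupled system 3} from \eqref{coupled system 1} by direct computation on $B_i=A_i-\tfrac{N}{2}\dr_i\tau$. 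One minor slip to flag: the commutator reads $[e_0,\dr_i]=\dr_i\beta^j\dr_j$, so $e_0(\dr_i\tau)=\dr_i(e_0\tau)+\dr_i\beta^j\dr_j\tau$ (plus, not minus as you wrote), which is exactly the sign needed for the $\dr_i\beta^jB_j$ term in \eqref{coupled system 3} to come out right.
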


\begin{proof}

The equation \eqref{coupled system 3} follows from \eqref{coupled system 1} and we omit the proof, which is a direct computation. Thanks to the previous proposition and the Bianchi identity, we have $D^{\mu}(G_{\mu\nu}-T_{\mu\nu})=0$.

We first prove \eqref{coupled system 1}. By \eqref{covariante 1} and \eqref{covariante 2}, 
\begin{align}
    D_0(G_{0i}-T_{0i}) & = e_0(G_{0i}-T_{0i})-\Ll N(G_{0i}-T_{0i})-e^{-2\gamma}\delta^{jk}N\dr_kN(G_{ij}-T_{ij})
    \nonumber\\&\quad-\frac{\dr_iN}{N}(G_{00}-T_{00})-\frac{1}{2}\left(2\delta^{j}_ie_0\gamma+\dr_i\beta^j-\delta_{ik}\delta^{j\ell}\dr_{\ell}\beta^k \right)(G_{0j}-T_{0j})
    \nonumber\\ & =e_0(G_{0i}-T_{0i})-\Ll N(G_{0i}-T_{0i})-\dr_iNe_0\tau\label{DoAoi}\\&\quad-\frac{1}{2}\left(2\delta^{j}_ie_0\gamma+\dr_i\beta^j-\delta_{ik}\delta^{j\ell}\dr_{\ell}\beta^k \right)(G_{0j}-T_{0j}),\nonumber
\end{align}
where in the last equality we have used \eqref{G 00} and \eqref{G ij}. Similarly, by \eqref{covariante 4} and \eqref{G 00}-\eqref{G ij},
\begin{align}
    g^{jk}D_j(G_{ki}-T_{ki}) = \frac{1}{2}\dr_i\Ll\tau-\frac{\tau}{N}(G_{0i}-T_{0i})-\frac{1}{2N^2}\left(2\delta_i^ke_0\gamma-\delta_{\ell}^k\dr_i\beta^{\ell}-\delta_{i\ell}\delta^{jk}\dr_j\beta^{\ell} \right)(G_{0k}-T_{0k})
    \label{DjAji}
\end{align}
Thanks to $D^{\mu}(G_{\mu i}-T_{\mu i})=0$, we have 
\begin{equation*}
    \text{\eqref{DoAoi}}-N^2\times\text{\eqref{DjAji}}=0,
\end{equation*}
which, after some straightforward simplifications, gives exactly \eqref{coupled system 1}.

We now prove \eqref{coupled system 2}. By \eqref{covariante 1} and \eqref{G 00},
\begin{equation}
    D_0(G_{00}-T_{00})=\frac{N}{2}e_0(e_0\tau)-\frac{e_0 N}{2}e_0\tau-2e^{-2\gamma}\delta^{ij}N\dr_iN(G_{0j}-T_{0j}).\label{DoAoo}
\end{equation}
On the other hand, by \eqref{covariante 3}-\eqref{covariante 4} and \eqref{G 00}-\eqref{G ij},
\begin{equation}
    g^{ij}D_i(G_{j0}-T_{j0})=e^{-2\gamma}\delta^{ij}\dr_{i}(G_{j0}-T_{j0})-e^{-2\gamma}\delta^{ij}\frac{\dr_iN}{N}(G_{j0}-T_{j0})+\tau e_0\tau.\label{DiAi0}
\end{equation}
Thanks to $D^{\mu}(G_{\mu 0}-T_{\mu 0})=0$, we have
\begin{equation*}
    -\frac{1}{N}\times\text{\eqref{DoAoo}}+N\times\text{\eqref{DiAi0}}=0,
\end{equation*}
which, after some straightforward simplifications, gives exactly \eqref{coupled system 2}.
\end{proof}

\begin{prop}
Suppose the solution to \eqref{EQ N}-\eqref{EQ omega} as constructed in Section \ref{section solving the reduced system} arises from initial
data with $\tau_{|\Sigma_0}=0$ and that the constraint equations are initially satisfied, then the
solution satisfies
\begin{align*}
    \tau&=0,\\
    G_{0i}&=T_{0i}.
\end{align*}
As a consequence, the solution to \eqref{EQ N}-\eqref{EQ omega} is indeed a solution to \eqref{EVE}.
\end{prop}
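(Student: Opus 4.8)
The plan is to combine the structural identities of Propositions \ref{Gij et Goo} and \ref{usage de bianchi} with a uniqueness (energy) argument for the coupled Bianchi system, exploiting that the Cauchy data of $A_i$, $B_i$ and $e_0\tau$ vanish on $\Sigma_0$. First I would record this vanishing: since by hypothesis the constraint equations $G_{00}=T_{00}$ and $G_{0i}=T_{0i}$ hold on $\Sigma_0$, we get $A_i=G_{0i}-T_{0i}=0$ there; evaluating \eqref{G 00} on $\Sigma_0$ and using $N>0$ then forces $e_0\tau=0$ on $\Sigma_0$; and since $\tau_{|\Sigma_0}=0$ by Corollary \ref{coro premiere section} (so in particular $\nabla\tau=0$ on $\Sigma_0$), the definition $B_i=A_i-\frac{N}{2}\dr_i\tau$ gives $B_i=0$ on $\Sigma_0$.

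Next, by Proposition \ref{usage de bianchi}, $(A_i,B_i,e_0\tau)$ solves the linear first-order system \eqref{coupled system 1}--\eqref{coupled system 2}, whose coefficients ($N$, $\gamma$, $\beta$, $\tau$ and their first derivatives) are all controlled by Corollary \ref{coro reduced sytem}. The only derivatives of the unknowns appearing on the right-hand sides are $\dr_i(e_0\tau)$ in the equation for $A_i$ and $\dive(A)$ in the equation for $e_0\tau$, occurring with the same antisymmetric structure as in $\Box_g$, while $B_i$ satisfies a plain transport equation sourced by $A_i$ and $e_0\tau$. I would therefore run a weighted $L^2$ energy estimate modeled on the proofs of Lemmas \ref{transport inegalite} and \ref{inegalite d'energie lemme}: after integration by parts the $\dr A\,\dr(e_0\tau)$ terms cancel up to lower-order contributions, yielding $\frac{\d}{\d t}E(t)\le C(C_i)E(t)$ for the natural energy $E$ of $(A,B,e_0\tau)$; with $E(0)=0$, Grönwall's inequality forces $A_i\equiv 0$, $B_i\equiv 0$ and $e_0\tau\equiv 0$ on $[0,T]\times\R^2$.

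Finally I would conclude: $A_i\equiv 0$ is exactly $G_{0i}=T_{0i}$; from $B_i=A_i-\frac{N}{2}\dr_i\tau\equiv 0$ together with $A_i\equiv 0$ we get $\nabla\tau\equiv 0$, and combined with $e_0\tau\equiv 0$ this gives $\dr_t\tau=e_0\tau+\beta^i\dr_i\tau\equiv 0$, hence $\tau\equiv\tau_{|\Sigma_0}=0$ on $[0,T]\times\R^2$; substituting $e_0\tau=0$ into \eqref{G 00} and \eqref{G ij} then yields $G_{00}=T_{00}$ and $G_{ij}=T_{ij}$. Thus $G_{\mu\nu}=T_{\mu\nu}$ in the frame $(e_0,\dr_i)$, which is precisely the Einstein equation $R_{\mu\nu}(g)=2\dr_\mu\ffi\dr_\nu\ffi+\frac{1}{2}e^{-4\ffi}\dr_\mu\omega\dr_\nu\omega$ of \eqref{EVE}; since the wave equations for $\ffi$ and $\omega$ are already built into the reduced system \eqref{EQ N}--\eqref{EQ omega}, the solution of the reduced system is indeed a solution of \eqref{EVE}, the identity $D^\mu T_{\mu\nu}=0$ from Proposition \ref{Gij et Goo} being consistent with the Bianchi identity.

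The step I expect to be the main obstacle is the energy estimate of the second paragraph: one must set it up so that the first-order coupling between $A_i$ and $e_0\tau$ is absorbed through the div--grad cancellation, and choose the weights on $A_i$, $B_i$ and $e_0\tau$ compatibly with the $e^{\pm 2\gamma}$ factors and the logarithmic growth of $N$ and $\gamma$ at spacelike infinity. This is nonetheless entirely parallel to the weighted energy estimates already carried out in Section \ref{section solving the reduced system}, so it should be routine rather than genuinely new.
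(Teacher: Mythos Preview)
Your proposal is correct and follows essentially the same approach as the paper: define an $L^2$ energy for $(A_i,B_i,e_0\tau)$, observe that the div--grad coupling between \eqref{coupled system 1} and \eqref{coupled system 2} produces cancelling cross terms after integration by parts, and conclude by Gr\"onwall from the vanishing initial data. The paper's only specificity is the explicit choice of energy $E(t)=\|e_0\tau\|_{L^2}^2+\sum_i\bigl(\|2e^{-\gamma}A_i\|_{L^2}^2+\|B_i\|_{L^2}^2\bigr)$, where the factor $2e^{-\gamma}$ on $A_i$ (and no $\langle x\rangle$-weight at all) is exactly what makes the two first-order terms $\pm 4\int e^{-2\gamma}NA_i\,\dr_i e_0\tau$ cancel; this is the ``compatibility with $e^{\pm 2\gamma}$'' you anticipated.
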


\begin{proof}
We set the following energy :
\begin{equation*}
    E(t)\vcentcolon=\left\| e_0\tau\right\|_{L^2}^2+\sum_{i=1,2}\left(\left\|2 e^{-\gamma}A_i\right\|_{L^2}^2+\left\| B_i\right\|_{L^2}^2 \right).
\end{equation*}
We first note that $E(0)=0$ because our solution arises from initial date satisfying the constraint equations (which implies that $(G_{0i}-T_{0i})_{|\Sigma_0}=0$) and because $\tau_{|\Sigma_0}=0$. Our goal is to show that $E(t)=0$ for all $t\in[0,T]$.

We first multiply \eqref{coupled system 3} by $B_i$ and sum over $i=1,2$ the two equations we obtain. We integrate over $\R^2$ and write $e_0=\dr_t-\beta\cdot\nabla$ to obtain (after an integration by part on the last term) :
\begin{align*}
    \frac{1}{2}\frac{\d}{\d t}\sum_{i=1,2}\left\| B_i\right\|_{L^2}^2&=\int_{\R^2}\sum_{i=1,2}\frac{\dr_iN}{2}B_ie_0\tau+\int_{\R^2}\sum_{i=1,2}N\tau B_iA_i+\int_{\R^2}\sum_{i=1,2}\Ll NB_i^2\\&\qquad+\int_{\R^2}\sum_{i=1,2}\dr_i\beta^jB_iB_j-\int_{\R^2}\sum_{i=1,2}\frac{1}{2}\dive(\beta)B_i^2.
\end{align*}
Using Corollary \ref{coro reduced sytem} and Proposition \ref{embedding}, we see that the quantities $\nabla N$, $N\tau$, $\Ll N$ and $\nabla\beta$ are bounded (for $N\tau$ and $N\dr_iN$, we use the decay property of $\tau$ and $\dr_i N$ to deal with the logarithmic growth of $N$). Using the trick $2ab\leq a^2+b^2$ and the fact that $1\lesssim e^{-\gamma}$, we get :
\begin{equation}
    \frac{\d}{\d t}\sum_{i=1,2}\left\| B_i\right\|_{L^2}^2\leq C E(t).\label{coupled energy 1}
\end{equation}

Similarly, multiplying \eqref{coupled system 2} by $e_0\tau$ , we get :
\begin{align}
    \frac{\d}{\d t}\left\|e_0\tau \right\|_{L^2}^2 & = 4\int_{\R^2}e^{-2\gamma}N\,\dive(A)e_0\tau+2\int_{\R^2}\sum_{i=1,2}e^{-2\gamma}e_0\tau\dr_iN A_i+2\int_{\R^2}(2N\tau+\Ll N)\left( e_0\tau\right)^2\nonumber\\&\qquad\qquad-\int_{\R^2}\dive(\beta)\left( e_0\tau\right)^2
    \nonumber\\&=-4\int_{\R^2}e^{-2\gamma}NA_i\dr_ie_0\tau+O(E(t)),\label{coupled energy 2}
\end{align}
where we integrated by part the first term and bound the other terms just as we did for $\|B_i\|_{L^2}$, mainly using Corollary \ref{coro reduced sytem}. Now, writing $\dr_t=e_0+\beta\cdot\nabla$ and integrating by part, we get :
\begin{align*}
    \frac{1}{2}\frac{\d}{\d t}\sum_{i=1,2}\|e^{-\gamma} A_i\|_{L^2}^2&=\int_{\R^2}\sum_{i=1,2}e^{-2\gamma}A_ie_0A_i-\frac{1}{2}\int_{\R^2}\sum_{i=1,2}\dive\left(e^{-2\gamma}\beta\right)A_i^2+\frac{1}{2}\int_{\R^2}\sum_{i=1,2}\dr_t(-2\Tilde{\gamma})e^{-2\gamma}A_i^2
    \\& =\int_{\R^2}\sum_{i=1,2}e^{-2\gamma}A_ie_0A_i+O(E(t)).
\end{align*}
Using \eqref{coupled system 1}, we thus get :
\begin{align}
    \frac{\d}{\d t}\sum_{i=1,2}\|2e^{-\gamma} A_i\|_{L^2}^2= 4\int_{\R^2}\sum_{i=1,2}e^{-2\gamma}NA_i\dr_ie_0\tau+O(E(t)).\label{coupled energy 3}
\end{align}
Looking at \eqref{coupled energy 2} and \eqref{coupled energy 3}, we see that our choice of scaling in the expression of $E(t)$ implies a cancellation and we finally get, recalling \eqref{coupled energy 1} :
\begin{equation*}
    \frac{\d}{\d t}E(t)\leq CE(t)
\end{equation*}
which, using the Gronwall's Lemma and $E(0)=0$, implies that $E(t)=0$ for all $t\in [0,T]$, which implies the desired result.
\end{proof}

\subsection{Improved regularity}

To conclude the proof of the Theorem \ref{theoreme principal}, it only remains to prove the bounds stated in this theorem. Notice that some of the estimates are already obtained in Corollary \ref{coro reduced sytem}. This improvement of regularity is due to the fact that we now know that the solution of the reduced system is also a solution to the system \eqref{EVE}, and therefore all the metric components solves elliptic equations. 

\begin{lem}\label{lemme elliptique}
The metric components $N$, $\gamma$ and $\beta$ satisfy the following elliptic equations :
\begin{align}
    \Delta N & =e^{-2\gamma}N|H|^2+\frac{2e^{2\gamma}}{N}(e_0\ffi)^2+\frac{e^{2\gamma-4\ffi}}{2N}(e_0\omega)^2,\label{elliptic N}\\
    \Delta\gamma & = -|\nabla\ffi|^2-\frac{1}{4}e^{-4\ffi}|\nabla\omega|^2-\frac{e^{2\gamma}}{N^2}(e_0\ffi)^2-\frac{e^{2\gamma-4\ffi}}{4N^2}(e_0\omega)^2-\frac{e^{-2\gamma}}{2}|H|^2,\label{elliptic gamma}\\
    \Delta\beta^j & =\delta^{jk}\delta^{i\ell}(L\beta)_{ik}\left(\frac{\dr_{\ell}N}{2N}-\dr_{\ell}\gamma \right)-2\delta^{kj}e_0\ffi\dr_k\ffi-\frac{1}{2}e^{-4\ffi}\delta^{kj}e_0\omega\dr_k\omega.\label{elliptic beta}
\end{align}
\end{lem}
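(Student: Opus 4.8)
The plan is to note that, once the previous subsection is in hand, all three identities are purely algebraic rearrangements of the reduced system \eqref{EQ N}--\eqref{EQ omega}: we may freely use that the solution satisfies \eqref{EVE}, and in particular that $\tau\equiv 0$ on $[0,T]\times\R^2$ (so that $e_0\tau=\dr_t\tau=0$ as well) and that the momentum constraint $G_{0i}=T_{0i}$ holds on every slice $\Sigma_t$, not just on $\Sigma_0$. No genuinely new estimate is needed; the regularity implicit in speaking of ``elliptic equations'' is already contained in Corollary \ref{coro reduced sytem}.

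The equation \eqref{elliptic N} is immediate: putting $\tau=0$ in \eqref{EQ N} deletes the term $\tfrac{\tau^2}{2}e^{2\gamma}N$ and leaves exactly \eqref{elliptic N}. For \eqref{elliptic gamma} I would recall the combination of \eqref{EQ tau} and \eqref{EQ gamma} already carried out in the proof of Proposition \ref{Gij et Goo}, namely \eqref{vraie eq sur gamma}, which reads $\Delta\gamma=\tfrac{\tau^2}{2}e^{2\gamma}-\tfrac{e^{2\gamma}}{2N}e_0\tau-\tfrac{\Delta N}{2N}-|\nabla\ffi|^2-\tfrac14 e^{-4\ffi}|\nabla\omega|^2$; setting $\tau=0$ and $e_0\tau=0$ removes the first two terms, and then substituting the right-hand side of \eqref{elliptic N} for $\Delta N$ and dividing by $2N$ produces precisely the five terms of \eqref{elliptic gamma}.

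For \eqref{elliptic beta} I would start from \eqref{EQ beta}, i.e. $L\beta=2e^{-2\gamma}NH$, and apply the Euclidean divergence $\dr^i$ to both sides. On the left, the identity $\dr^i(L\xi)_{ij}=\Delta\xi_j$ (already used in the proof of Lemma \ref{CI sur H}) turns $\dr^i(L\beta)_{ij}$ into $\Delta\beta_j$. On the right, Leibniz gives $\dr^i(2e^{-2\gamma}NH_{ij})=2\,\dr^i(e^{-2\gamma}N)\,H_{ij}+2e^{-2\gamma}N\,\dr^iH_{ij}$; using $\dr^i(e^{-2\gamma}N)=e^{-2\gamma}N\bigl(\tfrac{\dr^iN}{N}-2\dr^i\gamma\bigr)$ together with $2e^{-2\gamma}NH_{ij}=(L\beta)_{ij}$ rewrites the first piece as the stated contraction of $L\beta$ against a multiple of $\tfrac{\nabla N}{N}-\nabla\gamma$, while into the second piece one feeds the momentum constraint $\dr^iH_{ij}=-2e^{2\gamma}\Ll\ffi\,\dr_j\ffi-\tfrac12 e^{2\gamma-4\ffi}\Ll\omega\,\dr_j\omega$ (valid on each $\Sigma_t$ since $G_{0i}=T_{0i}$ and this is how that equation reads in the elliptic gauge once $\tau=0$); after the $e^{\pm2\gamma}$ and $N$ prefactors cancel and one writes $\Ll=e_0/N$, this yields the $e_0\ffi\,\nabla\ffi$ and $e^{-4\ffi}e_0\omega\,\nabla\omega$ terms of \eqref{elliptic beta}.

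Nothing here is analytically delicate: the whole content sits in the substitution $\tau=0$ and in the matter-momentum identity, so the only real ``obstacle'' is the input from the previous subsection — that the constraints, and here in particular the momentum constraint, have been propagated by the Bianchi identity and are therefore available on the entire slab $[0,T]\times\R^2$ rather than merely on $\Sigma_0$.
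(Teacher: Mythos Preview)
Your proposal is correct and follows essentially the same route as the paper: both derivations rest on $\tau=0$ together with the propagated constraints, and the only differences are cosmetic reorderings of the algebra (for $\gamma$ the paper invokes $G_{00}=T_{00}$ directly instead of substituting \eqref{elliptic N} into \eqref{vraie eq sur gamma}, and for $\beta$ the paper starts from $R_{0j}=T_{0j}$ via \eqref{appendix R0j}--\eqref{appendix beta} rather than differentiating \eqref{EQ beta} and feeding in the momentum constraint, but these are the same computation).
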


\begin{proof}
Since we solved \eqref{EVE}, we have $R_{00}=T_{00}-g_{00}\tr_{g}T$, which, according to \eqref{appendix R00} and \eqref{JSP}, easily implies \eqref{elliptic N}.

Using \eqref{appendix R00}, \eqref{appendix R} and the fact that $\tau=0$, we get that
\begin{equation*}
    G_{00}=N^2e^{-2\gamma}\left(- \Delta\gamma-\frac{e^{-2\gamma}}{2}|H|^2\right),
\end{equation*}
Using \eqref{T 00} and the fact that $G_{00}=T_{00}$ we get \eqref{elliptic gamma}. 

The equation $R_{0j}=2e_0\ffi\dr_j\ffi+\frac{1}{2}e^{-4\ffi}e_0\omega\dr_j\omega$ and the fact that $\tau=0$ together with \eqref{appendix R0j} and \eqref{appendix beta} gives \eqref{elliptic beta}.

\end{proof}

In the following proposition, we state and prove the missing estimates :

\begin{prop}
Taking $\e_0$ smaller if necessary, the following estimates hold :
\begin{align}
         \left\|\Tilde{\gamma}\right\|_{H^4_{\delta}}+ \left\| \beta \right\|_{H^4_{\delta'}}  & \leq C_h,\label{improved regularity 1}\\
         \left\|\dr_t\Tilde{\gamma}\right\|_{H^3_{\delta}}+ \left\|\dr_t\Tilde{N}\right\|_{H^3_{\delta}}+\left\| \dr_t\beta \right\|_{H^3_{\delta'}} & \leq C_h,\label{improved regularity 2}\\
         \left|\dr_t^2N_a\right|+\left\|\dr_t^2\Tilde{N}\right\|_{H^2_{\delta}}+\left\| \dr_t^2\beta \right\|_{H^2_{\delta'}}+\left\|\dr_t^2\Tilde{\gamma}\right\|_{H^2_{\delta}} & \leq C_h.\label{improved regularity 3}
    \end{align}
\end{prop}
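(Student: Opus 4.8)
The plan is to bootstrap the improved regularity from the elliptic equations in Lemma \ref{lemme elliptique}, using as input the estimates already available from Corollary \ref{coro reduced sytem} (notably $\Tilde\gamma\in H^3_{\delta'}$, $\Tilde N\in H^4_\delta$, $\dr_t\Tilde N\in H^2_\delta$, $\beta, e_0\beta\in H^3_{\delta'}$, $\tau, H, e_0H$ with their regularity, and the wave estimates for $\ffi,\omega$), together with the smallness statements. The key point making this step work — and the reason it appears only now — is that after Section \ref{section end of proof} we know $\tau=0$ and that \emph{all} metric components satisfy genuine elliptic equations on $\R^2$, so we can invoke McOwen's theorem (Theorem \ref{mcowens 1} and Corollary \ref{mcowens 2}) repeatedly to gain two derivatives from the right-hand sides. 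Throughout, $C_h=C_h(C_{high},\delta,R)$ will absorb all constants, and we use Lemma \ref{useful lem} to handle the $e^{\pm2\gamma}$ and $e^{-4\ffi}$ factors, exactly as in Section \ref{initial data} and Section \ref{section solving the reduced system}.

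\textbf{Step 1: the $H^4$ estimates for $\Tilde\gamma$ and $\beta$.} For \eqref{improved regularity 1}, I would apply Corollary \ref{mcowens 2} to \eqref{elliptic gamma} and Theorem \ref{mcowens 1} to (the divergence of) \eqref{elliptic beta}, reducing the claim to bounding the right-hand sides in $H^2_{\delta+2}$ and $H^1_{\delta'+2}$ respectively. The terms $|\nabla\ffi|^2$, $e^{-4\ffi}|\nabla\omega|^2$ and the ones quadratic in $e_0\ffi$, $e_0\omega$ are compactly supported in $B_{2R}$, so using that $H^2$ is an algebra together with the $H^2$ bounds on $\dr_t\ffi,\nabla\ffi$ (and $\omega$) from Corollary \ref{coro reduced sytem} controls them by $C_h$; the term $e^{-2\gamma}|H|^2$ is handled by \eqref{useful gamma 1}–\eqref{useful gamma 2} and the product law of Proposition \ref{prop prod} using $H\in H^2_{\delta'+1}$ (here one picks up the slightly worse weight $\delta'$, which is already built into the statement). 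For $\beta$ one must first verify the right-hand side of $\Delta\beta^j=\dots$ has zero mean — this follows from Lemma \ref{divergence nulle} as in Lemma \ref{lem CI beta} — and then estimate $\dr^\ell\big(e^{-2\gamma}NH\big)$ and the compactly supported matter terms in $H^1_{\delta'+2}$, using \eqref{useful gamma 2}, \eqref{HR N 3} and the $H^2_{\delta'+1}$ bound on $H$.

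\textbf{Step 2: the one-time-derivative estimates.} For \eqref{improved regularity 2} I would differentiate the elliptic equations \eqref{elliptic N}, \eqref{elliptic gamma}, \eqref{elliptic beta} in $t$, so that $\dr_t\Tilde N$, $\dr_t\Tilde\gamma$, $\dr_t\beta$ solve elliptic equations whose right-hand sides are $\dr_t$ of the original ones. By McOwen, it suffices to bound $\dr_t(\text{RHS})$ in $H^1_{\delta+2}$ (resp.\ $H^1_{\delta'+2}$) and, for $\dr_t\gamma$ and $\dr_t\beta$, also its integral over $\R^2$ to extract the $\chi\ln$-coefficients. The new ingredient is that $\dr_t$ hits $\gamma$, $N$, $H$, $\beta$, $\ffi$, $\omega$; all these time derivatives are controlled in the appropriate norms by Corollary \ref{coro reduced sytem} (e.g.\ $\dr_t\Tilde\gamma\in H^2_{\delta'}$ via $\Ll\Tilde\gamma\in H^2_{\delta'+1}$ and the commutation estimate Proposition \ref{commutation estimate}; $\dr_t\beta\in H^2_{\delta'}$ via $e_0\beta\in H^3_{\delta'}$ and $\beta\in H^3_{\delta'}$; $\dr_t H$ via $e_0H$ and $\beta\cdot\nabla H$; $\dr_t\ffi,\dr_t\omega$ from the wave estimates). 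The bookkeeping is the same kind already carried out in Propositions \ref{hr+1 N prop}–\ref{hr+1 gamma prop}.

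\textbf{Step 3: the two-time-derivative estimates, and the main obstacle.} For \eqref{improved regularity 3} I would differentiate twice in $t$ and again invoke McOwen, now needing $\dr_t^2(\text{RHS})$ in $L^2_{\delta+2}$ (resp.\ $L^2_{\delta'+2}$) plus its integral. \textbf{This is the delicate step}: the worst terms are $\dr_t^2(e_0\ffi)^2 = 2(\dr_t e_0\ffi)^2 + 2 e_0\ffi\,\dr_t^2 e_0\ffi$ and its $\omega$ analogue, which force control of $\dr_t^2\dr\ffi$, i.e.\ third-order quantities for the wave fields. These are \emph{not} directly listed in Corollary \ref{coro reduced sytem}, but they can be recovered: from $\dr_t(\Ll\ffi)\in H^1$ together with the wave equation \eqref{EQ ffi} one expresses $\dr_t^2\ffi$ and hence $\dr_t^2 e_0\ffi$ in terms of $e^{-2\gamma}\Delta\ffi$, lower order terms, and $\dr_t$ of the (now known to be $H^1$) right-hand side of \eqref{EQ ffi} — this gives $\dr_t^2\ffi\in H^1$ and, after a further use of the equation, the $L^2$ control needed; similarly $\dr_t^2\dr_i\ffi$ is obtained by commuting $\dr_i$ through. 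The matter terms are compactly supported so no weights intervene and Hölder $L^4\times L^4\hookrightarrow L^2$ suffices. The term $\dr_t^2(e^{-2\gamma}|H|^2)$ needs $\dr_t^2 H$, which one reads off from the transport equation \eqref{EQ H} differentiated once in $t$, using the $H^1_{\delta+1}$ control of $e_0H$ and the already-established bounds on $\dr_t$ of all other coefficients; the weight loss $\delta\to\delta'$ is again absorbed by the statement. Finally, taking $\e_0$ smaller if necessary guarantees that all the smallness factors produced by Lemma \ref{useful lem} (powers of $\alpha$, $N_a$, $\|\Tilde\gamma\|_{H^2_\delta}$) stay under control, which is why the proposition is stated with this proviso. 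Collecting the three steps yields all the bounds claimed in Theorem \ref{theoreme principal}, completing its proof.
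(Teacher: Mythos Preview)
Your proposal is correct and follows exactly the same approach as the paper: apply Corollary~\ref{mcowens 2} (and Theorem~\ref{mcowens 1} for $\beta$) to the elliptic equations \eqref{elliptic N}--\eqref{elliptic beta} and to their first and second time derivatives, bounding the right-hand sides using the regularity from Corollary~\ref{coro reduced sytem}. The paper dismisses the details as ``straightforward (mainly because now we don't have to worry about the constants in the estimates)'', whereas you have correctly identified the one place that requires a moment's thought---recovering $\partial_t^2 e_0\varphi$ (and the $\omega$ analogue) via the wave equation---and proposed the right fix.
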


\begin{proof}
The idea is just to apply Corollary \ref{mcowens 2} to the equations  \eqref{elliptic N}-\eqref{elliptic gamma}-\eqref{elliptic beta}, after having proven, using the regularity obtained in Corollary \ref{coro reduced sytem}, that the RHS of these equations are in the appropriate spaces. For the estimates involving time derivatives, we proceed in the same way after having differenciated once or twice the equations \eqref{elliptic N}-\eqref{elliptic gamma}-\eqref{elliptic beta}. We omit the details, since the computations are straightforward (mainly because now we don't have to worry about the constants in the estimates).

\end{proof}

This concludes the proof of Theorem \ref{theoreme principal}.

\section{Proof of Theorem \ref{theo 2}}\label{section theo 2}

\subsection{Almost $H^2$ well-posedness}

At this stage, thanks to Theorem \ref{theoreme principal}, we proved that the system \eqref{EVE} is well posed locally in time with initial data $(\dr\ffi,\dr\omega)\in H^2$. The next step would be to consider initial data $(\dr\ffi,\dr\omega)$ which are only in $H^1$. In order to obtain well-posedness in this setting, we could regularize the initial data with a sequence $(\dr\ffi_n,\dr\omega_n)\in H^2$ to which we can apply Theorem \ref{theoreme principal}, thus obtaining a sequence of solution to \eqref{EVE} on $[0,T_n)$. A priori, if $(\dr\ffi,\dr\omega)$ only belongs to $H^1$, the $H^2$ norm of $(\dr\ffi_n,\dr\omega_n)$ explodes as $n$ tends to $+\infty$ and therefore the sequence $(T_n)_{n\in\N}$ converges to 0, forbidding us to define a limit on some non-trivial interval.

To prevent this to happen, we need to prove that the $H^2$ and $L^4$ estimates of each $(\dr\ffi_n,\dr\omega_n)$  can be propagated on some fixed interval using only their $H^1$ norm (which are bounded by the $H^1$ norm of the initial data) using the system that $\ffi$ and $\omega$ solve, i.e the system \eqref{WM ffi}-\eqref{WM omega} below. As we will see in the rest of this section, it is possible to improve the $H^2$ norm. But unfortunately, we can't improve the $L^4$ estimates using only the $H^1$ norm and the system \eqref{WM ffi}-\eqref{WM omega}. Note that this difficulty already occured in the proof of Theorem \ref{theoreme principal} but we bypassed it by taking advantage of the smallness of the time of existence (see Proposition \ref{prop propsmall}), something that we cannot do in this approximation procedure.

Therefore, we can't prove local well-posedness at the $H^2$ level. Instead we prove a blow-up criterium, meaning that the $L^4$ estimates that we can't propagate is assumed to hold from the start. It only remains to improve the $H^2$ estimates.

\subsection{The wave map structure}

To prove Theorem \ref{theo 2}, we argue by contradiction and assume throughout this section that the following statements both hold on $[0,T)$ :
\begin{align}
\l \dr \ffi \r_{H^1} + \l \dr\omega \r_{H^1}&\leq C_0,\label{propa H1}\\
\l \dr \ffi \r_{H^1} + \l \dr\omega \r_{L^4}&\leq \e_0,\label{propa L4}
\end{align}
for some $C_0>0$, and $\e_0>0$ defined in Theorem \ref{theoreme principal}, and where $T$ is the maximal time of existence of a solution to \eqref{EVE}. The goal is to show that we can actually bound the $H^2$ norm of $\dr\ffi$ and $\dr\omega$ on $[0,T)$, and hence up to $T$, using \eqref{propa H1}. Then, using in addition \eqref{propa L4} and applying Theorem \ref{theoreme principal}, we construct a solution of \eqref{EVE} beyond the time $T$. This would contradict the maximality of $T$, and thus prove Theorem \ref{theo 2}.
\par\leavevmode\par
In order to estimate the $H^2$ norm of $\dr\ffi$ and $\dr\omega$ using \eqref{propa H1}, we are going to use the wave map structure of the coupled wave equations solved by $\ffi$ and $\omega$, which we recall :
\begin{align}
\Box_g\ffi&=-\frac{1}{2}e^{-4\ffi}\dr^\rho\omega\dr_\rho\omega,\label{WM ffi}\\
\Box_g\omega&=4\dr^\rho\omega\dr_\rho\ffi.\label{WM omega}
\end{align}
We also recall the expression of the operator $\Box_g$ in the case $\tau=0$ :
\begin{equation}
\Box_g f= -\Ll^2f+\frac{e^{-2\gamma}}{N}\dive(N\nabla f),\label{expression de box}
\end{equation}
where $f$ is any function on $\mathcal{M}$. Note the following notation for the rest of this section : $U$ stands for $\ffi$ or $\omega$, $g$ stands for any metric coefficient, meaning $N$, $\gamma$ and $\beta$.

\subsubsection{The naive energy estimate}

We want to control the $H^2$ norm of $\dr U$. As $U$ satisfies a wave equation, we could use Lemma \ref{inegalite d'energie lemme}. With our formal notation, this wave equation writes
\begin{equation*}
\Box_g U= g^{-1}(\dr U)^2. 
\end{equation*}
Thus, Lemma \ref{inegalite d'energie lemme} would basically implies that
\begin{align*}
\l \dr\nabla^2 U \r_{L^2}^2 & \lesssim C_{high}^2 + \int_0^t \l \dr\nabla^2 U \nabla^2\left( g^{-1}(\dr U)^2\right) \r_{L^1}
\\& \lesssim C_{high}^2 +\int_0^t\l(\dr\nabla^2 U)^2\dr U \r_{L^1} + \cdots,
\end{align*}
where the dots reprensent term easily bounded by $\l\dr U \r_{H^2}^2$. The problem is that, using only \eqref{propa H1} and \eqref{propa L4}, the term $\l(\dr\nabla^2 U)^2\dr U \r_{L^1}$ cannot be bounded by $\l\dr U \r_{H^2}^2$, it requires necessarily $\l\dr U \r_{H^2}^{2+\eta}$ with $\eta>0$. Thus, a continuity argument, aiming at proving boundedness in $H^2$, would be impossible to carry out.  

Therefore, we need to use deeper the structure of the coupled equations \eqref{WM ffi} and \eqref{WM omega}. This structure will allows us to define a third order energy, which will have the property of avoiding $\l\dr U \r_{H^2}^{2+\eta}$ terms into the energy estimate.

\subsubsection{The third order energy}

The system \eqref{WM ffi}-\eqref{WM omega} has actually more structure than we could expect : it is a wave map system, as shown in \cite{Malone}. More precisely, if we consider the map $u=(\ffi,\omega)$, then $u$ is an harmonic map from $([0,T)\times \R^3,g)$ to $(\R^2, h)$ with $h$ being the following metric :
\begin{equation*}
2(\d x)^2 + \frac{1}{2}e^{-4x}(\d y)^2.
\end{equation*}
For those wave map systems, Choquet-Bruhat in \cite{CBwavemaps} noted that we can define a third order energy, which in our case is 
\begin{equation*}
\mathscr{E}_3\vcentcolon=\mathscr{E}_3^\ffi+\mathscr{E}_3^\omega,
\end{equation*}
with
\begin{align*}
\mathscr{E}_3^\ffi & \vcentcolon= \int_{\R^2} 2 \left[ \frac{1}{N^2}\left(e_0\dr_j\dr_i\ffi+\frac{1}{2}e^{-4\ffi}\dr_j\dr_i\omega e_0\omega \right)^2 +e^{-2\gamma}\left|\nabla\dr_j\dr_i\ffi+\frac{1}{2}e^{-4\ffi}\dr_j\dr_i\omega \nabla\omega \right|^2\right]\d x, \\
\mathscr{E}_3^\omega &\vcentcolon = \int_{\R^2}\frac{1}{2}e^{-4\ffi}\left[ \frac{1}{N^2}\left( e_0\dr_j\dr_i\omega -2\dr_j\dr_i\omega e_0\ffi-2\dr_j\dr_i\ffi e_0\omega \right)^2 \right.\\&\qquad\qquad\qquad\qquad\qquad \left.+e^{-2\gamma}\left|\nabla\dr_j\dr_i\omega -2\dr_j\dr_i\omega \nabla\ffi-2\dr_j\dr_i\ffi \nabla\omega \right|^2\right] \d x .
\end{align*}
Our goal is to show that we can estimate $\mathscr{E}_3$ by $\l\dr U \r_{H^2}^{2}$. We start by commuting $\Box_g$ with $\dr_i\dr_j$ to obtain :

\begin{align}
\Box_g\dr_j\dr_i\ffi+e^{-4\ffi}g^{\alpha\beta}\dr_\alpha\dr_j\dr_i\omega\dr_\beta\omega  & = F^\ffi_{ij}\label{WM dd ffi},\\
\Box_g\dr_j\dr_i\omega - 4g^{\alpha\beta}\dr_\alpha\dr_j\dr_i\omega\dr_\beta\ffi- 4g^{\alpha\beta}\dr_\alpha\omega\dr_\beta\dr_j\dr_i\ffi&= F^\omega_{ij},\label{WM dd omega}
\end{align}
where we set 
\begin{align}
F^\ffi_{ij} & \vcentcolon= \left[\Box_g ,\dr_j\dr_i \right]\ffi+-\frac{1}{2}\dr_i\dr_j\left(e^{-4\ffi}g^{\alpha\beta} \right)\dr_\alpha\omega\dr_\beta\omega\nonumber
\\&\qquad\qquad\qquad-\dr_{(i}\left(e^{-4\ffi}g^{\alpha\beta} \right)\dr_\alpha\dr_{j)}\omega\dr_\beta\omega - e^{-4\ffi}g^{\alpha\beta}\dr_\alpha\dr_i\omega\dr_\beta\dr_j\omega,\label{F ffi}\\
F^\omega_{ij} & \vcentcolon=\left[\Box_g ,\dr_j\dr_i \right]\omega+ 4\dr_i\dr_jg^{\alpha\beta}\dr_\alpha\omega\dr_\beta\ffi\nonumber\\&\qquad +4\dr_{(i}g^{\alpha\beta}\dr_\alpha\omega\dr_\beta\dr_{j)}\ffi+4\dr_{(i}g^{\alpha\beta}\dr_\alpha\dr_{j)}\omega\dr_\beta\ffi+4g^{\alpha\beta}\dr_\alpha\dr_{(i}\omega\dr_\beta\dr_{j)}\ffi.\label{F omega}
\end{align}
We also define the following quantity :
\begin{align}\label{def Reste}
\mathscr{R}&\vcentcolon = \l\dr_t\gamma \r_{L^\infty}\left(\l\dr \nabla^2 U\r_{L^2}^2 + \l\dr U\nabla^2 U \r_{L^2}^2\right)\nonumber
\\& \quad \; +\left(\l\dr \nabla^2 U\r_{L^2} + \l\dr U\nabla^2 U \r_{L^2}\right) \left( \l\nabla ^2 U (\dr U)^2 \r_{L^2}+\l (\dr U)^3\r_{L^2}+\l F^U\r_{L^2}\right.\\&\qquad\qquad\qquad\qquad\qquad\qquad\qquad\left.+\l (\dr\nabla U)^2 \r_{L^2} +\l\nabla g\nabla^3U \r_{L^2} + \l\nabla g\nabla U\nabla^2 U \r_{L^2}  \right) \nonumber
\end{align}
where by $F^U$ we mean either $F^\ffi$ or $F^\omega$.  For clarity, the computations for the time derivative of the energy $\mathscr{E}_3$ are done in Appendix \ref{appendix C}, where we prove the following proposition. 
\begin{prop}\label{dernier coro}
The energy $\mathscr{E}_3$ satisfies
\begin{equation*}
\frac{\d}{\d t}\mathscr{E}_3=O(\mathscr{R}(t)).
\end{equation*}
\end{prop}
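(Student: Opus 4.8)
The plan is to differentiate the energy $\mathscr{E}_3 = \mathscr{E}_3^\ffi + \mathscr{E}_3^\omega$ in time and track every term that appears, showing each one is controlled by $\mathscr{R}(t)$ as defined in \eqref{def Reste}. The starting point is to observe that $\mathscr{E}_3$ is, up to the weights $N^{-2}$, $e^{-2\gamma}$ and $e^{-4\ffi}$, exactly the natural energy associated (via Lemma \ref{inegalite d'energie lemme} with $\sigma=0$) to the two \emph{good unknowns} suggested by the wave map structure, namely
\begin{equation*}
V^\ffi_{ij}\vcentcolon=\dr_j\dr_i\ffi+\tfrac12 e^{-4\ffi}\dr_j\dr_i\omega\,\dr_\bullet\omega,\qquad V^\omega_{ij}\vcentcolon=\dr_j\dr_i\omega-2\dr_j\dr_i\omega\,\dr_\bullet\ffi-2\dr_j\dr_i\ffi\,\dr_\bullet\omega,
\end{equation*}
where $\dr_\bullet$ is either $e_0/N$ (time part) or $\nabla$ (spatial part). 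First I would record that, as a consequence of commuting $\Box_g$ past $\dr_i\dr_j$ in \eqref{WM ffi}--\eqref{WM omega} and using the explicit form of $\Box_g$ in \eqref{expression de box}, these combinations satisfy wave equations
\begin{align*}
\Box_g V^\ffi_{ij}&=F^\ffi_{ij}+(\text{lower order}),\\
\Box_g V^\omega_{ij}&=F^\omega_{ij}+(\text{lower order}),
\end{align*}
where the ``lower order'' terms are precisely those produced when a derivative falls on the coefficients $e^{-4\ffi}$, $\dr\omega$, $\dr\ffi$ appearing in the definition of $V$; the key algebraic miracle of the wave map structure is that the would-be top-order term $\dr\nabla^2U\cdot\dr\nabla^2U\cdot\dr U$ (the one that forced the bad $\l\dr U\r_{H^2}^{2+\eta}$ in the naive estimate) cancels once the equations are written for $V^\ffi$ and $V^\omega$ \emph{simultaneously}.

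Next I would carry out the energy computation proper: multiply the $V^\ffi$-equation by $e_0 V^\ffi_{ij}$ (with the weight $2N^{-2}\cdot N^2=2$ absorbed appropriately), the $V^\omega$-equation by $e_0 V^\omega_{ij}$ with weight $\tfrac12 e^{-4\ffi}$, integrate over $\R^2$, integrate by parts in space, and sum over $i,j$. This is the same manipulation as in the proof of Lemma \ref{inegalite d'energie lemme 1}, so the $\dr h\,\dr^2 h$-type terms cancel and one is left with: (a) the ``remainder'' terms $R_{\gamma}$-type contributions, which here produce the factor $\l\dr_t\gamma\r_{L^\infty}\big(\l\dr\nabla^2U\r_{L^2}^2+\l\dr U\nabla^2U\r_{L^2}^2\big)$ — the first line of $\mathscr{R}$ — together with analogous terms coming from $\dr_t(e^{-4\ffi})$ which are of the form $\l\dr\ffi\r_{L^\infty}\times(\text{energy density})$ and are absorbed into the same line using $\l\dr\ffi\r_{L^\infty}\lesssim\l\dr U\r_{H^2}$; (b) the source term $\int e_0V^U\cdot F^U$, which by Cauchy--Schwarz is bounded by $(\l\dr\nabla^2U\r_{L^2}+\l\dr U\nabla^2U\r_{L^2})\l F^U\r_{L^2}$, contributing the $\l F^U\r_{L^2}$ entry in the second line; (c) the terms from expanding $e_0V^U$ in terms of $e_0\nabla^2U$, $\nabla^2U$, $e_0U$, $e_0\nabla^2\omega$ etc., and the terms from $\nabla(e^{-2\gamma}w\,e_0V^U)$ where $\nabla$ hits the metric coefficient $e^{-2\gamma}$ or the weight — these produce the remaining entries $\l\nabla^2U(\dr U)^2\r_{L^2}$, $\l(\dr U)^3\r_{L^2}$, $\l(\dr\nabla U)^2\r_{L^2}$, $\l\nabla g\nabla^3U\r_{L^2}$, $\l\nabla g\nabla U\nabla^2U\r_{L^2}$ in the second line of $\mathscr{R}$. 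Finally I would note that $\sqrt{\mathscr{E}_3}$ is comparable to $\l\dr\nabla^2U\r_{L^2}+\l\dr U\nabla^2U\r_{L^2}$ (using $1\lesssim e^{-\gamma}$, $1\lesssim e^{-2\ffi}$, and $N^{-1}\lesssim1$ from Corollary \ref{coro reduced sytem}), so that $\frac{\d}{\d t}\mathscr{E}_3$, which is $2\sqrt{\mathscr{E}_3}$ times (LHS energy flux) plus the weight-derivative terms, is indeed $O(\mathscr{R}(t))$.

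The main obstacle is step (a)--(b) bookkeeping of $F^\ffi_{ij}$ and $F^\omega_{ij}$: one must check that the commutator $[\Box_g,\dr_i\dr_j]U$ and all the terms in \eqref{F ffi}--\eqref{F omega} only ever contain \emph{at most two} derivatives on one copy of $U$ times \emph{at most two} derivatives on another copy (times metric factors with at most two derivatives on a metric coefficient), so that $\l F^U\r_{L^2}$ genuinely appears only linearly and no hidden top-order quadratic term of the form $\dr\nabla^2U\cdot\dr\nabla^2U$ survives. Equivalently, the crux is verifying the cancellation of the top-order term between $\mathscr{E}_3^\ffi$ and $\mathscr{E}_3^\omega$: the coefficient $+\tfrac12 e^{-4\ffi}\dr_j\dr_i\omega$ in $V^\ffi$ and the coefficients $-2\dr_j\dr_i\omega$, $-2\dr_j\dr_i\ffi$ in $V^\omega$ are exactly tuned (by the target metric $h=2(\d x)^2+\tfrac12 e^{-4x}(\d y)^2$ and its Christoffel symbols) so that when one computes $\Box_g V^U$ the dangerous cubic-in-highest-derivatives term is $\Gamma$-covariant and hence disappears; this is the content of Choquet-Bruhat's observation in \cite{CBwavemaps}, and reproducing it in the elliptic-gauge notation with the weights $N^{-2}$, $e^{-2\gamma}$ is the only genuinely delicate point. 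Everything else is a routine, if lengthy, application of Cauchy--Schwarz, the product estimates, and the Sobolev embedding $H^2\hookrightarrow L^\infty$ in dimension $2$, which is why the detailed computation is deferred to Appendix \ref{appendix C}.
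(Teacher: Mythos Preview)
Your overall strategy matches the paper's Appendix \ref{appendix C}: differentiate $\mathscr{E}_3$, substitute the commuted equations \eqref{WM dd ffi}--\eqref{WM dd omega}, integrate by parts, and rely on the wave-map structure to cancel the dangerous cubic-in-top-order terms. The paper carries this out by direct computation on $\mathscr{E}_3^\ffi$ and $\mathscr{E}_3^\omega$ separately (rather than first deriving a wave equation for a ``good unknown'' $V^U$), but the mechanism is the same.

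There is, however, a genuine gap in your treatment of the contribution from $\dr_t(e^{-4\ffi})$ when differentiating $\mathscr{E}_3^\omega$. You say this produces a term of the form $\l\dr\ffi\r_{L^\infty}\times(\text{energy density})$ which is ``absorbed into the same line using $\l\dr\ffi\r_{L^\infty}\lesssim\l\dr U\r_{H^2}$''. But that bound yields precisely the cubic $\l\dr U\r_{H^2}\cdot\l\dr\nabla^2U\r_{L^2}^2\sim\l\dr U\r_{H^2}^3$ that the entire construction is designed to avoid; it would destroy Lemma \ref{majorer R} (which requires $\mathscr{R}\leq C'(C_0)\l\dr U\r_{H^2}^2$) and hence the continuity argument. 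Note that the first line of $\mathscr{R}$ carries $\l\dr_t\gamma\r_{L^\infty}$, not $\l\dr_t\ffi\r_{L^\infty}$: the former is bounded by a constant $C(C_0)$ via the gauge condition $\tau=0$ and \eqref{L infini dg}, while the latter is only controlled by $\l\dr U\r_{H^2}$.

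What actually happens, and what the paper's computation tracks explicitly, is that the term $-2\int e^{-4\ffi}e_0\ffi\,(\Ll\dr_j\dr_i\omega)^2\,\d x$ arising from $\dr_t(e^{-4\ffi})$ in $A^\omega$ \emph{cancels} against the cross-term $2\int e^{-4\ffi}e_0\dr_j\dr_i\omega\cdot\Ll\dr_j\dr_i\omega\,\Ll\ffi\,\d x$ produced by the main flux, and similarly for the spatial part $B^\omega$. This internal cancellation within $\mathscr{E}_3^\omega$ is just as essential as the cross-cancellation between $\mathscr{E}_3^\ffi$ and $\mathscr{E}_3^\omega$ that you correctly identify. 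Geometrically, it reflects that $e^{-4\ffi}$ is a component of the target metric $h$, and the energy is built with the connection compatible with $h$; the ``wave-map miracle'' therefore has two layers, and your proposal only accounts for the outer one.
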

This proposition shows the interest of the energy $\mathscr{E}_3$ : its time derivative do not include terms of the form $\l(\dr\nabla^2U)^2\dr U \r_{L^1}$, unlike the usual energy estimate of Lemma \ref{inegalite d'energie lemme}.

\subsection{Continuity argument}

Before starting the continuity argument, we need to show that $\mathscr{R}$ can be bounded by $\l \dr U \r_{H^2}^2$ (Lemmas \ref{estimation F} and \ref{majorer R}) and to compare $\mathscr{E}_3$ with $\l \dr U \r_{H^2}^2$ (Lemma \ref{comparaison}). To this end, we will use the following key estimates :
\begin{align}
\l u \r_{L^4}&\lesssim \l u\r_{L^2}^{\frac{1}{2}}\l u\r_{H^1}^{\frac{1}{2}},\label{estimate A}\\
\l u\r_{L^\infty}&\lesssim \l u\r_{L^2}^{\frac{1}{2}}\l \nabla^2u\r_{L^2}^{\frac{1}{2}}.\label{estimate B}
\end{align}
Both are consequences of the Gagliardo-Nirenberg interpolation inequality, see Proposition \ref{GN}. We will use without mention the fact that $\l\ffi\r_{L^2}+ \l \omega\r_{L^2}\lesssim \l\ffi\r_{L^4}+ \l \omega\r_{L^4}\lesssim \e_0$ (since $\ffi$ and $\omega$ are compactly supported function and because of \eqref{propa L4}), and also the fact that $\l g\r_{H^2}\lesssim \e_0$. We also need to estimate $\nabla^3g$. For this, we apply the usual elliptic estimate to the equation $\Delta g = (\nabla g)^2 + (\dr U)^2$ (this is the type of equations solved by the metric coefficients in the elliptic gauge, see Lemma \ref{lemme elliptique}). It first gives :
\begin{align*}
\l \nabla g \r_{W^{2,\frac{4}{3}}} & \lesssim \l  \nabla^2 g \nabla g \r_{L^\frac{4}{3}} + \l \dr \nabla U \dr U \r_{L^\frac{4}{3}}
\\& \lesssim \l\nabla g \r_{H^1}^2 + \l\dr U \r_{H^1}^2
\end{align*}
where we used Hölder's inequality $L^2\times L^4 \xhookrightarrow{} L^\frac{4}{3}$ and the embedding $H^1\xhookrightarrow{}L^4$. The embedding $W^{2,\frac{4}{3}}\xhookrightarrow{}L^\infty$ then gives :
\begin{equation}
\l \nabla g \r_{L^\infty} \lesssim  \e_0^2 + C_0^2. \label{L infini dg}
\end{equation}
The $L^2$ elliptic estimate implies :
\begin{align*}
\l \nabla g \r_{H^2} & \lesssim \l  \nabla^2 g \nabla g \r_{L^2} + \l \dr \nabla U \dr U \r_{L^2}
\\ & \lesssim \e_0 \l \nabla g \r_{H^2} + \e_0 C_0 \l \dr U\r_{H^2}^\frac{1}{2}
\end{align*}
where we used $\l g \r_{H^2}\lesssim \e_0$, the Hölder's inequality and \eqref{estimate A}. Taking $\e_0$ small enough this gives :
\begin{equation}
\l \nabla g \r_{H^2}\lesssim C(C_0)\l\dr U \r_{H^2}^\frac{1}{2}. \label{H 2 dg}
\end{equation}
In the sequel, we will commute without mention $\dr$ and $\nabla$ since $[e_0,\nabla]=\nabla\beta \nabla$ and $\nabla\beta$ can be bounded using \eqref{L infini dg}.

\subsubsection{The energy $\mathscr{R}$}

We start by the estimates for $F^U$ :

\begin{lem}\label{estimation F}
There exists $C(C_0)>0$ such that
\begin{align*}
\l F^\ffi_{ij}\r_{L^2}+\l F^\omega_{ij}\r_{L^2}& \lesssim C(C_0)\l\dr U \r_{H^2}.
\end{align*}
\end{lem}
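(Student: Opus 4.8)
The statement to prove is the bound $\l F^\ffi_{ij}\r_{L^2}+\l F^\omega_{ij}\r_{L^2}\lesssim C(C_0)\l\dr U\r_{H^2}$, where $F^\ffi_{ij}$ and $F^\omega_{ij}$ are defined in \eqref{F ffi} and \eqref{F omega}. The plan is to go through each of the constituent terms of $F^U_{ij}$ and estimate it in $L^2$, using throughout the a priori bounds \eqref{propa H1}, \eqref{propa L4}, the metric estimates $\l g\r_{H^2}\lesssim\e_0$, \eqref{L infini dg}, \eqref{H 2 dg}, the Gagliardo–Nirenberg inequalities \eqref{estimate A}, \eqref{estimate B}, and the fact that $\ffi,\omega$ are compactly supported so that no weights intervene. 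The only structural input needed is that all coefficients $e^{-4\ffi}g^{\alpha\beta}$ and their derivatives are under control: $e^{-4\ffi}$ is bounded in $L^\infty$ (since $\l\ffi\r_{L^\infty}\lesssim\l\ffi\r_{L^2}^{1/2}\l\nabla^2\ffi\r_{L^2}^{1/2}\lesssim\e_0^{1/2}\l\dr U\r_{H^2}^{1/2}$, which is bounded once we run the continuity argument — but for this lemma alone it suffices to note $\l\ffi\r_{H^2}$ is finite on $[0,T)$), and $g^{\alpha\beta}$ is a smooth function of $N,\gamma,\beta$ with $\l g\r_{H^2}\lesssim\e_0$.

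\textbf{Key steps, in order.} First, the commutator term $[\Box_g,\dr_j\dr_i]U$: expanding $\Box_g$ via \eqref{expression de box}, the commutator with two spatial derivatives produces terms schematically of the form $\nabla^{\leq 2}(\text{metric})\cdot\dr\nabla^{\leq 2}U$ plus lower order; the worst piece is $\nabla^2 g\,\dr\nabla U$ and $\nabla^3 g\,\dr U$. For $\nabla^2 g\,\dr\nabla U$ use Hölder $L^4\times L^4\hookrightarrow L^2$ together with $\l\nabla^2 g\r_{L^4}\lesssim\l\nabla g\r_{H^2}^{1/2}\cdots$ or more simply $H^1\hookrightarrow L^4$, bounding by $\l g\r_{H^2}\l\dr U\r_{H^2}\lesssim\e_0\l\dr U\r_{H^2}$; for $\nabla^3 g\,\dr U$ use \eqref{H 2 dg} and $\dr U\in L^\infty$ via \eqref{estimate B}, giving $\l\nabla^3 g\r_{L^2}\l\dr U\r_{L^\infty}\lesssim C(C_0)\l\dr U\r_{H^2}^{1/2}\cdot\l\dr U\r_{H^2}^{1/2}\cdot(\cdots)$; one must check these interpolations close at exponent $1$ in $\l\dr U\r_{H^2}$, which they do because each $\dr U$ factor contributes a power $\tfrac12$ and there are never three $\dr\nabla^{\leq2}U$ factors — this is precisely the wave-map gain. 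Second, the terms $\dr_i\dr_j(e^{-4\ffi}g^{\alpha\beta})\dr_\alpha\omega\dr_\beta\omega$ and analogous ones in $F^\omega$: distribute the two derivatives; the dangerous case is when both hit $\ffi$ (giving $\nabla^2\ffi$), handled by $L^2\times L^4\times L^\infty$ with $\l\nabla\omega\r_{L^\infty}\lesssim\e_0^{1/2}\l\dr U\r_{H^2}^{1/2}$ from \eqref{estimate B} and $\l\nabla\omega\r_{L^4}\lesssim\e_0$ from \eqref{estimate A}, again closing at power $1$. Third, the mixed terms $\dr_{(i}(e^{-4\ffi}g^{\alpha\beta})\dr_\alpha\dr_{j)}\omega\dr_\beta\omega$ and $e^{-4\ffi}g^{\alpha\beta}\dr_\alpha\dr_i\omega\dr_\beta\dr_j\omega$: these are quadratic in $\dr\nabla U$, so Hölder $L^4\times L^4$ with $\l\dr\nabla U\r_{L^4}\lesssim\l\dr\nabla U\r_{L^2}^{1/2}\l\dr\nabla U\r_{H^1}^{1/2}\lesssim C_0^{1/2}\l\dr U\r_{H^2}^{1/2}$ gives the product bounded by $C(C_0)\l\dr U\r_{H^2}$. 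The $F^\omega$ terms involving $4\dr_i\dr_j g^{\alpha\beta}\dr_\alpha\omega\dr_\beta\ffi$ etc. are handled identically.

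\textbf{Main obstacle.} The only delicate point is bookkeeping the homogeneity: one must verify term by term that no estimate requires $\l\dr U\r_{H^2}^{1+\eta}$ with $\eta>0$, i.e. that each product of a top-order factor ($\dr\nabla^2 U$, $\nabla^3 g$, or $\dr\nabla U$ squared) with lower-order factors leaves at most a single full power of $\l\dr U\r_{H^2}$ after using Gagliardo–Nirenberg to trade regularity for smallness ($\e_0$ or $C_0$) on the remaining factors. This works precisely because $F^U_{ij}$ is a commutator/nonlinear remainder that is at most quadratic in the genuinely-third-order quantity and the wave-map cancellations have already removed the would-be cubic-in-$\dr\nabla^2 U$ pieces. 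Since the statement only claims a bound by $C(C_0)\l\dr U\r_{H^2}$ (not smallness), it suffices to absorb all auxiliary factors into $C(C_0)$, so smallness of $\e_0$ is not even needed here — one just needs $\l g\r_{H^2}$, $\l\dr U\r_{H^1}$, and $\l\ffi\r_{H^2}$ finite, which hold on $[0,T)$ by Theorem \ref{theoreme principal} and the standing assumption \eqref{propa H1}. I would therefore organise the proof as a short enumeration of the five or six schematic term types above, giving the Hölder split and the interpolation inequality invoked in each case, and conclude.
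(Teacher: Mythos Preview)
Your overall strategy and the treatment of the explicit nonlinear terms in $F^\ffi_{ij}$ and $F^\omega_{ij}$ (the $\dr_i\dr_j(e^{-4\ffi}g^{\alpha\beta})\dr_\alpha\omega\dr_\beta\omega$, $(\dr\nabla U)^2$, etc.) is correct and matches what the paper does. The gap is in your handling of the commutator $[\Box_g,\dr_j\dr_i]U$.

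You write that this commutator produces ``terms schematically of the form $\nabla^{\leq 2}(\text{metric})\cdot\dr\nabla^{\leq 2}U$ plus lower order''. This is not right for the $\Ll^2$ part of $\Box_g$. Since $\Ll=\frac{1}{N}e_0$, the operator $\Ll^2$ contains both a factor $e_0^2$ and a factor $e_0(1/N)\sim\dr_t N$. Commuting $\nabla^2$ with $\Ll^2$ therefore generates two types of terms that do \emph{not} fit your schematic:
\begin{itemize}
\item terms of the form $\nabla^2 g\cdot\dr_t^2 U$ and $\nabla g\cdot\nabla\dr_t^2 U$. Here $\dr_t^2 U$ carries two time derivatives and is not controlled by $\l\dr U\r_{H^2}$ directly. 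The paper handles this by substituting the wave equation $\Box_g U=(\dr U)^2$, which gives $|\dr_t^2 U|\lesssim C(C_0)\big(|\dr U|^2+|\nabla^2 U|\big)$ and similarly for $\nabla\dr_t^2 U$, after which your Hölder/interpolation bookkeeping goes through.
\item terms of the form $\nabla^2\dr_t g\cdot\dr U$. This involves a time derivative of the metric at top spatial order, which is not covered by $\l g\r_{H^2}\lesssim\e_0$, by \eqref{L infini dg}, or by \eqref{H 2 dg}. The paper deals with this by differentiating the elliptic equation $\Delta g=(\nabla g)^2+(\dr U)^2$ in time and applying the $L^2$ elliptic estimate to get $\l\dr_t g\r_{H^2}\lesssim C(C_0)\l\dr U\r_{H^2}^{1/2}$ (after absorbing an $\e_0$ term); combined with $\l\dr U\r_{L^\infty}\lesssim\e_0^{1/2}\l\dr U\r_{H^2}^{1/2}$ from \eqref{estimate B} this closes at a single power of $\l\dr U\r_{H^2}$.
\end{itemize}
Neither of these steps is automatic or ``lower order''; both require invoking the PDE (the wave equation for $U$ and the elliptic equation for $\dr_t g$). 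Once you insert these two ingredients, the rest of your argument is fine and coincides with the paper's.
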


\begin{proof}
The expressions of $F^\ffi_{ij}$ and $F^\omega_{ij}$ are given by \eqref{F ffi} and \eqref{F omega}. We start by estimate the commutator $[\Box_g,\nabla^2]U$. Looking at the expression \eqref{expression de box}, we start by the spatial part of $\Box_g$ :
\begin{align*}
\l \left[ \nabla^2, g\nabla (g\nabla\cdot) \right] U \r_{L^2} & \lesssim \l g\nabla^3 g \nabla U \r_{L^2} + \l \nabla g \nabla^2 g \nabla U \r_{L^2}
+ \l g\nabla^2 g \nabla^2 U \r_{L^2} + \l (\nabla g)^2 \nabla^2 U \r_{L^2}
\\&\quad + \l g\nabla g \nabla^3 U \r_{L^2}.
\end{align*}
For the last two terms, we simply bound $\nabla g$ using \eqref{L infini dg} and put $\dr U$ in $H^2$ :
\begin{align*}
\l (\nabla g)^2 \nabla^2 U \r_{L^2} + \l g\nabla g \nabla^3 U \r_{L^2} & \lesssim C(C_0) \left(  \l \nabla^2 U \r_{L^2} + \l \nabla^3 U \r_{L^2} \right).
\end{align*}
We do the same for the second term, using in addition $\l\nabla^2 g \r_{L^2}\lesssim 1$ and the embedding $H^2\xhookrightarrow{}L^\infty$ :
\begin{align*}
\l \nabla g \nabla^2 g \nabla U \r_{L^2} & \lesssim C(C_0) \l \nabla U \r_{L^\infty}.
\end{align*}
We deal with the third term using first the Hölder's inequality, the embedding $H^1\xhookrightarrow{}L^4$, \eqref{H 2 dg} and \eqref{estimate A} :
\begin{align*}
\l g\nabla^2 g \nabla^2 U \r_{L^2} & \lesssim \e_0 \l \nabla^2g \r_{H^1} \l \nabla^2U \r_{L^4}
\\& \lesssim C(C_0) \l \dr U \r_{H^2}^{\frac{1}{2}} \l \nabla^2 U \r_{H^1}^{\frac{1}{2}}.
\end{align*}
For the first term, we put $\nabla^3g$ in $L^2$ and $\nabla U$ in $L^\infty$, and then  use \eqref{H 2 dg} and \eqref{estimate B} :
\begin{align*}
\l g\nabla^3 g \nabla U \r_{L^2} & \lesssim C(C_0) \l \dr U \r_{H^2}^{\frac{1}{2}} \l \nabla U \r_{H^2}^{\frac{1}{2}}.
\end{align*}
Summarizing everything we obtain :
\begin{equation}
\l \left[ \nabla^2, g\nabla (g\nabla\cdot) \right] U \r_{L^2} \lesssim C(C_0) \l\dr U \r_{H^2}.\label{spatial}
\end{equation}
We now estimate the contribution of $\Ll^2$ to the commutator. We have :
\begin{align*}
\l \left[ \nabla^2, \Ll^2 \right] U \r_{L^2} & \lesssim \l \nabla^2 g \dr_t^2 U \r_{L^2} + \l \nabla g \nabla \dr_t^2 U \r_{L^2} + \l\nabla^2\dr_t g\dr U \r_{L^2}  +\l\nabla^2g \nabla \dr U \r_{L^2} + \l\nabla g \nabla^2\dr U \r_{L^2}
\end{align*}
The last two terms have already been estimated during the proof of \eqref{spatial}. For the first two terms, we use the equation $\Box_g U= (\dr U)^2$ satisfied by $U$ to express $\dr_t^2 U$. It shows that
\begin{align}
|\dr_t^2 U| & \lesssim \left( 1 + |\nabla g| \right) |\dr U |^2 + |g\nabla^2U|
\lesssim C(C_0) \left(  |\dr U |^2 + |\nabla^2U|\right)\label{dt2}
\end{align}
where we also used \eqref{L infini dg}. We can put $\nabla^2g$ in $L^2$ and $\dr U$ in $L^\infty$ using \eqref{estimate B} (note that the second term has already been estimated) :
\begin{align*}
\l \nabla^2 g \dr_t^2 U \r_{L^2} & \lesssim \l\nabla^2g (\dr U)^2\r_{L^2} + \l \nabla^2g \nabla^2 U\r_{L^2} \lesssim C(C_0) \l \dr U \r_{H^2}
\end{align*}
The equation $\Box_g U= (\dr U)^2$ also gives us
\begin{equation*}
|\nabla \dr_t^2 U|\lesssim |\nabla \dr U \dr U | + |\nabla^2 g \nabla U| + |\nabla^3 U| + |\nabla g \nabla^2 U|.
\end{equation*}
Because of \eqref{L infini dg}, $\l \nabla g \nabla \dr_t^2 U \r_{L^2} \lesssim C(C_0)\l  \nabla \dr_t^2 U \r_{L^2} $ and the previous estimate shows therefore that all the terms in $\l \nabla g \nabla \dr_t^2 U \r_{L^2} $ have already been estimated. This gives :
\begin{equation*}
\l \nabla g \nabla \dr_t^2 U \r_{L^2} \lesssim C(C_0) \l \dr U \r_{H^2}.
\end{equation*}
It remains to deal with the term involving $\dr_t g$. This quantity satisfies the following equation :
\begin{equation*}
\Delta \dr_t g = \nabla \dr_t g \nabla g + \dr_t^2 U \nabla U + \dr_t U \nabla \dr_t U.
\end{equation*}
The usual elliptic estimates gives us 
\begin{align*}
\l\dr_t g \r_{H^2} & \lesssim \l \nabla \dr_t g \nabla g \r_{L^2} + \l\dr_t^2 U \nabla U \r_{L^2} + \l\dr_t U \nabla \dr_t U \r_{L^2}
\\& \lesssim \e_0 \l \nabla \dr_t g \r_{H^1} + C(C_0) \l \dr U \r_{H^2}^\frac{1}{2}
\end{align*}
where we used \eqref{estimate A} and \eqref{estimate B}. Taking $\e_0$ small enough, this shows that $\l\dr_t g \r_{H^2}\lesssim C(C_0) \l \dr U \r_{H^2}^\frac{1}{2}$. With this, we estimate the remaining term in the commutator $\left[ \nabla^2,\Ll^2\right]$ using in addition \eqref{estimate B} :
\begin{align*}
\l\nabla^2\dr_t g\dr U \r_{L^2} & \lesssim \l \nabla^2\dr_t g \r_{L^2} \l \dr U \r_{L^\infty} \lesssim C(C_0) \l \dr U \r_{H^2}
\end{align*}
Thus, we obtain :
\begin{equation}
\l \left[ \nabla^2, \Ll^2 \right] U \r_{L^2} \lesssim C(C_0) \l \dr U \r_{H^2}.\label{time}
\end{equation}
Putting \eqref{spatial} and \eqref{time} together we finally obtain :
\begin{equation*}
\l \left[ \Box_g, \nabla^2 \right] U \r_{L^2} \lesssim C(C_0) \l \dr U \r_{H^2}.
\end{equation*}
The lemma is actually proved because all the remaining terms in $F^U$ have already been estimated in the proof of \eqref{spatial} and \eqref{time}.
\end{proof}

We now estimate $\mathscr{R}$ :

\begin{lem}\label{majorer R}
There exists $C'(C_0)>0$ such that
\begin{equation*}
\mathscr{R}\leq C'(C_0) \l \dr U \r_{H^2}^2.
\end{equation*}
\end{lem}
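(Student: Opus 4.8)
The plan is to estimate each of the three groups of terms in the definition \eqref{def Reste} of $\mathscr{R}$ separately, using the bootstrap assumptions \eqref{propa H1} and \eqref{propa L4}, the elliptic estimates \eqref{L infini dg} and \eqref{H 2 dg}, the interpolation inequalities \eqref{estimate A}--\eqref{estimate B}, and Lemma \ref{estimation F}. The guiding principle is that every term appearing in $\mathscr{R}$ is either quadratic in $\dr\nabla^2 U$ (or $\dr U \nabla^2 U$) times something small/bounded, or it is $\l \dr\nabla^2 U \r_{L^2}$ times a factor that we can bound by $C(C_0)\l\dr U\r_{H^2}$, so in both cases we land on $C(C_0)\l\dr U\r_{H^2}^2$ — crucially with exponent exactly $2$ and no $\eta>0$ surplus, which is the whole point of the third-order energy.

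First I would deal with the first line of \eqref{def Reste}. The factor $\l\dr_t\gamma\r_{L^\infty}$ must be bounded by $C(C_0)$; this follows by writing $\dr_t\gamma = N\Ll\gamma + \beta\cdot\nabla\gamma$ (with $\alpha$ time-independent), then using $\l\nabla\gamma\r_{L^\infty}\lesssim \e_0^2 + C_0^2$ from \eqref{L infini dg}, the smallness $\l g\r_{H^2}\lesssim\e_0$, and the bound on $\l\dr_t g\r_{H^2}\lesssim C(C_0)\l\dr U\r_{H^2}^{1/2}$ established inside the proof of Lemma \ref{estimation F} — actually for the $L^\infty$ bound one should instead reuse the elliptic equation for $\dr_t\gamma$ together with a $W^{2,4/3}\hookrightarrow L^\infty$ argument as in \eqref{L infini dg}, giving $\l\dr_t\gamma\r_{L^\infty}\lesssim C(C_0)$. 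Then $\l\dr_t\gamma\r_{L^\infty}\big(\l\dr\nabla^2U\r_{L^2}^2 + \l\dr U\nabla^2 U\r_{L^2}^2\big)$ is bounded by $C(C_0)\l\dr U\r_{H^2}^2$ directly once we observe $\l\dr U\nabla^2 U\r_{L^2}\lesssim \l\dr U\r_{L^\infty}\l\nabla^2U\r_{L^2}\lesssim \l\dr U\r_{H^2}^2$ by \eqref{estimate B}, or more simply $\lesssim \l\dr U\r_{L^4}\l\nabla^2U\r_{L^4}\lesssim \e_0 \l\dr U\r_{H^2}$ using \eqref{estimate A} and \eqref{propa L4}.

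Next I would handle the second and third lines of \eqref{def Reste}: the prefactor $\l\dr\nabla^2 U\r_{L^2} + \l\dr U\nabla^2 U\r_{L^2}$ is $\lesssim \l\dr U\r_{H^2}$ (again using \eqref{estimate A}, \eqref{propa L4} for the mixed term), so it suffices to bound each term inside the large parenthesis by $C(C_0)\l\dr U\r_{H^2}$. For $\l\nabla^2U(\dr U)^2\r_{L^2}$ use Hölder $L^4\times L^4\times L^\infty$ together with \eqref{estimate A} and \eqref{propa L4}; for $\l(\dr U)^3\r_{L^2}$ use $L^4\times L^4\times L^\infty$ and \eqref{estimate B}; the term $\l F^U\r_{L^2}$ is exactly Lemma \ref{estimation F}; for $\l(\dr\nabla U)^2\r_{L^2}$ use \eqref{estimate A} to get $\lesssim \l\dr\nabla U\r_{L^2}\l\dr\nabla U\r_{H^1}\lesssim \l\dr U\r_{H^2}^2$ — here one should double-check whether this yields an admissible $C_0$-free small constant; in fact $\l\dr\nabla U\r_{L^4}^2\lesssim \l\dr\nabla U\r_{L^2}\l\dr\nabla U\r_{H^1}$, and combined with the prefactor we get $\l\dr U\r_{H^2}^3$, so instead one keeps one factor in $L^2$ and interpolates the other as $\l\dr\nabla U\r_{L^4}\lesssim \l\dr U\r_{L^2}^{1/4}\l\dr U\r_{H^2}^{3/4}$... the cleanest route is to bound $\l(\dr\nabla U)^2\r_{L^2}\lesssim \l\dr\nabla U\r_{L^2}^{1/2}\l\dr\nabla U\r_{H^1}^{3/2}$ and absorb — but the honest statement is that this term contributes $\l\dr U\r_{H^2}^2$ after using that $\l\dr\nabla U\r_{L^2}$ itself is part of $\l\dr U\r_{H^2}$ and the $L^\infty$-type factor is bounded by $\e_0$ via interpolation against $\l\dr U\r_{H^1}\lesssim C_0$. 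Finally $\l\nabla g\nabla^3 U\r_{L^2}\lesssim \l\nabla g\r_{L^\infty}\l\nabla^3 U\r_{L^2}\lesssim C(C_0)\l\dr U\r_{H^2}$ by \eqref{L infini dg}, and $\l\nabla g\nabla U\nabla^2 U\r_{L^2}\lesssim \l\nabla g\r_{L^\infty}\l\nabla U\r_{L^4}\l\nabla^2U\r_{L^4}\lesssim C(C_0)\e_0\l\dr U\r_{H^2}$ by \eqref{L infini dg} and \eqref{estimate A}. Collecting all contributions gives $\mathscr{R}\le C'(C_0)\l\dr U\r_{H^2}^2$.

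The main obstacle is bookkeeping of the exponents: one must verify in each term that the total power of $\l\dr U\r_{H^2}$ is exactly $2$ and that every "extra" factor beyond that is controlled either by the genuinely small $\e_0$ (via $\l g\r_{H^2}\lesssim\e_0$, $\l\dr U\r_{L^4}\lesssim\e_0$, and Gagliardo--Nirenberg interpolation against the bounded $\l\dr U\r_{H^1}\lesssim C_0$) or by a pure $C(C_0)$ constant. The subtle points are (i) the $L^\infty$ bound on $\dr_t\gamma$, which requires running the elliptic-regularity argument of \eqref{L infini dg} on the time-differentiated equation rather than naively using Sobolev embedding at the $H^2$ level, and (ii) making sure the terms quadratic in third derivatives of $U$ genuinely come with a small or $C(C_0)$ coefficient — they do, because $\nabla g$ is in $L^\infty$ with a small-plus-$C_0^2$ bound and $\dr\nabla^2 U$ only ever appears to the first power once the $\mathscr{E}_3$-prefactor is peeled off. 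Since all the needed ingredients are already established (the interpolation inequalities, the elliptic estimates \eqref{L infini dg}--\eqref{H 2 dg}, and Lemma \ref{estimation F}), the proof is a routine but careful concatenation of Hölder and interpolation, and I would present it as such without grinding through every term.

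\begin{proof}
By \eqref{estimate A} and \eqref{propa L4} we have, for $U$ standing for $\ffi$ or $\omega$,
\begin{equation*}
\l\dr U\nabla^2 U\r_{L^2}\lesssim \l\dr U\r_{L^4}\l\nabla^2 U\r_{L^4}\lesssim \e_0\,\l\dr U\r_{H^2},
\end{equation*}
so that the prefactor appearing on the second line of \eqref{def Reste} satisfies $\l\dr\nabla^2 U\r_{L^2}+\l\dr U\nabla^2 U\r_{L^2}\lesssim \l\dr U\r_{H^2}$; the same bound controls the squared quantities on the first line of \eqref{def Reste} by $\l\dr U\r_{H^2}^2$.

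It remains to bound $\l\dr_t\gamma\r_{L^\infty}$ and each term inside the large parenthesis of \eqref{def Reste} by $C(C_0)\l\dr U\r_{H^2}$ (or by $C(C_0)$ for the $L^\infty$-norm of $\dr_t\gamma$). For $\dr_t\gamma$, differentiating \eqref{elliptic gamma} in $t$ yields an equation of the schematic form $\Delta\dr_t\gamma=\nabla\dr_t\gamma\,\nabla g+\dr_t^2U\,\nabla U+\dr_t U\,\nabla\dr_t U+(\dr U)^2\dr_t g$; applying the $W^{2,4/3}$ elliptic estimate exactly as in the derivation of \eqref{L infini dg}, using \eqref{dt2}, \eqref{H 2 dg} and the bound $\l\dr_t g\r_{H^2}\lesssim C(C_0)\l\dr U\r_{H^2}^{1/2}$ obtained in the proof of Lemma \ref{estimation F}, together with $W^{2,4/3}\hookrightarrow L^\infty$, gives $\l\dr_t\gamma\r_{L^\infty}\lesssim C(C_0)$. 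Hence the first line of \eqref{def Reste} is $\lesssim C(C_0)\l\dr U\r_{H^2}^2$.

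For the terms inside the parenthesis: by Hölder ($L^\infty\times L^4\times L^4$), \eqref{estimate A} and \eqref{propa L4},
\begin{equation*}
\l\nabla^2U(\dr U)^2\r_{L^2}+\l(\dr U)^3\r_{L^2}\lesssim \e_0\,\l\dr U\r_{H^2};
\end{equation*}
by Lemma \ref{estimation F}, $\l F^U\r_{L^2}\lesssim C(C_0)\l\dr U\r_{H^2}$; by \eqref{estimate A},
\begin{equation*}
\l(\dr\nabla U)^2\r_{L^2}\lesssim \l\dr\nabla U\r_{L^4}^2\lesssim \l\dr\nabla U\r_{L^2}\l\dr\nabla U\r_{H^1}\lesssim C_0\,\l\dr U\r_{H^2};
\end{equation*}
by \eqref{L infini dg}, $\l\nabla g\,\nabla^3U\r_{L^2}\lesssim \l\nabla g\r_{L^\infty}\l\nabla^3U\r_{L^2}\lesssim C(C_0)\l\dr U\r_{H^2}$; and by \eqref{L infini dg} together with \eqref{estimate A},
\begin{equation*}
\l\nabla g\,\nabla U\,\nabla^2U\r_{L^2}\lesssim \l\nabla g\r_{L^\infty}\l\nabla U\r_{L^4}\l\nabla^2U\r_{L^4}\lesssim C(C_0)\e_0\,\l\dr U\r_{H^2}.
\end{equation*}
Multiplying each of these by the prefactor $\lesssim\l\dr U\r_{H^2}$ and summing, and adding the first-line contribution, we obtain $\mathscr{R}\leq C'(C_0)\l\dr U\r_{H^2}^2$.
\end{proof}
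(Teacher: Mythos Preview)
Your estimates for all the terms inside the large parenthesis of \eqref{def Reste}, for the prefactor $\l\dr\nabla^2 U\r_{L^2}+\l\dr U\nabla^2 U\r_{L^2}$, and for the two squared quantities on the first line are correct and essentially identical to what the paper does. The only substantive difference is in how you bound $\l\dr_t\gamma\r_{L^\infty}$, and there your argument has a gap.

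You propose to time-differentiate \eqref{elliptic gamma} and run a $W^{2,4/3}$ elliptic estimate as in the derivation of \eqref{L infini dg}. But the right-hand side of the time-differentiated equation contains, among other things, the term $(\dr U)^2\,\dr_t g$ and a term of the form $H\,\dr_t H$ (from $|H|^2$), and to control these in $L^{4/3}$ you invoke the bound $\l\dr_t g\r_{H^2}\lesssim C(C_0)\l\dr U\r_{H^2}^{1/2}$ from the proof of Lemma~\ref{estimation F}. That bound carries a factor $\l\dr U\r_{H^2}^{1/2}$, so it is not clear how you arrive at a pure $C(C_0)$ bound for $\l\dr_t\gamma\r_{L^\infty}$; a factor $\l\dr U\r_{H^2}^{1/2}$ in this coefficient would, after multiplication by $\l\dr\nabla^2 U\r_{L^2}^2$, produce $\l\dr U\r_{H^2}^{5/2}$ and spoil the continuity argument. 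There is also a circularity risk, since $\dr_t g$ in the right-hand side includes $\dr_t\gamma$ itself.

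The paper avoids all of this by a one-line algebraic observation: from the gauge condition $\tau=0$ and \eqref{appendix tau} one has $e_0\gamma=\tfrac12\dive\beta$, hence $\dr_t\gamma=\tfrac12\dive\beta+\beta\cdot\nabla\gamma$ pointwise, i.e.\ $|\dr_t\gamma|\lesssim |\nabla g|$, and the $L^\infty$ bound then follows immediately from \eqref{L infini dg}. You should replace your elliptic argument for $\l\dr_t\gamma\r_{L^\infty}$ by this algebraic identity; the rest of your proof can stand as written.
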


\begin{proof}
First, note that the previous lemma handles the term $\l F^U \r_{L^2}$. Most of the remaining terms in $\mathscr{R}$ can simply be estimated using the Hölder's inequality, \eqref{estimate A} and \eqref{estimate B} :
\begin{align*}
\l\dr U \nabla^2 U \r_{L^2} & \lesssim C_0 \l\dr U \r_{H^2},
\\ \l \nabla^2U (\dr U)^2\r_{L^2} & \lesssim \e_0 C_0 \l\dr U \r_{H^2},
\\ \l (\dr U)^3\r_{L^2} & \lesssim \e_0^2 \l \dr U \r_{H^2},
\\ \l (\dr\nabla U)^2 \r_{L^2} & \lesssim C_0 \l \dr U \r_{H^2},
\\ \l \nabla g \nabla U \nabla^2 U \r_{L^2} & \lesssim \e_0C(C_0)\l\dr U \r_{H^2} .
\end{align*}
Let us give the details only for the last one. We bound $\nabla g$ with \eqref{L infini dg}, and then we the Hölder's inequality $L^4 \times  L^4\xhookrightarrow{} L^2$ and the embedding $H^1\xhookrightarrow{} L^4$ :
\begin{align*}
\l \nabla g \nabla U \nabla^2 U \r_{L^2} & \lesssim \l \nabla g \r_{L^\infty}  \l \nabla U \r_{L^4} \l \nabla^2 U \r_{L^4}\lesssim \e_0C(C_0)\l\dr U \r_{H^2} .
\end{align*}
Because of \eqref{appendix tau} and the gauge condition $\tau=0$ we have $|\dr_t\gamma|\lesssim |\nabla g|$ so we estimate $\l \dr_t\gamma \r_{L^\infty}$ with \eqref{L infini dg}. Samewise with \eqref{L infini dg} we estimate the very last term appearing in $\mathscr{R}$ :
\begin{align*}
\l \nabla g \nabla^3 U  \r_{L^2} \lesssim (\e_0^2+C_0^2) \l \dr U \r_{H^2}.
\end{align*}
\end{proof}

In the next lemma, we compare $\mathscr{E}_3$ with the $H^2$ norm of $\dr U$. We omit the proof since all the terms involved have been already estimated in the two previous lemmas.
\begin{lem}\label{comparaison}
There exists $K(C_0)>0$  such that 
\begin{align*}
\mathscr{E}_3& \leq K(C_0) \l \dr U \r_{H^2}^2,\\
\l \nabla^2\dr U \r_{L^2}^2& \leq K(C_0)\mathscr{E}_3 +\e_0^2K(C_0) \l\dr U \r_{H^2}^2.
\end{align*}

\end{lem}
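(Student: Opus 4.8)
The plan is to prove both estimates by fully expanding the squares in the definitions of $\mathscr{E}_3^\ffi$ and $\mathscr{E}_3^\omega$ and controlling each resulting term exactly as in Lemmas \ref{estimation F} and \ref{majorer R}. First I would record the elementary observation that, since $\ffi$ and $\omega$ are supported in $B_{2R}$, $\l g\r_{H^2}\lesssim\e_0$ and $\l\dr U\r_{H^1}\leq C_0$, the weights $N^{\pm2}$, $e^{\pm2\gamma}$ and $e^{\pm4\ffi}$ are all bounded above and below by constants depending only on $C_0$ on the compact set where the integrands are supported: one uses $H^2\hookrightarrow L^\infty$ for $N$ and $\gamma$, and \eqref{estimate B} together with $\l\ffi\r_{L^2}\lesssim\e_0$ to bound $\l\ffi\r_{L^\infty}$. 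Hence, up to $C(C_0)$ factors, one may simply replace these weights by $1$ throughout.

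For the upper bound I would expand $\mathscr{E}_3$ as a sum of the four principal terms $\l e_0\nabla^2\ffi\r_{L^2}^2$, $\l\nabla^3\ffi\r_{L^2}^2$, $\l e_0\nabla^2\omega\r_{L^2}^2$, $\l\nabla^3\omega\r_{L^2}^2$ — each of which is $\lesssim C(C_0)\l\dr U\r_{H^2}^2$ after trading $e_0$ against $\dr_t$ via $\dr_t=e_0+\beta\cdot\nabla$ and commuting $\dr$ with $\nabla$ using $[e_0,\nabla]=\nabla\beta\,\nabla$ and \eqref{L infini dg} — plus cross terms and squared correction terms. Every correction factor has the schematic form $\nabla^2V\,\dr W$ with $V,W\in\{\ffi,\omega\}$, and by Hölder ($L^4\times L^4\hookrightarrow L^2$), $H^1\hookrightarrow L^4$, \eqref{estimate A} and the hypothesis \eqref{propa L4} one gets $\l\nabla^2V\,\dr W\r_{L^2}\lesssim\l\nabla^2V\r_{L^4}\l\dr W\r_{L^4}\lesssim\e_0\l\dr U\r_{H^2}$. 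Thus each cross term is $\lesssim\l\dr U\r_{H^2}\cdot\e_0\l\dr U\r_{H^2}$ and each squared correction term is $\lesssim\e_0^2\l\dr U\r_{H^2}^2$, which yields $\mathscr{E}_3\leq K(C_0)\l\dr U\r_{H^2}^2$.

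For the lower bound I would extract the principal terms from $\mathscr{E}_3$ by the triangle inequality: from $\mathscr{E}_3^\ffi$ one obtains $\l e_0\nabla^2\ffi\r_{L^2}+\l\nabla^3\ffi\r_{L^2}\leq C(C_0)\big(\mathscr{E}_3^{1/2}+\l\nabla^2\omega\,\dr\omega\r_{L^2}\big)$, and from $\mathscr{E}_3^\omega$ one obtains $\l e_0\nabla^2\omega\r_{L^2}+\l\nabla^3\omega\r_{L^2}\leq C(C_0)\big(\mathscr{E}_3^{1/2}+\l\nabla^2\omega\,\dr\ffi\r_{L^2}+\l\nabla^2\ffi\,\dr\omega\r_{L^2}\big)$. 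By the correction-term bound above each error is $\lesssim\e_0\l\dr U\r_{H^2}$; squaring, summing the four contributions, and passing from $e_0$ back to $\dr_t$ then gives $\l\nabla^2\dr U\r_{L^2}^2\leq K(C_0)\mathscr{E}_3+\e_0^2K(C_0)\l\dr U\r_{H^2}^2$.

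The only real difficulty is bookkeeping: one must check that every term generated by the expansion other than the four principal quantities carries at least one full power of $\e_0$, so that in the lower bound its square is genuinely $\e_0^2$-small and can be absorbed later in the continuity argument. This is precisely the point where the $L^4$-smallness \eqref{propa L4}, rather than mere $H^1$-boundedness \eqref{propa H1}, is used, in combination with the Gagliardo--Nirenberg inequality \eqref{estimate A}.
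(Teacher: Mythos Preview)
Your proposal is correct and follows exactly the approach the paper indicates: the paper omits the details entirely, saying only that ``all the terms involved have been already estimated in the two previous lemmas,'' and you have supplied precisely those details by expanding the squares, bounding the weights via $H^2\hookrightarrow L^\infty$ and \eqref{estimate B}, and controlling every correction term $\nabla^2V\,\dr W$ through the $L^4$-smallness \eqref{propa L4} and \eqref{estimate A}, just as in Lemmas~\ref{estimation F} and~\ref{majorer R}.
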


\subsubsection{Conclusion}
Putting everything together, we can now complete the continuity argument by propagating the $H^2$ regularity. We consider the following bootstrap assumption :
\begin{equation}
\l \dr U \r_{H^2}(t) \leq C_1\exp(C_1t),\label{bootstrap H2}
\end{equation}
with $C_1>0$ to be chosen later. Let $T_0<T$ be the maximal time such that \eqref{bootstrap H2} holds for all $0\leq t\leq T_0$. Note that if $C_1$ is large enough we have $T_0>0$, since $\dr\ffi$ and $\dr\omega$ are initially in $H^2$.

\begin{prop}
If $\e_0$ is small enough (still independent of $C_{high}$) and $C_1$ is large enough, the following holds on $[0,T_0]$ :
\begin{equation}
\l \dr U \r_{H^2}(t) \leq \frac{1}{2}C_1\exp(C_1t).
\end{equation}
\end{prop}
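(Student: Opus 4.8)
The plan is to run the continuity argument set up above, closing it with the third order energy $\mathscr{E}_3$ of Proposition \ref{dernier coro}. Throughout, $U$ stands for $\ffi$ or $\omega$ and every quantity involving $\dr U$, $\mathscr{E}_3$ or $\mathscr{R}$ is understood as the sum over $U\in\{\ffi,\omega\}$, so that the bootstrap assumption \eqref{bootstrap H2} reads $\l\dr\ffi\r_{H^2}(t)+\l\dr\omega\r_{H^2}(t)\le C_1\exp(C_1t)$.

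The first step is to reduce the full $H^2$ norm to its top order part. Writing $\l\dr U\r_{H^2}^2=\l\dr U\r_{L^2}^2+\l\nabla\dr U\r_{L^2}^2+\l\nabla^2\dr U\r_{L^2}^2$ and bounding the first two terms by $\l\dr U\r_{H^1}^2\le C_0^2$ via \eqref{propa H1}, the second inequality of Lemma \ref{comparaison} gives
\[
\l\dr U\r_{H^2}^2\le C_0^2+K(C_0)\mathscr{E}_3+\e_0^2K(C_0)\l\dr U\r_{H^2}^2 .
\]
Choosing $\e_0$ small enough, with smallness depending only on $K(C_0)$ and hence not on $C_{high}$, so that $\e_0^2K(C_0)\le\tfrac12$, the last term is absorbed into the left-hand side and
\[
\l\dr U\r_{H^2}^2\le 2C_0^2+2K(C_0)\mathscr{E}_3 \qquad (\star).
\]
Next, integrating Proposition \ref{dernier coro} in time and using Lemma \ref{majorer R},
\[
\mathscr{E}_3(t)\le \mathscr{E}_3(0)+C\!\int_0^t\mathscr{R}(s)\,\d s\le \mathscr{E}_3(0)+CC'(C_0)\!\int_0^t\l\dr U\r_{H^2}^2(s)\,\d s .
\]
By the first inequality of Lemma \ref{comparaison}, $\mathscr{E}_3(0)\le K(C_0)\l\dr U\r_{H^2}^2(0)=:\mathscr{E}_0$, which is finite since the initial data lie in $H^2$ (and is controlled by $C_{high}$ through Theorem \ref{theoreme principal} at $t=0$). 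Feeding in the bootstrap assumption \eqref{bootstrap H2}, $\int_0^t\l\dr U\r_{H^2}^2\,\d s\le C_1^2\int_0^te^{2C_1s}\,\d s=\tfrac{C_1}{2}(e^{2C_1t}-1)\le\tfrac{C_1}{2}e^{2C_1t}$, so that $\mathscr{E}_3(t)\le \mathscr{E}_0+\tfrac12CC'(C_0)C_1e^{2C_1t}$. Plugging this into $(\star)$,
\[
\l\dr U\r_{H^2}^2(t)\le 2C_0^2+2K(C_0)\mathscr{E}_0+K(C_0)CC'(C_0)\,C_1\,e^{2C_1t}.
\]
Since $e^{2C_1t}\ge1$, it now suffices to fix $C_1$ large enough, depending on $C_0$ and on $\mathscr{E}_0$ (equivalently on $C_{high}$) but with no effect on the already-chosen $\e_0$, so that simultaneously $C_1\ge 8K(C_0)CC'(C_0)$ and $\tfrac18C_1^2\ge 2C_0^2+2K(C_0)\mathscr{E}_0$; taking also $C_1\ge\l\dr U\r_{H^2}(0)$ ensures $T_0>0$. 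With such a choice the right-hand side above is at most $\tfrac14C_1^2e^{2C_1t}=\big(\tfrac12C_1e^{C_1t}\big)^2$, which is precisely the improved bound.

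The one genuinely delicate point is the order in which the constants are fixed, which is what makes the independence of $\e_0$ from $C_{high}$ claimed in the statement hold: $C_0$ is given by \eqref{propa H1}, then $\e_0$ is shrunk using only the constants $K(C_0)$ (and the smallness constants from \eqref{propa L4} already built into Lemmas \ref{majorer R} and \ref{comparaison}), and only afterwards is $C_1$ taken large in terms of $C_0$ and the initial energy $\mathscr{E}_0$. Everything else is the bookkeeping of a Grönwall-type estimate. Once the improved bound is established on $[0,T_0]$, a standard continuity argument forces $T_0=T$, so $\l\dr U\r_{H^2}$ remains bounded on $[0,T)$ (here $T<\infty$); combined with \eqref{propa L4} one then re-applies Theorem \ref{theoreme principal} to continue the solution past $T$, contradicting maximality — but that final step is outside the scope of the present proposition.
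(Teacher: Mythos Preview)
Your proof is correct and follows essentially the same approach as the paper: reduce to the top-order part via Lemma \ref{comparaison}, integrate Proposition \ref{dernier coro}, control $\mathscr{R}$ by Lemma \ref{majorer R}, feed in the bootstrap, and close by choosing $C_1$ large and $\e_0$ small. The only cosmetic difference is that you absorb the $\e_0^2K(C_0)\l\dr U\r_{H^2}^2$ term into the left-hand side before anything else, whereas the paper keeps it on the right and bounds it using the bootstrap assumption, imposing $\e_0\le (6K)^{-1/2}$ at the end; both routes give the same constraints and the same conclusion.
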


\begin{proof}
The $H^1$ norm of $\dr U$ is already controled, so it suffices to prove the bound stated in the proposition for $\l \nabla^2\dr U \r_{L^2}$. For this,  we use the Proposition \ref{dernier coro}, which implies that for $t\in[0,T_0]$ (we also use Lemma \ref{comparaison} and \eqref{bootstrap H2}) :
\begin{align*}
\l \nabla^2\dr U \r_{L^2}^2(t) & \leq K^2\l\dr U \r_{H^2}^2(0)+\e_0^2K \l\dr U \r_{H^2}^2(t)+CK\int_0^t\mathscr{R}(s)\d s
\\&\leq K^2C_{high}^2+\e_0^2K C_1^2\exp(2C_1t)+CK\int_0^t\mathscr{R}(s)\d s,
\end{align*}
for some $C>0$ given by Proposition \ref{dernier coro}. We now use Lemma \ref{majorer R} :
\begin{align*}
\l \nabla^2\dr U \r_{L^2}^2(t) & \leq K^2C_{high}^2+\e_0^2K C_1^2\exp(2C_1t)+CKC'(C_0)\int_0^t\l\dr U \r_{H^2}^2(s)\d s
\\& \leq K^2C_{high}^2+\e_0^2K C_1^2\exp(2C_1t) + \frac{1}{2}CKC'(C_0)C_1\exp(2C_1t).
\end{align*}
We now choose $C_1\geq \max\left(3CKC'(C_0),\sqrt{6}KC_{high}\right)$ and $\e_0\leq \frac{1}{\sqrt{6K}}$, so that each term of the previous inequality is bounded by $\frac{1}{6}C_1^2\exp(2C_1t)$. This concludes the proof.

\end{proof}

By continuity of the quantities involved, the previous proposition contradicts the maximality of $T_0$, and thus proves that $T_0=T$. As explained at the beginning of Section \ref{section theo 2}, this concludes the proof of Theorem \ref{theo 2}.

\appendix

\section{Computations in the elliptic gauge}\label{appendix A}

In this section, we collect some computations for the spacetime metric in the elliptic gauge defined in Section \ref{section geometrie}. See also \cite{hunluk18}.

\subsection{Connection coefficients}\label{connection coefficients}
The 2+1 metric $g$ has the form
\begin{equation*}
    g=-N^2\d t+\Bar{g}_{ij}\left(\d x^i+\beta^i\d t\right)\left(\d x^j+\beta^j\d t\right),
\end{equation*}
with $\Bar{g}=e^{2\gamma}\delta$. 
In the basis $(e_0,\dr_i)$, we have $g_{00}=-N^2$, $g_{0i}=0$ and $g_{ij}=e^{2\gamma}\delta_{ij}$, which gives $\det(g)=-e^{4\gamma}N^2$. 

In the basis $(\dr_t,\dr_i)$ we have :
\begin{equation}\label{inverse de g}
    g^{-1}=\frac{1}{N^2}
    \begin{pmatrix}
    -1 & \beta^1 & \beta^2 \\
    \beta^1 & N^2e^{-2\gamma}-\left(\beta^1\right)^2 & -\beta^1\beta^2 \\
    \beta^2 & -\beta^1\beta^2 & N^2e^{-2\gamma}-\left(\beta^2\right)^2
    \end{pmatrix}
\end{equation}
This allows us to compute $\Box_gh$ for $h$ a function on $\mathcal{M}$ :
\begin{prop}\label{appendix box}
If $h$ is a function on $\M$, we have
\begin{align*}
    \Box_gh & =-\Ll^2h+\frac{e^{-2\gamma}}{N}\dive(N\nabla h)+\tau\Ll h
    \\&=-\Ll^2h+e^{-2\gamma}\Delta h+\frac{e^{-2\gamma}}{N}\nabla h\cdot\nabla N+\tau\Ll h.
\end{align*}
\end{prop}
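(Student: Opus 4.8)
The statement is a purely local computation: rewriting $\Box_g h = \frac{1}{\sqrt{|\det g|}}\dr_\mu\left((g^{-1})^{\mu\nu}\sqrt{|\det g|}\dr_\nu h\right)$ in the elliptic gauge, using the explicit form of $g$, its inverse, and $\det(g) = -e^{4\gamma}N^2$ already recorded just above the statement. The plan is to substitute everything and organize the resulting terms so that the geometric quantities $\Ll = e_0/N$, $\tau = \tr_{\bar g}K$, and the spatial operators $\nabla$, $\dive$ emerge naturally. I would carry this out in the $(\dr_t,\dr_i)$ coordinate basis, where $g^{-1}$ is given by \eqref{inverse de g} and $\sqrt{|\det g|} = e^{2\gamma}N$.

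First I would split the sum over $\mu,\nu$ into the purely temporal part $\mu=\nu=0$, the mixed parts $\mu=0,\nu=i$ (and symmetrically), and the purely spatial part $\mu=i,\nu=j$. From \eqref{inverse de g}: $(g^{-1})^{00} = -1/N^2$, $(g^{-1})^{0i} = \beta^i/N^2$, and $(g^{-1})^{ij} = e^{-2\gamma}\delta^{ij} - \beta^i\beta^j/N^2$. The temporal and mixed terms combine into $\frac{1}{e^{2\gamma}N}\left[-\dr_t\left(\frac{e^{2\gamma}N}{N^2}(\dr_t - \beta^i\dr_i)h\right) + \dr_i\left(\frac{e^{2\gamma}N}{N^2}\beta^i(\dr_t h) - \cdots\right)\right]$; the point is that the combination $\dr_t h - \beta^i\dr_i h = e_0 h$ appears, and after dividing by $N$ one recognizes $\Ll h = e_0 h / N$. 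Pushing the $e_0$ derivative through and collecting, the temporal+mixed contribution should yield $-\Ll^2 h$ plus a term proportional to $\Ll h$ with coefficient involving $e_0(e^{2\gamma}N)/(e^{2\gamma}N^2)$ and $\dive\beta/N$ — which, via the elliptic-gauge identity $N\tau = -2e_0\gamma + \dive\beta$ (equation \eqref{EQ tau}, or \eqref{appendix tau}), assembles into exactly $+\tau\Ll h$. The $\beta^i\beta^j/N^2$ piece of $(g^{-1})^{ij}$ cancels against residual cross terms from the mixed part — this cancellation is the computational heart and must be tracked carefully.

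Next I would handle the pure spatial part: $\frac{e^{-2\gamma}}{e^{2\gamma}N}\dr_i\left(e^{2\gamma}N\,\delta^{ij}\dr_j h\right)$. Since the $e^{2\gamma}$ cancels (the conformal factor of a $2$-metric is invisible to the conformal Laplacian in dimension $2$), this is $\frac{e^{-2\gamma}}{N}\dr_i(N\dr^i h) = \frac{e^{-2\gamma}}{N}\dive(N\nabla h)$, giving the first displayed line. Expanding the divergence by Leibniz, $\dive(N\nabla h) = N\Delta h + \nabla N\cdot\nabla h$, yields the second displayed line.

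The main obstacle is purely bookkeeping: correctly tracking all terms produced when $\dr_t$ and $\dr_i$ act on the coefficient $e^{2\gamma}N$ and on $\beta^i$, and verifying that the $\Ll h$-coefficient reduces precisely to $\tau$ via \eqref{EQ tau} and that all $\beta\beta$-quadratic terms cancel. There is no analytic difficulty — no regularity, no estimates — just the risk of sign or index errors, so I would double-check the computation by specializing to the flat case ($N=1$, $\gamma=0$, $\beta=0$, $\tau=0$), where the formula must collapse to $\Box h = -\dr_t^2 h + \Delta h$. This consistency check plus the identification of $\tau$ via the gauge relation is what I would write out; the rest is mechanical substitution.
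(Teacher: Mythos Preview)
Your proposal is correct and follows essentially the same route as the paper's proof: both start from the coordinate expression $\Box_g h = \frac{1}{\sqrt{|\det g|}}\dr_\mu\bigl((g^{-1})^{\mu\nu}\sqrt{|\det g|}\dr_\nu h\bigr)$ with $\sqrt{|\det g|}=e^{2\gamma}N$ and the inverse metric \eqref{inverse de g}, regroup so that $e_0 h$ and then $\Ll h$ emerge, isolate the spatial part as $\frac{e^{-2\gamma}}{N}\dive(N\nabla h)$, and finally identify the coefficient of $\Ll h$ as $-2\Ll\gamma+\dive\beta/N=\tau$ via \eqref{appendix tau}. The paper's write-up is slightly more streamlined in that it collects the $\beta$-terms into a single divergence $\dive(e^{2\gamma}\Ll h\,\beta)$ before expanding, which makes the cancellation of the $\beta\beta$-quadratic terms automatic rather than something to track separately, but the substance is identical.
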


\begin{proof}
By definition of $\Box_g$, we have :
\begin{align*}
\Box_g h & = \frac{1}{\sqrt{|\det(g)|}}\dr_\beta\left(g^{\beta\alpha}\sqrt{|\det(g)|}\dr_\alpha h \right)
\\& = \frac{e^{-2\gamma}}{N}\dr_t \left(\frac{e^{2\gamma}}{N} \left( -\dr_t h +\beta^1\dr_1h+\beta^2\dr_2h \right) \right)
\\& \quad + \frac{e^{-2\gamma}}{N} \dr_1 \left( \frac{e^{2\gamma}}{N} \left( \beta^1\dr_t h+\left(N^2e^{-2\gamma}-\left(\beta^1\right)^2\right) \dr_1 h -\beta^1\beta^2\dr_2h \right) \right)
\\& \quad + \frac{e^{-2\gamma}}{N} \dr_2 \left( \frac{e^{2\gamma}}{N} \left( \beta^2\dr_t h -\beta^1\beta^2\dr_1h+\left(N^2e^{-2\gamma}-\left(\beta^2\right)^2\right) \dr_2 h \right) \right),
\end{align*}
where we used the expression of $g^{-1}$ in the basis $(\dr_t,\dr_i)$ (see expression \eqref{inverse de g}). By rearranging the terms, we get :
\begin{align*}
\Box_g h & =- \frac{1}{N}\dr_t\Ll h -\frac{2\dr_t\gamma}{N}\Ll h + \frac{e^{-2\gamma}}{N}\dive\left( e^{2\gamma}\Ll h\beta + N \nabla h \right) 
\\& = -\Ll^2h+\frac{e^{-2\gamma}}{N}\dive(N\nabla h)+\left(-2\Ll\gamma+\frac{\dive(\beta)}{N} \right)\Ll h.
\end{align*}
This proves the proposition, by looking at \eqref{appendix tau}.
\end{proof}

We now compute the connection coefficients for the metric \eqref{metrique elliptique} in the basis $(e_0,\dr_i)$. Notice that $[e_0,\dr_i]=\dr_i\beta^j\dr_j$. Using this, we compute :
\begin{align*}
    g(D_0e_0,e_0)&=\frac{1}{2}e_0g_{00}=-\frac{1}{2}e_0(N^2)=-Ne_0N,\\
    g(D_ie_0,e_0)&=\frac{1}{2}\dr_ig_{00}=-\frac{1}{2}\dr_i(N^2)=-N\dr_iN,\\
    g(D_0e_0,\dr_i)&=-\frac{1}{2}\dr_ig_{00}-g(e_0,[e_0,\dr_i])=N\dr_iN,\\
    g(D_ie_0,\dr_j)&=\frac{1}{2}\left(e_0g_{ij}-g(\dr_i,[e_0,\dr_j])-g(\dr_j,[e_0,\dr_i])\right)=\frac{e^{2\gamma}}{2}\left(2e_0\gamma\delta_{ij}-\dr_i\beta^k\delta_{jk}-\dr_j\beta^k\delta_{ik}\right), \\
    g(D_0\dr_i,e_0)&=\frac{1}{2}\dr_ig_{00}+g(e_0,[e_0,\dr_i])=-N\dr_iN,\\
    g(D_0\dr_i,\dr_j)&=\frac{1}{2}\left(e_0g_{ij}+g(\dr_i,[\dr_j,e_0])+g(\dr_j,[e_0,\dr_i])\right)=\frac{e^{2\gamma}}{2}\left(2e_0\gamma\delta_{ij}+\dr_i\beta^k\delta_{jk}-\dr_j\beta^k\delta_{ik}\right),\\
    g(D_i\dr_j,e_0)&=\frac{1}{2}\left(-e_0g_{ij}-g(\dr_i,[\dr_j,e_0])+g(\dr_j,[e_0,\dr_i])\right)=-\frac{e^{2\gamma}}{2}\left(2e_0\gamma\delta_{ij}-\dr_i\beta^k\delta_{jk}-\dr_j\beta^k\delta_{ik}\right),\\
    g(D_i\dr_j,\dr_k)&=e^{2\gamma}\left(\delta_{ik}\dr_j\gamma+\delta_{jk}\dr_i\gamma-\delta_{ij}\delta^{\ell}_k\dr_{\ell}\gamma \right).\\
\end{align*}
The first two expressions are derived using $X(g(Y,Z))=g(D_XY,Z)+g(Y,D_XZ)$ and the other ones with the Koszul formula : 
\begin{equation*}
    2g(D_VW,X)=Vg(W,X)+Wg(X,V)-Xg(V,W)-g(V,[W,X])+g(W,[X,V])+g(X,[V,W]).
\end{equation*}
From the above calculations, we obtain 
\begin{align}
    D_0e_0 &=\Ll N e_0 +e^{-2\gamma}\delta^{ij}N\dr_iN\dr_j , \label{covariante 1}\\
    D_0\dr_i &=\dr_iN\Ll+\frac{1}{2}\left(2\delta^{j}_ie_0\gamma+\dr_i\beta^j-\delta_{ik}\delta^{j\ell}\dr_{\ell}\beta^k \right)\dr_j ,\label{covariante 2}\\
    D_ie_0 &=\dr_iN\Ll+\frac{1}{2}\left(2\delta^{j}_ie_0\gamma-\dr_i\beta^j-\delta_{ik}\delta^{j\ell}\dr_{\ell}\beta^k \right)\dr_j ,\label{covariante 3}\\
    D_i\dr_j &=\frac{e^{2\gamma}}{2N}\left(2\delta_{ij}e_0\gamma-\left(\dr_i\beta^k \right)\delta_{jk}-\left(\dr_j\beta^k \right)\delta_{ik} \right)\Ll+\left(\delta^k_i\dr_j\gamma+\delta^k_j\dr_i\gamma-\delta_{ij}\delta^{k\ell}\dr_{\ell}\gamma\right)\dr_k.\label{covariante 4}
\end{align}

\subsection{Decomposition of the Ricci tensor}\label{ricci tensor}

\begin{prop}
Given $g$ of the form \eqref{metrique elliptique}, we have the following identities :
\begin{align}
    K_{ij}&=-\frac{\delta_{ij}}{2}\Ll\left( e^{2\gamma}\right)+\frac{e^{2\gamma}}{2N}\left(\dr_i\beta_j+\dr_j\beta_i\right),\label{seconde forme fonda}\\
    H_{ij}&=\frac{e^{2\gamma}}{2N}(L\beta)_{ij},\label{appendix beta}\\
    \tau &=-2\Ll\gamma+\frac{\dive\left(\beta\right)}{N}.\label{appendix tau}
\end{align}
\end{prop}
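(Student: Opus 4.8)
The three identities \eqref{seconde forme fonda}, \eqref{appendix beta}, \eqref{appendix tau} are purely computational consequences of the definitions of $K$, $H$, $\tau$ together with the explicit form \eqref{metrique elliptique} of the metric. The plan is to start from the definition $K_{ij}=-\frac{1}{2N}\mathcal{L}_{e_0}\Bar{g}_{ij}$ with $\Bar{g}_{ij}=e^{2\gamma}\delta_{ij}$, unravel the Lie derivative along $e_0=\dr_t-\beta^k\dr_k$, and then read off the trace and traceless parts to obtain $\tau$ and $H$ respectively. Since $\Bar{g}$ is a genuinely Riemannian (fixed $t$, spatial) $2$-tensor, one must be a bit careful about whether the Lie derivative is taken along the spacetime vector field $e_0$ acting on the full tensor $\Bar g$ or along its spatial projection; the cleanest route is to use the connection coefficients already computed in Appendix \ref{connection coefficients}, namely \eqref{covariante 2}--\eqref{covariante 3}, via the formula $K_{ij}=-g(D_i \Ll,\dr_j)$ restricted to the tangential directions (recalling $\Ll=e_0/N$).

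\textbf{Key steps.} First I would compute $\mathcal{L}_{e_0}\Bar{g}_{ij}$ directly: writing $\mathcal{L}_{e_0}\Bar g_{ij}=e_0(\Bar g_{ij})+\Bar g_{kj}\dr_i\beta^k+\Bar g_{ik}\dr_j\beta^k$ (the extra terms coming from $[e_0,\dr_i]=\dr_i\beta^k\dr_k$), and substituting $\Bar g_{ij}=e^{2\gamma}\delta_{ij}$, one gets
\begin{equation*}
\mathcal{L}_{e_0}\Bar g_{ij}=e_0(e^{2\gamma})\delta_{ij}+e^{2\gamma}\left(\dr_i\beta^k\delta_{kj}+\dr_j\beta^k\delta_{ik}\right)=e_0(e^{2\gamma})\delta_{ij}+e^{2\gamma}\left(\dr_i\beta_j+\dr_j\beta_i\right),
\end{equation*}
where indices on $\beta$ are lowered with $\delta$. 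Dividing by $-2N$ and recalling $\Ll(e^{2\gamma})=\frac{1}{N}e_0(e^{2\gamma})$ gives \eqref{seconde forme fonda}. Second, I would take the $\delta$-trace: $\delta^{ij}K_{ij}=-\frac{1}{N}e_0\gamma\cdot 2 + \frac{e^{2\gamma}}{2N}\cdot 2\,\delta^{ij}\dr_i\beta_j$ — here one uses $\delta^{ij}e_0(e^{2\gamma})\delta_{ij}=2e_0(e^{2\gamma})=4e^{2\gamma}e_0\gamma$ — but one must remember $\tau=\tr_{\Bar g}K=e^{-2\gamma}\delta^{ij}K_{ij}$, and carrying the $e^{-2\gamma}$ factor through yields exactly $\tau=-2\Ll\gamma+\frac{\dive\beta}{N}$, which is \eqref{appendix tau}. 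Third, plug $\tau$ back into $H_{ij}=K_{ij}-\frac{1}{2}\Bar g_{ij}\tau$ from \eqref{def H}: the $\Ll(e^{2\gamma})$-type terms and the $\dive\beta$ terms recombine, and using the definition $(L\beta)_{ij}=\delta_{jk}\dr_i\beta^k+\delta_{ik}\dr_j\beta^k-\delta_{ij}\dr_k\beta^k$ of the conformal Killing operator one checks that $H_{ij}=\frac{e^{2\gamma}}{2N}(L\beta)_{ij}$, giving \eqref{appendix beta}.

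\textbf{Main obstacle.} There is no genuine analytic difficulty here; the only thing that requires care is bookkeeping of the conformal factors $e^{2\gamma}$ — in particular distinguishing $\tr_\delta$ from $\tr_{\Bar g}$, and making sure the term $e_0(e^{2\gamma})$ is correctly identified with $2e^{2\gamma}e_0\gamma$ so that the $-2\Ll\gamma$ appears with the right coefficient. I would double-check the final step by verifying that $H_{ij}$ as computed is automatically $\delta$-traceless (equivalently $\Bar g$-traceless, since $\Bar g$ is conformal to $\delta$), which is immediate from $\delta^{ij}(L\beta)_{ij}=0$; this serves as a consistency check on the trace split. The identities then follow, and in particular \eqref{appendix beta} is what feeds into the elliptic equation \eqref{EQ beta} for $\beta$.
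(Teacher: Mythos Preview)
Your approach is correct and is exactly what the paper does (the paper's proof is the one-liner ``\eqref{seconde forme fonda} follows from \eqref{Kij}, and \eqref{appendix beta}, \eqref{appendix tau} follow from \eqref{seconde forme fonda}''). One slip to fix: in your displayed formula for $\mathcal{L}_{e_0}\Bar g_{ij}$ the $\beta$-terms should carry minus signs, $\mathcal{L}_{e_0}\Bar g_{ij}=e_0(\Bar g_{ij})-\Bar g_{kj}\dr_i\beta^k-\Bar g_{ik}\dr_j\beta^k$ (since $(\mathcal{L}_X T)_{ij}=X(T_{ij})-T([X,\dr_i],\dr_j)-T(\dr_i,[X,\dr_j])$ with $[e_0,\dr_i]=\dr_i\beta^k\dr_k$); with that correction, dividing by $-2N$ indeed yields \eqref{seconde forme fonda} and the rest goes through as you wrote.
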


\begin{proof}
The equation \eqref{seconde forme fonda} follows from \eqref{Kij}, and \eqref{appendix beta} and \eqref{appendix tau} follow from \eqref{seconde forme fonda}.
\end{proof}

\begin{prop}
Given $g$ of the form \eqref{metrique elliptique}, the components of the Ricci tensor in the basis $(e_0,\dr_i)$ are given by
\begin{align}
    R_{ij} &=\delta_{ij}\left(-\Delta\gamma+\frac{\tau^2}{2}e^{2\gamma}-\frac{e^{2\gamma}}{2}\Ll\tau-\frac{\Delta N}{2N}\right) -\Ll H_{ij}-2e^{-2\gamma}H_i^{\;\ell}H_{j\ell}\label{appendix Rij}\\& \quad+\frac{1}{N}\left( \dr_j\beta^kH_{ki}+\dr_i\beta^kH_{kj}\right)-\frac{1}{N}\left(  \dr_i\dr_jN-\frac{1}{2}\delta_{ij}\Delta N-\left( \delta_i^k\dr_j\gamma+\delta_j^k\dr_i\gamma-\delta_{ij}\delta^{\ell k}\dr_{\ell}\gamma \right)\dr_k N  \right)\nonumber,\\
    R_{0j} &= N\left(\frac{1}{2}\dr_j\tau-e^{-2\gamma}\dr^iH_{ij} \right),\label{appendix R0j}\\
    R_{00} &= N\left(e_0\tau-e^{-4\gamma}N\left|H\right|^2-\frac{N\tau^2}{2}+e^{-2\gamma}\Delta N \right)\label{appendix R00}.
\end{align}
Moreover, 
\begin{align}
    \delta^{ij}R_{ij}&= 2\left(-\Delta\gamma+\frac{\tau^2}{2}e^{2\gamma}-\frac{e^{2\gamma}}{2}\Ll\tau-\frac{\Delta N}{2N}\right),\label{appendix trace ricci}\\
    R&=-2\Ll\tau +\frac{3}{2}\tau^2 + e^{-4\gamma}\left|H\right|^2-2e^{-2\gamma}\frac{\Delta N}{N} - 2e^{-2\gamma}\Delta\gamma.\label{appendix R}
\end{align}
\end{prop}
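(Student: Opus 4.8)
The statement to be proved is the computation of the components of the Ricci tensor of the metric \eqref{metrique elliptique} in the moving frame $(e_0,\dr_i)$, together with the trace of the spatial Ricci tensor and the scalar curvature. The natural route is a direct computation from the curvature formula $R(X,Y)Z=D_XD_YZ-D_YD_XZ-D_{[X,Y]}Z$ using the connection coefficients \eqref{covariante 1}--\eqref{covariante 4} already established in Appendix \ref{appendix A}. The Ricci tensor is then obtained by contracting: $R_{\mu\nu}=g^{\alpha\beta}g(R(e_\alpha,e_\mu)e_\nu,e_\beta)$, where $(e_\alpha)=(e_0,\dr_1,\dr_2)$ and one uses $g_{00}=-N^2$, $g_{ij}=e^{2\gamma}\delta_{ij}$, $g_{0i}=0$, hence $g^{00}=-N^{-2}$, $g^{ij}=e^{-2\gamma}\delta^{ij}$. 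One should however shorten the work considerably by invoking the Gauss--Codazzi framework relative to the foliation $\Sigma_t$: with $\Ll$ the unit normal, $\Bar{g}$ the induced metric, $K$ the second fundamental form \eqref{Kij}, the standard $2+1$ splitting gives the Gauss equation, the Codazzi equation (which yields $R_{0j}$ after contracting, i.e.\ the momentum constraint form $R_{0j}=N(\tfrac12\dr_j\tau-e^{-2\gamma}\dr^iH_{ij})$ once $\tau$ is split off via \eqref{def H}), and the Ricci/Raychaudhuri equation $R_{\Ll\Ll}=-\Ll\tr K-|K|^2+\Bar{g}^{ij}\tfrac{D_iD_jN}{N}$ for the $\Ll\Ll$ component. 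Passing from $\Ll$ to $e_0=N\Ll$ produces the factors of $N$ in \eqref{appendix R0j} and \eqref{appendix R00}, and one then inserts $K_{ij}=H_{ij}+\tfrac12\Bar{g}_{ij}\tau$ and $\Bar{g}=e^{2\gamma}\delta$ to get the stated form, using that for a traceless tensor $|K|^2=|H|^2+\tfrac12\tau^2$ with $|H|^2=e^{-4\gamma}H_{ij}H_{k\ell}\delta^{ik}\delta^{j\ell}$.

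\textbf{Key steps, in order.} First I would record the frame, the dual coframe, and the Lie bracket $[e_0,\dr_i]=\dr_i\beta^j\dr_j$, together with the connection coefficients \eqref{covariante 1}--\eqref{covariante 4}. Second, I would compute $R_{ij}$: this is the most involved piece, so I would split it as $R_{ij}=g(R(e_0,\dr_i)\dr_j,e_0)(-N^{-2})+g(R(\dr_k,\dr_i)\dr_j,\dr_\ell)e^{-2\gamma}\delta^{k\ell}$. The second term is the intrinsic Ricci curvature of the conformally flat metric $e^{2\gamma}\delta$, which in dimension two equals $-\Bar{g}_{ij}\tfrac12\Bar{R}$ with $\Bar{R}=-2e^{-2\gamma}\Delta\gamma$, giving the $-\delta_{ij}\Delta\gamma$ contribution; the first term produces the $\Ll H_{ij}$, the $H\cdot H$ quadratic term, the $\dr\beta\cdot H$ terms, and the Hessian-of-$N$ terms, via the Raychaudhuri-type identity applied tangentially. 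Third, I would take the $\delta^{ij}$-trace of \eqref{appendix Rij}: the key simplifications are that $\delta^{ij}\Ll H_{ij}=\Ll(\delta^{ij}H_{ij})=0$ since $H$ is traceless with respect to $\delta$ up to a $\Ll\gamma$ correction — more carefully, $\delta^{ij}H_{ij}=0$ identically, and $\delta^{ij}H_i^{\;\ell}H_{j\ell}=e^{-2\gamma}\delta^{ij}\delta^{k\ell}H_{ik}H_{j\ell}=|H|^2e^{2\gamma}$; likewise $\delta^{ij}\dr_i\beta^kH_{kj}$ vanishes by tracelessness and symmetry of $H$, and the Hessian terms collapse using $\delta^{ij}(\dr_i\dr_jN-\tfrac12\delta_{ij}\Delta N-(\cdots)\dr_kN)=0$. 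This gives \eqref{appendix trace ricci}. Fourth, I would compute $R_{00}$ directly from $R_{00}=g(R(\dr_k,e_0)e_0,\dr_\ell)e^{-2\gamma}\delta^{k\ell}$, which is the Raychaudhuri equation for $e_0$ and yields \eqref{appendix R00} after substituting $K=H+\tfrac12\Bar g\tau$. Fifth, $R_{0j}$ from the Codazzi equation, contracting and splitting off the trace of $K$. Finally, the scalar curvature $R=g^{00}R_{00}+g^{ij}R_{ij}=-N^{-2}R_{00}+e^{-2\gamma}\delta^{ij}R_{ij}$; plugging in \eqref{appendix R00} and \eqref{appendix trace ricci} and collecting the $\tau^2$ coefficients ($-\tfrac12-\tfrac12+\tfrac32\cdot$?\ — precisely $N^{-2}\cdot(-N)\cdot(-N\tau^2/2)=-\tau^2/2$ from $R_{00}$ combined with $-N^{-2}\cdot N e_0\tau$ giving $-\Ll\tau$, and $e^{-2\gamma}\cdot 2\cdot(\tau^2/2)e^{2\gamma}=\tau^2$ from the trace, plus the hidden $\Ll\tau$ from the trace term $-e^{-2\gamma}\cdot 2\cdot\tfrac12 e^{2\gamma}\Ll\tau=-\Ll\tau$) yields the coefficient $\tfrac32\tau^2$ and the two $\Ll\tau$ terms combining to $-2\Ll\tau$, together with $e^{-4\gamma}|H|^2$ (from $+N^{-2}\cdot N\cdot N e^{-4\gamma}|H|^2$), $-2e^{-2\gamma}\Delta N/N$, and $-2e^{-2\gamma}\Delta\gamma$, which is exactly \eqref{appendix R}.

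\textbf{Main obstacle.} The hard part is the computation of $R_{ij}$ in \eqref{appendix Rij}: there are many terms, and keeping careful track of which Christoffel-type coefficients contribute — especially the antisymmetric $\dr\beta$ pieces, which must reorganize into $\dr_{(i}\beta^kH_{kj)}$ and into the second fundamental form $H_{ij}$ via \eqref{appendix beta} — requires discipline. In particular one must verify that the terms involving $e_0\gamma$ combine correctly with those from $L\beta$ so that only $H_{ij}=\tfrac{e^{2\gamma}}{2N}(L\beta)_{ij}$ and $\tau$ survive, rather than $\gamma$-derivatives explicitly; this is where the gauge choice $\Bar g=e^{2\gamma}\delta$ is genuinely used. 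Everything else (the traces, $R_{00}$, $R_{0j}$, $R$) then follows by contraction and routine algebra once \eqref{appendix Rij} is in hand, so I would present \eqref{appendix Rij} with the full computation and treat the remaining identities as direct corollaries, stating that they follow by taking traces and substituting \eqref{appendix beta}, \eqref{appendix tau}, \eqref{def H}.
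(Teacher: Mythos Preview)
Your overall approach is essentially the paper's: invoke the standard $2+1$ Gauss--Codazzi/Raychaudhuri decomposition for the foliation $\Sigma_t$, then substitute $K=H+\tfrac12 e^{2\gamma}\delta\,\tau$ and $\Bar g=e^{2\gamma}\delta$. The paper cites the formulas
\[
R_{ij}=\Bar R_{ij}+K_{ij}\tau-2K_i^{\;\ell}K_{j\ell}-N^{-1}(\mathcal{L}_{e_0}K_{ij}+\Bar D_i\dr_jN),\qquad
R_{0j}=N(\dr_j\tau-\Bar D_\ell K^{\ell}_{\;j}),\qquad
R_{00}=N(e_0\tau-N|K|^2+\Delta_{\Bar g}N)
\]
from \cite{cho09} and then simplifies; your plan to derive the same three formulas from Gauss--Codazzi and Raychaudhuri is equivalent, so the route and difficulty are the same.

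There is, however, one concrete mistake in your outline of the $\delta^{ij}$-trace. You assert that $\delta^{ij}\dr_i\beta^kH_{kj}$ vanishes ``by tracelessness and symmetry of $H$''. It does not: using symmetry and tracelessness of $H$ with respect to $\delta$ one only gets
\[
\delta^{ij}\dr_i\beta^kH_{kj}=\tfrac12(L\beta)^{jk}H_{kj},
\]
which is generically nonzero. What actually happens (and what the paper does) is that this contribution \emph{cancels} the trace of the quadratic $H$-term: from $\delta^{ij}(-2e^{-2\gamma}H_i^{\;\ell}H_{j\ell})=-2e^{-2\gamma}|H|^2$ and $\delta^{ij}\tfrac1N(\dr_j\beta^kH_{ki}+\dr_i\beta^kH_{kj})=\tfrac1N H_{ij}(L\beta)^{ij}$, the identity $H_{ij}=\tfrac{e^{2\gamma}}{2N}(L\beta)_{ij}$ from \eqref{appendix beta} yields $\tfrac1N H_{ij}(L\beta)^{ij}=2e^{-2\gamma}|H|^2$, so the two terms sum to zero. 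With this correction your trace argument goes through and the rest of your plan is fine.
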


\begin{proof}
From Chapter 6 of \cite{cho09}, we have
\begin{align}
    R_{ij} & = \Bar{R}_{ij}+K_{ij}\tr_{\Bar{g}}K-2K_i^{\;\ell}K_{j\ell}- N^{-1}\left( \mathcal{L}_{e_0}K_{ij}+D_i\dr_jN\right),\label{Rij CB}\\
    R_{0j} & = N\left( \dr_j( \tr_{\Bar{g}}K)-D_{\ell}K^{\ell}_{\;j}\right) ,\label{R0j CB}\\
    R_{00} & = N\left( e_0(\tr_{\Bar{g}}K)-N|K|^2+\Delta_{\Bar{g}} N \right),\label{R00 CB}
\end{align}
where $\Bar{D}$, $\Bar{R}_{ij}$ and $\Delta_{\Bar{g}}$ are defined with respect to $\Bar{g}$. First, by \eqref{def H} and the connection coefficients computations, \eqref{Rij CB} becomes 
\begin{align}
    R_{ij} & =-\delta_{ij}\Delta\gamma+\tau\left( H_{ij}+\frac{1}{2}e^{2\gamma}\delta_{ij}\tau\right)-2e^{-2\gamma}\left( H_i^{\;\ell} +\frac{1}{2}e^{2\gamma}\delta_i^{\ell}\tau\right) \left( H_{j\ell} +\frac{1}{2}e^{2\gamma}\delta_{j\ell}\tau\right) \label{Rij inter}\\&\quad -\frac{1}{N} \left( \mathcal{L}_{e_0}K_{ij}+\dr_i\dr_jN- \left( \delta_i^k\dr_j\gamma+\delta_j^k\dr_i\gamma-\delta_{ij}\delta^{k\ell}\dr_{\ell}\gamma\right) \dr_kN  \right). \nonumber
\end{align}
To proceed, we compute $\mathcal{L}_{e_0}K_{ij}$ by considering $H_{ij}$ and $\tau$ :
\begin{align*}
    \mathcal{L}_{e_0}H_{ij}&=e_0H_{ij}-\dr_j\beta^kH_{ki}-\dr_i\beta^kH_{kj},\\
    \mathcal{L}_{e_0}(\tau\Bar{g}_{ij})&=e^{2\gamma}\delta_{ij}e_0\tau-2N\tau K_{ij}.
\end{align*}
Therefore, using \eqref{def H} and plugging $\mathcal{L}_{e_0}K_{ij}$ into \eqref{Rij inter}, we obtain \eqref{appendix Rij}. 

The expression of $R_{0j}$ in \eqref{appendix R0j} follows from \eqref{R0j CB} and the fact that for any covariant symmetric 2-tensor $A_ij$,
\begin{equation*}
    \Bar{g}^{ik}\Bar{D}_kA_{ij}=e^{-2\gamma}\dr^iA_ij-\dr_j\gamma\tr_{\Bar{g}}A.
\end{equation*}

Using \eqref{R00 CB} and the conformal invariance of the Laplacian we easily get \eqref{appendix R00}. 

To prove \eqref{appendix trace ricci}, we first note that
\begin{equation*}
    \delta^{ij}(\dr_j\beta^k H_{ki}+\dr_i\beta^k H_{kj})=H_{ij}(L\beta)^{ij}.
\end{equation*}
Combining this with \eqref{appendix beta}, we obtain
\begin{equation*}
    \delta^{ij}\left( -2e^{-2\gamma}H_i^{\;\ell}H_{j\ell}+\frac{1}{N}(\dr_j\beta^k H_{ki}+\dr_i\beta^k H_{kj}) \right)=0.
\end{equation*}
Taking the trace of \eqref{appendix Rij} and using this identity yield \eqref{appendix trace ricci}.

Finally, by putting \eqref{metrique elliptique}, \eqref{appendix R00} and \eqref{appendix trace ricci} we easily get \eqref{appendix R}.
\end{proof}

\subsection{The stress-energy-momentum tensor}\label{T mu nu}

Define $T_{\mu\nu}$ by
\begin{equation*}
    T_{\mu\nu}=2\dr_{\mu}\ffi\dr_{\nu}\ffi-g_{\mu\nu}g^{\alpha\beta}\dr_{\alpha}\ffi\dr_{\beta}\ffi+\frac{1}{2}e^{-4\ffi}\left( 2\dr_{\mu}\omega\dr_{\nu}\omega-g_{\mu\nu}g^{\alpha\beta}\dr_{\alpha}\omega\dr_{\beta}\omega\right).
\end{equation*}

\begin{prop}
The following identities are satisfied (with respect to the $(e_0,\dr_i)$ basis) :
\begin{align}
    T_{00}&= (e_0\ffi)^2+e^{-2\gamma}N^2|\nabla\ffi|^2+\frac{1}{4}e^{-4\ffi}\left(  (e_0\omega)^2+e^{-2\gamma}N^2|\nabla\omega|^2\right),\label{T 00}\\
    T_{0j}&= 2e_0\ffi\dr_j\ffi+\frac{1}{2}e^{-4\ffi}e_0\omega\dr_j\omega,\label{T 0j}\\
    T_{ij}&= 2\dr_i\ffi\dr_j\ffi+\frac{e^{2\gamma}}{N^2}(e_0\ffi)^2\delta_{ij}-|\nabla\ffi|^2\delta_{ij}\label{T ij}\\&\qquad+\frac{1}{4}e^{-4\ffi}\left( 2\dr_i\omega\dr_j\omega+\frac{e^{2\gamma}}{N^2}(e_0\omega)^2\delta_{ij}-|\nabla\omega|^2\delta_{ij}\right),\nonumber\\
    \tr_gT &= -g^{\alpha\beta}\dr_{\alpha}\ffi\dr_{\beta}\ffi-\frac{1}{4}e^{-4\ffi}g^{\alpha\beta}\dr_{\alpha}\omega\dr_{\beta}\omega,\\
    T_{00}-g_{00}\tr_gT &= 2\left(e_0\ffi\right)^2+\frac{1}{2}e^{-4\ffi}(e_0\omega)^2,\label{JSP}\\
    T_{ij}-g_{ij}\tr_gT &= 2\dr_{i}\ffi\dr_{j}\ffi+\frac{1}{2}e^{-4\ffi}\dr_{i}\omega\dr_{j}\omega,\label{JSP 2}\\
    \delta^{ij}\left( T_{ij}-g_{ij}\tr_gT\right) & =2\left|\nabla\ffi \right|^2+\frac{1}{2}e^{-4\ffi}\left|\nabla\omega \right|^2,\label{JSP 3}\\
    D^{\mu}T_{\mu\nu}&=2(\Box_g\ffi)\dr_{\nu}\ffi+\frac{1}{2}e^{-4\ffi}(\Box_g\omega)\dr_{\nu}\omega \label{divergence de T}\\&\qquad-e^{-4\ffi}\dr^\mu\ffi \left( 2\dr_{\mu}\omega\dr_{\nu}\omega-g_{\mu\nu}g^{\alpha\beta}\dr_{\alpha}\omega\dr_{\beta}\omega\right) .\nonumber
\end{align}
\end{prop}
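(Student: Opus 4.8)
The plan is to establish every identity by a direct computation, using only the block form \eqref{metrique elliptique} of $g$ in the frame $(e_0,\dr_i)$, the definition \eqref{tenseur energie impulsion } of $T_{\mu\nu}$, and the symmetry of the Hessian.

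First I would record the frame data. In the frame $(e_0,\dr_i)$ the metric is block-diagonal, $g_{00}=-N^2$, $g_{0i}=0$, $g_{ij}=e^{2\gamma}\delta_{ij}$ (this is \eqref{metrique elliptique}), so its inverse is block-diagonal as well, with $g^{00}=-N^{-2}$, $g^{0i}=0$, $g^{ij}=e^{-2\gamma}\delta^{ij}$ — exactly \eqref{inverse de g} read in this frame. The key point is that the $0$-component of $\d\ffi$ in this frame is $e_0\ffi$, so that
\begin{equation*}
g^{\alpha\beta}\dr_\alpha\ffi\dr_\beta\ffi=-\frac{1}{N^2}(e_0\ffi)^2+e^{-2\gamma}|\nabla\ffi|^2,
\end{equation*}
and likewise for $\omega$. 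Inserting these two identities and the values of $g_{\mu\nu}$ into \eqref{tenseur energie impulsion } and reading off the $(00)$, $(0j)$ and $(ij)$ entries yields \eqref{T 00}, \eqref{T 0j} and \eqref{T ij} after collecting terms; this step is pure substitution.

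Next I would compute the trace and the three algebraic combinations. Contracting \eqref{tenseur energie impulsion } with $g^{\mu\nu}$ and using $g^{\mu\nu}g_{\mu\nu}=3$ (the spacetime is $2+1$-dimensional) turns $2\dr_\mu\ffi\dr_\nu\ffi-g_{\mu\nu}g^{\alpha\beta}\dr_\alpha\ffi\dr_\beta\ffi$ into $(2-3)\,g^{\alpha\beta}\dr_\alpha\ffi\dr_\beta\ffi$, and similarly for the $\omega$ part, which gives $\tr_gT$. The combinations $T_{00}-g_{00}\tr_gT$, $T_{ij}-g_{ij}\tr_gT$ and $\delta^{ij}(T_{ij}-g_{ij}\tr_gT)$ are then obtained by elementary algebra from the frame expressions of the first step: using $g_{00}=-N^2$ and $g_{ij}=e^{2\gamma}\delta_{ij}$ one checks that the $|\nabla\ffi|^2$- and $|\nabla\omega|^2$-type terms cancel, leaving precisely the right-hand sides of \eqref{JSP}, \eqref{JSP 2} and \eqref{JSP 3}. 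As a consistency check, \eqref{JSP 2} is exactly $R_{\mu\nu}=T_{\mu\nu}-g_{\mu\nu}\tr_gT$ in dimension $2+1$, matching the Einstein equation in \eqref{EVE}.

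Finally, for the divergence identity \eqref{divergence de T} I would split $T_{\mu\nu}=T^\ffi_{\mu\nu}+T^\omega_{\mu\nu}$ and apply $D^\mu$ via the Leibniz rule together with $D^\mu g_{\mu\nu}=0$. For $T^\ffi$, the second-order terms $\dr^\mu\ffi\,D_\mu\dr_\nu\ffi$ produced by differentiating $\dr_\mu\ffi\dr_\nu\ffi$ cancel the terms $\dr^\mu\ffi\,D_\nu\dr_\mu\ffi$ coming from $D_\nu(g^{\alpha\beta}\dr_\alpha\ffi\dr_\beta\ffi)$, because the Hessian $D\d\ffi$ is symmetric; only $(\Box_g\ffi)\dr_\nu\ffi$ survives. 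For $T^\omega$ the same cancellation occurs, but now $D^\mu$ also falls on the scalar factor $e^{-4\ffi}$, producing $D^\mu(e^{-4\ffi})=-4e^{-4\ffi}\dr^\mu\ffi$ contracted against $2\dr_\mu\omega\dr_\nu\omega-g_{\mu\nu}g^{\alpha\beta}\dr_\alpha\omega\dr_\beta\omega$ — this is exactly the last term of \eqref{divergence de T} — together with $(\Box_g\omega)\dr_\nu\omega$ times the coefficient of $\omega$ in $T$. Collecting the surviving terms gives \eqref{divergence de T}. (Equivalently, one may invoke the general wave-map identity $D^\mu T_{\mu\nu}=h_{AB}\bigl(\Box_g u^A+\Gamma^A_{CD}(u)\dr^\lambda u^C\dr_\lambda u^D\bigr)\dr_\nu u^B$ for $u=(\ffi,\omega)$ with target metric $h=2(\d x)^2+\tfrac12 e^{-4x}(\d y)^2$, and compute its Christoffel symbols.) The only place where any care is needed is the bookkeeping in this last step — tracking which second-order terms cancel and correctly handling the derivatives that fall on $e^{-4\ffi}$; everything else is routine substitution.
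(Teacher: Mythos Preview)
Your approach is correct and is exactly the routine computation the paper has in mind: in the paper this proposition is stated in Appendix~\ref{appendix A} without any proof, so the implied argument is precisely the direct substitution you outline --- compute $g^{\alpha\beta}\dr_\alpha\ffi\dr_\beta\ffi$ in the block-diagonal frame, plug into \eqref{tenseur energie impulsion }, use $g^{\mu\nu}g_{\mu\nu}=3$ for the trace, and exploit the symmetry of the Hessian together with the Leibniz rule on $e^{-4\ffi}$ for the divergence identity. Your alternative remark via the wave-map stress-energy identity is a nice extra viewpoint but not needed here.
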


\section{Weighted Sobolev spaces}\label{appendix B}
Here are some results about weighted Sobolev spaces on $\R^2$, which are systematically used during the proof. Most of them can be found in the Appendix I of \cite{cho09}. 

\begin{lem}\label{B1}
Let $m\geq 1$, $p\in[1,\infty)$ and $\delta\in\R$, then 
\begin{align*}
    \|\nabla u\|_{W^{m-1,p}_{\delta+1}} & \lesssim \| u\|_{W^{m,p}_{\delta}},\\
    \|\nabla u\|_{C^{m-1}_{\delta}} & \lesssim \| u\|_{C^{m}_{\delta+1}}.
\end{align*}
\end{lem}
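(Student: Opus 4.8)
\textbf{Proof proposal for Lemma \ref{B1}.}

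The plan is to reduce both inequalities to the definitions of the weighted norms in Definition \ref{wss} and observe that the weight shift by one unit exactly compensates for the drop of one order of differentiation. First I would recall that for $m \geq 1$,
\begin{equation*}
    \|u\|_{W^{m,p}_{\delta}} = \sum_{|\beta| \leq m} \left\| \langle x\rangle^{\delta + |\beta|} \nabla^{\beta} u \right\|_{L^p},
\end{equation*}
so that the terms with $|\beta| \geq 1$ in this sum are precisely
\begin{equation*}
    \sum_{1 \leq |\beta| \leq m} \left\| \langle x\rangle^{\delta + |\beta|} \nabla^{\beta} u \right\|_{L^p}.
\end{equation*}

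The key step is then a reindexing: for each multi-index $\beta$ with $1 \leq |\beta| \leq m$, write $\nabla^{\beta} u = \nabla^{\beta'}(\nabla_j u)$ for some $j \in \{1,2\}$ and some multi-index $\beta'$ with $|\beta'| = |\beta| - 1 \leq m-1$. Setting $\eta = \delta + 1$, the weight $\langle x\rangle^{\delta + |\beta|}$ becomes $\langle x\rangle^{\eta + |\beta'|}$, which is exactly the weight appearing in the definition of $\|\nabla u\|_{W^{m-1,p}_{\delta+1}}$ attached to the derivative $\nabla^{\beta'}(\nabla u)$. Summing over all such $\beta$ (and over $j$) and using the convention in Definition \ref{wss} that the norm of a vector-valued object is the sum of the norms of its components, one obtains
\begin{equation*}
    \|\nabla u\|_{W^{m-1,p}_{\delta+1}} = \sum_{|\beta'| \leq m-1} \left\| \langle x\rangle^{\delta + 1 + |\beta'|} \nabla^{\beta'}(\nabla u) \right\|_{L^p} \lesssim \sum_{1 \leq |\beta| \leq m} \left\| \langle x\rangle^{\delta + |\beta|} \nabla^{\beta} u \right\|_{L^p} \leq \|u\|_{W^{m,p}_{\delta}},
\end{equation*}
where the implicit constant accounts only for the fact that a given derivative $\nabla^\beta u$ may be reached through several choices of $(\beta', j)$. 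The second inequality, for the weighted Hölder spaces $C^m_\delta$, follows by the identical bookkeeping with $L^p$ replaced by $L^\infty$ and $m$ playing the role of $m$ in both sides; here the weight shift is $\delta \mapsto \delta+1$ as written, matching $C^{m-1}_\delta$ on the left with $C^m_{\delta+1}$ on the right.

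There is essentially no analytic obstacle here: the statement is purely a matter of matching indices and weights against Definition \ref{wss}, and the only mild point to be careful about is the combinatorial constant arising because distinct pairs (lower-order multi-index, direction of the extra derivative) can produce the same top-order multi-index — this is harmless and absorbed into the $\lesssim$. I would therefore present the argument concisely, essentially as the display chain above together with its $L^\infty$ analogue.
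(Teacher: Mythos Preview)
Your argument is correct and is exactly the trivial unwinding of Definition \ref{wss} that the paper has in mind; the paper does not supply a proof for this lemma at all, treating it as an immediate consequence of the definitions. One small remark: for the second inequality the bookkeeping is not literally identical to the first, since after reindexing the left-hand weight is $\langle x\rangle^{\delta+|\beta|-1}$ while the right-hand weight is $\langle x\rangle^{(\delta+1)+|\beta|}$, and you need the harmless observation $\langle x\rangle\geq 1$ to compare them --- worth one explicit sentence rather than the phrase ``identical bookkeeping''.
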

We have an easy embedding result, which is a straightforward application of the Hölder’s inequality :
\begin{lem}\label{prop holder 2}
If $1\leq p_1\leq p_2\leq \infty$ and $\delta_2-\delta_1>2\left( \frac{1}{p_1}-\frac{1}{p_2} \right)$, then we have the continuous embedding 
\begin{equation*}
    L^{p_2}_{\delta_2}\xhookrightarrow{}L^{p_1}_{\delta_1}.
\end{equation*}
\end{lem}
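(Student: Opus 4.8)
The statement to establish is Lemma \ref{prop holder 2}: if $1\leq p_1\leq p_2\leq\infty$ and $\delta_2-\delta_1>2\left(\tfrac{1}{p_1}-\tfrac{1}{p_2}\right)$, then $L^{p_2}_{\delta_2}\hookrightarrow L^{p_1}_{\delta_1}$. The approach is a direct application of Hölder's inequality, as the statement advertises. First I would unwind the definitions: for $u\in L^{p_2}_{\delta_2}$ we have $\|u\|_{L^{p_2}_{\delta_2}}=\|\langle x\rangle^{\delta_2}u\|_{L^{p_2}}<\infty$, and we want to bound $\|u\|_{L^{p_1}_{\delta_1}}=\|\langle x\rangle^{\delta_1}u\|_{L^{p_1}}$. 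The natural move is to write $\langle x\rangle^{\delta_1}u=\langle x\rangle^{\delta_1-\delta_2}\cdot\langle x\rangle^{\delta_2}u$ and split the $L^{p_1}$ norm of this product via Hölder.

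\textbf{Key steps.} The case $p_2=\infty$ should be treated separately (or folded in with the convention $1/p_2=0$): there the condition reads $\delta_2-\delta_1>2/p_1$, and one bounds $\|\langle x\rangle^{\delta_1}u\|_{L^{p_1}}\leq \|\langle x\rangle^{\delta_1-\delta_2}\|_{L^{p_1}}\,\|\langle x\rangle^{\delta_2}u\|_{L^\infty}$, with the first factor finite precisely because $(\delta_1-\delta_2)p_1<-2$, i.e. $\langle x\rangle^{(\delta_1-\delta_2)p_1}$ is integrable on $\mathbb{R}^2$ (the threshold for $\langle x\rangle^{-a}\in L^1(\mathbb{R}^2)$ being $a>2$). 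For $p_2<\infty$, let $r$ be the Hölder conjugate exponent determined by $\tfrac{1}{p_1}=\tfrac{1}{p_2}+\tfrac{1}{r}$, so $\tfrac{1}{r}=\tfrac{1}{p_1}-\tfrac{1}{p_2}\geq 0$ and $r\in[1,\infty]$ is well-defined (with $r=\infty$ when $p_1=p_2$). Hölder's inequality gives
\begin{equation*}
\|\langle x\rangle^{\delta_1}u\|_{L^{p_1}}\leq \big\|\langle x\rangle^{\delta_1-\delta_2}\big\|_{L^r}\,\big\|\langle x\rangle^{\delta_2}u\big\|_{L^{p_2}}.
\end{equation*}
It remains to check that $\langle x\rangle^{\delta_1-\delta_2}\in L^r(\mathbb{R}^2)$: when $r<\infty$ this holds iff $(\delta_1-\delta_2)r<-2$, i.e. $\delta_2-\delta_1>2/r=2\left(\tfrac{1}{p_1}-\tfrac{1}{p_2}\right)$, which is exactly the hypothesis; when $r=\infty$ (the case $p_1=p_2$) the hypothesis forces $\delta_2>\delta_1$ so $\langle x\rangle^{\delta_1-\delta_2}$ is bounded, and in fact one could even allow $\delta_1\le\delta_2$ trivially. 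Combining, $\|u\|_{L^{p_1}_{\delta_1}}\lesssim \|u\|_{L^{p_2}_{\delta_2}}$, with implicit constant $\|\langle x\rangle^{\delta_1-\delta_2}\|_{L^r}$ depending only on $p_1,p_2,\delta_1,\delta_2$. Since this holds on the dense subspace $C^\infty_0$ and extends by density to the completion, the continuous embedding follows.

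\textbf{Main obstacle.} There is essentially no obstacle here — the proof is a one-line Hölder estimate once the correct conjugate exponent $r$ with $1/r=1/p_1-1/p_2$ is identified. The only points requiring a modicum of care are: (i) correctly bookkeeping the borderline/degenerate cases $p_2=\infty$ and $p_1=p_2$ (where $r=\infty$), and (ii) recalling the precise integrability threshold $\langle x\rangle^{-a}\in L^1(\mathbb{R}^2)\iff a>2$, which is what produces the factor $2$ and the strict inequality in the hypothesis. One should also note the embedding constant is uniform, so density of $C^\infty_0$ immediately upgrades the pointwise inequality to a genuine continuous embedding of Banach spaces.
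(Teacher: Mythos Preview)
Your proof is correct and is exactly the approach the paper has in mind: the paper does not spell out a proof but merely notes that the lemma ``is a straightforward application of the Hölder's inequality,'' which is precisely what you do by writing $\langle x\rangle^{\delta_1}u=\langle x\rangle^{\delta_1-\delta_2}\cdot\langle x\rangle^{\delta_2}u$ and applying Hölder with exponent $r$ defined by $\tfrac{1}{r}=\tfrac{1}{p_1}-\tfrac{1}{p_2}$.
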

Next, we have Sobolev embedding theorems for weighted Sobolev spaces :
\begin{prop}\label{embedding}
Let $s,m\in\N\cup\{0\}$, $1<p<\infty$.
\begin{itemize}
    \item If $s>\frac{2}{p}$ and $\beta\leq \delta+\frac{2}{p}$, then we have the continuous embedding
     \begin{equation*}
        W^{s+m,p}_{\delta}\xhookrightarrow{}C^m_{\beta}.
    \end{equation*}
    \item If $s<\frac{2}{p}$, then we have the continuous embedding
    \begin{equation*}
        W^{s+m,p}_{\delta}\xhookrightarrow{}W^{m,\frac{2p}{2-sp}}_{\delta+s}.
    \end{equation*}
\end{itemize}
\end{prop}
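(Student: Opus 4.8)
The statement is a standard pair of Sobolev embeddings for the weighted spaces $W^{m,p}_\delta(\R^2)$, and is proved (for instance) in Appendix~I of \cite{cho09}; the plan is to reproduce the usual argument, which consists of a dyadic decomposition into annuli, a rescaling of each annulus to a fixed one, an application of the \emph{classical} (unweighted) Sobolev embeddings there — with a constant uniform in the annulus — and a summation of the resulting pieces, the weights being tracked through the rescaling.

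First I would reduce both statements to the case $m=0$. If $|\gamma|\leq m$ and $v=\nabla^\gamma u$, then $\|u\|_{W^{s+m,p}_\delta}$ dominates $\|v\|_{W^{s,p}_{\delta+|\gamma|}}$ (using $|\gamma+\alpha|=|\gamma|+|\alpha|$ for multi-indices). Applying the $m=0$ statement to each $v$, with $\delta$ shifted to $\delta+|\gamma|$ and, in the first case, $\beta$ shifted to $\beta+|\gamma|$ — the hypothesis $\beta\leq\delta+2/p$ being invariant under this shift — and summing over $|\gamma|\leq m$ then gives the general case.

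For $m=0$ I would fix a dyadic partition of unity $1=\sum_{j\geq0}\phi_j$ with $\phi_0$ supported in $B_2$ and $\phi_j$ supported in $A_j:=\{2^{j-1}\leq|x|\leq2^{j+1}\}$ for $j\geq1$, satisfying $|\nabla^\alpha\phi_j|\lesssim 2^{-j|\alpha|}$, so that the $A_j$ have bounded overlap and $\langle x\rangle\approx2^j$ on $A_j$. On each $A_j$ with $j\geq1$ I would rescale $x=2^jy$ onto the fixed annulus $A=\{1/2\leq|y|\leq2\}$; writing $u_j(y)=u(2^jy)$, the scaling identities $\|\nabla_y^\alpha u_j\|_{L^p(A)}=2^{j|\alpha|-2j/p}\|\nabla_x^\alpha u\|_{L^p(A_j)}$, $\|u_j\|_{L^\infty(A)}=\|u\|_{L^\infty(A_j)}$ and $\|u_j\|_{L^q(A)}=2^{-2j/q}\|u\|_{L^q(A_j)}$, combined with the classical embeddings $W^{s,p}(A)\hookrightarrow C^0(A)$ (when $s>2/p$) and $W^{s,p}(A)\hookrightarrow L^q(A)$ with $1/q=1/p-s/2$ (when $s<2/p$) — and after moving the derivatives that land on $\phi_j$ back onto $u$ at the cost of factors $2^{-j|\alpha|}$ — yield the local bounds $\|\langle x\rangle^\beta u\|_{L^\infty(A_j)}\lesssim 2^{j(\beta-\delta-2/p)}\|u\|_{W^{s,p}_\delta(A_j)}$ and $\|\langle x\rangle^{\delta+s}u\|_{L^q(A_j)}\lesssim \|u\|_{W^{s,p}_\delta(A_j)}$ respectively, the key cancellation in the second being $2/q-2/p=-s$, which makes all the powers of $2^j$ recombine into the weight $\langle x\rangle^{\delta+|\alpha|}$. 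The piece $j=0$ is treated directly by unweighted Sobolev on $B_2$, where all weights are comparable to $1$.

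Finally I would sum over $j$. In the first case, the hypothesis $\beta\leq\delta+2/p$ makes $2^{j(\beta-\delta-2/p)}\leq1$, so taking the supremum over $j$ and using the bounded overlap of the $A_j$ gives $\|\langle x\rangle^\beta u\|_{L^\infty}\lesssim\|u\|_{W^{s,p}_\delta}$. In the second case $s>0$ forces $q=2p/(2-sp)\geq p$, hence $\ell^p\hookrightarrow\ell^q$; raising the local bound to the power $q$, summing over $j$, and using $\ell^p\hookrightarrow\ell^q$ together with the bounded overlap and the finiteness of the index set $\{|\alpha|\leq s\}$ gives $\|\langle x\rangle^{\delta+s}u\|_{L^q}\lesssim\|u\|_{W^{s,p}_\delta}$. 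The only place requiring genuine care — there is no deep obstacle — is the exponent bookkeeping under rescaling: one must check that the sharp thresholds $\beta\leq\delta+2/p$ and $1/q=1/p-s/2$ are exactly what forces the powers of $2^j$ to be non-positive, respectively to recombine into the target weight, since a marginally worse weight on either side would break the summation over $j$.
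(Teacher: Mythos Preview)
Your argument is correct and is the standard dyadic--rescaling proof of these weighted embeddings. Note, however, that the paper does not actually give a proof of this proposition: it is stated in Appendix~\ref{appendix B} as a known result, with a blanket reference to Appendix~I of \cite{cho09} for weighted Sobolev facts. So there is no ``paper's own proof'' to compare against; what you have written is essentially the argument one finds in \cite{cho09}, and it is fine.
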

We will also need a product estimate. 
\begin{prop}\label{prop prod}
Let $s,s_1,s_2\in\N\cup\{0\}$, $p\in[1,\infty]$, $\delta,\delta_1,\delta_2\in\R$ such that $s\leq\min(s_1,s_2)$, $s<s_1+s_2-\frac{2}{p}$ and $\delta<\delta_1+\delta_2+\frac{2}{p}$. Then we have the continuous multiplication property 
\begin{equation*}
    W^{s_1,p}_{\delta_1}\times W^{s_2,p}_{\delta_2} \xhookrightarrow{}W^{s,p}_{\delta}. 
\end{equation*}
\end{prop}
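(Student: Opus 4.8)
The plan is to deduce this weighted statement from the classical (unweighted) Sobolev multiplication theorem on a fixed bounded annulus, transporting it to each dyadic annulus of $\R^2$ by rescaling; this is the scheme used for such products in Appendix I of \cite{cho09}. Fix the reference annulus $A=\{y\in\R^2 : \tfrac12\le|y|\le 2\}$, and for $j\ge 1$ set $A_j=\{x : 2^{j-1}\le|x|\le 2^{j+1}\}$. Together with the ball $B_1$ these cover $\R^2$ with bounded overlap, and $\langle x\rangle\approx 2^j$ on $A_j$. On $B_1$ every weight $\langle x\rangle^{\eta}$ is comparable to $1$, so there the asserted bound is exactly the unweighted multiplication theorem $W^{s_1,p}(B_1)\cdot W^{s_2,p}(B_1)\hookrightarrow W^{s,p}(B_1)$, which is valid since $s\le\min(s_1,s_2)$ and $s<s_1+s_2-2/p$. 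It therefore remains to treat the annuli, and by density of $C_0^\infty$ it suffices to argue for $u,v\in C_0^\infty$.

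For a function $f$ on $A_j$ put $f_j(y)=f(2^j y)$ on $A$, so that $\nabla^\beta f_j(y)=2^{j|\beta|}(\nabla^\beta f)(2^j y)$. Combining this with $\langle x\rangle\approx 2^j$ on $A_j$ and the Jacobian factor $2^{2j}$ (dimension $2$), a change of variables gives, for every $\eta\in\R$ and $m\in\N$,
\[
\sum_{|\beta|\le m}\big\|\langle x\rangle^{\eta+|\beta|}\nabla^\beta f\big\|_{L^p(A_j)}\;\approx\;2^{j\eta+2j/p}\,\|f_j\|_{W^{m,p}(A)}
\]
(with $2/p$ read as $0$ when $p=\infty$). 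Applying this to $u$ with $(m,\eta)=(s_1,\delta_1)$, to $v$ with $(s_2,\delta_2)$, and to $w:=uv$ with $(s,\delta)$, and noting $w_j=u_jv_j$, the unweighted estimate $\|u_jv_j\|_{W^{s,p}(A)}\lesssim\|u_j\|_{W^{s_1,p}(A)}\|v_j\|_{W^{s_2,p}(A)}$ yields
\[
\sum_{|\beta|\le s}\big\|\langle x\rangle^{\delta+|\beta|}\nabla^\beta w\big\|_{L^p(A_j)}\;\lesssim\;2^{-j\kappa}\,\|u\|_{W^{s_1,p}_{\delta_1}(A_j)}\,\|v\|_{W^{s_2,p}_{\delta_2}(A_j)},
\]
where $\kappa:=\delta_1+\delta_2+2/p-\delta>0$ by hypothesis and $\|\cdot\|_{W^{m,p}_\eta(A_j)}$ denotes the restriction of the weighted norm to $A_j$. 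Here the three rescaling factors $2^{j\delta+2j/p}$, $2^{-j\delta_1-2j/p}$, $2^{-j\delta_2-2j/p}$ multiply to exactly $2^{-j\kappa}$, which is where the weight hypothesis enters.

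To conclude I would raise to the power $p$ (for $p<\infty$; for $p=\infty$ replace the $\ell^p$ sums below by $\sup_j$), sum over $j\ge1$, use the bounded overlap of the $A_j$ to recombine the $L^p(A_j)$ pieces into $L^p(\R^2)$ norms, and bound each localized factor crudely by the global one, $\|u\|_{W^{s_1,p}_{\delta_1}(A_j)}\le\|u\|_{W^{s_1,p}_{\delta_1}}$ and likewise for $v$. Since $\kappa>0$ the geometric series $\sum_{j\ge1}2^{-j\kappa p}$ converges and absorbs everything, giving $\|uv\|_{W^{s,p}_\delta}\lesssim\|u\|_{W^{s_1,p}_{\delta_1}}\|v\|_{W^{s_2,p}_{\delta_2}}$; adding the $B_1$ contribution and invoking density of $C_0^\infty$ finishes the proof.

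The step I expect to be the real obstacle is the unweighted multiplication theorem on the fixed domain $A$, which is itself nontrivial: one proves it by distinguishing $s\le 2/p$ (where one combines the Sobolev embeddings $W^{s_i,p}\hookrightarrow W^{s,2p/(2-sp)}$ of Proposition \ref{embedding} with H\"older's inequality) from $s>2/p$ (where $W^{s,\infty}(A)$-type bounds and an induction on the number of derivatives, together with H\"older, do the job). In the write-up I would simply cite this from \cite{cho09}. The only other point needing care is the exponent bookkeeping in the rescaling identity — it is routine but must be done precisely, since it is exactly the interaction between the $2^{j|\beta|}$ coming from differentiation and the $2^{2j/p}$ from the Jacobian that converts the hypothesis $\delta<\delta_1+\delta_2+2/p$ into the summability condition $\kappa>0$.
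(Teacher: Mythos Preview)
The paper does not prove this proposition: Appendix \ref{appendix B} merely states it and refers the reader to Appendix I of \cite{cho09} for the proof. Your dyadic-rescaling argument is correct and is precisely the scheme used in that reference, so your proposal agrees with the intended proof.

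One small remark: your exponent bookkeeping is right, and the key identity
\[
\sum_{|\beta|\le m}\big\|\langle x\rangle^{\eta+|\beta|}\nabla^\beta f\big\|_{L^p(A_j)}\;\approx\;2^{j\eta+2j/p}\,\|f_j\|_{W^{m,p}(A)}
\]
is exactly what makes the hypothesis $\delta<\delta_1+\delta_2+2/p$ equivalent to summability of $\sum_j 2^{-j\kappa p}$. You are also right that the only nontrivial ingredient is the unweighted product estimate on the fixed annulus $A$, which is itself a standard result (and is the part \cite{cho09} spends effort on); citing it is entirely appropriate here.
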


The following simple lemma will be useful as well.
\begin{lem}
Let $\alpha\in\R$ and $g\in L^{\infty}_{loc}$ such that $|g(x)|\lesssim \langle x \rangle^{\alpha}$. Then the multiplication by $g$ map $L^2_{\delta+\alpha}$ to $L^2_{\delta}$ with operator norm bounded by $\sup_{x\in\R^2}\frac{|g(x)|}{\langle x\rangle^{\alpha}}$.
\end{lem}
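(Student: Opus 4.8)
The statement to prove is the very last lemma of Appendix \ref{appendix B}: if $|g(x)|\lesssim \langle x\rangle^{\alpha}$, then multiplication by $g$ maps $L^2_{\delta+\alpha}$ to $L^2_{\delta}$ with operator norm at most $\displaystyle\sup_{x\in\R^2}\frac{|g(x)|}{\langle x\rangle^{\alpha}}$. This is an immediate unwinding of the definition of the weighted norm in Definition \ref{wss}, so the ``proof'' is really just a one-line computation; the plan is simply to write it out carefully.

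First I would recall from Definition \ref{wss} that for $u\in L^2_{\eta}=W^{0,2}_{\eta}$ we have $\|u\|_{L^2_{\eta}}=\|\langle x\rangle^{\eta}u\|_{L^2}$. Set $M\vcentcolon=\sup_{x\in\R^2}\dfrac{|g(x)|}{\langle x\rangle^{\alpha}}$, which is finite by the hypothesis $|g(x)|\lesssim\langle x\rangle^{\alpha}$ (and $g\in L^{\infty}_{loc}$ ensures the ratio is locally bounded, so the supremum is over a genuinely bounded function). Then for any $u\in L^2_{\delta+\alpha}$ one estimates pointwise
\begin{equation*}
\langle x\rangle^{\delta}|g(x)u(x)| = \frac{|g(x)|}{\langle x\rangle^{\alpha}}\,\langle x\rangle^{\delta+\alpha}|u(x)| \leq M\,\langle x\rangle^{\delta+\alpha}|u(x)|.
\end{equation*}
Taking the $L^2$ norm in $x$ of both sides and using monotonicity of the integral gives $\|gu\|_{L^2_{\delta}}=\|\langle x\rangle^{\delta}gu\|_{L^2}\leq M\|\langle x\rangle^{\delta+\alpha}u\|_{L^2}=M\|u\|_{L^2_{\delta+\alpha}}$, which is exactly the claimed bound on the operator norm.

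There is essentially no obstacle here: the only point worth a sentence is the finiteness of $M$, which follows by combining $|g(x)|\lesssim\langle x\rangle^{\alpha}$ (controlling the ratio near infinity) with $g\in L^{\infty}_{loc}$ and the fact that $\langle x\rangle^{\alpha}$ is bounded below on compact sets (controlling it on bounded regions). I would state this explicitly and then present the two displayed inequalities above as the entire argument.
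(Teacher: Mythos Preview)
Your proof is correct. The paper itself does not supply a proof for this lemma (it is introduced simply as ``The following simple lemma will be useful as well'' and stated without argument), so there is nothing to compare against; your one-line computation unwinding Definition~\ref{wss} is exactly what is implicit in the paper's omission.
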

The next result, which is due to McOwen, concerns the invertibility of the Laplacian on weighted Sobolev
spaces. Its proof can be found in \cite{mco79}.
\begin{thm}\label{mcowens 1}
Let $m,s\in\N\cup\{0\}$ and $-1+m<\delta<m$. The Laplace operator $\Delta:H^{s+2}_{\delta}\longrightarrow H^{s}_{\delta+2}$ is an injection with closed range
\begin{equation*}
    \enstq{f\in H^{s}_{\delta+2}}{ \forall v\in \cup_{i=0}^m\mathcal{H}_i,\; \int_{\R^2}fv=0},
\end{equation*}
where $\mathcal{H}_i$ is the set of harmonic polynomials of degree $i$. Moreover, $u$ obeys the estimate 
\begin{equation*}
    \| u\|_{H^{s+2}_{\delta}}\leq C(\delta,m,p)\|\Delta u\|_{H^{s}_{\delta+2}}.
\end{equation*}
\end{thm}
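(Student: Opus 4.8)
The plan is to follow McOwen's original argument \cite{mco79}; I sketch the main lines. The idea is to build an explicit right inverse of $\Delta$ on the candidate range using the Newtonian potential, to prove injectivity by a Liouville-type argument, and to combine the two into the closed-range statement, handling the regularity index $s$ separately.

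First I would reduce to $s=0$. If $\Delta u=f$ with $u\in H^{2}_{\delta}$ and $f\in H^{s}_{\delta+2}$, one covers $\R^{2}$ by the dyadic annuli $A_{j}=\{2^{j}\le|x|\le2^{j+1}\}$, rescales each to the fixed annulus $\{1\le|x|\le2\}$, applies interior elliptic regularity there, and sums the resulting estimates against the weight $\langle x\rangle^{2(\delta+|\beta|)}$ over $j\in\N$ (the weight is comparable to a constant on each $A_{j}$, and each spatial derivative buys the extra decay built into the index). This gives $\|u\|_{H^{s+2}_{\delta}}\lesssim\|f\|_{H^{s}_{\delta+2}}+\|u\|_{L^{2}_{\delta}}$, so it is enough to treat $\Delta\colon H^{2}_{\delta}\to L^{2}_{\delta+2}$ and then bootstrap. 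For injectivity, if $\Delta u=0$ with $u\in H^{2}_{\delta}$ then $u$ is harmonic, and by the embedding $H^{2}_{\delta}\hookrightarrow C^{0}_{\delta+1}$ (Proposition \ref{embedding}) it is bounded, hence constant by the classical Liouville theorem; since $\delta>-1+m\ge-1$, a nonzero constant does not lie in $L^{2}_{\delta}$, so $u\equiv0$ and $\Delta$ is injective.

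For the range, given $f\in H^{s}_{\delta+2}$ annihilating every $v\in\bigcup_{i=0}^{m}\mathcal H_{i}$, I would set $u=E*f$ with $E(x)=\tfrac1{2\pi}\ln|x|$ and exploit the far-field expansion $\ln|x-y|=\ln|x|-\operatorname{Re}\sum_{k\ge1}\tfrac1k(y/x)^{k}$ for $|y|<|x|$ (viewing $x,y$ as complex numbers): orthogonality of $f$ to $\mathcal H_{0},\dots,\mathcal H_{m}$ cancels the terms $\ln|x|$ and $\operatorname{Re}(y/x)^{k}$, $k\le m$, so $u$ decays like $\langle x\rangle^{-m-1}$, and combined with weighted estimates for the singular integral $\nabla^{2}E*(\cdot)$ and its lower-order and logarithmic corrections one gets $u\in H^{s+2}_{\delta}$ with $\|u\|_{H^{s+2}_{\delta}}\lesssim\|f\|_{H^{s}_{\delta+2}}$. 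This produces a bounded right inverse $G$ of $\Delta$ on the annihilator subspace, hence surjectivity onto it and closedness of the range. Together with injectivity, $u=G\Delta u$ for every $u$, so $G$ is also a left inverse; therefore $\Delta$ is an isomorphism onto $\{f\in H^{s}_{\delta+2}\colon\int fv=0\ \forall v\in\bigcup_{i=0}^{m}\mathcal H_{i}\}$ and the a priori bound $\|u\|_{H^{s+2}_{\delta}}\le C\|\Delta u\|_{H^{s}_{\delta+2}}$ follows at once. (The inclusion $\operatorname{Ran}\Delta\subseteq$ annihilator can also be checked directly: for $u\in H^{s+2}_{\delta}$ and $v\in\mathcal H_{i}$, $i\le m$, integration by parts on $B_{R}$ gives $\int_{B_{R}}(\Delta u)v=\int_{\partial B_{R}}(v\partial_{\nu}u-u\partial_{\nu}v)$, and the boundary integral vanishes as $R\to\infty$ precisely because $\delta>m-1$.)

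The technical core --- and the step I expect to be the main obstacle --- is the weighted estimate for the Newtonian potential: controlling $\nabla^{2}E*(\cdot)$ in the weighted $L^{2}$ spaces and keeping track of the finitely many polynomial and logarithmic correction terms that must be subtracted to enforce the $\langle x\rangle^{-m-1}$ decay, checking that these corrections are exactly compensated by the orthogonality conditions against $\bigcup_{i=0}^{m}\mathcal H_{i}$. This is also where the non-exceptionality of $\delta$ (i.e.\ $\delta\notin\Z$, which is implicit in $-1+m<\delta<m$) is indispensable. By contrast, the dyadic-rescaling reduction in $s$ and the Liouville argument for injectivity are routine.
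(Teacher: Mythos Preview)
The paper does not give its own proof of this theorem; it simply states the result and refers to McOwen's original paper \cite{mco79}. Your sketch is a correct and faithful outline of how such a proof proceeds (reduction in $s$ by dyadic rescaling plus interior estimates, Liouville for injectivity, explicit right inverse via the Newtonian potential with the harmonic--polynomial moments subtracted, weighted Calder\'on--Zygmund for $\nabla^{2}E*\cdot$), so there is nothing to compare against.

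One small technical remark: in your parenthetical direct check that $\mathrm{Ran}\,\Delta$ lies in the annihilator, the pointwise decay you implicitly use for $\nabla u$ on $\partial B_{R}$ is not available when $s=0$, since $H^{1}$ does not embed into $L^{\infty}$ in two dimensions. This is harmless: argue by density instead, noting that $C^{\infty}_{c}$ is dense in $H^{s+2}_{\delta}$ and that the linear functional $f\mapsto\int_{\R^{2}} f v$ is continuous on $L^{2}_{\delta+2}$ for each $v\in\mathcal H_{i}$ with $i\le m$ precisely because $\delta>m-1$ forces $\langle x\rangle^{-(\delta+2)}v\in L^{2}$.
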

The following is a corollary of Theorem \ref{mcowens 1} :
\begin{coro}\label{mcowens 2}
Let $-1<\delta<0$ and $f\in H^0_{\delta+2}$. Then there exists a solution u of
\begin{equation*}
    \Delta u=f,
\end{equation*}
which can be written 
\begin{equation*}
    u=\frac{1}{2\pi}\left(\int_{\R^2}f\right)\chi(|x|)\ln(|x|)+v,
\end{equation*}
where $\chi$ is as in Section \ref{subsection initial data} and $\|v\|_{H^2_{\delta}}\leq C(\delta)\|f\|_{H^0_{\delta+2}}$.
\end{coro}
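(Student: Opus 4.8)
The plan is to use $\frac{1}{2\pi}\chi\ln$ as an explicit parametrix for the Laplacian on $\R^2$, so that the remaining equation has a mean-zero right-hand side and can be solved by Theorem \ref{mcowens 1}.

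First I would record the two features of $\chi\ln$ that make this work. Since $\chi(|x|)$ vanishes for $|x|\le 1$ and equals $1$ for $|x|\ge 2$, the function $\chi\ln$ is smooth on all of $\R^2$ and coincides with the harmonic function $\ln|x|$ outside $B_2$; hence $\Delta(\chi\ln)=(\Delta\chi)\ln+2\nabla\chi\cdot\nabla\ln$ is smooth and supported in the annulus $\{1\le|x|\le 2\}$. Integrating by parts over $B_R$ with $R>2$ and using $\nabla\ln|x|\cdot\frac{x}{|x|}=\frac{1}{|x|}$ on $\dr B_R$ gives $\int_{\R^2}\Delta(\chi\ln)\,\d x=2\pi$. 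I would then set $w\vcentcolon=\frac{1}{2\pi}\left(\int_{\R^2}f\right)\chi\ln$ and $g\vcentcolon=\Delta w=\frac{1}{2\pi}\left(\int_{\R^2}f\right)\Delta(\chi\ln)$; this $g$ is smooth and compactly supported, hence lies in $H^0_{\delta+2}$, and by construction $\int_{\R^2}g=\int_{\R^2}f$.

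Next I would solve for the remainder. The difference $f-g$ lies in $H^0_{\delta+2}$ and satisfies $\int_{\R^2}(f-g)=0$, i.e. it is $L^2$-orthogonal to the constants. Applying Theorem \ref{mcowens 1} with $s=0$ and $m=0$ — for which the admissible range is precisely $-1<\delta<0$ and $\bigcup_{i=0}^0\mathcal{H}_i$ is the space of constant functions — the operator $\Delta\colon H^2_\delta\to H^0_{\delta+2}$ is a bijection onto the mean-zero subspace, so there is a unique $v\in H^2_\delta$ with $\Delta v=f-g$ and $\|v\|_{H^2_\delta}\le C(\delta)\|f-g\|_{H^0_{\delta+2}}$. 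Then $u\vcentcolon=w+v$ satisfies $\Delta u=g+(f-g)=f$ and has exactly the stated form $u=\frac{1}{2\pi}\left(\int_{\R^2}f\right)\chi\ln+v$.

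The estimate $\|v\|_{H^2_\delta}\le C(\delta)\|f\|_{H^0_{\delta+2}}$ then requires controlling $\|g\|_{H^0_{\delta+2}}$ by $\|f\|_{H^0_{\delta+2}}$, and this is the one place where the hypothesis $\delta>-1$ is genuinely used: $\|g\|_{H^0_{\delta+2}}$ is a fixed multiple of $\left|\int_{\R^2}f\right|$, and by Cauchy--Schwarz $\left|\int_{\R^2}f\right|\le\|\langle x\rangle^{\delta+2}f\|_{L^2}\,\|\langle x\rangle^{-(\delta+2)}\|_{L^2}$ with $\langle x\rangle^{-(\delta+2)}\in L^2(\R^2)$ exactly because $2(\delta+2)>2$. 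Beyond this routine point there is no real obstacle: the whole argument is the standard parametrix-plus-McOwen bootstrap, the only thing to get right being that the parametrix is designed so that the leftover source has the vanishing mean that McOwen's surjectivity statement demands.
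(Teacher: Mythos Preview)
Your proof is correct and is exactly the standard derivation the paper has in mind: the paper states Corollary~\ref{mcowens 2} as an immediate consequence of Theorem~\ref{mcowens 1} without writing out the argument, and your parametrix-plus-McOwen step (subtract the $\chi\ln$ piece so the remaining source has zero mean, then invoke the $m=0$ case of McOwen's theorem) is precisely how one fills in that gap. The verification that $\int_{\R^2}\Delta(\chi\ln)=2\pi$ and the Cauchy--Schwarz bound $\left|\int f\right|\lesssim\|f\|_{H^0_{\delta+2}}$ using $\langle x\rangle^{-(\delta+2)}\in L^2$ for $\delta>-1$ are the only points to check, and you have handled both.
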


We will also use some classical inequalities, which we recall here, even if they are not related to weighted Sobolev spaces. The proof of the next property can be found in Appendix A of \cite{tao06}. 
\begin{prop}\label{littlewood paley}
If $s\in\N$, then 
\begin{equation*}
    \| uv\|_{H^s}\lesssim \|u\|_{H^s}\|v\|_{L^{\infty}}+\|v\|_{H^s}\|u\|_{L^{\infty}}.
\end{equation*}
\end{prop}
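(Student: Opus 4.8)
The statement to prove is the classical Moser / Kato--Ponce product inequality in $H^s$ on $\R^2$ (in fact the argument works on $\R^n$): for $s\in\N$,
\[
\|uv\|_{H^s}\lesssim \|u\|_{H^s}\|v\|_{L^\infty}+\|v\|_{H^s}\|u\|_{L^\infty}.
\]
The plan is to follow the standard Littlewood--Paley proof, exactly as in Appendix A of \cite{tao06}, which is the reference the paper already points to. The idea is to decompose each factor into dyadic pieces $u=\sum_j P_j u$, $v=\sum_k P_k v$ (where $P_j$ is the Littlewood--Paley projection onto frequencies $|\xi|\sim 2^j$), split the double sum $uv=\sum_{j,k}P_j u\, P_k v$ into the three regions $j\ll k$ (low--high), $k\ll j$ (high--low), and $j\sim k$ (high--high), and estimate the $H^s$ norm of each piece using almost-orthogonality and Bernstein's inequality.

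\textbf{Key steps.}
First I would recall the characterization $\|w\|_{H^s}^2\sim \sum_{m\ge 0} 2^{2ms}\|P_m w\|_{L^2}^2$ and the basic tools: Bernstein's inequality $\|P_m w\|_{L^\infty}\lesssim 2^{m n/2}\|P_m w\|_{L^2}$ and $\|P_m w\|_{L^2}\lesssim \|w\|_{L^2}$, together with the frequency-support (paraproduct) bookkeeping: $P_m(P_j u\, P_k v)=0$ unless either $m\lesssim \max(j,k)$ and the two largest of $j,k,m$ are comparable. Second, for the low--high term $T_1=\sum_j (P_{<j-3}u)(P_j v)$ I would write $P_m T_1 = \sum_{j\sim m} P_m\big((P_{<j-3}u)(P_j v)\big)$, bound $\|P_{<j-3}u\|_{L^\infty}\lesssim \|u\|_{L^\infty}$ (a consequence of the $L^\infty$-boundedness of the partial-sum multipliers in dimension $2$, or alternatively absorbed into the statement by replacing $P_{<j-3}u$ estimates by $\|u\|_{L^\infty}$ directly), hence $\|P_m T_1\|_{L^2}\lesssim \|u\|_{L^\infty}\|P_m v\|_{L^2}$ up to finitely many shifts, multiply by $2^{ms}$, square, and sum in $m$ to get $\|T_1\|_{H^s}\lesssim \|u\|_{L^\infty}\|v\|_{H^s}$. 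By symmetry the high--low term $T_2$ is bounded by $\|v\|_{L^\infty}\|u\|_{H^s}$. Third, for the high--high term $T_3=\sum_{j\sim k}(P_j u)(P_k v)$ I would use $P_m T_3 = \sum_{j\sim k\gtrsim m}P_m\big((P_j u)(P_k v)\big)$, estimate $\|P_m((P_j u)(P_k v))\|_{L^2}\lesssim \|P_j u\|_{L^2}\|P_k v\|_{L^\infty}\lesssim 2^{k}\|P_j u\|_{L^2}\|P_k v\|_{L^2}$ (Bernstein, with $n=2$), so that $2^{ms}\|P_m T_3\|_{L^2}\lesssim \sum_{j\sim k\gtrsim m}2^{(m-k)s}\,2^{k}\,2^{ks}\|P_j u\|_{L^2}\cdot 2^{-ks+k}\cdots$; the clean way is instead to bound $\|P_k v\|_{L^\infty}$ crudely by $\|v\|_{L^\infty}$ and $2^{js}\|P_j u\|_{L^2}=:c_j\in\ell^2$, giving $2^{ms}\|P_m T_3\|_{L^2}\lesssim \sum_{j\gtrsim m}2^{(m-j)s}c_j\|v\|_{L^\infty}$, which is a convolution of $\ell^2$ with the summable kernel $2^{-\ell s}\mathbf 1_{\ell\ge 0}$ (here $s\ge 1>0$), so Young's inequality yields $\|T_3\|_{H^s}\lesssim \|v\|_{L^\infty}\|u\|_{H^s}$; and by symmetry also $\lesssim \|u\|_{L^\infty}\|v\|_{H^s}$. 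Finally, summing the three contributions gives the claimed bound.

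\textbf{Main obstacle.}
The only genuinely delicate point is the low--high (paraproduct) term: one must either invoke the uniform $L^\infty\!\to\!L^\infty$ boundedness of the low-frequency cutoffs $P_{<j}$ (true in all dimensions for the standard smooth Littlewood--Paley cutoffs, and this is where the ``$s\in\N$'' and the concrete choice of multipliers in \cite{tao06} matter), or reorganize the argument so that $u$ always appears in $L^\infty$ only through the full function $\|u\|_{L^\infty}$ rather than through a projection. Everything else is routine: almost-orthogonality of the $P_m$, Bernstein in dimension $2$, and a discrete Young inequality with a geometrically decaying kernel (which converges precisely because $s\ge 1$). Since the paper explicitly cites \cite{tao06} for this estimate, I would present the proof at the level of these three steps and refer to that reference for the standard Littlewood--Paley facts.

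\begin{proof}
This is the classical Moser/Kato--Ponce product estimate; we recall the Littlewood--Paley proof, following Appendix A of \cite{tao06}. Let $(P_m)_{m\ge 0}$ be the inhomogeneous Littlewood--Paley projections on $\R^2$, so that $\|w\|_{H^s}^2\sim \sum_{m\ge 0}2^{2ms}\|P_m w\|_{L^2}^2$ for $s\ge 0$, and recall Bernstein's inequality $\|P_m w\|_{L^\infty}\lesssim 2^{m}\|P_m w\|_{L^2}$ in dimension $2$, the uniform bounds $\|P_m w\|_{L^p}\lesssim \|w\|_{L^p}$ for $1\le p\le\infty$, and the frequency-support rule: $P_m(P_j u\,P_k v)=0$ unless $m\lesssim\max(j,k)$ and the two largest among $j,k,m$ are comparable.

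Write $u=\sum_j P_j u$, $v=\sum_k P_k v$, and split
\[
uv=\underbrace{\sum_j (P_{<j-3}u)(P_j v)}_{T_1}+\underbrace{\sum_j (P_j u)(P_{<j-3}v)}_{T_2}+\underbrace{\sum_{|j-k|\le 3}(P_j u)(P_k v)}_{T_3}.
\]
For $T_1$: by the support rule, $P_m T_1=\sum_{j:\,|j-m|\le 4}P_m\big((P_{<j-3}u)(P_j v)\big)$, so using $\|P_{<j-3}u\|_{L^\infty}\lesssim\|u\|_{L^\infty}$ we get $\|P_m T_1\|_{L^2}\lesssim \|u\|_{L^\infty}\sum_{|j-m|\le 4}\|P_j v\|_{L^2}$. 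Hence
\[
\|T_1\|_{H^s}^2\lesssim \sum_m 2^{2ms}\|u\|_{L^\infty}^2\Big(\sum_{|j-m|\le 4}\|P_j v\|_{L^2}\Big)^2\lesssim \|u\|_{L^\infty}^2\,\|v\|_{H^s}^2.
\]
By symmetry $\|T_2\|_{H^s}\lesssim \|v\|_{L^\infty}\|u\|_{H^s}$.

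For $T_3$: by the support rule, $P_m T_3=\sum_{j\ge m-4,\ |j-k|\le 3}P_m\big((P_j u)(P_k v)\big)$, and with Bernstein (to place $P_k v$ in $L^\infty$) or simply $\|P_k v\|_{L^\infty}\lesssim\|v\|_{L^\infty}$ we obtain $\|P_m T_3\|_{L^2}\lesssim \|v\|_{L^\infty}\sum_{j\ge m-4}\|P_j u\|_{L^2}$. Setting $c_j=2^{js}\|P_j u\|_{L^2}\in\ell^2$ with $\|(c_j)\|_{\ell^2}\sim\|u\|_{H^s}$,
\[
2^{ms}\|P_m T_3\|_{L^2}\lesssim \|v\|_{L^\infty}\sum_{j\ge m-4}2^{(m-j)s}c_j,
\]
which is the convolution of $(c_j)\in\ell^2$ with the kernel $2^{-\ell s}\mathbf 1_{\ell\ge -4}\in\ell^1$ (summable since $s\ge 1>0$). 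By Young's inequality, $\|T_3\|_{H^s}\lesssim \|v\|_{L^\infty}\|u\|_{H^s}$; by symmetry also $\|T_3\|_{H^s}\lesssim \|u\|_{L^\infty}\|v\|_{H^s}$.

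Combining the three estimates,
\[
\|uv\|_{H^s}\le \|T_1\|_{H^s}+\|T_2\|_{H^s}+\|T_3\|_{H^s}\lesssim \|u\|_{H^s}\|v\|_{L^\infty}+\|v\|_{H^s}\|u\|_{L^\infty},
\]
as claimed.
\end{proof}
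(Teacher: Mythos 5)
Your proof is correct, and it is essentially the argument the paper itself defers to: the paper does not prove this proposition but cites Appendix~A of \cite{tao06}, and your Littlewood--Paley paraproduct decomposition (low--high, high--low, high--high, with the crude $L^\infty$ bound on the low-frequency factor and a discrete Young inequality for the high--high piece, convergent since $s\geq 1$) is exactly that standard proof. Note only that your $T_3$ estimate uses $s>0$, which is consistent with the paper's convention that $\N$ excludes $0$ (it writes $\N\cup\{0\}$ elsewhere when zero is allowed), and that the $s=0$ case would anyway be immediate from H\"older.
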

We recall the Hardy-Littlewood-Sobolev inequality :
\begin{prop}\label{prop HLS}
If $0<\alpha<2$ and $1<p<r<\infty$ and $\frac{1}{r}=\frac{1}{p}-\frac{\alpha}{2}$, then
\begin{equation*}
    \l u* \frac{1}{|\cdot|^{2-\alpha}} \r_{L^r}\lesssim \l u \r_{L^p}.
\end{equation*}
\end{prop}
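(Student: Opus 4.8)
The statement to be proved is the Hardy--Littlewood--Sobolev inequality in dimension two, so the plan is to run the classical maximal-function argument. Writing $K(x)=|x|^{-(2-\alpha)}$, the operator in question is $u\mapsto u*K$, and since $K\ge 0$ one may assume $u\ge 0$. The first step is to split the kernel at an arbitrary scale $R>0$:
\[
(u*K)(x)=\int_{|y|<R}\frac{u(x-y)}{|y|^{2-\alpha}}\,\d y+\int_{|y|\ge R}\frac{u(x-y)}{|y|^{2-\alpha}}\,\d y=:I_R(x)+J_R(x).
\]
For $I_R$ I would decompose $\{|y|<R\}$ into the dyadic annuli $\{R2^{-k-1}\le|y|<R2^{-k}\}$ and use the elementary bound $\int_{|y|<\rho}u(x-y)\,\d y\lesssim \rho^{2}\,(Mu)(x)$, where $M$ is the Hardy--Littlewood maximal operator; on the $k$-th annulus $|y|^{-(2-\alpha)}\lesssim (R2^{-k})^{\alpha-2}$, so the $k$-th term is $\lesssim R^{\alpha}2^{-k\alpha}(Mu)(x)$, and summing the geometric series (which converges because $\alpha>0$) gives $I_R(x)\lesssim R^{\alpha}(Mu)(x)$. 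For $J_R$ I would apply Hölder's inequality with exponent $p$ and compute $\int_{|y|\ge R}|y|^{-(2-\alpha)p'}\,\d y\le c\,R^{2-(2-\alpha)p'}$, the integral converging precisely because the hypothesis on the exponents forces $(2-\alpha)p'>2$; this yields $J_R(x)\lesssim R^{\alpha-2/p}\,\l u\r_{L^p}$, where $\alpha-2/p<0$ (again by the exponent relation).

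Next I would optimize in $R$. Balancing the two bounds, i.e. taking $R=\big(\l u\r_{L^p}/(Mu)(x)\big)^{p/2}$, produces the pointwise estimate
\[
(u*K)(x)\lesssim (Mu)(x)^{\,1-\alpha p/2}\,\l u\r_{L^p}^{\,\alpha p/2},
\]
where $0<1-\alpha p/2<1$ by the exponent relation. Raising to the power $r$, integrating, and using that the relation $\tfrac1r=\tfrac1p-\tfrac\alpha2$ is exactly what makes $(1-\alpha p/2)\,r=p$, I obtain
\[
\l u*K\r_{L^r}^{\,r}\lesssim \l u\r_{L^p}^{\,\alpha p r/2}\,\l Mu\r_{L^p}^{\,p}\lesssim \l u\r_{L^p}^{\,\alpha p r/2+p},
\]
the last step being the $L^p$-boundedness of $M$ for $p>1$. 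A final arithmetic check, $\alpha p r/2+p=r$ (once more a consequence of the exponent relation), gives $\l u*K\r_{L^r}\lesssim\l u\r_{L^p}$, which is the claim.

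The only real work here is bookkeeping: verifying that the three exponent identities used above, namely $(2-\alpha)p'>2$, $(1-\alpha p/2)r=p$, and $\alpha p r/2+p=r$, all follow from the single hypothesis $\tfrac1r=\tfrac1p-\tfrac\alpha2$ together with $1<p<r<\infty$, and noting that the assumption $p>1$ is genuinely needed since it enters through the maximal theorem (the endpoint $p=1$ fails). An alternative and shorter route, which I would record as a remark rather than as the main argument, is to observe by a direct distribution-function computation that $K\in L^{2/(2-\alpha),\infty}(\R^{2})$ and then invoke the weak Young convolution inequality $\l u*K\r_{L^r}\lesssim\l u\r_{L^p}\l K\r_{L^{2/(2-\alpha),\infty}}$; this packages everything into one citation but hides the maximal-function mechanism.
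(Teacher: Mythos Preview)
Your argument is the standard Hedberg maximal-function proof of Hardy--Littlewood--Sobolev and is correct; the exponent bookkeeping you flag (convergence of the far-field integral, $(1-\alpha p/2)r=p$, and $\alpha p r/2+p=r$) all checks out from the single relation $\tfrac1r=\tfrac1p-\tfrac\alpha2$ together with $1<p<r<\infty$. The paper itself gives no proof of this proposition---it is merely recalled as a classical inequality in Appendix~B---so there is nothing to compare against; your write-up would serve as a self-contained justification, and the weak-Young remark you add is a legitimate shortcut.
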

We recall the Gagliardo-Nirenberg inequality, for which a proof can be found in \cite{fri69} :
\begin{prop}\label{GN}
Let $1\leq q,r\leq +\infty$, $m\in\N^*$. Let $\alpha\in\R$ and $j\in\N$ such that
\begin{equation*}
\frac{j}{m}\leq\alpha\leq 1.
\end{equation*}
Then :
\begin{equation*}
\l \nabla^j u \r_{L^p} \lesssim \l \nabla ^mu\r_{L^r}^\alpha \l  u \r_{L^q}^{1-\alpha},
\end{equation*}
with 
\begin{equation*}
\frac{1}{p}=\frac{j}{2}+\left(\frac{1}{r}-\frac{m}{2} \right)\alpha+\frac{1-\alpha}{q}.
\end{equation*}
\end{prop}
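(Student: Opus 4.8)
The plan is to prove Proposition~\ref{GN} by the classical route, reducing the general estimate to a single first–order building block and then bootstrapping; I work throughout on $\R^2$, as in the statement. The exponent relation $\frac1p=\frac j2+\bigl(\frac1r-\frac m2\bigr)\alpha+\frac{1-\alpha}q$ is forced by scaling: applying the asserted inequality to $u(\lambda\,\cdot)$ and letting $\lambda\to 0$ and $\lambda\to\infty$ shows that this identity is a \emph{necessary} condition, so only sufficiency has to be proved, and one may freely normalise $\l u\r_{L^q}=\l\nabla^m u\r_{L^r}=1$.

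First I would establish the seed case $j=1$, $m=2$, $\alpha=\tfrac12$, namely
\begin{equation*}
\l\nabla u\r_{L^{p}}\lesssim\l\nabla^{2}u\r_{L^{r}}^{1/2}\,\l u\r_{L^{q}}^{1/2}
\end{equation*}
for $u\in C^\infty_c(\R^2)$, where the scaling relation here reads $\tfrac2p=\tfrac1q+\tfrac1r$. Treating one coordinate direction at a time and integrating by parts gives
\begin{equation*}
\int_{\R^2}|\dr_i u|^{p}\,\d x=-(p-1)\int_{\R^2}u\,|\dr_i u|^{p-2}\dr_i^{2}u\,\d x,
\end{equation*}
and then Hölder's inequality with the exponents $q$, $\tfrac{p}{p-2}$, $r$ — whose reciprocals sum to $1$ exactly because of the scaling identity — yields $\l\dr_i u\r_{L^p}^{2}\lesssim\l u\r_{L^q}\l\dr_i^2u\r_{L^r}$; summing over $i=1,2$ and absorbing a spare $\l\nabla u\r_{L^p}$ factor via $ab\le\eta a^2+\eta^{-1}b^2$ completes the seed estimate, which extends to general $u$ by density. \emph{This is the step I expect to be the main obstacle}: the integration by parts requires $p\ge 2$ and all three Hölder exponents to lie in $[1,\infty]$, so the borderline configurations ($p<2$, or an exponent equal to $1$ or $\infty$, e.g. $r=1$ or $q=\infty$) escape this argument and need a separate treatment — typically a Littlewood–Paley decomposition of $u$ together with Bernstein's inequalities, which make the interpolation explicit on each frequency annulus.

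Once the seed inequality is in hand, Proposition~\ref{GN} follows by a standard induction on $m$ and $j$: iterating the seed estimate on $u,\nabla u,\dots,\nabla^{m-1}u$ and telescoping the exponents produces the endpoint $\l\nabla u\r_{L^p}\lesssim\l\nabla^m u\r_{L^r}^{1/m}\l u\r_{L^q}^{1-1/m}$ (the case $\alpha=1/m$, $j=1$); the opposite endpoint $\alpha=1$ is the homogeneous Sobolev embedding $\dot W^{m,r}(\R^2)\hookrightarrow\dot W^{1,p}(\R^2)$; Hölder's inequality interpolates between the two for all $\alpha\in[1/m,1]$; and applying everything to $\nabla^{j-1}u$ in place of $u$ yields the statement for arbitrary $j$. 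Finally I would note that the present paper invokes only the two instances \eqref{estimate A} and \eqref{estimate B} of this proposition, so a pragmatic fallback — should the general exponent bookkeeping prove too cumbersome — is to prove just $\l u\r_{L^4}\lesssim\l u\r_{L^2}^{1/2}\l u\r_{H^1}^{1/2}$ and $\l u\r_{L^\infty}\lesssim\l u\r_{L^2}^{1/2}\l\nabla^2 u\r_{L^2}^{1/2}$ on $\R^2$ directly, by elementary Fourier–analytic arguments (Plancherel and Cauchy–Schwarz).
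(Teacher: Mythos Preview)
The paper does not give a proof of Proposition~\ref{GN} at all: immediately before the statement it simply says ``We recall the Gagliardo--Nirenberg inequality, for which a proof can be found in \cite{fri69}'', and then moves on. So there is no ``paper's own proof'' to compare against --- the proposition is treated as classical background.

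Your sketch is a reasonable outline of the standard proof (integration by parts to get the $j=1$, $m=2$, $\alpha=\tfrac12$ seed, then iteration and interpolation), and you correctly flag the endpoint configurations where the naive Hölder split breaks down. Since the paper only invokes the two special cases \eqref{estimate A} and \eqref{estimate B}, your pragmatic fallback --- proving just those two directly by Plancherel and Cauchy--Schwarz --- is entirely adequate for the purposes of this article, and arguably more in keeping with the paper's own attitude, which is to quote the result rather than reprove it.
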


\section{Third order energy estimate}\label{appendix C}

In this section, we prove Proposition \ref{dernier coro}. We split the proof into two lemmas : their goal is to point out the dependence of $\frac{\d}{\d t}\mathscr{E}_3^\ffi$ and $\frac{\d}{\d t}\mathscr{E}_3^\omega$ on non-linear terms in $\dr\nabla^2 U$.

\begin{lem}
The energy $\mathscr{E}_3^\ffi$ satisfies
\begin{align*}
\frac{\d}{\d t}\mathscr{E}_3^\ffi & = \int_{\R^2}2e^{-4\ffi}e_0 \dr_j\dr_i\ffi \left( -\Ll \dr_j\dr_i\omega \Ll\omega+e^{-2\gamma}\nabla \dr_j\dr_i\omega \cdot\nabla\omega\right)\d x
\\&\quad +\int_{\R^2}2e^{-2\gamma}e^{-4\ffi}\nabla \dr_j\dr_i\ffi\cdot \left( e_0\dr_j\dr_i\omega \nabla\omega-e_0\omega\nabla \dr_j\dr_i\omega \right)\d x +O(\mathscr{R}(t)).
\end{align*}
\end{lem}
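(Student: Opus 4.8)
The plan is to differentiate $\mathscr{E}_3^\ffi$ under the integral sign, using that the integrand is a sum of squares of the two quantities
\[
Q_{ij}\vcentcolon=\dr_j\dr_i\ffi \text{-part involving }e_0\omega,\qquad P_{ij}\vcentcolon=\dr_j\dr_i\ffi\text{-part involving }\nabla\omega,
\]
weighted by $\tfrac{2}{N^2}$ and $2e^{-2\gamma}$ respectively. Concretely, $\mathscr{E}_3^\ffi=\int_{\R^2}\left(\tfrac{2}{N^2}Q_{ij}^2+2e^{-2\gamma}|P_{ij}|^2\right)\d x$ with $Q_{ij}=e_0\dr_j\dr_i\ffi+\tfrac12 e^{-4\ffi}\dr_j\dr_i\omega\, e_0\omega$ and $P_{ij}=\nabla\dr_j\dr_i\ffi+\tfrac12 e^{-4\ffi}\dr_j\dr_i\omega\,\nabla\omega$. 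First I would apply $\dr_t$ and collect three kinds of terms: (i) terms where $\dr_t$ hits the coefficients $N^{-2}$, $e^{-2\gamma}$, or the $e^{-4\ffi}$ inside $Q_{ij},P_{ij}$ — these produce factors $\dr_t\gamma$ (via $\Ll$ and $\beta\cdot\nabla$) or $\dr U$ multiplied by $(\dr\nabla^2U)^2$ or $\dr U\nabla^2U$-type squares, hence are $O(\mathscr{R}(t))$ by the definition \eqref{def Reste}; (ii) terms where $\dr_t$ hits $\dr_j\dr_i\omega$ but not the top-order $\ffi$ factor, i.e.\ terms like $(\dr_t\dr_j\dr_i\omega)(\dr U)(\dr\nabla^2\ffi\text{ or }\nabla^2\ffi\,\dr U)$ — again bounded by $\mathscr R$; (iii) the genuine top-order terms where $\dr_t$ hits $e_0\dr_j\dr_i\ffi$ or $\nabla\dr_j\dr_i\ffi$, which will be rewritten using the commuted wave equation \eqref{WM dd ffi}.

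The key step is step (iii). When $\dr_t$ hits $e_0\dr_j\dr_i\ffi$ in $Q_{ij}$ I would write $\dr_t=e_0+\beta\cdot\nabla$; the $\beta\cdot\nabla$ piece, after integration by parts, yields a $\nabla\beta$ times squares of top-order quantities, hence $O(\mathscr R)$. For the $e_0$ piece I use that $e_0^2\dr_j\dr_i\ffi=N^2\left(\Ll^2\dr_j\dr_i\ffi\right)+(\text{lower order from }\Ll N)$, and $\Ll^2\dr_j\dr_i\ffi=e^{-2\gamma}\Delta\dr_j\dr_i\ffi-\Box_g\dr_j\dr_i\ffi$ by \eqref{expression de box} with $\tau=0$. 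Then \eqref{WM dd ffi} replaces $\Box_g\dr_j\dr_i\ffi$ by $-e^{-4\ffi}g^{\alpha\beta}\dr_\alpha\dr_j\dr_i\omega\,\dr_\beta\omega+F^\ffi_{ij}$. The $F^\ffi_{ij}$ contribution is handled by Lemma \ref{estimation F} (it appears explicitly in $\mathscr R$), the $e^{-2\gamma}\Delta\dr_j\dr_i\ffi$ term is integrated by parts against $e_0\dr_j\dr_i\ffi$ so that it combines with the corresponding spatial term coming from $\dr_t$ acting on $|P_{ij}|^2$ — this is the standard energy-estimate cancellation of the $\dr h\,\dr^2 h$ terms seen in Lemma \ref{inegalite d'energie lemme 1}, leaving only a $\nabla N$, $\nabla\gamma$ remainder that is $O(\mathscr R)$. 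What survives from $\Box_g\dr_j\dr_i\ffi$ is precisely the wave-map coupling term $e^{-4\ffi}g^{\alpha\beta}\dr_\alpha\dr_j\dr_i\omega\,\dr_\beta\omega$; decomposing $g^{\alpha\beta}\dr_\alpha\dr_j\dr_i\omega\,\dr_\beta\omega=-\Ll\dr_j\dr_i\omega\,\Ll\omega+e^{-2\gamma}\nabla\dr_j\dr_i\omega\cdot\nabla\omega$ (using $\tau=0$, i.e.\ Proposition \ref{appendix box}) gives exactly the first displayed integral in the lemma. An identical computation starting from the $\nabla\dr_j\dr_i\ffi$ term in $|P_{ij}|^2$, commuting $\nabla$ past $\dr_t$ at the cost of $\nabla\beta$-remainders, produces the second displayed integral.

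The main obstacle will be the careful bookkeeping in step (iii): one must check that every term arising from $\dr_t$ hitting the coefficients, from the $\beta\cdot\nabla$ part of $\dr_t$, from $[\Box_g,\dr_j\dr_i]$, and from the $\tfrac12 e^{-4\ffi}\dr_j\dr_i\omega\,(e_0\omega\text{ or }\nabla\omega)$ correction terms inside $Q_{ij},P_{ij}$, is genuinely of the schematic form appearing in \eqref{def Reste} — in particular that no uncontrolled $(\dr\nabla^2U)^2\,\dr U$ term is produced except through $F^U$, which is already in $\mathscr R$. The subtle point is the $\dr_t$-derivative of $e^{-4\ffi}$ producing $e^{-4\ffi}\,e_0\ffi\,\dr_j\dr_i\omega$ multiplied by a top-order $\ffi$ factor: here one uses that $e_0\ffi=N\Ll\ffi$ and that $\Ll\ffi$ is merely a first-order quantity (bounded in $L^\infty$ by \eqref{estimate B} and the bootstrap), so the product fits into the $\l\dr U\nabla^2U\r_{L^2}$ slot of $\mathscr R$. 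Once this classification is done, the remainder $R_{\gamma^{(n)}}$-type terms from the $e^{-2\gamma}$ weight are absorbed into the $\l\dr_t\gamma\r_{L^\infty}$ and $\l\nabla g\nabla^3U\r_{L^2}$ contributions to $\mathscr R$, and the lemma follows. The companion lemma for $\mathscr{E}_3^\omega$ is then proved the same way using \eqref{WM dd omega}, and adding the two gives Proposition \ref{dernier coro}.
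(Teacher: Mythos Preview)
Your overall strategy matches the paper's: split $\mathscr{E}_3^\ffi$ into its temporal and spatial pieces, differentiate, replace $\Ll^2\dr_j\dr_i\ffi$ using \eqref{expression de box} and \eqref{WM dd ffi}, integrate the Laplacian term by parts, and observe the standard $\dr h\,\dr^2 h$ cancellation between the two pieces. However, there is a genuine error in your classification step (ii).

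You claim that when $\dr_t$ hits the factor $\dr_j\dr_i\omega$ inside the correction $\tfrac12 e^{-4\ffi}\dr_j\dr_i\omega\,e_0\omega$ (or $\tfrac12 e^{-4\ffi}\dr_j\dr_i\omega\,\nabla\omega$), the resulting term is $O(\mathscr R)$. But after multiplying by the leading part $e_0\dr_j\dr_i\ffi$ of $Q_{ij}$ (respectively $\nabla\dr_j\dr_i\ffi$ of $P_{ij}$), this produces a contribution of the schematic form $(\dr\nabla^2 U)^2\,\dr U$, which is \emph{not} in $\mathscr R$ --- indeed, avoiding precisely this type of term in $L^1$ is the whole purpose of the modified energy. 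These cross terms are main terms: in the paper's computation they yield $+\tfrac12 e^{-4\ffi}\Ll\dr_j\dr_i\omega\,\Ll\omega$ (from $A^\ffi$) and $+\tfrac12 e^{-4\ffi} e_0\dr_j\dr_i\omega\,\nabla\omega$ (from $B^\ffi$), multiplied by $4Q_{ij}$ and $4e^{-2\gamma}P_{ij}$ respectively.

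Consequently your step (iii) is also off. The wave-map coupling $e^{-4\ffi}g^{\alpha\beta}\dr_\alpha\dr_j\dr_i\omega\,\dr_\beta\omega$ coming from \eqref{WM dd ffi} does \emph{not} by itself give the first displayed integral with the correct coefficient~$2$: it gives coefficient~$4$ for the $\Ll$-part, and only after combining with the (ii)-contribution $+\tfrac12 e^{-4\ffi}\Ll\dr_j\dr_i\omega\,\Ll\omega$ does one obtain the $-\tfrac12$ (hence overall $2$) in the statement. Likewise the second displayed integral arises from a combination of an integration-by-parts term in $A^\ffi$ (producing $-e_0\omega\,\nabla\dr_j\dr_i\omega$) and the (ii)-contribution in $B^\ffi$ (producing $+e_0\dr_j\dr_i\omega\,\nabla\omega$). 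If you discard the (ii)-terms, the coefficients are wrong and the subsequent cancellation with $\tfrac{\d}{\d t}\mathscr E_3^\omega$ in Proposition~\ref{dernier coro} fails. The fix is simply to keep all terms where $\dr_t$ lands on $\dr_j\dr_i\omega$ as main terms and track them alongside the wave-equation contribution.
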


\begin{proof}
We split $\mathscr{E}_3^\ffi$ into two parts $A^\ffi+B^\ffi$ :
\begin{equation*}
\mathscr{E}_3^\ffi  =\underbrace{\int_{\R^2}  \frac{2}{N^2}\left(e_0\dr_j\dr_i\ffi+\frac{1}{2}e^{-4\ffi}\dr_j\dr_i\omega e_0\omega \right)^2\d x}_{A^\ffi\vcentcolon=}  +\underbrace{\int_{\R^2}2e^{-2\gamma}\left|\nabla\dr_j\dr_i\ffi+\frac{1}{2}e^{-4\ffi}\dr_j\dr_i\omega \nabla\omega \right|^2\d x}_{B^\ffi\vcentcolon=}.
\end{equation*}
We start with $A^\ffi$, by writing $\dr_t=e_0+\beta\cdot\nabla$. Note that if for some function $f$ we have $\l \nabla g f \r_{L^1}=O(\mathscr{R})$, then by integration by parts we have :
\begin{equation*}
\int_{\R^2}\dr_tf\d x = \int_{\R^2}e_0f\d x+\int_{\R^2}\beta\cdot\nabla f\d x =  \int_{\R^2}e_0f\d x - \int_{\R^2}\dive(\beta)f\d x= \int_{\R^2}e_0f\d x+O(\mathscr{R}).
\end{equation*}
Therefore, in what follows, we can forget about the $\beta\cdot\nabla$-part in $\dr_t$, which only contributes to $O(\mathscr{R})$. We now compute :
\begin{align*}
\frac{\d}{\d t}A^\ffi & = \int_{\R^2}4\left(e_0\dr_j\dr_i\ffi+\frac{1}{2}e^{-4\ffi}\dr_j\dr_i\omega e_0\omega \right)\left( \Ll^2\dr_j\dr_i\ffi+\frac{1}{2}e^{-4\ffi}\Ll \dr_j\dr_i\omega \Ll\omega+\frac{1}{2}e^{-4\ffi}\dr_j\dr_i\omega \Ll^2\omega \right)\d x 
\\&\quad   +O(\mathscr{R}(t)).
\end{align*}
We then replace terms involving $\Ll^2$ according to \eqref{expression de box}, and then replace $\Box_g \dr_j\dr_i\ffi$ according to \eqref{WM dd ffi} ($F^\ffi_{ij}$ and $\dr_j\dr_i\omega\Box_g\omega$ only contributes to $O(\mathscr{R})$) :
\begin{align*}
\frac{\d}{\d t}A^\ffi& = \int_{\R^2}4\left(e_0\dr_j\dr_i\ffi+\frac{1}{2}e^{-4\ffi}\dr_j\dr_i\omega e_0\omega \right) \left[ -\Box_g \dr_j\dr_i\ffi+\frac{e^{-2\gamma}}{N}\dive(N\nabla \dr_j\dr_i\ffi)+\frac{1}{2}e^{-4\ffi}\Ll \dr_j\dr_i\omega \Ll\omega \right.
\\&\qquad\qquad\qquad\qquad\qquad\qquad\qquad \left. -\frac{1}{2}e^{-4\ffi}\dr_j\dr_i\omega \Box_g\omega +\frac{e^{-2\gamma}}{2N}e^{-4\ffi}\dr_j\dr_i\omega \dive(N\nabla\omega) \right]\d x+O(\mathscr{R}(t))
\\& = \int_{\R^2}4\left(e_0\dr_j\dr_i\ffi+\frac{1}{2}e^{-4\ffi}\dr_j\dr_i\omega e_0\omega \right) \left[ -\frac{1}{2}e^{-4\ffi}\Ll \dr_j\dr_i\omega \Ll\omega+e^{-4\ffi}e^{-2\gamma}\nabla \dr_j\dr_i\omega \cdot\nabla\omega\right.
\\&\qquad\qquad\qquad\qquad\qquad\qquad\qquad  \left. +\frac{e^{-2\gamma}}{N}\dive(N\nabla \dr_j\dr_i\ffi)+\frac{e^{-2\gamma}}{2N}e^{-4\ffi}\dr_j\dr_i\omega \dive(N\nabla\omega) \right]\d x+O(\mathscr{R}(t)).
\end{align*}
We integrate by parts the terms with a divergence and expand :
\begin{align*}
\frac{\d}{\d t}A^\ffi & =  \int_{\R^2}4\left(e_0\dr_j\dr_i\ffi+\frac{1}{2}e^{-4\ffi}\dr_j\dr_i\omega e_0\omega \right) \left( -\frac{1}{2}e^{-4\ffi}\Ll \dr_j\dr_i\omega \Ll\omega+e^{-4\ffi}e^{-2\gamma}\nabla \dr_j\dr_i\omega \cdot\nabla\omega\right)\d x 
\\& \quad -\int_{\R^2}4e^{-2\gamma}\nabla \dr_j\dr_i\ffi\cdot \nabla\left(e_0\dr_j\dr_i\ffi+\frac{1}{2}e^{-4\ffi}\dr_j\dr_i\omega e_0\omega \right)\d x
\\& \quad -\int_{\R^2}2e^{-4\ffi}e^{-2\gamma}\nabla\omega\cdot\nabla\left( \dr_j\dr_i\omega \left(e_0\dr_j\dr_i\ffi+\frac{1}{2}e^{-4\ffi}\dr_j\dr_i\omega e_0\omega \right)\right)\d x+O(\mathscr{R}(t))
\\& = \int_{\R^2}4\left(e_0\dr_j\dr_i\ffi+\frac{1}{2}e^{-4\ffi}\dr_j\dr_i\omega e_0\omega \right) \left( -\frac{1}{2}e^{-4\ffi}\Ll \dr_j\dr_i\omega \Ll\omega+e^{-4\ffi}e^{-2\gamma}\nabla \dr_j\dr_i\omega \cdot\nabla\omega\right)\d x
\\& \quad -\int_{\R^2}4e^{-2\gamma}\nabla \dr_j\dr_i\ffi\cdot\nabla e_0 \dr_j\dr_i\ffi \d x -\int_{\R^2}2e^{-4\ffi}e^{-2\gamma}\nabla \dr_j\dr_i\ffi\cdot\nabla (\dr_j\dr_i\omega e_0\omega)\d x
\\& \quad -\int_{\R^2}2e^{-4\ffi}e^{-2\gamma}(\nabla\omega\cdot\nabla \dr_j\dr_i\omega )\left(e_0\dr_j\dr_i\ffi+\frac{1}{2}e^{-4\ffi}\dr_j\dr_i\omega e_0\omega \right)\d x
\\& \quad -\int_{\R^2}2e^{-4\ffi}e^{-2\gamma}\dr_j\dr_i\omega \nabla\omega\cdot\nabla e_0\dr_j\dr_i\ffi\d x - \int_{\R^2}e^{-8\ffi}e^{-2\gamma}\dr_j\dr_i\omega \nabla\omega\cdot\nabla(\dr_j\dr_i\omega e_0\omega)\d x +O(\mathscr{R}(t))
\\& = \int_{\R^2}2e_0\dr_j\dr_i\ffi \left( -e^{-4\ffi}\Ll \dr_j\dr_i\omega \Ll\omega+e^{-4\ffi}e^{-2\gamma}\nabla \dr_j\dr_i\omega \cdot\nabla\omega\right)\d x
\\& \quad  -\int_{\R^2}2e^{-4\ffi}e^{-2\gamma}e_0\omega\nabla \dr_j\dr_i\ffi\cdot\nabla \dr_j\dr_i\omega \d x
 -\int_{\R^2}4e^{-2\gamma}\nabla \dr_j\dr_i\ffi\cdot\nabla e_0 \dr_j\dr_i\ffi \d x \\& \quad-\int_{\R^2}2e^{-4\ffi}e^{-2\gamma}\dr_j\dr_i\omega \nabla\omega\cdot\nabla e_0\dr_j\dr_i\ffi\d x  +O(\mathscr{R}(t))
\end{align*}
We now deal with $B^\ffi$ :
\begin{align*}
\frac{\d}{\d t}B^\ffi & = \int_{\R^2}4e^{-2\gamma} \left( \nabla \dr_j\dr_i\ffi+\frac{1}{2}e^{-4\ffi}\dr_j\dr_i\omega \nabla\omega \right)\cdot e_0\left( \nabla \dr_j\dr_i\ffi+\frac{1}{2}e^{-4\ffi}\dr_j\dr_i\omega \nabla\omega \right)\d x +O(\mathscr{R}(t))
\\& = \int_{\R^2}2e^{-2\gamma}e^{-4\ffi}e_0\dr_j\dr_i\omega \nabla\dr_j\dr_i\ffi\cdot \nabla\omega \d x
\\&\quad+ \int_{\R^2}4e^{-2\gamma}\nabla \dr_j\dr_i\ffi\cdot \nabla e_0 \dr_j\dr_i\ffi \d x +\int_{\R^2}2e^{-2\gamma}e^{-4\ffi}\dr_j\dr_i\omega \nabla\omega\cdot\nabla e_0\dr_j\dr_i\ffi \d x +O(\mathscr{R}(t)).
\end{align*}
We see that the terms which contains $\nabla e_0 \dr_j\dr_i\ffi $ in $A^\ffi$ and $B^\ffi$ cancel each other, and that every terms wich are linear in $\dr \nabla^2 U$ only contribute to $O(\mathscr{R}(t))$, so that :
\begin{align*}
\frac{\d}{\d t}\mathscr{E}_3^\ffi & = \int_{\R^2}2e^{-4\ffi}e_0 \dr_j\dr_i\ffi \left( -\Ll \dr_j\dr_i\omega \Ll\omega+e^{-2\gamma}\nabla \dr_j\dr_i\omega \cdot\nabla\omega\right)\d x
\\&\quad +\int_{\R^2}2e^{-2\gamma}e^{-4\ffi}\nabla \dr_j\dr_i\ffi\cdot \left( e_0\dr_j\dr_i\omega \nabla\omega-e_0\omega\nabla \dr_j\dr_i\omega \right)\d x +O(\mathscr{R}(t)).
\end{align*}

\end{proof}

\begin{lem}
The energy $\mathscr{E}_3^\omega$ satisfies
\begin{align*}
\frac{\d}{\d t}\mathscr{E}_3^\omega & =\int_{\R^2}2e^{-4\ffi}e_0\dr_j\dr_i\omega \left( \Ll\dr_j\dr_i\ffi\Ll\omega -e^{-2\gamma}\nabla\dr_j\dr_i\ffi\cdot\nabla\omega \right) \d x
 \\& \quad +\int_{\R^2}2e^{-2\gamma}e^{-4\ffi}\nabla\dr_j\dr_i\omega \cdot \left( e_0\omega\nabla\dr_j\dr_i\ffi-e_0\dr_j\dr_i\ffi\nabla\omega \right) \d x+O(\mathscr{R}(t)).
\end{align*}
\end{lem}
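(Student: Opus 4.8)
The plan is to compute $\frac{\d}{\d t}\mathscr{E}_3^\omega$ by mimicking exactly the argument used for $\mathscr{E}_3^\ffi$ in the previous lemma, with the roles of $\ffi$ and $\omega$ interchanged and with the wave map structure of \eqref{WM omega} (and its commuted version \eqref{WM dd omega}) replacing that of \eqref{WM ffi}--\eqref{WM dd ffi}. First I would split $\mathscr{E}_3^\omega = A^\omega + B^\omega$ into its "timelike" and "spacelike" parts, exactly as
\begin{equation*}
A^\omega \vcentcolon= \int_{\R^2}\frac{1}{2}e^{-4\ffi}\frac{1}{N^2}\left( e_0\dr_j\dr_i\omega -2\dr_j\dr_i\omega e_0\ffi-2\dr_j\dr_i\ffi e_0\omega \right)^2 \d x,
\end{equation*}
\begin{equation*}
B^\omega \vcentcolon= \int_{\R^2}\frac{1}{2}e^{-4\ffi}e^{-2\gamma}\left|\nabla\dr_j\dr_i\omega -2\dr_j\dr_i\omega \nabla\ffi-2\dr_j\dr_i\ffi \nabla\omega \right|^2 \d x .
\end{equation*}
As in the $\ffi$ case, I would first record that replacing $\dr_t$ by $e_0$ throughout is harmless: for any $f$ with $\l\nabla g\,f\r_{L^1}=O(\mathscr{R})$, the $\beta\cdot\nabla$ contribution integrates by parts into $-\int \dive(\beta)f = O(\mathscr{R})$, since $\l\nabla g\r_{L^\infty}$ is controlled by \eqref{L infini dg} and enters the definition \eqref{def Reste} of $\mathscr{R}$. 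Likewise any term that is merely \emph{linear} in $\dr\nabla^2 U$ (as opposed to multiplied by another factor of $\dr\nabla^2 U$) is of the form $\l\dr\nabla^2 U\r_{L^2}\times(\text{lower order})$ and hence $O(\mathscr{R})$ by inspection of \eqref{def Reste}.

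Next I would differentiate $A^\omega$ in time, producing $e_0$ acting on the "corrected second derivative" $\Xi_{ij}\vcentcolon= e_0\dr_j\dr_i\omega -2\dr_j\dr_i\omega e_0\ffi-2\dr_j\dr_i\ffi e_0\omega$; the key point of the wave-map correction is that $\Ll\Xi_{ij}$ reproduces, up to terms that are $O(\mathscr{R})$ and up to the structure of \eqref{WM dd omega}, the combination $\Ll^2\dr_j\dr_i\omega - 4g^{\alpha\beta}\dr_\alpha\dr_j\dr_i\omega\dr_\beta\ffi - 4g^{\alpha\beta}\dr_\alpha\omega\dr_\beta\dr_j\dr_i\ffi$. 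I then substitute $\Box_g\dr_j\dr_i\omega$ via \eqref{WM dd omega} (the source $F^\omega_{ij}$, which appears in $\mathscr{R}$ by Lemma \ref{estimation F}, and the terms $\dr_j\dr_i\omega\,\Box_g\omega$, $\dr_j\dr_i\ffi\,\Box_g\ffi$ which are likewise $O(\mathscr{R})$, are discarded), and replace the remaining $\Ll^2$'s using \eqref{expression de box} to turn them into $\frac{e^{-2\gamma}}{N}\dive(N\nabla\,\cdot\,)$. I integrate by parts every divergence term; crucially the resulting terms containing $\nabla e_0\dr_j\dr_i\ffi$ and $\nabla e_0\dr_j\dr_i\omega$ are the ones that must be cancelled. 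Doing the same computation for $B^\omega$ (applying $e_0$ to the corrected gradient $\nabla\dr_j\dr_i\omega -2\dr_j\dr_i\omega\nabla\ffi -2\dr_j\dr_i\ffi\nabla\omega$, commuting $e_0$ past $\nabla$ at the cost of $O(\mathscr{R})$ terms via $[e_0,\nabla]=\nabla\beta\nabla$), the $\nabla e_0\dr_j\dr_i\omega$ terms in $A^\omega$ and $B^\omega$ cancel, and I am left precisely with the two stated integrals plus $O(\mathscr{R})$. This is bookkeeping identical in structure to the proof of the $\mathscr{E}_3^\ffi$ lemma, so I would present it at the same level of detail, flagging each substitution and integration by parts.

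The main obstacle — such as it is — is purely organizational rather than conceptual: one must track a larger number of cross-terms (because the $\omega$-correction involves \emph{both} $e_0\ffi$ and $e_0\omega$, whereas the $\ffi$-correction involved only $e_0\omega$), and at each stage argue carefully that every term one drops is genuinely $O(\mathscr{R})$ — i.e. either linear in $\dr\nabla^2 U$, or quadratic in $\dr\nabla^2 U$ with a coefficient that is $\l\dr_t\gamma\r_{L^\infty}$ or a product of lower-order norms, or of the form $\dr\nabla^2 U\cdot F^U$, so that it is visibly one of the summands in \eqref{def Reste}. Once $\frac{\d}{\d t}\mathscr{E}_3^\ffi$ and $\frac{\d}{\d t}\mathscr{E}_3^\omega$ are both in the stated form, Proposition \ref{dernier coro} follows immediately: the two remaining (a priori dangerous, because top-order-times-top-order) integrals are, after relabelling the dummy indices $i,j$ and using symmetry of $\dr_j\dr_i$, manifestly opposite in sign — the $\ffi$-lemma contributes $+\int 2e^{-4\ffi}e_0\dr_j\dr_i\ffi(-\Ll\dr_j\dr_i\omega\Ll\omega + e^{-2\gamma}\nabla\dr_j\dr_i\omega\cdot\nabla\omega) + \int 2e^{-2\gamma}e^{-4\ffi}\nabla\dr_j\dr_i\ffi\cdot(e_0\dr_j\dr_i\omega\nabla\omega - e_0\omega\nabla\dr_j\dr_i\omega)$, while the $\omega$-lemma contributes exactly the negatives of these, so that $\frac{\d}{\d t}\mathscr{E}_3 = \frac{\d}{\d t}\mathscr{E}_3^\ffi + \frac{\d}{\d t}\mathscr{E}_3^\omega = O(\mathscr{R}(t))$. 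This final cancellation is the whole reason the particular scalings $2$ and $\tfrac12 e^{-4\ffi}$ were chosen in the definition of $\mathscr{E}_3$, reflecting the target metric $h = 2(\d x)^2 + \tfrac12 e^{-4x}(\d y)^2$.
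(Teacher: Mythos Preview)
Your proposal is correct and follows the same approach as the paper: split $\mathscr{E}_3^\omega=A^\omega+B^\omega$, differentiate, substitute via \eqref{WM dd omega} and \eqref{expression de box}, integrate the divergence terms by parts, and observe the cancellation of the $\nabla e_0\dr_j\dr_i\omega$ contributions between $A^\omega$ and $B^\omega$. The one point you gloss over, which the paper flags explicitly, is that differentiating the weight $e^{-4\ffi}$ in $A^\omega$ and $B^\omega$ produces genuinely dangerous top-order terms of the form $\int e^{-4\ffi}e_0\ffi\,(\Ll\dr_j\dr_i\omega)^2$ and $\int e^{-4\ffi}e^{-2\gamma}e_0\ffi\,|\nabla\dr_j\dr_i\omega|^2$; these are \emph{not} $O(\mathscr{R})$ a priori and cancel only against the $\Ll\dr_j\dr_i\omega\,\Ll\ffi$ and $\nabla\dr_j\dr_i\omega\cdot\nabla\ffi$ pieces coming from the wave-map term $4g^{\alpha\beta}\dr_\alpha\dr_j\dr_i\omega\,\dr_\beta\ffi$ in \eqref{WM dd omega}, so this additional cancellation (absent in the $\ffi$ lemma) deserves explicit mention.
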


\begin{proof}
The proof of this lemma is very similar to the one of the previous lemma, except that we also differenciate the coefficient $e^{-4\ffi}$ in the energy $\mathscr{E}_3^\omega$. We split $\mathscr{E}_3^\omega$ into two parts $A^\omega+B^\omega$ :
\begin{align*}
\mathscr{E}_3^\ffi & = \underbrace{\int_{\R^2}  \frac{1}{2N^2}e^{-4\ffi}  \left( e_0\dr_j\dr_i\omega -2\dr_j\dr_i\omega e_0\ffi-2\dr_j\dr_i\ffi e_0\omega \right)^2\d x}_{A^\omega\vcentcolon=}  \\&\qquad\qquad\qquad\qquad+\underbrace{\int_{\R^2}\frac{1}{2}e^{-4\ffi}e^{-2\gamma}\left|\nabla\dr_j\dr_i\omega -2\dr_j\dr_i\omega \nabla\ffi-2\dr_j\dr_i\ffi \nabla\omega \right|^2\d x}_{B^\omega\vcentcolon=}.
\end{align*}
We start by $A^\omega$ :
\begin{align*}
\frac{\d}{\d t}A^\omega & = \int_{\R^2}e^{-4\ffi} \left( e_0\dr_j\dr_i\omega  -2\dr_j\dr_i\omega  e_0\ffi-2 \dr_j\dr_i\ffi  e_0\omega \right)\left( \Ll^2\dr_j\dr_i\omega -2\Ll \dr_j\dr_i\omega \Ll\ffi -2\dr_j\dr_i\omega \Ll^2\ffi\right.\\& \qquad\qquad\qquad\qquad\qquad\qquad\qquad\qquad\qquad\left.-2\Ll  \dr_j\dr_i\ffi \Ll\omega -2 \dr_j\dr_i\ffi \Ll^2\omega \right) \d x
\\&\quad -\int_{\R^2}2e^{-4\ffi}e_0\ffi \left( \Ll \dr_j\dr_i\omega  -2\dr_j\dr_i\omega  \Ll\ffi-2 \dr_j\dr_i\ffi  \Ll\omega \right)^2\d x +O(\mathscr{R}(t))
\\& =  \int_{\R^2}e^{-4\ffi} \left( e_0\dr_j\dr_i\omega  -2\dr_j\dr_i\omega  e_0\ffi-2 \dr_j\dr_i\ffi  e_0\omega \right) \left[ -\Box_g\dr_j\dr_i\omega +\frac{e^{-2\gamma}}{N}\dive(N\nabla \dr_j\dr_i\omega ) + 2\dr_j\dr_i\omega \Box_g\ffi \right. 
\\& \qquad\qquad\qquad\qquad\qquad\qquad\qquad\qquad\qquad \left.- \frac{2e^{-2\gamma}}{N}\dr_j\dr_i\omega \dive(N\nabla\ffi) +2 \dr_j\dr_i\ffi \Box_g\omega \right. \\& \qquad\qquad\qquad\qquad\qquad\qquad\qquad\qquad\qquad\left.- \frac{2e^{-2\gamma}}{N} \dr_j\dr_i\ffi \dive(N\nabla\omega) -2\Ll \dr_j\dr_i\omega \Ll\ffi-2\Ll  \dr_j\dr_i\ffi \Ll\omega  \right]\d x
\\&\qquad\qquad\qquad-\int_{\R^2}2e^{-4\ffi}e_0\ffi \left( \Ll \dr_j\dr_i\omega   \right)^2\d x+O(\mathscr{R}(t)).
\end{align*}
We integrate by parts the terms with a divergence (note that we differenciate the $e^{-4\ffi}$, but the one with $\dive(N\nabla\dr_j\dr_i\omega)$ in front is the only divergence term which gives a main term) :
\begin{align*}
\frac{\d}{\d t}A^\omega & = \int_{\R^2}2e^{-4\ffi} \left( e_0\dr_j\dr_i\omega  -2\dr_j\dr_i\omega  e_0\ffi-2 \dr_j\dr_i\ffi  e_0\omega \right)\left( \Ll \dr_j\dr_i\omega \Ll\ffi+ \Ll  \dr_j\dr_i\ffi \Ll\omega \right. \\&  \qquad\qquad\qquad\qquad\qquad\qquad\qquad\qquad\qquad\left. -2e^{-2\gamma}\nabla \dr_j\dr_i\omega \cdot\nabla\ffi-2e^{-2\gamma}\nabla  \dr_j\dr_i\ffi \cdot\nabla\omega \right) \d x
\\& \quad -\int_{\R^2}e^{-4\ffi}e^{-2\gamma}\nabla \dr_j\dr_i\omega \cdot\nabla\left( e_0\dr_j\dr_i\omega  -2\dr_j\dr_i\omega  e_0\ffi-2 \dr_j\dr_i\ffi  e_0\omega \right)\d x
\\&\quad+\int_{\R^2}4e^{-4\ffi}e^{-2\gamma}\nabla\ffi\cdot\nabla\dr_j\dr_i\omega \left( e_0\dr_j\dr_i\omega  -2\dr_j\dr_i\omega  e_0\ffi-2 \dr_j\dr_i\ffi  e_0\omega \right) \d x
\\&\quad +\int_{\R^2}2e^{-4\ffi}e^{-2\gamma}\nabla\ffi\cdot \nabla\left( \dr_j\dr_i\omega \left( e_0\dr_j\dr_i\omega  -2\dr_j\dr_i\omega  e_0\ffi-2 \dr_j\dr_i\ffi  e_0\omega \right)\right)\d x
\\&\quad +\int_{\R^2}2e^{-4\ffi}e^{-2\gamma}\nabla\omega\cdot\nabla \left(  \dr_j\dr_i\ffi  \left( e_0\dr_j\dr_i\omega  -2\dr_j\dr_i\omega  e_0\ffi-2 \dr_j\dr_i\ffi  e_0\omega \right)\right)\d x \\& \quad-\int_{\R^2}2e^{-4\ffi}e_0\ffi \left( \Ll \dr_j\dr_i\omega   \right)^2\d x+O(\mathscr{R}(t)).
\end{align*}
We now expand all the terms and note again that the linear terms in $\dr\nabla^2 U$ only contribute to $O(\mathscr{R}(t))$ :
\begin{align*}
\frac{\d}{\d t}A^\omega & =  \int_{\R^2}2e^{-4\ffi}  \left( e_0\dr_j\dr_i\omega  -2\dr_j\dr_i\omega  e_0\ffi-2 \dr_j\dr_i\ffi  e_0\omega \right)  \left( \Ll \dr_j\dr_i\omega \Ll\ffi+ \Ll  \dr_j\dr_i\ffi \Ll\omega  \right. \\& \qquad\qquad\qquad\qquad\qquad\qquad\qquad\qquad\qquad\qquad\qquad \left.-2e^{-2\gamma}\nabla \dr_j\dr_i\omega \cdot\nabla\ffi-2e^{-2\gamma}\nabla  \dr_j\dr_i\ffi \cdot\nabla\omega \right) \d x
\\& \quad  +\int_{\R^2}2e^{-4\ffi}e^{-2\gamma}e_0\ffi|\nabla \dr_j\dr_i\omega|^2 \d x +\int_{\R^2}2e^{-4\ffi}e^{-2\gamma}e_0\omega\nabla \dr_j\dr_i\omega \cdot\nabla \dr_j\dr_i\ffi \d x
\\&\quad +\int_{\R^2}6e^{-4\ffi}e^{-2\gamma}(\nabla\ffi\cdot\nabla \dr_j\dr_i\omega )  e_0\dr_j\dr_i\omega    \d x  +\int_{\R^2}2e^{-4\ffi}e^{-2\gamma}(\nabla\omega\cdot\nabla  \dr_j\dr_i\ffi )  e_0\dr_j\dr_i\omega    \d x 
\\&\quad +\int_{\R^2}2e^{-4\ffi}e^{-2\gamma} \dr_j\dr_i\ffi \nabla\omega\cdot\nabla e_0\dr_j\dr_i\omega \d x  -\int_{\R^2}e^{-4\ffi}e^{-2\gamma}\nabla \dr_j\dr_i\omega \cdot\nabla e_0\dr_j\dr_i\omega \d x
\\&\quad
+\int_{\R^2}2e^{-4\ffi}e^{-2\gamma}\dr_j\dr_i\omega \nabla\ffi\cdot\nabla e_0\dr_j\dr_i\omega \d x -\int_{\R^2}2e^{-4\ffi}e_0\ffi \left( \Ll \dr_j\dr_i\omega   \right)^2\d x+O(\mathscr{R}(t))
\\& =  \int_{\R^2}2e^{-4\ffi} e_0\dr_j\dr_i\omega    \left(  \Ll  \dr_j\dr_i\ffi \Ll\omega +e^{-2\gamma}\nabla \dr_j\dr_i\omega \cdot\nabla\ffi-e^{-2\gamma}\nabla  \dr_j\dr_i\ffi \cdot\nabla\omega \right) \d x
\\& \quad  +\int_{\R^2}2e^{-4\ffi}e^{-2\gamma}e_0\ffi|\nabla \dr_j\dr_i\omega|^2 \d x +\int_{\R^2}2e^{-4\ffi}e^{-2\gamma}e_0\omega\nabla \dr_j\dr_i\omega \cdot\nabla \dr_j\dr_i\ffi \d x
\\&\quad +\int_{\R^2}e^{-4\ffi}e^{-2\gamma}(-\nabla \dr_j\dr_i\omega +2\dr_j\dr_i\omega \nabla\ffi+2 \dr_j\dr_i\ffi \nabla\omega)\cdot \nabla e_0\dr_j\dr_i\omega \d x +O(\mathscr{R}(t))
\end{align*}
We now deal with $B^\omega$ :
\begin{align*}
\frac{\d}{\d t}B^\omega & =\int_{\R^2}e^{-4\ffi}e^{-2\gamma}\left(\nabla \dr_j\dr_i\omega -2\dr_j\dr_i\omega \nabla\ffi-2 \dr_j\dr_i\ffi \nabla\omega \right)\cdot e_0\left(\nabla \dr_j\dr_i\omega -2\dr_j\dr_i\omega \nabla\ffi-2 \dr_j\dr_i\ffi \nabla\omega \right)\d x
\\&\quad -\int_{\R^2}2e^{-4\ffi}e^{-2\gamma}e_0\ffi\left|\nabla \dr_j\dr_i\omega -2\dr_j\dr_i\omega \nabla\ffi-2 \dr_j\dr_i\ffi \nabla\omega \right|^2\d x
\\& = \int_{\R^2}e^{-4\ffi}e^{-2\gamma}(\nabla \dr_j\dr_i\omega -2\dr_j\dr_i\omega \nabla\ffi-2 \dr_j\dr_i\ffi \nabla\omega)\cdot \nabla e_0\dr_j\dr_i\omega \d x
\\& \quad -\int_{\R^2}2e^{-4\ffi}e^{-2\gamma}e_0\dr_j\dr_i\omega\nabla \dr_j\dr_i\omega\cdot  \nabla\ffi\d x -\int_{\R^2}2e^{-4\ffi}e^{-2\gamma}e_0 \dr_j\dr_i\ffi\nabla \dr_j\dr_i\omega \cdot  \nabla\omega\d x
\\&\quad- \int_{\R^2}2e^{-4\ffi}e^{-2\gamma}e_0\ffi|\nabla \dr_j\dr_i\omega |^2\d x+O(\mathscr{R}(t)).
\end{align*}
We see that the terms which contains $\nabla e_0 \dr_j\dr_i\omega $ in $A^\omega$ and $B^\omega$ cancel each other, therefore :
\begin{align*}
\frac{\d}{\d t}\mathscr{E}_3^\omega & = \int_{\R^2}2e^{-4\ffi} e_0\dr_j\dr_i\omega    \left(  \Ll  \dr_j\dr_i\ffi \Ll\omega +e^{-2\gamma}\nabla \dr_j\dr_i\omega \cdot\nabla\ffi-e^{-2\gamma}\nabla  \dr_j\dr_i\ffi \cdot\nabla\omega \right) \d x
\\& \quad  +\int_{\R^2}2e^{-4\ffi}e^{-2\gamma}e_0\omega\nabla \dr_j\dr_i\omega \cdot\nabla \dr_j\dr_i\ffi \d x
 -\int_{\R^2}2e^{-4\ffi}e^{-2\gamma}e_0\dr_j\dr_i\omega\nabla \dr_j\dr_i\omega\cdot  \nabla\ffi\d x \\& \quad -\int_{\R^2}2e^{-4\ffi}e^{-2\gamma}e_0 \dr_j\dr_i\ffi\nabla \dr_j\dr_i\omega \cdot  \nabla\omega\d x
 +O(\mathscr{R}(t))
 \\& = \int_{\R^2}2e^{-4\ffi}e_0\dr_j\dr_i\omega \left( \Ll\dr_j\dr_i\ffi\Ll\omega -e^{-2\gamma}\nabla\dr_j\dr_i\ffi\cdot\nabla\omega \right) \d x
 \\& \quad +\int_{\R^2}2e^{-2\gamma}e^{-4\ffi}\nabla\dr_j\dr_i\omega \cdot \left( e_0\omega\nabla\dr_j\dr_i\ffi-e_0\dr_j\dr_i\ffi\nabla\omega \right) \d x+O(\mathscr{R}(t))
 \\& = \int_{\R^2}2e^{-4\ffi}e_0\dr_j\dr_i\omega \left( \Ll\dr_j\dr_i\ffi\Ll\omega -e^{-2\gamma}\nabla\dr_j\dr_i\ffi\cdot\nabla\omega \right) \d x
 \\& \quad +\int_{\R^2}2e^{-2\gamma}e^{-4\ffi}\nabla\dr_j\dr_i\omega \cdot \left( e_0\omega\nabla\dr_j\dr_i\ffi-e_0\dr_j\dr_i\ffi\nabla\omega \right) \d x+O(\mathscr{R}(t)).
\end{align*}

Adding the two previous lemmas, we see that the main parts of $\frac{\d}{\d t}\mathscr{E}_3^\ffi$ and $\frac{\d}{\d t}\mathscr{E}_3^\omega$ cancel each other, and we obtain Proposition \ref{dernier coro}.

\end{proof}

\end{document}